\documentclass[12pt,a4paper]{report}

\usepackage[utf8]{inputenc}
\usepackage[T2A]{fontenc}

\usepackage{amsmath,amssymb,amsthm,amsfonts}
\usepackage{geometry}
\usepackage{enumitem}
\setlist[enumerate]{itemsep=0.3ex, topsep=0.3ex, label={\rm(\arabic*)}}
\setlist[itemize]{itemsep=0.3ex, topsep=0.3ex, leftmargin=4ex}

\usepackage{indentfirst}
\usepackage{graphicx}
\graphicspath{{figs/}}

\usepackage[all]{xy}

\usepackage{xcolor}
\usepackage{hyperref}
\hypersetup{hidelinks=true, hypertexnames=false}
\hypersetup{
   hypertexnames=false,
   colorlinks,
   linkcolor={red!50!black},
   citecolor={blue!50!black},
   urlcolor={blue!80!black}
}

\usepackage[backend=biber,style=numeric-comp,sorting=none]{biblatex}
\DeclareMathOperator{\cat}{cat}
\DeclareMathOperator{\Pcat}{P\text{-}cat}

\DeclareMathOperator{\Int}{Int}

\newcommand\R{\mathbb{R}}
\newcommand\Z{\mathbb{Z}}
\newcommand\Q{\mathbb{Q}}

\usepackage[english]{babel}

\newtheorem{theorem}{Theorem}[chapter]

\newtheorem{proposition}[theorem]{Proposition}
\newtheorem{corollary}[theorem]{Corollary}
\theoremstyle{definition}
\newtheorem{definition}[theorem]{Definition}

\newtheorem{remark}[theorem]{Remark}

\begin{document}

\begin{titlepage}
\begin{center}
\textsc{Academy of Sciences of Ukraine}\\
\textsc{Institute of Mathematics}
\vfill
\hfill\textit{Manuscript copyright}
\vspace{1cm}

{\large \textbf{Bondar Olga Petrivna}}
\vspace{1cm}

{\Large \textbf{$P$-categories and functions with\\ degenerate singular submanifolds}}
\vspace{0.5cm}

01.01.01 --- Mathematical analysis
\vspace{1cm}

\textsc{Dissertation}\\
submitted for the degree of\\
Candidate of Physical and Mathematical Sciences
\vfill

\hfill\begin{minipage}{0.5\textwidth}
Scientific supervisor:\\
Doctor of Physical and\\
Mathematical Sciences, Professor\\
\textbf{V.\,V.\,Sharko}
\end{minipage}
\vfill

Kyiv --- 1993
\end{center}
\end{titlepage}

\tableofcontents
\newpage
\chapter*{Introduction}
\addcontentsline{toc}{chapter}{Introduction}

One of the central topics in the theory of functions on manifolds is the study of the interrelation between the topology of a manifold and the critical points of functions defined on it.
The first indication of a connection between the critical points of a function and the topology of its domain of definition was contained already in the Birkhoff minimax principle~\cite{birkhoff-1917}, which gave a lower bound on the number of saddle points of a function defined on a two-dimensional manifold in terms of the number of relative minima and the homologies of that manifold.
Inspired by Birkhoff's results, in 1926 there appeared the paper of M.\,Morse~\cite{morse-1925}, which laid the foundation of a qualitatively new theory, named after its author.
Morse showed~\cite{morse-1934} that the number of critical points of various indices of a differentiable function on a manifold is related to the homologies of that manifold; namely, he proved inequalities connecting the numbers of critical points with the ranks and torsion orders of the homology groups of the manifold.

Morse theory developed in various directions through the work of many mathematicians.
Since it relates the singularities of a function to the topology of the underlying space, its development is mainly oriented towards using information about one of the actors (the singularities of the function or the topology of the space on which the function is defined) to describe the other.
Thus R.\,Thom proved~\cite{thom-1949} the existence on a topological space of a cell-space structure whose cells correspond to the critical points of a function defined on it.
This allowed one to draw conclusions about the homotopy type of the space.
S.\,Smale~\cite{smale-1964} discovered the ``handle decomposition'' --- one handle for each critical point --- which made it possible to obtain a result about the differentiable-homotopy type.
R.\,Bott~\cite{bott-1982} related the structure of a manifold endowed with a differentiable function to its non-degenerate critical submanifolds.

In all of these studies an important role is played by the estimate of the number of singularities of a function on a manifold, primarily by how small the number of singularities can be.
Thus, for instance, the theorem of S.\,Smale on the exactness of the Morse inequalities implies the Poincaré conjecture in dimensions greater than five, as well as the $h$-cobordism theorem.
The existence on a smooth compact manifold without boundary of a function with two critical points implies that the manifold is homeomorphic to a sphere (the Reeb theorem~\cite{reeb-1947} and its generalisation by D.\,Milnor~\cite{milnor-difftop} to degenerate critical points).

For estimating the minimal number of non-degenerate critical points one used the well-known Morse inequalities~\cite{milnor-morse-1965}:
\[
   B_k(M) - B_{k-1}(M) + \ldots \pm B_0(M)
   \;\leq\; C_k - C_{k-1} + \ldots \pm C_0,
\]
where $C_k$ is the number of critical points of index $k$ on a compact manifold $M$, $B_k = \dim H_k(M, A)$, and $A$ is one of the groups $\Z$, $\Q$, $\Z_p$ ($p \neq 2$, $p$ prime).

For estimating the minimal number of degenerate critical points no such estimate existed.
For this reason L.\,A.\,Lyusternik and L.\,G.\,Shnirelman~\cite{lyusternik-shnirelman-1935} posed the following general problem:
\begin{quote}
Is it possible to indicate a topological characteristic of a space on which a function (or functional) is defined, which would characterise the smallest possible number of critical points of such a function?
\end{quote}

The answer was given in~\cite{lyusternik-shnirelman-1935} by a topological invariant named after its authors --- the Lyusternik--Shnirelman category.
It was defined as the minimal cardinality of a covering of the manifold by closed subsets that are contractible within the manifold.

Another topological invariant of a manifold also answers this question --- its \emph{length}: the largest number of cycles of the manifold whose intersection is a cycle not homologous to zero.
The notion of length of a manifold was introduced by S.\,V.\,Frolov and L.\,E.\,Elsgolts~\cite{frolov-elsgolts-1939}.
They also proved that the Lyusternik--Shnirelman category of a manifold without boundary is at least one greater than its length.
Consequently, the number $k$ of critical points of a differentiable function on a smooth compact connected manifold $M$ without boundary satisfies the inequalities
\[
   k \;\geq\; \cat M \;\geq\; \mathrm{long}\, M + 1.
\]

The natural question arises: can these inequalities be generalised to functions with degenerate submanifolds?
To answer this question, the present work introduces the following notions:
\begin{itemize}\setlength{\itemsep}{0pt}
\item the definition of a $P$-function (Definition~\ref{def:2.1}), generalising the notion of a function with isolated critical points;
\item the notion of $P$-category (Definition~\ref{def:1.1}), generalising the notion of Lyusternik--Shnirelman category;
\item the notion of $p$-length of a manifold (Definition~\ref{def:1.3}), generalising the notion of length of a manifold.
\end{itemize}

Then the number $k$ of critical submanifolds of a $P$-function on the corresponding manifold $M$ (Theorem~\ref{thm:1.5}, Theorem~\ref{thm:2.12}) satisfies the inequalities
\[
   k \;\geq\; \Pcat M \;\geq\; \mathrm{long}^{\,p} M + 1.
\]

Using these inequalities the work estimates, besides the minimal number of critical submanifolds of $P$-functions on manifolds, also the $P$-category and the $p$-length of several manifolds, and constructs $P$-functions on two-dimensional and three-dimensional manifolds, as well as on some manifolds of higher dimension.
We now describe the results obtained in more detail.

In Chapter~I, ``$P$-category of a topological space'', we define the $P$-category of a topological space, introduce the notion of $p$-length of a manifold, in terms of which a lower bound on the $P$-category is obtained, and give examples of computation of the $p$-length and $P$-category of manifolds.

\begin{definition}[1.1]
Let $A$, $B$, $P$ be closed subsets of a Hausdorff topological space, with $P \subset B$ and $A \subset B$. The \emph{$P$-category} $\Pcat_B A$ of the set $A$ relative to the set $B$ is the minimal number $k$ of closed subsets $A_1, \ldots, A_k$ in $B$ that satisfy the following properties:
\begin{enumerate}\setlength{\itemsep}{0pt}
\item $A$ is their union
   \[
      A = \bigcup A_i;
   \]
\item each subset $A_i$, $i=1,\ldots,k$, can be contracted within $B$ to $P$.
That is, for each $i$ there exists a homotopy
      \[
         F_i\colon A_i \times I \to B \;\text{such that}
      \]
      \begin{itemize}\setlength{\itemsep}{0pt}
\item[a)] $F_i(x,0) = x$, $x \in A_i$;
\item[b)] $F_i(x,t) \in B$, $x \in A_i$, $0 < t < 1$;
\item[c)] $F_i(x,1)$ is homeomorphic to $P$.
      \end{itemize}
\end{enumerate}
If no such number $k$ exists we set $\Pcat_B A = \infty$.
The subsets $A_1, \ldots, A_k$ are called \emph{categorical}.
\end{definition}

Note that if in the definition of $P$-category one takes $P$ to be a point, then this definition coincides with the definition of the Lyusternik--Shnirelman category. The $P$-category of a manifold can be estimated in terms of its length (a corollary of Property~4):
\[
   \Pcat M \;\geq\; \frac{\mathrm{long}\, M + 1}{\mathrm{long}\, P + 1}.
\]

In particular, when $P$ is a point the well-known estimate of the Lyusternik--Shnirelman category of a manifold $M$
\[
   \cat M \;\geq\; \mathrm{long}\, M + 1
\]
coincides with the estimate above.

In many cases the estimate of the $P$-category of a manifold in terms of its length is not sharp; that is, for many well-known manifolds the $P$-category is strictly greater than this lower bound expressed in terms of length.
With the aim of sharpening the lower bound for $P$-category, the work introduces the notion of $p$-length of a manifold.

\begin{definition}[1.3]
Let $n$ be the dimension of the manifold $M$ and let $p$ be a non-negative integer.
Let $H^*(M; A)$ be the cohomology ring of $M$, where $A = \Z$ or $\Z_p$ when $M$ is orientable, and $A = \Z_2$ when $M$ is non-orientable.
Consider all integers $q$ such that there exist elements $a_1, \ldots, a_q$ in $H^*(M; A)$ satisfying
\begin{enumerate}\setlength{\itemsep}{0pt}
\item $\dim a_i > 0$, $i = 1, \ldots, q$;
\item the product $a_1 \cdots a_q$ (with respect to the multiplication in the cohomology ring $H^*(M; A)$) is non-zero;
\item $\dim (a_1 \cdots a_q) \leq n - p$.
\end{enumerate}
The \emph{cohomological $p$-length} of the manifold $M$ is the largest such number $q$.
\end{definition}

Note that if $p = 0$, the definition of $p$-length coincides with the classical notion of length.

It turns out to be possible to estimate the $P$-category of a manifold in terms of its $p$-length.
The following theorem is the main result of Chapter~I.

\begin{theorem}[1.5]
If $M$ is a manifold without boundary, then its $P$-category is at least one more than the $p$-length of $M$:
\[
   \Pcat M \;\geq\; \mathrm{long}^{\,p} M + 1.
\]
If $M$ is a manifold with boundary, then its $P$-category is at least the $p$-length of $M$:
\[
   \Pcat M \;\geq\; \mathrm{long}^{\,p} M.
\]
\end{theorem}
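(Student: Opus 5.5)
The plan is to adapt the classical cup-length argument for the Lyusternik--Shnirelman category, with $P$ a closed submanifold of dimension $p$. Suppose $M = A_1\cup\dots\cup A_k$ with each $A_i$ closed and contractible in $M$ onto a copy of $P$ via $F_i$. Let $a_1,\dots,a_q$ be classes with $\dim a_i>0$, with $a_1\cdots a_q\neq 0$ and $\dim(a_1\cdots a_q)\le n-p$. I will show that $k\ge q+1$ when $\partial M=\emptyset$, and $k\ge q$ when $\partial M\neq\emptyset$.

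\textbf{Step 1: restriction to a categorical set.} For each $i$, the inclusion $A_i\hookrightarrow M$ is homotopic to a map $f_i$ whose image lies in $P_i=F_i(A_i,1)$, a subspace homeomorphic to $P$. Hence every class $a\in H^*(M;A)$ restricts to $A_i$ through $H^*(P_i;A)$. Thickening $A_i$ slightly to an open neighbourhood $U_i$ that still deforms (by tautness of Čech/Alexander--Spanier cohomology for closed sets in a manifold) lets me use the relative sequence of the pair $(M,U_i)$. The point is that restriction to $U_i$ factors through a space of cohomological dimension $p$.

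\textbf{Step 2: the cup-product argument.} Take the first $k-1$ factors $a_1,\dots,a_{k-1}$. Suppose first that each $a_i|_{A_i}=0$. Then $a_i$ lifts to $\bar a_i\in H^*(M,U_i)$, and the product lifts to $H^*(M,U_1\cup\dots\cup U_{k-1})$. The remaining set $A_k$ is handled differently. The product $b=a_1\cdots a_{k-1}$ restricted to $A_k$ factors through $H^*(P_k)$. For the full product $c=a_1\cdots a_q$, which is nonzero in degree at most $n-p$, I will use Poincaré duality (Lefschetz duality with boundary). Choose a dual class $d$ with $c\cdot d\neq0$ in $H^n$, where $\dim d\ge p$. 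Then I use the fact that a class of positive degree restricted to $P_k$, times a class of degree $\ge p$, vanishes whenever the total degree exceeds $p$. This shows the product vanishes on $A_k$ as well, so the whole product lifts to $H^*(M,M)=0$, a contradiction. This forces $k-1\ge q$, that is $k\ge q+1$.

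\textbf{Main obstacle.} The difficulty is that the hypothesis $a_i|_{A_i}=0$ is not automatic: restriction to $P_i$ kills only classes of degree $>p$, not every positive-degree class. The degree condition $\dim(a_1\cdots a_q)\le n-p$ is exactly what must repair this. My first attempt is to group the factors so that each categorical set receives a block of cohomology whose image in $H^*(P)$ vanishes. Failing that, I will multiply by the Poincaré dual $d$ and distribute the extra degree $\ge p$ onto one categorical set, so that on that set the restricted product has degree $>p=\dim P$ and vanishes. This is where the "$+1$" is earned in the closed case. In the boundary case the duality class lives in $H^*(M,\partial M)$, and the extra set cannot be absorbed in the same way, which loses one and gives only $k\ge q$. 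I expect making this degree bookkeeping rigorous, through the factorization via $P$ and the choice of $d$, to be the delicate step.
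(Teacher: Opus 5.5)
\textbf{The gap is the step you flag as the ``main obstacle'', and it cannot be closed.} The inequality is false for the $p$-length of Definition~1.3 with $p=\dim P$, so no degree bookkeeping will rescue it.

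Take $M=S^1\times S^1\times S^2$ and $P=S^1\times S^1\times\{\mathrm{pt}\}$, so $m=4$ and $p=2$. Let $x,y\in H^1(M)$ come from the two circle factors and $z\in H^2(M)$ from $S^2$. Then $xy\neq 0$ has degree $2\le m-p$, so $\mathrm{long}^{\,2}M\ge 2$, and the statement would force $\Pcat M\ge 3$. But $M=(T^2\times D^2_+)\cup(T^2\times D^2_-)$, and each piece deforms in $M$ onto a copy of $T^2\times\{\mathrm{pt}\}$. Hence $\Pcat M\le 2$; this is Theorem~1.14 itself with $r=3$, $l=2$.

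Your scheme shows why nothing can work here. Each piece is homotopy equivalent to $T^2$, and $x$, $y$, $xy$ all restrict non-trivially to both pieces. The classes that die on a piece are exactly the ideal generated by $z$. Any $d$ with $xy\cdot d\neq 0$ must involve $z$, so your dual class is drawn from this ideal. Since $z^2=0$, it can be ``spent'' on only one piece. So neither grouping the factors into blocks nor pushing the extra degree of $d$ onto one categorical set can ever produce $k-1$ blocks vanishing on $A_1,\dots,A_{k-1}$.

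\textbf{The paper's own proof breaks at the same place}, in homological language. From $H_l(A_i)\to H_l(M)$ being zero for $l>p$, it concludes that a cycle of dimension $l>p$ can be moved off $A_i$. By duality, moving $c_i$ off $A_i$ requires its dual class, of degree $m-l$, to vanish on $A_i$; that is, it requires codimension $>p$, not dimension $>p$. In the example, $S^1\times\mathrm{pt}\times S^2$ has dimension $3>2$. Yet it has intersection number $\pm1$ with $\mathrm{pt}\times S^1\times\mathrm{pt}\subset T^2\times D^2_+$, so it cannot be moved off that piece.

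\textbf{What your argument does prove.} Steps~1--2, with no duality at all, give $\Pcat M\ge q+1$, where $q$ is the maximal number of classes of degree $>p$ with non-zero product. Each such class dies on every $A_i$, because its restriction factors through $H^{>p}(P_i)=0$. The sharp values the paper gets for tori need extra input beyond degrees. For example, when $P=T^k$, the kernel $\ker\bigl(H^1(T^n)\to H^1(A_i)\bigr)$ has rank at least $n-k$, and one can then choose independent degree-one classes, one vanishing on each categorical set.
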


When $P$ is a point, the well-known theorem giving an estimate of the Lyusternik--Shnirelman category of a manifold without boundary (see, e.g.,~\cite{milnor-morse-1965})
\[
   \cat M \;\geq\; \mathrm{long}\, M + 1
\]
is a corollary of Theorem~\ref{thm:1.5}.
Using the obtained estimate of the $P$-category, the work gives examples of computation of the $P$-category and $p$-length of manifolds.
Interesting examples are obtained as corollaries of the following theorem.

\begin{theorem}[1.7]
Let
\[
   M = S^{k_1} \times S^{k_2} \times \ldots \times S^{k_r}
\]
be a product of $k_i$-dimensional spheres, $i = 1, \ldots, r$, and let $p$ be a non-negative integer.
Then
\[
   \mathrm{long}^{\,p} M = r - \min l(p),
\]
where the minimum is taken over all subsets
\[
   \{(k_{i_1}, k_{i_2}, \ldots, k_{i_l}) \subset (k_1, k_2, \ldots, k_r) : \textstyle\sum k_{i_j} \geq p\}.
\]
\end{theorem}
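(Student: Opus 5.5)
We compute both sides using the Künneth description of the cohomology ring. Take coefficients $A=\Z$; the product of spheres is orientable and torsion-free, so
\[
   H^*(M;\Z) \cong \Lambda[x_1,\ldots,x_r]/(\text{relations } x_i^2=0),
\]
with $\dim x_i = k_i$. More precisely, $H^*(M)$ is the tensor product of the rings $H^*(S^{k_i}) = \Z[x_i]/(x_i^2)$, with graded-commutative signs. As a free abelian group it has basis the monomials $x_I=\prod_{i\in I}x_i$ for $I\subset\{1,\ldots,r\}$, with $\dim x_I=\sum_{i\in I}k_i$. A product of two basis monomials is $\pm x_{I\cup J}$ if $I\cap J=\emptyset$ and $0$ otherwise. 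We assume all $k_i>0$.

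For the lower bound, let $l_0=\min l(p)$ be attained by a subset $J$ with $|J|=l_0$ and $\sum_{j\in J}k_j\ge p$. Put $I=\{1,\ldots,r\}\setminus J$ and take the $q=r-l_0$ elements $a_i=x_i$, $i\in I$. Each has positive degree. Their product is $\pm x_I\neq0$, and
\[
   \dim x_I = n-\sum_{j\in J}k_j \le n-p .
\]
Hence $\mathrm{long}^{\,p}M\ge r-l_0$.

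For the upper bound, suppose $a_1,\ldots,a_q$ have positive degree, the product is nonzero, and its degree is at most $n-p$. Expand each $a_s$ in the monomial basis. Since $\dim a_s>0$, we may assume $a_s$ is homogeneous (a general element splits into homogeneous components, and a nonzero product forces some choice of homogeneous components to have nonzero product of the same or smaller degree; we handle this by picking the components whose product is nonzero). Then $a_s$ involves only monomials $x_{I_s}$ with $I_s\neq\emptyset$. Expanding the product multilinearly, some term $\pm x_{I_1}\cdots x_{I_q}$ must be nonzero. This forces the $I_s$ to be pairwise disjoint and nonempty, so $q\le|\bigcup I_s|$.

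Let $U=\bigcup I_s$ and $J$ its complement. Then $\dim(a_1\cdots a_q)=\sum_{i\in U}k_i\le n-p$, so $\sum_{j\in J}k_j\ge p$. Thus $|J|\ge l_0$ and $q\le |U| = r-|J|\le r-l_0$.

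The main obstacle is the bookkeeping for nonhomogeneous elements and for the coefficient choice in Definition~1.3, which also allows $\Z_p$. For $\Z_p$ coefficients the same argument works verbatim: the ring is still the exterior-type algebra on the $x_i$ with monomial basis, and the disjointness argument is coefficient-independent. If one instead interprets $\dim$ of a nonhomogeneous element as its top degree, the argument adapts. Take a nonzero homogeneous component of the product; it comes from a product of homogeneous components of the $a_s$ of positive degree (a degree-zero component would be a constant, and one can reduce to the positive-degree parts using $\dim a_s>0$). I would fix this convention explicitly before the counting.
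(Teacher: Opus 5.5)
Your proof is correct, and for the upper bound it takes a different route from the paper. The lower bound is the paper's construction in dual form. Your monomials $x_i$, $i\notin J$, are Poincaré dual to the subproducts of $M$ with the factor $S^{k_i}$ collapsed to a point, whose common intersection is $Q=\prod_{j\in J}S^{k_j}$.

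For the upper bound the paper never computes in $H^*(M)$. It sandwiches $\mathrm{long}^pM+1\le\Pcat M\le r-l+1$, using Theorem~1.5 and the product covering $P\times A_i$ of Theorem~1.14. Your direct count in the exterior-type ring works instead: a nonzero term forces pairwise disjoint nonempty $I_s$, and the degree bound forces $\sum_{j\in J}k_j\ge p$ for the complement $J$. This is self-contained, works for every $p$ and every admissible coefficient ring, and avoids two problems in the paper's route.
\begin{itemize}
\item Theorem~1.14 uses $P=\prod_{j\in J}S^{k_j}$, so $\dim P=\sum_J k_j\ge p$. Since $\mathrm{long}^p$ is nonincreasing in $p$, Theorem~1.5 would only bound $\mathrm{long}^{\dim P}M$, not $\mathrm{long}^pM$, unless some minimizing $J$ has $\sum_J k_j=p$ exactly. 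This already fails for $S^2\times S^2$ with $p=1$.
\item Theorem~1.5 with $p=\dim P$ does not hold in general. Take $M=S^1\times S^1\times S^2$ and $P=T^2$. The covering $T^2\times D^2_\pm$ gives $\Pcat M=2$, as Theorem~1.14 also states. But $x_1x_2\neq0$ in degree $2=n-p$ gives $\mathrm{long}^2M=2$, exactly as your formula predicts. So $\Pcat M\ge\mathrm{long}^2M+1$ is violated.
\end{itemize}
So your algebraic argument is not merely an alternative; it is what actually establishes the upper bound.

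One correction: delete your closing claim that the argument adapts when $\dim$ of a nonhomogeneous element means its top degree. Under that reading the statement is false. The element $a_s=1+x_1$ has top degree $k_1>0$, and $(1+x_1)^q=1+qx_1\neq0$ has top degree at most $k_1$. So whenever $p\le n-k_1$, the number $q$ is unbounded and the $p$-length is infinite.

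Degree-zero components cannot be discarded. Your reduction in the upper-bound paragraph is therefore valid only when each $a_s$ lies in $\bigoplus_{d>0}H^d(M;A)$, in particular when each $a_s$ is homogeneous, which is what $\dim a_s$ presupposes. State that convention explicitly and the proof is complete.
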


\begin{corollary}[1.8]
Let
\[
   M = S^{k_1} \times \ldots \times S^{k_r}
\]
be a product of $k_i$-dimensional spheres, $i = 1, \ldots, r$.
Then the (co)homological length of $M$ equals the number $r$ of factors:
\[
   \mathrm{long}\, M = r.
\]
\end{corollary}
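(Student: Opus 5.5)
The plan is to deduce Corollary~1.8 directly from Theorem~1.7 by taking $p=0$. By the remark after Definition~1.3, $\mathrm{long}^{\,0} M$ coincides with the classical length $\mathrm{long}\, M$. So it is enough to show that, for $p=0$, the quantity $\min l(0)$ in Theorem~1.7 is zero.

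For $p=0$ the condition $\sum k_{i_j} \geq 0$ holds for every subfamily of $(k_1,\ldots,k_r)$. In particular it holds for the empty subfamily, with $l=0$ and empty sum $0$. Hence $\min l(0) = 0$, and Theorem~1.7 gives $\mathrm{long}^{\,0} M = r - 0 = r$.

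The only delicate point is whether the empty subfamily is admitted in the minimum of Theorem~1.7. If the author intends $l \geq 1$ there, the formula would give $r-1$ at $p=0$, which is false. So I would fix the convention that $l=0$ is allowed; this is also what makes Theorem~1.7 correct whenever $p \leq 0$.

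As a safeguard, I would also check $\mathrm{long}\, M = r$ directly from Definition~1.3 with $p=0$. Take $A=\Z$; the product of spheres is orientable. By the Künneth formula, $H^*(M;\Z)$ is the exterior-type algebra generated by the pulled-back fundamental classes $u_i \in H^{k_i}(M)$, all with $\dim u_i = k_i > 0$.

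For the lower bound, the product $u_1\cdots u_r$ is $\pm$ the top class, of dimension $n \leq n - 0$, so $q=r$ is attained. For the upper bound, every element of positive degree lies in the ideal generated by $u_1,\ldots,u_r$. Each $u_i^2=0$, since $H^{2k_i}(S^{k_i})=0$. So any product of $q>r$ positive-degree elements expands into monomials that repeat some $u_i$, and therefore vanishes. This gives $\mathrm{long}\, M = r$ independently of the convention in Theorem~1.7.
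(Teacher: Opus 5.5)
Your proof is correct, and its first half is close to what the paper does. The paper's proof of Corollary~1.8 is just the $l=0$ instance of the cycle construction in Theorem~1.7: with $Q$ a point, the cycles are the factor spheres $S^{k_j}$, which the paper says meet in one point. So your observation that the empty subfamily must be admitted is exactly what is being used there. You are also right that forcing $l\geq 1$ would make the formula fail at $p=0$.

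Note, however, that this construction only ever yields $\mathrm{long}\,M\geq r$. In the body of the paper Theorem~1.7 is stated only as an inequality. The reverse inequality is derived there from Theorem~1.14, which rests on Theorem~1.13, whose proof in turn cites Corollary~1.8. So quoting the equality form of Theorem~1.7 to get $\mathrm{long}\,M\leq r$ is circular. It stops being circular only if you isolate the constructive half: $\cat M\leq r+1$ from the explicit function in Theorem~1.13, then Theorem~1.5.

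Your cohomological argument is a genuinely different and better route, and it should be the main proof rather than a safeguard. Via K\"unneth it gives both bounds at once: $u_1\cdots u_r$ is $\pm$ the top class, and $J^{r+1}=0$ for the ideal $J$ of positive-degree classes because each $u_i^2=0$. It works for every coefficient ring allowed in Definition~1.3, and it needs no general-position argument.

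It also repairs the paper's choice of cycles. Your class $u_j$ is Poincar\'e dual to the co-sphere $\pi_j^{-1}(\mathrm{pt})=S^{k_1}\times\cdots\times\{\mathrm{pt}\}\times\cdots\times S^{k_r}$, and these $r$ submanifolds meet transversally in a single point. The factor spheres $S^{k_j}$, by contrast, meet in a point only set-theoretically: their intersection product vanishes once $r\geq 3$. For $r=1$ the single ``cycle'' is $M$ itself, which violates condition~$1')$ of Definition~1.3$'$.

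Finally, your tacit assumption $k_i\geq 1$ is genuinely needed and should be stated. A factor $S^0$ breaks the statement: for example, $\mathrm{long}(S^0\times S^1)=1$.
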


\begin{proposition}[1.9]
The $p$-length of the $n$-dimensional torus is $p$ less than its dimension:
\[
   \mathrm{long}^{\,p} T^n = n - p.
\]
\end{proposition}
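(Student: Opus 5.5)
Proposition~1.9 is a direct specialisation of Theorem~1.7. The torus is $T^n = S^1 \times \ldots \times S^1$, a product of $r = n$ circles, so every $k_i = 1$. Theorem~1.7 then reduces the computation of $\mathrm{long}^{\,p} T^n$ to finding $\min l(p)$. This is the smallest number $l$ of factors $k_{i_1}, \ldots, k_{i_l}$ whose sum is at least $p$.

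Since each $k_i = 1$, the sum of any $l$ chosen factors is exactly $l$. The condition $\sum k_{i_j} \geq p$ therefore becomes $l \geq p$, and the minimum is $\min l(p) = p$. This requires $0 \leq p \leq n$, so that a subset of size $p$ exists. For $p = 0$ the empty subset is allowed and gives $\min l(0) = 0$. Substituting into Theorem~1.7 gives $\mathrm{long}^{\,p} T^n = n - p$.

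As a sanity check independent of Theorem~1.7, I would argue directly from Definition~1.3. The ring $H^*(T^n;\Z)$ is the exterior algebra on $n$ degree-one generators $e_1, \ldots, e_n$.

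For the lower bound, take $a_i = e_i$ for $i = 1, \ldots, n-p$. Their product $e_1 \cdots e_{n-p}$ is non-zero and has degree $n - p \leq n - p$. So the $p$-length is at least $n - p$.

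For the upper bound, take any elements $a_1, \ldots, a_q$ of positive degree with non-zero product. The degree of the product is at least $q$, because every $a_i$ has degree at least one. Condition (3) of Definition~1.3 requires this degree to be at most $n - p$, so $q \leq n - p$.

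The only real point of care is the range of $p$. For $p > n$ no admissible subset exists in Theorem~1.7, and condition (3) cannot be met by any $q \geq 1$. The statement should therefore be read for $0 \leq p \leq n$, and I would state this restriction explicitly. Otherwise the argument is routine.
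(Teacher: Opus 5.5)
Your proposal is correct. Its first part is exactly the paper's argument. The paper gives no separate proof of Proposition 1.9; it simply calls it an immediate consequence of Theorem 1.7, i.e.\ the specialisation $r=n$, $k_i=1$, $\min l(p)=p$ that you carry out.

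Your ``sanity check'' is a genuinely independent proof, and I would make it the main argument. In the body of the paper, Theorem 1.7 is stated only as $\mathrm{long}^{\,p} M \geq r-\min l(p)$. The reverse inequality, needed for equality, appears only in the last line of its proof. It comes from feeding the covering bound $\Pcat T^n \leq n-p+1$ (Theorem 1.14 with $P=T^p$) into Theorem 1.5, whose proof is a geometric ``moving cycles off the categorical sets'' argument. The lower bound there likewise rests on an explicit intersection of cycles.

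Your direct route replaces all of this with two short observations. The upper bound is a degree count: $\sum \dim a_i \geq q$, so $\mathrm{long}^{\,p} M \leq m-p$ for every $m$-manifold. The lower bound is that $e_1\cdots e_{n-p}$ is nonzero in the exterior algebra $H^*(T^n;\Z)$. So the torus is exactly a case where this universal bound is attained. In exchange, the paper's route places the torus inside the general formula for products of spheres and ties $\mathrm{long}^{\,p}$ to $\Pcat$.

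Your explicit restriction $0\leq p\leq n$ is right; the paper leaves it implicit (compare the hypothesis $n\geq k$ in Corollary 1.15).
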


On two-dimensional manifolds it makes sense to consider only the \emph{round category} (Definition~\ref{def:1.2}).
We emphasise that all two-dimensional compact connected manifolds are classified in this work according to their $P$-category.

\begin{proposition}[1.11]
The round category of a two-dimensional compact connected manifold equals:
\begin{enumerate}\setlength{\itemsep}{0pt}
\item one, if the manifold is homeomorphic to $S^1 \times I$ or to the Möbius band;
\item two, on the remaining manifolds with boundary;
\item two, if the manifold is homeomorphic to the sphere $S^2$, the projective plane $\R P^2$, the torus $T^2$ or the Klein bottle;
\item three, on the remaining manifolds without boundary.
\end{enumerate}
\end{proposition}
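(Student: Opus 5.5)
The round category is the $P$-category with $P = S^1$ (Definition~\ref{def:1.2}). For each case I will give an explicit cover by closed sets, each contractible in $M$ onto a circle, showing $\Pcat M$ is at most the claimed value. I will then match this with a lower bound from Theorem~\ref{thm:1.5} or from a direct argument. Throughout I use the classification of compact surfaces.

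\emph{Upper bounds.} The annulus $S^1\times I$ deforms onto its core circle, and so does the Möbius band, so these have round category one. Every other compact surface with boundary deformation retracts onto a wedge of $m\ge 2$ circles. Split the wedge into two closed pieces, each containing one circle together with short arcs of the others; each piece deforms onto a single circle. Thickening the pieces back in $M$ gives two categorical sets.

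For $S^2$, take the two closed hemispheres enlarged to overlapping bands; each is a disc, which can be mapped into $M$ with image an embedded circle. For $T^2$ and the Klein bottle, cut along a circle into two closed annuli. For $\R P^2$, use a Möbius band together with a disc. For a general closed surface, write $M = M_0\cup D$ with $M_0$ a punctured surface and $D$ a disc. The piece $M_0$ needs two round-categorical sets by the previous paragraph, and $D$ adds one more, giving at most three.

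\emph{Lower bounds.} Round category is always at least one, which settles case (1). For case (2), suppose $M$ with boundary were covered by a single set contractible onto a circle. Then the identity of $M$ would be homotopic to a map with image a circle, so $H_1(M)$ would be a quotient of a subgroup of $H_1(S^1)=\Z$. This is impossible when $M\simeq\bigvee^m S^1$ with $m\ge2$.

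For closed surfaces, one set cannot suffice, because a map $M\to S^1$ kills $H_2(M;\Z_2)$. This gives case (3). For case (4) I will apply Theorem~\ref{thm:1.5} with $p=1$. The cohomological $1$-length asks for products of positive-dimensional classes of dimension $\le 1$, which only gives $1$ and hence the bound $2$. So Theorem~\ref{thm:1.5} alone is too weak here.

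The main obstacle is therefore excluding covers of the remaining closed surfaces by two sets. I would argue as in the proof of Theorem~\ref{thm:1.5}. Suppose $M=A_1\cup A_2$, where each inclusion $A_i\to M$ is homotopic to a map $A_i\to M$ with image a circle $C_i$. Then every class in $H^1(M;\Z_2)$ restricted to $A_i$ factors through $H^1(C_i)$, which has rank one. Choose $a,b\in H^1(M;\Z_2)$ with $ab\neq0$. For the remaining surfaces (genus $\ge2$, or nonorientable genus $\ge3$) such a pair can be chosen so that $a$ vanishes on $A_1$ and $b$ vanishes on $A_2$. This uses that the kernel of restriction to $A_i$ has corank at most one and that the cup pairing has rank at least $3$; a linear-algebra count should produce the pair. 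Then $ab$ comes from $H^2(M,A_1\cup A_2)=0$, a contradiction.

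For $T^2$, the Klein bottle and $\R P^2$ the rank is too small, which is consistent with their round category being two. The delicate point is making the linear-algebra choice of $a$ and $b$ rigorous, including the odd nonorientable cases.
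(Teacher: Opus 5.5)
Your proposal is correct once $D^2$ is excluded (see below), and the decisive step takes a genuinely different route from the paper.

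\emph{Upper bounds.} The paper draws covers by sets that contract \emph{within themselves} onto circles, and Theorem~\ref{thm:3.3} later reuses these annular pieces to draw level lines. You instead use the literal reading of Definition~\ref{def:1.1}(c), under which a disc may be homotoped in $M$ onto a circle, together with Property~3 applied to a retraction onto a wedge of circles. This is legitimate, and for $\R P^2$ it is actually needed. The paper's claim that $\R P^2$ is a union of two closed annuli cannot hold. The generator $w=w_1(\R P^2)$ of $H^1(\R P^2;\Z_2)$ vanishes on orientation-preserving loops, in particular on the core of any embedded annulus. Your own relative cup-product argument would then force $w^2=0$, whereas $w^2\neq0$. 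Your M\"obius band plus disc is the right cover.

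\emph{Lower bound in case (4).} The paper argues heuristically with non-homologous circles and invokes $\operatorname{rank}H_1\ge4$, which fails for the connected sum of three projective planes. Your argument is a sharpened cup-length bound. Theorem~\ref{thm:1.5} only uses that restriction to a categorical set vanishes in degrees above $p$; you use that in degree $p=1$ it has rank at most one. You are right that Theorem~\ref{thm:1.5} alone gives only $2$.

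The step you call delicate is immediate. The mod $2$ cup pairing on $H^1(M;\Z_2)$ is nondegenerate for every closed surface, orientable or not. Set $K_i=\ker\bigl(H^1(M;\Z_2)\to H^1(A_i;\Z_2)\bigr)$ and $r=\dim H^1(M;\Z_2)$. Then $\dim K_1^{\perp}\le 1<2\le r-1\le\dim K_2$ once $r\ge 3$, which gives $a\in K_1$, $b\in K_2$ with $ab\neq0$. Since $r=2g$ or $r=k$, the condition $r\ge3$ is exactly case (4), with no parity issue.

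For the relative cup product to be defined, replace the closed $A_i$ by open neighbourhoods $U_i$. Extend the homotopy using that $M$ is an ANR, and shrink $U_i$ so the end map lands in an annulus or M\"obius-band neighbourhood of $C_i$.

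Three small repairs are needed.
\begin{itemize}
\item For $m\ge3$, your second wedge piece must contain the remaining parts of circles $2,\dots,m$, not just short arcs.
\item The torus must be cut along two parallel circles to get two annuli.
\item Your claim that every other surface with boundary retracts onto a wedge of $m\ge2$ circles omits $D^2$. Under Definition~\ref{def:1.1} the disc has round category one: by convexity the identity is homotopic to a surjection onto an inscribed circle. So $D^2$ has to be excluded from case (2), as in Proposition~\ref{prop:1.10}; the paper's proof has the same omission.
\end{itemize}
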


Three-dimensional manifolds are only partially classified by round category, but the following proposition gives an upper bound for the round category.

\begin{proposition}[1.12]
The round category of a three-dimensional compact connected manifold without boundary
\begin{enumerate}\setlength{\itemsep}{0pt}
\item equals two if the manifold is homeomorphic to one of the following:
   \begin{itemize}\setlength{\itemsep}{0pt}
\item[a)] the sphere $S^3$;
\item[b)] $S^1 \times S^2$;
\item[c)] the projective space $\R P^3$;
\item[d)] a lens space obtainable as the quotient of $S^3$ by a differentiable action of the group $\Z_p$;
   \end{itemize}
\item equals three on the torus $T^3$;
\item does not exceed four on all other such manifolds.
\end{enumerate}
\end{proposition}

The following two theorems turn out to be the source for computing the $P$-category of manifolds of special type.

\begin{theorem}[1.13]
Let
\[
   M = S^{k_1} \times S^{k_2} \times \ldots \times S^{k_r}
\]
be a product of $k_i$-dimensional spheres, $i = 1, \ldots, r$.
Then the Lyusternik--Shnirelman category of $M$ is one more than the number $r$ of factors:
\[
   \cat M = r + 1.
\]
\end{theorem}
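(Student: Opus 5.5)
The argument has two halves: a lower bound $\cat M \geq r+1$ from cup-length, and an upper bound $\cat M \leq r+1$ from an explicit covering. Here $\cat$ is the non-normalised Lyusternik--Shnirelman category, i.e.\ the minimal number of closed subsets contractible in $M$, so $\cat S^k = 2$. We assume $k_i \geq 1$ for all $i$; a factor $S^0$ would change the picture, since the product is then disconnected.

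\emph{Lower bound.} We take $P$ to be a point in Theorem~\ref{thm:1.5}. Then $\Pcat M = \cat M$, and $\mathrm{long}^{\,0} M = \mathrm{long}\, M$. Since $M$ is a closed manifold, this gives $\cat M \geq \mathrm{long}\, M + 1$. Corollary~1.8 gives $\mathrm{long}\, M = r$, so $\cat M \geq r+1$. The cohomological content is visible directly: let $u_i$ be the pullback of the fundamental class of $S^{k_i}$ under the $i$-th projection. By the Künneth formula over $\Z$, the product $u_1 \cdots u_r$ is the fundamental class of $M$, which is nonzero.

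\emph{Upper bound.} We use the product inequality $\cat(X\times Y) \leq \cat X + \cat Y - 1$ and induct on $r$. Since $\cat S^k = 2$, this gives $\cat M \leq 2r - (r-1) = r+1$.

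Alternatively, we give a direct covering, which is the step that needs care. Write $S^{k_i} = D_i^0 \cup D_i^1$, where $D_i^0$ and $D_i^1$ are closed hemispheres (closed neighbourhoods of the north and south poles), each contractible. For $\varepsilon \in \{0,1\}^r$ let $D_\varepsilon = \prod_i D_i^{\varepsilon_i}$, a closed contractible set. For $j = 0,\ldots,r$ put
\[
   A_j = \bigcup_{|\varepsilon| = j} D_\varepsilon .
\]
These $r+1$ sets cover $M$. The difficulty is that $A_j$ is a union of boxes that intersect, so it is not obviously contractible in $M$.

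To fix this we use shrunken pieces. Choose a function $h_i\colon S^{k_i} \to [0,1]$ that equals $0$ near the north pole and $1$ near the south pole. Set
\[
   A_j = \Bigl\{x : \textstyle\sum_i h_i(x_i) \in [\,j - \tfrac12 - \delta,\; j - \tfrac12 + \delta\,] \cap [0,r]\Bigr\},
\]
using the standard staircase construction. Equivalently, arrange that the boxes $D_\varepsilon'$ with $|\varepsilon| = j$ are pairwise disjoint. For this we take $D_i^0 = \{h_i \leq 1/3\}$ and $D_i^1 = \{h_i \geq 2/3\}$, which do not cover $S^{k_i}$, and put the equatorial band $E_i = \{1/3 \leq h_i \leq 2/3\} \cong S^{k_i-1} \times I$ into the count as well.

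The cleanest rigorous route is the standard one: use the open-cover formulation and the product inequality proved via a partition-of-unity / staircase argument, as in James's or Fox's product theorem. Since the statement allows assuming standard results, I would cite or reprove $\cat(X\times Y) \leq \cat X + \cat Y - 1$ for closed manifolds (normal ANRs, where open and closed category agree). The main obstacle is exactly this product inequality, in particular the disjointness bookkeeping that makes each union of boxes with the same level a disjoint union of contractible pieces. Once a union of disjoint closed sets, each contractible in $M$, is known to be contractible in $M$ to a point (using connectedness of $M$ to join the images by paths), the induction closes and gives $\cat M = r+1$.
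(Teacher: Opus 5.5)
Your argument is correct. The lower bound is exactly the paper's: Theorem~1.5 with $P$ a point, plus Corollary~1.8. Your Künneth remark just makes the cup-length explicit.

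The upper bound takes a different route. The paper uses no product theorem. It embeds $M$ in $\R^l$ and restricts $f = x_1 + y_1 + \ldots + z_1$ to $M$. The $2^r$ critical points of $f$ (tuples of poles) lie on the $r+1$ levels $-r, -r+2, \ldots, r$. The paper merges the critical points on each intermediate level into one degenerate point and reads off $\cat M \leq r+1$ from the Lyusternik--Shnirelman theorem.

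Your route via $\cat(X\times Y) \leq \cat X + \cat Y - 1$ and $\cat S^k = 2$ is purely homotopy-theoretic. It needs no smooth structure and applies to any product of spaces of known category. Its cost is that the real work is cited rather than done: the covering-enlargement lemma behind the product theorem, and the agreement of open and closed category.

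The paper's route is constructive. It exhibits a function with exactly $\cat M$ critical points, so the Lyusternik--Shnirelman bound is attained on $M$. The same function, pulled back to $P\times L$, is reused in Theorem~2.15 to build exact $P$-functions. Its merging step relies on the intermediate level sets being connected, but it can be skipped. Each critical level is a finite set, hence contractible in the connected $M$, so the standard deformation argument already bounds $\cat M$ by the number of critical levels.

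You should delete the ``direct covering'' paragraph rather than keep it as an alternative. The displayed $A_j$ are thin bands around the half-integer levels $j - \tfrac12$ of $\sum_i h_i$, and the one for $j=0$ is empty, so they do not cover $M$. Thick bands around the integer levels would not be categorical either. On $T^2$ with the normalised heights, the level $\sum_i h_i = 1$ is two circles in the classes $(1,1)$ and $(1,-1)$, and a set containing an essential loop cannot be contractible in $M$. The correct explicit covering comes from the gradient flow of the height function, which is in substance the paper's argument.
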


\begin{theorem}[1.14]
Let
\[
   M = S^{k_1} \times S^{k_2} \times \ldots \times S^{k_r}
\]
be a product of $k_i$-dimensional spheres, $k_i \geq 1$, $i = 1, \ldots, r$, and let
\[
   P = S^{k_{i_1}} \times S^{k_{i_2}} \times \ldots \times S^{k_{i_l}}
\]
be a submanifold of $M$, where $(k_{i_1}, k_{i_2}, \ldots, k_{i_l}) \subset (k_1, k_2, \ldots, k_r)$.
Then the $P$-category of $M$ equals
\[
   \Pcat M = r - l + 1.
\]
\end{theorem}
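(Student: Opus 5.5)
The plan is to show the two inequalities $\Pcat M \le r-l+1$ and $\Pcat M \ge r-l+1$ separately. Write $M = P \times Q$ after reordering factors, where
\[
   Q = \prod_{i \notin \{i_1,\ldots,i_l\}} S^{k_i}
\]
is the product of the remaining $r-l$ spheres, and $P$ is embedded as the fibres $P \times \{q\}$.

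\emph{Upper bound.} By Theorem~\ref{thm:1.13} applied to $Q$, we have $\cat Q = r-l+1$. So there are closed sets $B_1,\ldots,B_{r-l+1}$ covering $Q$, each contractible in $Q$ to a point $q_j$ by a homotopy $G_j$. Put $A_j = P \times B_j$ and
\[
   F_j(x,y,t) = \bigl(x, G_j(y,t)\bigr).
\]
These are closed, they cover $M$, and each deforms within $M$ onto $P \times \{q_j\}$, which is homeomorphic to $P$. Hence $\Pcat M \le r-l+1$.

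\emph{Lower bound.} I would run the classical cup-length argument relative to $P$, not quote Theorem~\ref{thm:1.5} blindly. The reason is that the $p$-length with $p=\dim P$ given by Theorem~\ref{thm:1.7} is $r-\min l(p)$, and $\min l(p)$ can be smaller than $l$ when $P$ consists of low-dimensional spheres. For example, take $M = S^1\times S^1\times S^2$ and $P = T^2$. Then $\operatorname{long}^{\,2} M = 2$, so Theorem~\ref{thm:1.5} would give $\Pcat M \ge 3$, while the construction above gives $\Pcat M \le 2$. So the relevant count of classes must be adapted to $P$.

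Let $u_1,\ldots,u_{r-l}$ be the pullbacks under the projection $M \to Q$ of the top generators of the spheres in $Q$. Their product is the pullback of the fundamental class of $Q$, which is nonzero in $H^*(M)$ by the Künneth formula. Suppose $M = A_1\cup\dots\cup A_k$ with $k \le r-l$ categorical sets. Each $A_j$ is deformed in $M$ onto a set of the form $P\times\{q\}$. Every $u_i$ restricts to zero on such a fibre, since it factors through a point of $Q$. Hence, by homotopy invariance, each $u_j$ restricts to zero on $A_j$.

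Passing to slightly larger open neighbourhoods, which still deform in the same way because $M$ is an ANR and the $A_j$ are closed, each $u_j$ lifts to a class in $H^*(M, U_j)$. The relative cup product then gives
\[
   u_1\cdots u_k \in H^*\bigl(M, U_1\cup\dots\cup U_k\bigr) = H^*(M,M) = 0 .
\]
This contradicts the nonvanishing of the product of the $r-l \ge k$ classes. Therefore $\Pcat M \ge r-l+1$.

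\emph{Main obstacle.} The hardest step is the one I glossed over: condition c) of Definition~\ref{def:1.1} only asks that the end of the deformation be \emph{homeomorphic} to $P$, not that it be a fibre $P\times\{q\}$. An arbitrary embedded copy of $P$ could carry nonzero restrictions of the $u_i$. I would first try to show that the final image $F_j(A_j,1)$ is a copy of $P$ whose inclusion induces zero on the classes pulled back from $Q$. If that fails in general, I would read c) as deformation into a standardly embedded copy of $P$, which is how the construction above uses it. Either the proof is carried out under that interpretation, or $p$ in Theorem~\ref{thm:1.5} must be replaced by the number of factors of $P$ and Theorem~\ref{thm:1.5} re-proved in that form.
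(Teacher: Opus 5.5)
Your upper bound is the paper's argument: write $M=P\times L$, take a Lyusternik--Shnirelman cover of $L$ by $r-l+1$ sets, and multiply by $P$. Your distrust of Theorem 1.5 is justified, and it matters. The paper's lower bound consists only of ``Theorem 1.7 gives $\mathrm{long}^{\,p}M\ge r-l$, so Theorem 1.5 gives $\Pcat M\ge r-l+1$''. Your example $M=T^2\times S^2$, $P=T^2$ shows that Theorem 1.5 is false with $p=\dim P$. (Its proof confuses dimension with codimension. An $l$-cycle can be moved off $A_i$ when its Poincar\'e dual, of degree $m-l$, vanishes on $A_i$; that needs $m-l>p$, not $l>p$.) So a direct argument really is required.

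Your direct argument, however, has a genuine gap at exactly the step you flag, and the gap is not a technicality. Definition 1.1 lets $A_j$ deform onto \emph{any} subset $P_j\cong P$, and fixed classes pulled back from $Q$ need not vanish on such a $P_j$. Take $T^3$ with $P$ the first circle factor. A tubular neighbourhood of the diagonal circle $\{(x_0,t,t)\}$ is categorical, yet both $u_1$ and $u_2$ restrict nontrivially to it; among classes from $Q$, only multiples of $u_1-u_2$ vanish there. This disposes of your three options:
\begin{itemize}
\item Option (i) is false, by this example.
\item Option (ii) changes the invariant. The restricted invariant would not serve Theorems 2.12 and 2.15, whose categorical sets deform onto critical submanifolds, i.e.\ arbitrarily embedded copies of $P$.
\item Option (iii) fails on your own example, where $P$ has two factors and dimension $2$.
\end{itemize}

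The missing idea is to choose the classes depending on the cover. Since $A_j\hookrightarrow M$ is homotopic into $P_j$, restriction to $A_j$ kills $K_j=\ker\bigl(H^*(M;\Q)\to H^*(P_j;\Q)\bigr)$. You must therefore produce $\alpha_j\in K_j$ with $\alpha_1\cdots\alpha_{r-l}\neq 0$. Two facts are available: each $K_j$ contains $H^{>p}(M)$, and $\dim(K_j\cap H^d(M))\ge\dim H^d(M)-\dim H^d(P)$.
\begin{itemize}
\item This settles $r-l=2$. If $S^{m_1}$ is a factor of $Q$, then K\"unneth gives $\pi_P^*H^{m_1}(P)\oplus\Q u_1\subseteq H^{m_1}(M)$. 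Hence $K_1$ contains a nonzero $\alpha_1$ of degree $m_1$. A Poincar\'e-dual partner $\alpha_2$ has degree $\dim M-m_1>p$, so it lies in $K_2$.
\item It also settles tori. Each kernel $\ker\bigl(H^1(T^n)\to H^1(P_j)\bigr)$ has dimension $\ge n-k$, so you can pick linearly independent classes $\alpha_j$, one from each kernel.
\item For general $r-l$ you still need an argument. It must produce a nonzero product of one class from each of $K_1,\dots,K_{r-l-1}$, in degree $<\dim M-p$; a dual partner then finishes.
\end{itemize}
That argument is the actual content of the lower bound, and it is supplied neither by your proposal nor, validly, by the paper.
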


\begin{corollary}[1.15]
Let $P$ be the $k$-dimensional torus.
Then the $P$-category of the $n$-dimensional torus equals
\[
   T^k\text{-}\cat T^n = n - k + 1.
\]
\end{corollary}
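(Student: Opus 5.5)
The statement to prove is Corollary~1.15: $T^k\text{-}\cat T^n = n-k+1$.

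The corollary is a direct specialisation of Theorem~1.14, checked against the lower bound of Theorem~1.5 as a consistency test. Write
\[
   T^n = S^1 \times S^1 \times \ldots \times S^1 \qquad (n \text{ factors}),
\]
so that $r = n$ and every $k_i = 1 \ge 1$. The hypothesis $k_i \geq 1$ of Theorem~1.14 therefore holds. Choose $P = T^k$ to be the subtorus formed by $k$ of the circle factors, for example the first $k$. Then $(k_{i_1},\ldots,k_{i_l}) = (1,\ldots,1)$ with $l = k$ entries, and this is a subcollection of $(k_1,\ldots,k_r)$. Theorem~1.14 gives
\[
   T^k\text{-}\cat T^n = r - l + 1 = n - k + 1.
\]
One remark is needed. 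Condition c) of Definition~1.1 only asks that the image $F_i(A_i,1)$ be homeomorphic to $P$. So the $P$-category depends only on the homeomorphism type of $P$, provided $P$ sits inside $M$ as in Theorem~1.14. Hence the choice of which $k$ circle factors form $P$ is irrelevant.

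As a sanity check on the lower bound, combine Proposition~1.9 with Theorem~1.5. Taking $p = k = \dim P$, we get $\mathrm{long}^{\,k} T^n = n-k$, so
\[
   T^k\text{-}\cat T^n \ge n-k+1.
\]
This agrees with the value from Theorem~1.14.

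For the upper bound, if one wants it independently of Theorem~1.14, the plan is an explicit covering of $T^n = T^{n-k}\times T^k$. Take a categorical covering of $T^{n-k}$ by $n-k+1$ contractible closed sets; such a covering exists because $\cat T^{n-k} = n-k+1$ by Theorem~1.13. Multiply each set by $T^k$. Each product contracts within $T^n$ onto a set of the form $\{\text{pt}\}\times T^k \cong T^k$. This yields $n-k+1$ categorical sets.

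The main obstacle is only bookkeeping: checking that the theorem's indexing (a subcollection of factors with multiplicity) covers the case where all factors are equal circles. It does.
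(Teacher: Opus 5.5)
Your derivation matches the paper's. Corollary~1.15 is Theorem~1.14 with all $k_i=1$, $r=n$, $l=k$, and your two ``checks'' are exactly the two halves of the proof of Theorem~1.14. The upper bound, via products of a categorical covering of $T^{n-k}$ with $T^k$, is fine.

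The lower bound has a genuine gap, inherited from the paper. It rests entirely on Theorem~1.5 (directly, and again inside Theorem~1.14), and Theorem~1.5 is false as stated. Take $M=T^2\times S^2$, $P=T^2$, $p=2$. The cycles $S^1\times\{b_0\}\times S^2$ and $\{a_0\}\times S^1\times S^2$ have dimension $3<4$. They meet in $\{(a_0,b_0)\}\times S^2$, a non-null-homologous cycle of dimension $2\ge p$; this is the construction in the proof of Theorem~1.7 with $Q=S^2$. So $\mathrm{long}^{\,2}M\ge 2$, and Theorem~1.5 would give $\Pcat M\ge 3$. But $T^2\times D^2_+$ and $T^2\times D^2_-$ show $\Pcat M\le 2$, which is the value Theorem~1.14 itself predicts.

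The faulty step is the passage from ``$H_l(A_i)\to H_l(M)$ vanishes for $l>p$'' to ``$c_i$ is homologous to a cycle supported in $M\setminus A_i$''. By Alexander--Lefschetz duality, the correct criterion is that the Poincar\'e dual of $c_i$, a class of degree $m-\dim c_i<m-p$, restrict to zero on $A_i$. Nothing forces this when $P$ has cohomology in that degree. In the example, $S^1\times\{b_0\}\times S^2$ has intersection number $\pm1$ with the circle $\{a_0\}\times S^1\times\{y\}\subset T^2\times D^2_+$, so it cannot be moved off that set. Your torus case is exactly this situation: the relevant cycles are codimension-one subtori, dual to degree-one classes, and $H^1(T^k)\neq 0$.

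The inequality $T^k\text{-}\cat T^n\ge n-k+1$ is nonetheless true. The missing idea is to choose the cohomology classes \emph{after} the covering, adapted to each set, instead of fixing cycles and pairing them arbitrarily with the $A_i$.

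Suppose $T^n=A_1\cup\dots\cup A_{n-k}$, with each inclusion $A_j\hookrightarrow T^n$ homotopic to a map into some $P_j\cong T^k$. Then $H^1(T^n;\Q)\to H^1(A_j;\Q)$ factors through $H^1(P_j;\Q)\cong\Q^k$, so its kernel $K_j$ has dimension at least $n-k$. Choose inductively $u_j\in K_j\setminus\mathrm{span}(u_1,\dots,u_{j-1})$; this is possible because $\dim K_j\ge n-k>j-1$.

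The $u_j$ are linearly independent, so $u_1\smile\dots\smile u_{n-k}\neq 0$ in $H^{n-k}(T^n;\Q)\cong\Lambda^{n-k}\Q^n$. On the other hand, each $u_j$ lifts to $H^1(T^n,A_j;\Q)$, after thickening $A_j$ slightly to an open set as in the standard cup-length argument. Hence the product comes from $H^{n-k}(T^n,\bigcup_j A_j;\Q)=0$, a contradiction. With this replacing the appeal to Theorem~1.5, your upper bound completes the proof.
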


In particular, the round category of the $n$-dimensional torus equals $n$.

In Chapter~II, ``Estimate of the number of critical submanifolds of a function on a manifold'', we give the definition of a $P$-function on a manifold, study a condition for their existence, give an estimate of the number of critical submanifolds of functions on a manifold in terms of its $P$-category, define exact $P$-functions and demonstrate their existence on certain manifolds.

\begin{definition}[2.1]
A differentiable function on a manifold whose set of critical points is a disjoint union of smooth submanifolds without boundary is called a \emph{$P$-function} if each of its critical submanifolds is homeomorphic to a fixed smooth manifold $P$ without boundary.
\end{definition}

The following theorem provides a sufficient condition for the existence of a $P$-function on a manifold.
It is the main result of \S2.1.

\begin{theorem}[2.5]
Let $M^m$ be a smooth compact manifold (with or without boundary), and let
\[
   M_1 \subset M_2 \subset \ldots \subset M_k = M^m
\]
be a filtration of $M^m$ by compact manifolds with boundary, satisfying:
\begin{enumerate}\setlength{\itemsep}{0pt}
\item[(a)] $M_i$ are manifolds with boundary, $i = 1, \ldots, k$;
\item[(b)] $M_i \subset \Int(M_{i+1})$, $i = 1, \ldots, k-1$;
\item[(c)] $\partial M_i \subset (M_i \setminus M_{i-1})$;
\item[(d)] for each $i = 1, \ldots, k$ the manifold $(M_i \setminus \Int(M_{i-1})) \cup \partial M_{i-1} \cup \partial M_i$
\end{enumerate}
admits a covering by three closed subsets as in Proposition~\ref{prop:2.4}.
Then on $M^m$ there exists a $P$-function whose number of critical submanifolds equals $k$.
\end{theorem}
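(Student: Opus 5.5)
We build the required function layer by layer, gluing together the local functions supplied by Proposition~\ref{prop:2.4}. As we read it, Proposition~\ref{prop:2.4} states the following. Suppose a compact manifold $W$ with boundary $\partial W = \partial_- W \sqcup \partial_+ W$ admits a covering by three closed subsets of the prescribed type. Then there is a smooth function $f\colon W \to [a,b]$ with exactly one critical submanifold, homeomorphic to $P$ and lying in $\Int W$. Moreover, $f^{-1}(a)=\partial_- W$ and $f^{-1}(b)=\partial_+ W$, and these boundary pieces are regular level sets.

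Set $M_0=\emptyset$ and $W_i=(M_i\setminus \Int M_{i-1})\cup\partial M_{i-1}\cup\partial M_i$ for $i=1,\dots,k$. Conditions (b) and (c) show that $W_i$ is a compact manifold with boundary $\partial_-W_i=\partial M_{i-1}$ and $\partial_+W_i=\partial M_i$. For $i=1$ the lower boundary is empty, and when $M$ is closed the upper boundary of $W_k$ is empty. Condition (d) lets us apply Proposition~\ref{prop:2.4} to each $W_i$. This gives functions $f_i\colon W_i\to[i-1,i]$, each with a single critical submanifold $P_i\cong P$ in its interior, and with $f_i^{-1}(i-1)=\partial M_{i-1}$ and $f_i^{-1}(i)=\partial M_i$. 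When $\partial M=\emptyset$, we instead take $f_k$ with a critical set at its maximum; the three-set covering in Proposition~\ref{prop:2.4} must also handle this case.

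The main step is gluing. Define $f$ on $M$ by $f|_{W_i}=f_i$. This is continuous, since the $f_i$ agree on each common boundary $\partial M_i$, where both equal the constant $i$. It need not be smooth across $\partial M_i$, however. To fix this we use collars. Near $\partial M_i$ the function $f_i$ has nonvanishing gradient, so by the flow of a gradient-like vector field we obtain a collar $\partial M_i\times(i-\varepsilon,i]$ on which $f_i$ is the projection to the second coordinate. Likewise $f_{i+1}$ is the projection on a collar $\partial M_i\times[i,i+\varepsilon)$. These are the standard Morse-theory collaring lemmas, as in Milnor's $h$-cobordism lectures. We modify each $f_i$ within its collar so that it becomes exactly the projection there. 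The modification is by a reparametrisation $\phi\circ f_i$ with $\phi'>0$, equal to the identity away from the collar, and it creates no new critical points. After this, the glued function $f$ is the coordinate $t$ on the bicollar $\partial M_i\times(i-\varepsilon,i+\varepsilon)$, hence smooth.

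Finally we check the critical set. It is $\bigsqcup_{i=1}^k P_i$: each $P_i$ is a smooth closed submanifold homeomorphic to $P$, and the sets are pairwise disjoint because they lie in the disjoint interiors of the $W_i$. No critical points occur on the gluing hypersurfaces, since $df\neq0$ there by construction. If $M$ has boundary, $\partial M=\partial M_k$ is a regular level set at the maximum. So $f$ is a $P$-function with exactly $k$ critical submanifolds.

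The main obstacle is the collar compatibility. Proposition~\ref{prop:2.4} must produce functions that are regular near the boundary and constant on each boundary component. If it only produces a function on $W_i$ without this boundary normalisation, we first adjust it near $\partial W_i$ by a partition of unity. Concretely, we interpolate between the given function and a collar coordinate, taking care that the interpolation does not introduce critical points; this is where the argument needs the most care.
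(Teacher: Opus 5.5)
Your overall plan is the paper's: apply Proposition~\ref{prop:2.4} to each layer $W_i=M_i\setminus\Int M_{i-1}$ with values $i-1$ on $\partial M_{i-1}$ and $i$ on $\partial M_i$, glue, and correct near the interfaces. The gap is in that correction, which is the one step not already contained in Proposition~\ref{prop:2.4}.

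Fix a normal $\nu$ along $\partial M_i$ pointing out of $M_i$. The one-sided derivatives $a=\partial_\nu f_i$ and $b=\partial_\nu f_{i+1}$ are positive functions on $\partial M_i$, and nothing forces them to coincide. Since both functions are constant on $\partial M_i$, the glued function is differentiable there only if $a=b$ pointwise.

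Post-composing with $\phi$ preserves level sets and multiplies $a$ by the single constant $\phi'(i)$. So no reparametrisation of either side helps when $a/b$ is not constant. For instance, on $S^1\times\R$ take $f_i=i+(1+\tfrac12\cos\theta)s$ for $s\le 0$ and $f_{i+1}=i+s$ for $s\ge 0$.

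Equivalently, your two gradient collars are built from different vector fields on the two sides. Their union $\partial M_i\times(i-\varepsilon,i+\varepsilon)\to M$ is therefore a homeomorphism that is smooth on each half but not across $t=i$. That $f$ is the coordinate $t$ there does not make $f$ smooth on $M$. Milnor's collar argument \emph{defines} a new smooth structure on the union in which $f$ is smooth. To return to the given structure of $M$ you would need the uniqueness part of that result, and then replace $f$ by $f\circ\Psi^{-1}$ for the resulting homeomorphism $\Psi$ of $M$. You do not do this.

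The paper instead uses one smooth bicollar of $\partial M_i$ in $M$ for both sides. Near $\partial M_i$ it puts $f_i=i-\rho(x,\partial M_i)$ and $f_{i+1}=i+\rho(x,\partial M_i)$. Together these form $i+s$, with $s$ the signed distance to $\partial M_i$, which is smooth near $\partial M_i$. It then blends back to $F_i$ farther away. The paper does not check that the blending creates no critical points. This is the care you anticipate, and it works if the collar coordinate is given a small slope $\delta$. On each side set $g=(1-\beta(s))(i+\delta s)+\beta(s)f_j$, with $\beta=0$ near $s=0$ and $\beta=1$ for $|s|\ge\eta$, so that
\[
\partial_s g=(1-\beta)\,\delta+\beta\,\partial_s f_j+\beta'(s)\,(f_j-i-\delta s).
\]
Choose $\delta$ below the minimum of $\partial_s f_i$ on $-\eta\le s\le 0$ and of $\partial_s f_{i+1}$ on $0\le s\le\eta$; this minimum is positive for small $\eta$ since $a,b>0$. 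By the mean value theorem:
\begin{itemize}
\item for $s<0$ we have $\beta'\le 0$ and $f_i-i-\delta s<0$;
\item for $s>0$ we have $\beta'\ge 0$ and $f_{i+1}-i-\delta s>0$.
\end{itemize}
So the last term is nonnegative and $\partial_s g>0$, while $g=i+\delta s$ is smooth across $\partial M_i$.

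Your final paragraph applies the interpolation to normalising the output of Proposition~\ref{prop:2.4}, which the proposition already provides. It is needed instead to match the two sides of $\partial M_i$ through a common smooth collar coordinate.
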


The next theorem is the main result of the dissertation.

\begin{theorem}[2.12]
Let $M$ be a smooth compact connected manifold (with or without boundary), and let $f$ be a $P$-function on it which, in the case of a manifold with boundary, takes a constant maximal value on the boundary and has no critical points on the boundary.
Then the number of critical submanifolds of $f$ is at least the $P$-category of $M$.
\end{theorem}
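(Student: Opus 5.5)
We adapt the classical Lyusternik--Shnirelman argument, replacing contraction to a point by contraction onto a copy of $P$. Let $f$ be a $P$-function on $M$ with critical submanifolds $N_1,\dots,N_k$, each homeomorphic to $P$. Let $c_1<c_2<\dots<c_s$ be the distinct critical values, so $s\le k$. For $c\in\R$ put $M^c=\{x\in M: f(x)\le c\}$. We will prove by induction on $j$ that
\[
   \Pcat_M M^{c_j+\varepsilon}\;\le\; \#\{i : f(N_i)\le c_j\}
\]
for small $\varepsilon>0$. The case $j=s$ gives $\Pcat M\le k$, since $M^{c_s+\varepsilon}=M$. In the boundary case this uses that $f$ is constant and maximal on $\partial M$ with no critical points there, so the top sublevel set is all of $M$ after the last interior critical value.

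\textbf{Step 1 (regular intervals).} If $[a,b]$ contains no critical values, then $M^b$ deformation retracts onto $M^a$ inside $M$ by the normalized gradient flow of $-\nabla f/|\nabla f|^2$. Near $\partial M$ the flow points inward, because $f$ is maximal there and $\nabla f\ne 0$. Hence categorical sets for $M^a$ can be pulled back along this retraction: if $A\subset M^a$ contracts in $M$ to a copy of $P$, then $r^{-1}(A)$ followed by the retraction contracts in $M$ to the same copy. This gives $\Pcat_M M^b\le \Pcat_M M^a$. We use here the subadditivity and monotonicity properties of $\Pcat$ from Chapter~I, that is, those properties listed before Theorem~\ref{thm:1.5}.

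\textbf{Step 2 (crossing a critical level).} Let $N_{i_1},\dots,N_{i_m}$ be the critical submanifolds at level $c=c_j$. They are disjoint compact submanifolds, so we choose disjoint closed tubular neighbourhoods $U_1,\dots,U_m$. Each $U_t$ deformation retracts onto $N_{i_t}\cong P$, so each $U_t$ is a categorical set contractible in $M$ to $P$. On the complement $M^{c+\varepsilon}\setminus\bigcup\Int U_t$ the gradient is bounded away from zero. The standard deformation lemma, which uses compactness and the fact that there are no critical points off the $N_{i_t}$ near level $c$, then pushes this complement into $M^{c-\varepsilon}$. It follows that $M^{c+\varepsilon}\subset \bigl(\text{preimage of } M^{c-\varepsilon}\bigr)\cup U_1\cup\dots\cup U_m$. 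We take the categorical sets of $M^{c-\varepsilon}$, pull them back under the deformation, and add $U_1,\dots,U_m$. This gives
\[
   \Pcat_M M^{c+\varepsilon}\le \Pcat_M M^{c-\varepsilon}+m,
\]
which completes the induction. The base case $j=1$ is immediate: $M^{c_1-\varepsilon}=\emptyset$, so $M^{c_1+\varepsilon}$ is covered by the tubular neighbourhoods of the minimal critical submanifolds.

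The main obstacle is Step 2. The deformation lemma must produce closed sets, and the contraction in condition (2c) of Definition~\ref{def:1.1} must end on a set homeomorphic to $P$. Pulled-back sets first deform into categorical sets of the lower level and are then contracted to $P$, so they are fine. The tubular neighbourhoods need their retraction to land exactly on $N_{i_t}$, which the disc-bundle structure gives. A further delicate point is a non-compact or degenerate normal structure: the critical submanifolds may be degenerate, so no Morse--Bott lemma is available. We avoid needing one because the argument only uses that $N_{i_t}$ is a smooth compact submanifold with a tubular neighbourhood and that $|\nabla f|$ is bounded below off these neighbourhoods near the level. I would verify this bound first, by a compactness argument on $f^{-1}[c-\varepsilon,c+\varepsilon]\setminus\bigcup\Int U_t$.
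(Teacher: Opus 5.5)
Your argument is correct and has the same architecture as the paper's proof. Both argue by induction over the critical values: at each level $c$ the new sublevel set is covered by one categorical set per critical submanifold, plus a closed set that deforms into the previous sublevel set, and subadditivity finishes the step.

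The key lemma differs. The paper removes the cylindrical neighbourhoods $\overline{C_\delta(P_i)}$ of Definition~2.8, which are saturated by gradient trajectories; Propositions~2.7--2.11 exist to build them. As a result the closure of the remainder is a genuine deformation retract onto $M^{b+\varepsilon}$, and Properties~1--3 apply verbatim inside $M^{c+\delta}$. The price is that $\overline{C_\delta(P_i)}$ must itself be pushed along trajectories onto $\bar v_i$. That requires the trajectories entering a possibly degenerate $P_i$ to converge, continuously in the initial point, which is not automatic for a merely smooth $f$.

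You use ordinary tubular neighbourhoods, which retract onto $N_{i_t}$ with no flow at all, together with the quantitative deformation lemma, which only flows where $|\nabla f|$ is bounded below. Working with $\Pcat_M$ rather than the intrinsic $\Pcat M^{c+\delta}$ also frees you from keeping homotopies inside sublevel sets. For degenerate critical submanifolds your route is, if anything, the more robust one.

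Three points need care.

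\textbf{The gradient bound.} The bound you propose to verify, on $f^{-1}[c-\varepsilon,c+\varepsilon]\setminus\bigcup\Int U_t$, is not enough by itself. Locally, take $f=x^2-y^2$, $U$ the disc of radius $r$, and $\varepsilon>r^2$. The points $(x,0)$ with $r<|x|\le\sqrt{\varepsilon}$ lie in $M^{\varepsilon}\setminus\Int U$, where $|\nabla f|\ge 2r$, yet they flow into the origin. The fix is to take $|\nabla f|\ge\sigma$ on a fixed layer minus smaller neighbourhoods $U_t'\subset\Int U_t$, and only then choose $\varepsilon<\sigma\operatorname{dist}(U_t',\partial U_t)/2$. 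Since $-\nabla f/|\nabla f|^2$ lowers $f$ at unit rate and has length at most $1/\sigma$, trajectories starting outside $U_t$ reach level $c-\varepsilon$ before they can reach $U_t'$. So $\varepsilon$ must be chosen after $U_t$.

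\textbf{Into versus onto.} Your $\eta(1,\cdot)$ maps $M^{c+\varepsilon}\setminus\bigcup\Int U_t$ \emph{into} $M^{c-\varepsilon}$, not onto it. Under the literal reading of condition~c) of Definition~1.1, the end image must be a copy of $P$, so your pulled-back sets might end on a proper subset of $P_i$. Stop the flow at level $c-\varepsilon$, so that $\eta(1,\cdot)$ is the identity on $M^{c-\varepsilon}$, and pull back over $(M^{c+\varepsilon}\setminus\bigcup\Int U_t)\cup M^{c-\varepsilon}$. This restores surjectivity, which the paper gets for free from its deformation retraction.

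\textbf{The last step with boundary.} In the boundary case $M^{c_s+\varepsilon}\neq M$ whenever $c_s<f(\partial M)$. The final step is therefore your Step~1 applied on $[c_s+\varepsilon,f(\partial M)]$, not an identity.
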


Is the bound obtained sharp; that is, do there exist manifolds on which one can construct a $P$-function whose number of singularities equals the $P$-category?
An affirmative answer to this question is given by most of the examples considered below.

\begin{definition}[2.13]
A $P$-function on a manifold is called \emph{exact} if the number of its critical submanifolds equals the minimum, taken over all $P$-functions on the manifold, of the number of critical submanifolds.
\end{definition}

It follows that if the number of singularities of a $P$-function on a manifold coincides with the manifold's $P$-category, then the function is exact.
The converse is not known.

Almost all functions considered in Chapter~III will be exact.
We give some examples of the existence and construction of exact $P$-functions on manifolds.

\begin{definition}[2.14]
A $P$-function on a manifold is called \emph{round} if its critical submanifolds are homeomorphic to the circle $S^1$.
\end{definition}

In particular, the round Morse functions introduced by Thurston~\cite{thurston-1976}, and studied by Franks~\cite{franks-1980}, Asimov~\cite{asimov-1975}, Miyoshi~\cite{miyoshi-1983}, Morgan~\cite{morgan-1979}, A.\,T.\,Fomenko, H.\,Zieschang, S.\,V.\,Matveev, A.\,V.\,Brailov and V.\,V.\,Sharko \cite{matveev-fomenko-sharko-1988,sharko-1990,fomenko-sharko-1989}, form a subset of the round functions.

The following theorem is the source of constructions of exact $P$-functions on manifolds of special type.

\begin{theorem}[2.15]
Let
\[
   \{k_{i_1}, \ldots, k_{i_l}\}
\]
be a subset of
\[
   \{k_1, \ldots, k_n\}
\]
positive integers.
Let
\[
   P = S^{k_{i_1}} \times \ldots \times S^{k_{i_l}}
\]
be a product of $k_{i_j}$-dimensional spheres, $j = 1, \ldots, l$.
Then on the manifold
\[
   M = S^{k_1} \times \ldots \times S^{k_n}
\]
there exists an exact $P$-function whose number of singularities equals
\[
   n - l + 1.
\]
\end{theorem}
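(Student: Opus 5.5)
Two things have to be shown: that $M$ carries a $P$-function with $n-l+1$ critical submanifolds, and that this number is minimal. For minimality I would combine Theorem~\ref{thm:2.12} with Theorem~1.14. Every $P$-function on $M$ has at least $\Pcat M$ critical submanifolds. Theorem~1.14 gives $\Pcat M = n-l+1$ (after reindexing, $n$ plays the role of $r$). So any $P$-function with exactly $n-l+1$ critical submanifolds is exact in the sense of Definition~2.13, and the remaining work is the construction.

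For the construction, reorder the factors so that $M = N \times P$ with $N = S^{k_{j_1}}\times\cdots\times S^{k_{j_{n-l}}}$ the product of the remaining spheres. The plan is to take a function $g\colon N\to\R$ with exactly $n-l+1$ critical points and set $f = g\circ\pi_N$. The critical set of $f$ is $\mathrm{Crit}(g)\times P$, a disjoint union of $n-l+1$ submanifolds, each diffeomorphic to $P$. So $f$ is a $P$-function with the required count.

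The function $g$ is built inductively in the number $m=n-l$ of factors. For $m=0$ take a constant on a point (then $M=P$ and $f$ is constant with one critical submanifold, consistent with $n-l+1=1$). For $m=1$ take the height function on $S^{k}$, which has two critical points. In general, on $S^{k_1}\times\cdots\times S^{k_m}\subset \R^{k_1+1}\times\cdots\times\R^{k_m+1}$ take the classical function with $m+1$ critical points realizing $\cat = m+1$ (Theorem~1.13). Concretely, I would use the Takens-type construction: write each factor $S^{k_i}$ as the suspension of $S^{k_i-1}$ with suspension coordinate $t_i\in[-1,1]$, and set
\[
   g(x_1,\dots,x_m)=\sum_{i} \phi(t_1,\dots,t_m)
\]
where $\phi$ is chosen, as in the Lusternik--Schnirelmann literature, so that its critical points on the cube $[-1,1]^m$ occur only at the $m+1$ points $(1,\dots,1,-1,\dots,-1)$ with $j$ entries equal to $1$, $j=0,\dots,m$. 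At the endpoints $t_i=\pm1$ the function must also be smooth in the sphere coordinates, which amounts to $\phi$ being even in $t_i\mp1$ near those endpoints.

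Alternatively, and more in the spirit of Chapter~II, I would apply Theorem~\ref{thm:2.5} directly to $M$. The filtration $M_1\subset\cdots\subset M_{n-l+1}$ would be obtained from the product cell structure of $N$, with $M_j$ a regular neighbourhood of the union of the cells $e^{k_{j_1}}\times\cdots\times e^{k_{j_s}}$ with $s<j$, crossed with $P$. One then checks condition~(d) against Proposition~2.4 on each layer.

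The main obstacle is the explicit smooth function on $N$ with exactly $m+1$ (degenerate) critical points. Non-degenerate Morse functions have $2^m$ critical points, so the degenerate critical points must be merged carefully, and smoothness at the suspension poles must be verified. I would first try to cite this as the classical fact underlying Theorem~1.13, whose proof presumably gives the upper bound $\cat N\le m+1$ by such a function. If it only gives a covering, I would fall back on the filtration route via Theorem~\ref{thm:2.5}, where the layers are products of handles $D^{k}\times(\text{lower product})\times P$ and the covering of Proposition~2.4 can be written down factorwise.
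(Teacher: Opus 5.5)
Your main line is the paper's proof. Exactness comes from Theorem~2.12 together with Theorem~1.14, and the function is obtained by pulling back, along $M=P\times L\to L$, a function $g$ on the complementary product $L$ with $n-l+1$ critical points, so that the critical set is $\mathrm{Crit}(g)\times P$.

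The one step that would fail is your ``concrete'' Takens-type choice $g=\phi(t_1,\dots,t_m)$. Any smooth function of the suspension coordinates alone is invariant under the involution $(t_i,y_i)\mapsto(t_i,-y_i)$ on each factor $S^{k_i}\subset\R\times\R^{k_i}$. Each of the $2^m$ pole-tuples is fixed by this involution, and there its differential is $-\mathrm{id}$. Hence $dg=-dg=0$ at every pole-tuple, so all $2^m$ corners are critical whatever $\phi$ you choose.

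The function you actually need is the one from the proof of Theorem~1.13, and it is exactly what the paper uses here. The sum of first coordinates $x_1+\cdots+y_1$, restricted to $L$, has its $2^m$ nondegenerate critical points on the $m+1$ connected levels $-m,-m+2,\dots,m$. The critical points on each level are then merged into a single degenerate one, citing Milnor. So your fallback of citing Theorem~1.13 is the right choice, and the filtration route via Theorem~2.5 is not needed.
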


\begin{corollary}[2.16]
Let $P$ be the $k$-dimensional torus, $k \leq n$.
Then on the $n$-dimensional torus there exists an exact $P$-function whose number of singularities equals
\[
   n - k + 1.
\]
\end{corollary}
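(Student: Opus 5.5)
Corollary~2.16 is the special case of Theorem~2.15 in which every sphere is a circle. I would take $k_1=\ldots=k_n=1$, so that $M=S^1\times\ldots\times S^1=T^n$. For the subset I would choose $l=k$ of these indices, say $\{k_{i_1},\ldots,k_{i_k}\}$, which is possible precisely because $k\le n$. Then
\[
   P = S^{k_{i_1}}\times\ldots\times S^{k_{i_k}} = (S^1)^k = T^k .
\]
Theorem~2.15 then gives an exact $P$-function on $T^n$ whose number of singularities is $n-l+1=n-k+1$.

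Two side checks are needed. First, the hypothesis of Theorem~2.15 that the $k_j$ be positive integers holds, since each $k_j=1$. Second, Theorem~2.15 allows a repeated value (all $k_j$ equal to $1$) to be selected $k$ times. This is harmless because the subset is taken over the indexed family of factors, and the statement of Theorem~2.15 is phrased exactly that way.

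As a consistency check, Corollary~1.15 gives $T^k\text{-}\cat T^n=n-k+1$. The function obtained therefore realises the $P$-category, and by the remark after Definition~2.13 it is exact. So exactness can also be seen directly from Theorem~2.12 together with Corollary~1.15, without relying on the exactness part of Theorem~2.15.

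I do not expect any genuine obstacle. The only point needing care is the degenerate case $k=n$. There $P=M=T^n$ and the required number of singularities is $1$, which is consistent with the constant function, whose single critical submanifold is all of $T^n$. At $k=0$ the subfamily is empty, $P$ is a point, and the count becomes $n+1=\cat T^n$, in agreement with Theorem~1.13.
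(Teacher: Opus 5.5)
Your proposal is correct and matches the paper's implicit argument: Corollary~2.16 is stated without separate proof as the direct specialization of Theorem~2.15 to $k_1=\cdots=k_n=1$, $l=k$, and exactness rests, as in Theorem~2.15 itself, on Theorem~2.12 together with the $P$-category value $n-k+1$ from Theorem~1.14 (Corollary~1.15). Your extra remarks, on indexing the subfamily by position rather than by value and on the edge cases $k=n$ and $k=0$, are harmless clarifications that the paper does not spell out.
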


As an interesting consequence, on the $n$-dimensional torus there exists an exact round function with $n$ singularities.

The existence on a sphere of a function with two singularities is discussed in the following theorem.

\begin{theorem}[2.17]
Let $P = S^n$ be the $n$-dimensional sphere.
Then on the odd-dimensional sphere
\[
   S^{2n+1}, \quad n \geq 1,
\]
there exists an exact $P$-function with two singularities.
\end{theorem}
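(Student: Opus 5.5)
The plan is to construct the function explicitly on $S^{2n+1}\subset\R^{2n+2}=\R^{n+1}\times\R^{n+1}$, writing points as $(x,y)$ with $x,y\in\R^{n+1}$ and $|x|^2+|y|^2=1$. I take
\[
   f(x,y)=|x|^2-|y|^2 \qquad\text{restricted to } S^{2n+1}.
\]
On the sphere we have $f=2|x|^2-1$. Its minimum $-1$ is attained exactly on $\{x=0,\ |y|=1\}\cong S^n$, and its maximum $1$ exactly on $\{y=0,\ |x|=1\}\cong S^n$.

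The first step is to show there are no other critical points. I would use Lagrange multipliers: a critical point of $g=|x|^2$ on the sphere satisfies $(x,0)=\lambda(x,y)$. If $x\neq0$ this forces $\lambda=1$ and hence $y=0$; if $x=0$ we are on the minimum set. So the critical set is the disjoint union of two smooth submanifolds, each diffeomorphic to $S^n$ and without boundary. Thus $f$ is a $P$-function with $P=S^n$ and exactly two critical submanifolds, in the sense of Definition~\ref{def:2.1}. The same computation shows these critical manifolds are Morse--Bott nondegenerate, since the normal Hessian is $\pm$ definite, though we do not need this.

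The second step is exactness. Any $P$-function on the compact manifold $S^{2n+1}$ with $n\ge1$ has at least two critical submanifolds, because it attains both a maximum and a minimum. These lie on different critical submanifolds: otherwise some connected $P$-component would contain both extreme values, yet $f$ is constant on each connected critical submanifold. If $P$ is disconnected, one can argue per component, or simply note that a nonconstant function has distinct critical values there. A $P$-function cannot be constant, since then every point would be critical and the critical set would be all of $M$, which is not homeomorphic to $S^n$. Hence the minimum number of critical submanifolds is $2$, and $f$ is exact in the sense of Definition~\ref{def:2.13}.

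Alternatively, one can invoke Theorem~\ref{thm:2.12}. The $P$-category of $S^{2n+1}$ is at least $2$, since the sphere is not contractible onto $S^n$ inside itself in one piece. This is because such a contraction would make the identity factor through $S^n$ up to homotopy, killing $H^{2n+1}$. I expect this step to be the only real obstacle, and even it is mild; the elementary max/min argument above avoids it entirely.
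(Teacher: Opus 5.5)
Your argument is correct, but it reaches the result by a different route than the paper.

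The paper splits $S^{2n+1}$, via a cube model of $D^{2n+1}$ and the quotient $D^{2n+1}/S^{2n}$, into two pieces $M_1,M_2$, each almost diffeomorphic to $S^n\times D^{n+1}$. It then obtains the function abstractly from the filtration criterion of Theorem~\ref{thm:2.5}, i.e.\ from the gluing-and-smoothing constructions of Propositions~\ref{prop:2.2} and~\ref{prop:2.4}. Exactness is not argued in the paper's proof itself. It has to be supplied by $\Pcat S^{2n+1}=2$ (Proposition~\ref{prop:1.16}) together with Theorem~\ref{thm:2.12}.

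Your function $f=|x|^2-|y|^2$ realizes exactly the same splitting, since $\{f\le 0\}$ and $\{f\ge 0\}$ are the two copies of $S^n\times D^{n+1}$. The difference is that you get it from an explicit formula. Smoothness, the critical set, and even Morse--Bott nondegeneracy then follow from a two-line Lagrange computation, rather than from the rather sketchy almost-diffeomorphism and smoothing steps.

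Your exactness argument is also more elementary and more general. Because $S^n$ is connected for $n\ge 1$, a $P$-function is constant on each critical submanifold. It also cannot be constant on a closed connected manifold of different dimension. So the maximum and minimum already force two distinct critical submanifolds, with no need for the $P$-category. Your remark about disconnected $P$ is superfluous here.

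The paper's route serves to illustrate Theorem~\ref{thm:2.5} as a general existence tool; yours gives a self-contained, easily checkable proof of this particular statement.
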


In Chapter~III, ``$P$-functions on manifolds'', round functions are constructed on two-dimensional manifolds, and examples of $P$-functions are given on three-dimensional manifolds and on manifolds of higher dimension.

We point out that all two-dimensional smooth compact connected manifolds are classified according to whether they admit round functions.

\begin{theorem}[3.2]
On a manifold homeomorphic to the sphere $S^2$, the projective plane $\R P^2$, or the disc $D^2$, there are no round functions.
On any other two-dimensional smooth compact connected manifold round functions exist.
\end{theorem}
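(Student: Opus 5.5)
The statement has two parts: non-existence on $S^2$, $\R P^2$, $D^2$, and existence on every other compact connected surface. I would treat them separately.

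\emph{Non-existence.} Suppose $f$ is a round function on a closed surface $M$. Since $M$ is compact, $f$ attains its maximum and minimum. Both extrema lie in the critical set, so they lie on critical circles. Consider a minimum circle $C$. Near $C$, take a tubular neighbourhood, which is an annulus or a Möbius band. For small $\varepsilon$, the sublevel set $f^{-1}(-\infty,\min f+\varepsilon]$ is, up to homotopy, a regular neighbourhood of $C$. So $M$ is built from such pieces, and its Euler characteristic is a sum of Euler characteristics of circles and of cobordisms between regular levels containing a single critical circle.

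The key point is that each regular level is a disjoint union of circles, with $\chi=0$. The region between two consecutive regular levels retracts onto the level union the critical circle, so it also has $\chi=0$. Gluing along circles (which have $\chi=0$) gives $\chi(M)=0$.

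This settles the cases at once: $\chi(S^2)=2$, $\chi(\R P^2)=1$ and $\chi(D^2)=1$, all nonzero. For $D^2$ one should allow either convention on boundary behaviour. The boundary consists of circles of $\chi=0$, so the same additivity gives $\chi(M)=0$ for any surface with boundary carrying a round function.

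A cleaner alternative is to use Theorem~\ref{thm:2.12} together with a $P$-category computation. I prefer the Euler characteristic argument, because it is self-contained and handles all three surfaces uniformly.

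\emph{Existence.} The remaining surfaces are those with $\chi\le 0$, together with the annulus and the Möbius band. Note that the closed ones with $\chi=0$ are $T^2$ and the Klein bottle; those with $\chi<0$ are orientable surfaces of genus $\ge2$ and nonorientable surfaces of genus $\ge3$.

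I would construct round functions explicitly by cutting $M$ along disjoint simple closed curves into elementary pieces, each carrying a round function with one critical circle that is constant on the boundary. The elementary pieces are:
\begin{itemize}
\item the annulus, using $f=t^2$ or $f=t$ in the collar coordinate, with a critical circle at $t=0$;
\item the Möbius band, using the squared distance to the core circle;
\item the pair of pants, using Theorem~\ref{thm:2.5} with Proposition~\ref{prop:2.4}, or directly as a "saddle circle": the level set passes from one circle to two circles through a critical set homeomorphic to a figure-eight. The trouble is that a figure-eight is not a circle, which is the main obstacle.
\end{itemize}

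To overcome this, I would use degenerate critical circles with Morse--Bott-type but non-isolated behaviour. The idea is a circle on which $f$ is constant and critical, sitting inside a level set that itself splits. One way is to write a pair of pants as a thickened annulus glued to a band, and make the critical circle one boundary component of the region where the splitting happens. Perhaps it is simpler to use the filtration criterion of Theorem~\ref{thm:2.5}: it suffices to exhibit a filtration $M_1\subset\dots\subset M_k=M$ whose successive differences admit the three-set covers of Proposition~\ref{prop:2.4}.

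Concretely, I would take $M_1$ to be an annulus or a Möbius band around a curve. Each subsequent $M_{i+1}$ is obtained by attaching an annulus or band along $\partial M_i$, plus a handle whose core curve becomes part of an annular layer. Proposition~\ref{prop:1.11}'s coverings (round category $2$ or $3$) suggest that $k=2$ or $3$ layers suffice. I would check condition (d) for each layer, and I expect this verification to be the hardest step.

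Finally, I would handle orientable surfaces of genus $g\ge1$ and nonorientable surfaces of genus $\ge2$ (with any number of boundary components) by induction. The inductive step performs a connected sum with $T^2$ or with the Klein bottle, realised as attaching such a layer.
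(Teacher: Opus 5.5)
\paragraph{There is a genuine gap in the non-existence half.} It rests on the claim that each critical layer $f^{-1}[c-\varepsilon,c+\varepsilon]$ ``retracts onto the level union the critical circle'' and therefore has $\chi=0$.

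That claim holds for Morse--Bott circles, which on a surface are always local extrema. But Definition~\ref{def:2.1} allows degenerate circles, and then the critical level $f^{-1}(c)$ can contain regular arcs running into the critical circle.

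\begin{itemize}
\item In annular coordinates $(\theta,t)$ about $C=\{t=0\}$, take $f=c+t^2\sin\theta$. It has no critical points near $C$ other than $C$ itself. Yet $f^{-1}(c)$ near $C$ is $C$ together with the transverse arcs $\theta=0$ and $\theta=\pi$. The gradient has index $-2$ along $C$, so the corresponding layer has $\chi=-2$.
\item Take $f=c+e^{-1/t^2}$ for $t<0$, $f=c$ on $C$, and $f=c+e^{-1/t^2}\sin\theta$ for $t>0$. Join the arc leaving $C$ at $\theta=0$ on the side $t>0$ to the one leaving at $\theta=\pi$. The critical level is then a theta-graph. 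Its regular neighbourhood is a pair of pants with one boundary circle below $c$ and two above.
\end{itemize}

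So your argument proves too much. It would give $\chi(M)=0$ for every surface carrying a round function. That contradicts the second half of the very statement you are proving, since a genus-$2$ surface has $\chi=-2$.

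The $D^2$ case is also not convention-free. Without the standing assumption that $f$ takes a constant maximum on $\partial M$ with no critical points there, $f=x$ on $D^2$ has no critical points at all. Its regular levels are then arcs rather than circles, so your additivity step never gets started.

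\paragraph{How to repair it.} To save the Euler-characteristic route you would need an inequality $\chi(\text{layer})\le 0$ for arbitrary degenerate circles. That requires a real local analysis you have not supplied.

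The paper's route, via the Jordan curve theorem, avoids this.
\begin{itemize}
\item On $S^2$, each critical circle $C$ bounds a disc $U$.
\item $f$ is constant on $\partial U$ and not constant on $U$, since otherwise all of $U$ would be critical.
\item Hence $f$ has an extremum in the interior of $U$. That extremum lies on another critical circle $C'\subset U$, and $C'$ bounds a strictly smaller disc inside $U$.
\item There are finitely many discs bounded by critical circles, so a minimal one gives a contradiction.
\end{itemize}
The same argument works for $D^2$: under the boundary convention the minimum is interior. It also works for $\R P^2$, because a two-sided circle there bounds a disc and the complement of a one-sided circle is an open disc.

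\paragraph{The existence half is unfinished.} You stop at the pair-of-pants obstacle and defer condition (d) of Theorem~\ref{thm:2.5} without checking it. The theta-graph circle above is exactly the missing piece. Annuli and M\"obius bands around an extremal circle, together with such pants, glue level by level into round functions on all the remaining surfaces.
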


\begin{theorem}[3.3]
On any two-dimensional smooth compact connected manifold not homeomorphic to the disc $D^2$, sphere $S^2$ or projective plane $\R P^2$ there exists an exact round function whose number of singularities equals:
\begin{enumerate}\setlength{\itemsep}{0pt}
\item two on the torus and the Klein bottle;
\item three on the remaining manifolds without boundary;
\item one on the annulus $S^1 \times I$ and on the Möbius band;
\item two on the remaining manifolds with boundary.
\end{enumerate}
\end{theorem}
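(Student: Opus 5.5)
We must prove Theorem 3.3. For each listed surface we construct a round function with the stated number of critical circles, and then show that number is minimal. For minimality we use Theorem 2.12 together with Proposition 1.11, which gives the round category. Theorem 2.12 says a round function has at least $\Pcat M$ critical circles, with the boundary condition that $f$ is constantly maximal on $\partial M$ with no critical points there. Proposition 1.11 then gives the lower bounds: $2$ for the torus and Klein bottle, $3$ for other closed surfaces (other than $S^2$ and $\R P^2$), $1$ for the annulus and Möbius band, and $2$ for other surfaces with boundary. So exactness reduces to constructing a round function attaining each count, and it remains only to exhibit the functions.

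The constructions go case by case.

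\begin{itemize}
\item \emph{Annulus $S^1\times I$.} Take $f(\theta,t)=(t-\tfrac12)^2$. Its single critical circle is $t=\tfrac12$, and it is maximal and regular on the boundary.
\item \emph{Möbius band.} Take the same function on $[0,1]\times I/\!\sim$, which is invariant under $t\mapsto 1-t$. Its single critical circle is the core circle.
\item \emph{Torus.} Take $f(\theta,\varphi)=\cos\varphi$, with critical circles $\varphi=0,\pi$.
\item \emph{Klein bottle.} Write it as two Möbius bands glued along their boundary. Take the distance squared to the core on each band, glued through a collar where it is regular. This gives two critical circles.
\end{itemize}

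For the remaining surfaces I would use Theorem 2.5 and decompose the surface into pieces.

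For a surface with boundary other than the annulus and Möbius band, a natural filtration is $M_1$ an annulus (or Möbius band) around a core circle, and $M_2=M$. The piece $M_2\setminus\Int M_1$ must admit the three-set covering of Proposition 2.4. Equivalently, and more concretely, I would construct the function directly: near a critical circle we have a round "saddle". By this I mean a circle $S^1$ times a one-dimensional Morse saddle, or, allowing degenerate critical circles, a function constant on a circle whose level sets cross it. A single degenerate critical circle can realize attaching several bands at once. For instance, a function on a pair of pants with one critical circle in the interior that is a figure-eight-like degenerate level is \emph{not} allowed, since the critical set must be a submanifold. So I would instead use a circle whose normal behaviour is of type $x^3$ or $xy$-multi-saddle transverse to it.

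The delicate point is therefore the second critical circle in the general case. I expect to write $M$ as a neighborhood $N$ of a circle $C$ (the minimum circle) union a piece $W=M\setminus \Int N$. The piece $W$ must be mapped so that a single circle inside it is the only critical set, with $f$ maximal on $\partial M$ and regular on $\partial N$. To do this I would realize $W$ as a mapping-torus-like region fibered over the critical circle: the critical circle is taken as an embedded circle $\gamma$ in $W$ whose complement $W\setminus\gamma$ is a union of collars of $\partial N$ and $\partial M$. Then $f$ is defined as a monotone function of the signed distance to $\gamma$ on each collar. This requires showing every surface with boundary (genus $g$, $b$ boundary components, not a disc, annulus or Möbius band) contains disjoint circles $C,\gamma$ such that $M\setminus(C\cup\gamma)$ consists only of half-open collars. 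The point is that a critical circle must have a neighborhood that is an annulus or Möbius band, but several collars may spiral onto it only if arranged properly. This combinatorial existence is the main obstacle, and I would try first to handle it via an explicit handle description, checking orientable and nonorientable cases separately.

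For closed surfaces, double the bounded case or add a third circle: take the maximum circle plus the two-circle structure on the complement of its tubular neighbourhood. The complement is a surface with boundary, on which the previous case gives two critical circles, so three in total.

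Finally, we note that the counts match Proposition 1.11 in every case, so by Theorem 2.12 each constructed function is exact.
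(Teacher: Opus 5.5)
\textbf{Where the gap is.} Your lower bound (Theorem~2.12 with Proposition~1.11) matches the paper, and so do your explicit functions on the annulus, the Möbius band and the torus. But the cases that carry the content of the theorem, namely all surfaces with $\chi(M)<0$, are not handled, and the plan you give for them cannot work.

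You ask for disjoint circles $C,\gamma$ such that $W\setminus\gamma$ (with $W=M\setminus\Int N$) is a union of half-open collars $S^1\times[0,1)$, and you take $f$ monotone in the distance to $\gamma$ on each collar. Circles and half-open annuli have compactly supported Euler characteristic $0$. So such a decomposition forces $\chi(W)=0$, and hence $\chi(M)=\chi(N)+\chi(W)-\chi(\partial N)=0$. Only the annulus, Möbius band, torus and Klein bottle admit it, which are exactly the cases you already treated.

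The same obstruction appears through Poincar\'e--Hopf. If $f$ is maximal and regular on $\partial M$, then $\nabla f$ points outward and the indices localised at the critical circles sum to $\chi(M)$. The minimum circle has index $0$. So does any circle whose nearby levels are parallel circles (your collar picture, or the $x^3$ normal behaviour). Hence the second circle $\gamma$ must carry index $\chi(M)<0$. The ``combinatorial existence'' you flag as the main obstacle is therefore not just unproved but false.

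\textbf{The missing idea.} You need a genuinely degenerate critical circle whose critical level contains arcs emanating from it. For example, on $S^1\times(-\varepsilon,\varepsilon)$ the function $t^2\sin(k\theta)$ has the following properties:
\begin{itemize}
\item its critical set is exactly $\{t=0\}$;
\item its zero level is that circle together with $2k$ transverse arcs;
\item its index is $-2k$, so it behaves like $2k$ saddles fused along the circle.
\end{itemize}
For odd $\chi(M)$, such as the pair of pants or the punctured torus, this model is not enough. You need something like the flat model $e^{-1/t^2}g_{\pm}(\theta)$ for $\pm t>0$, with different profiles on the two sides; for instance $g_+=\sin\theta$, $g_-\equiv-1$ gives index $-1$.

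This is the mechanism in the paper's proof. It uses the covering by three annuli $Q_1,Q_2,Q_3$ from Proposition~1.11. The level lines in $Q_1$ and $Q_2$ are the circles $S^1\times t_0$. In $Q_3$, the intersection arcs with $Q_1$ and $Q_2$ are placed on the critical level through the middle circle. In the language of Proposition~2.4 and Theorem~2.5, this is a continuous function constant on an annulus, with regular level arcs abutting it, collapsed to the core circle and smoothed by Proposition~2.2.

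\textbf{Two smaller points.}
\begin{itemize}
\item For the Klein bottle you need $d^2$ on one Möbius band and $2a-d^2$ on the other. With $d^2$ on both, the gluing circle becomes a third (maximum) critical circle.
\item For closed surfaces, only your second option (a maximum circle plus the bounded case on the complement) gives three circles; doubling gives four. That reduction is fine once the bounded case is actually constructed.
\end{itemize}
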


On three-dimensional manifolds round functions are studied.
Among such manifolds we distinguish those on which an exact round function exists.

\begin{theorem}[3.4]
On every three-dimensional smooth compact connected manifold without boundary a round function exists.
If the manifold is homeomorphic to one of
\begin{enumerate}\setlength{\itemsep}{0pt}
\item[a)] the sphere $S^3$,
\item[b)] the projective space $\R P^3$,
\item[c)] the manifold $S^1 \times S^2$,
\item[d)] a lens space,
\end{enumerate}
then it admits an exact round function with two critical circles.
If the manifold is homeomorphic to the torus $T^3$, then it admits an exact round function with three critical circles.
On all other three-dimensional smooth compact connected manifolds there exists a round function whose number of singularities does not exceed four.
\end{theorem}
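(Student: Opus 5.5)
We have to prove Theorem 3.4: existence of round functions on every closed 3-manifold, exact ones with two circles on $S^3$, $\R P^3$, $S^1\times S^2$ and lens spaces, three circles on $T^3$, and at most four circles in general. The plan has three parts: exact functions on the special manifolds, the bound four in general via Theorem 2.5, and exactness via Theorem 2.12 together with Propositions 1.11–1.12.

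\emph{Special manifolds.} Each of $S^3$, $\R P^3$, the lens spaces and $S^1\times S^2$ has a genus one Heegaard splitting into two solid tori $V_1\cup_h V_2$. On a solid torus $S^1\times D^2$ take $f_1(\theta,z)=|z|^2$. Its critical set is the core circle $S^1\times 0$, which is a nondegenerate (Bott) minimum, and $f_1$ is constant and maximal on the boundary with nonvanishing gradient there. Do the same on $V_2$ with $2-|z|^2$, and glue along a collar of the common boundary torus, where both functions are regular with level sets parallel to the boundary. The result is a smooth function with exactly two critical circles. It can be written equivalently as a filtration $M_1=V_1\subset M_2=M$, which is the setting of Theorem 2.5.

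For $T^3=S^1\times T^2$, take a round function on $T^2$ with two critical circles from Theorem 3.3. Better still, apply Corollary 2.16 with $k=1$, $n=3$, which directly gives an exact round function with three critical circles.

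\emph{Lower bounds.} The critical set of a round function is a union of circles, so two is the minimum whenever some round function exists; indeed a function on a closed manifold needs at least a max and a min set. This already settles the special manifolds. For $T^3$, Theorem 2.12 gives that the number of critical circles is at least the round category, and the round category of $T^3$ is $3$ by Corollary 1.15 (Proposition 1.12). So three is optimal.

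\emph{General case.} This is the main obstacle. Take a Heegaard splitting $M=H_g\cup H_g'$ of genus $g$. The plan is to build a round function on a handlebody $H_g$ with at most two critical circles and constant maximal boundary values. Gluing this to the mirrored function on $H_g'$, with the levels matched near the Heegaard surface, gives at most four circles.

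For $H_g$ with $g\ge 1$, I would use the filtration of Theorem 2.5 as follows:
\begin{itemize}
\item $M_1$ is a solid torus neighbourhood of one core circle, with one critical circle, a minimum.
\item $M_2=H_g$ is obtained from $M_1$ by attaching the remaining $g-1$ one-handles.
\end{itemize}
The cobordism $M_2\setminus\Int M_1$ must admit a single critical circle. I would verify condition (d) of Theorem 2.5, the three-set covering of Proposition 2.4, by realising this cobordism as a "round handle" region: attach the $g-1$ one-handles along the $S^1$ direction so that they form a thickened surface $F\times S^1$-type piece. Alternatively, I would use the Asimov/Morgan round-handle decomposition (round $1$-handle $S^1\times D^1\times D^1$). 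If a single round handle cannot absorb all $g-1$ handles, I would instead prove directly on the two-dimensional analogue — a disc with holes times $S^1$-fibrewise structure — that a planar surface $\times S^1$ minus tubes carries one critical circle, as in Theorem 3.3 for surfaces with boundary. The case $g=0$ does not occur, since $S^3$ has a genus one splitting.

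Finally, the existence claim for every closed 3-manifold follows from the general case.
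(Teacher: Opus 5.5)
Your overall plan matches the paper's. You use genus-one splittings into two solid tori for $S^3$, $\R P^3$, $S^1\times S^2$ and lens spaces, and Corollary 2.16 for $T^3$. In the general case you put a two-circle function with constant maximal boundary value on each Heegaard handlebody via Theorem 2.5, then mirror and glue. Your lower bound of two is correct and more elementary than the paper's appeal to Theorem 2.12 and Proposition 1.12: the maximum and minimum lie on distinct critical circles because $f$ is constant on each critical circle.

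\textbf{The gap.} Everything in the general case depends on verifying condition (d) of Theorem 2.5 for $W=H_g\setminus\Int M_1$. You leave this open, and the round-handle route you propose cannot work. Suppose the critical circle in $W$ were nondegenerate. Then $W$ would be $T^2\times I$ plus one round handle $S^1\times D^\lambda\times D^{2-\lambda}$. Relative to its attaching region, that handle has Euler characteristic $\chi(S^1)\cdot(-1)^\lambda=0$, so $\chi(W)=\chi(T^2)=0$. But $\partial W=T^2\sqcup\Sigma_g$, so $\chi(W)=\tfrac12\chi(\partial W)=1-g\neq 0$ for $g\geq 2$. Hence the single critical circle must be degenerate, and all of $\chi(W,\partial_-W)=1-g$ must come from it. Asimov--Morgan round handles cannot supply such a circle. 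Neither can any construction in which $f$ is locally a product along the circle, such as your $S^1$-fibrewise models. Note also that $H_g$ is a planar surface times an interval, not times $S^1$.

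\textbf{The missing idea.} The paper uses a specific decomposition (proof of Proposition 1.12, Fig.~1.8): $H_g$ is a union of two solid tori with disjoint interiors meeting in $g-1$ discs. The second solid torus then plays the role of $Q_3$ in Proposition 2.4. In your setting this works as follows.
\begin{itemize}
\item $W$ is the collar $T^2\times[0,1]$ of $\partial M_1$ with $g-1$ one-handles attached on $T^2\times 1$.
\item Join the handles into a cyclic chain using $g-1$ thin slabs in $T^2\times[\tfrac12,1]$, each slab connecting a foot of one handle to a foot of the next.
\item This cyclic chain of balls is a solid torus $Q_3\cong S^1\times D^2$. It meets the inner part $Q_1\cong T^2\times I$ in $g-1$ discs.
\item Let $Q_2$ be a thin collar of $\Sigma_g$. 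Then $Q_1\cap Q_2$ is $T^2$ minus $g-1$ discs, and $Q_2\cap Q_3$ is $\partial Q_3$ minus $g-1$ discs. One checks that this covering gives $\chi(W)=1-g$.
\end{itemize}
So $(W;\partial M_1,\Sigma_g)$ has the three-set covering of Proposition 2.4 with $P=S^1$. Collapsing $Q_3$ to its core produces the single degenerate critical circle of the second stage. With this step supplied, the rest of your argument goes through.
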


On $n$-dimensional manifolds we consider $P$-functions for which $P$ is a sphere.

\begin{theorem}[3.5]
Let $P = S^{n-1}$ be the $(n-1)$-dimensional sphere.
Then on the $n$-dimensional sphere there are no $P$-functions.
\end{theorem}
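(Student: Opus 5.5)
We argue by contradiction: assume $f$ is a $P$-function on $S^n$ with $P=S^{n-1}$, and let $f$ attain its maximum and minimum on $S^n$. Both extremal values are critical values, so the critical set contains at least two components, each a closed smooth submanifold of $S^n$ homeomorphic to $S^{n-1}$, hence of codimension one. Let $\Sigma_{\min}$ be a critical component on which $f$ attains its global minimum. It is locally a level set.

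The plan is to use the local structure near a codimension-one critical submanifold. By the Jordan--Brouwer separation theorem (smooth version), $\Sigma_{\min}$ separates $S^n$ into two components. Its normal bundle is trivial, since $\Sigma_{\min}$ is a closed hypersurface in a simply connected manifold. Thus a tubular neighbourhood is $\Sigma_{\min}\times(-\varepsilon,\varepsilon)$. Along each normal fibre, $f$ restricted to a segment through the critical point has a minimum at $0$. Since the critical set near $\Sigma_{\min}$ is exactly $\Sigma_{\min}$ (components are disjoint, closed and isolated), $f$ is strictly increasing along both normal directions for small $|t|>0$.

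Next we study each complementary region $D_\pm$, each a compact region with boundary $\Sigma_{\min}$. In fact each is homeomorphic to $D^n$, using the generalized Schoenflies theorem for the locally flat embedding; this holds in all dimensions given local flatness. On each $D_\pm$, $f$ has a maximum in the interior, because near the boundary $f$ increases away from it. So each $D_\pm$ contains an interior critical component $\Sigma_\pm\cong S^{n-1}$. Now iterate: $\Sigma_+$ again separates $S^n$, and one of its sides lies in the interior of $D_+$, a disk bounded by $\Sigma_+$. Then $f$ restricted to this disk has an interior extremum, and so on.

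To get a contradiction, choose, among all critical components, one that bounds an innermost disk. Since the critical set is compact and the components are finitely many, there are finitely many components. Each one bounds two disks, and the inclusion order among such disks is well founded, so we can pick a critical component $\Sigma$ bounding a disk $D$ whose interior contains no critical points. But $f|_D$ is continuous on a compact set, so it has an extremum. If that extremum were interior, it would be a critical point. So $f$ attains both its max and min on $D$ only on $\partial D=\Sigma$, where $f$ is constant. Hence $f$ is constant on $D$, and the interior of $D$ consists of critical points, a contradiction.

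The main obstacle I expect is justifying that a smooth hypersurface homeomorphic to $S^{n-1}$ bounds disks and that the components are finitely many. Finiteness follows because the components are open in the compact critical set, being separated submanifolds of the same dimension. For the disk property, the innermost argument actually only needs that $\Sigma$ separates $S^n$ into two components with compact closures whose boundary is $\Sigma$; this holds by Alexander duality ($\tilde H_0(S^n\setminus\Sigma)\cong \check H^{n-1}(\Sigma)=\mathbb Z$). Ordering the complementary components by inclusion then suffices, so Schoenflies can be avoided.
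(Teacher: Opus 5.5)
Your proof is correct. It shares the paper's skeleton: a critical sphere separates $S^n$, and finiteness of the critical components lets one reach a complementary region, bounded by a single critical sphere, that contains no critical points. You differ in how that region is found and why it is contradictory.

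\begin{itemize}
\item \emph{How the region is found.} The paper peels off critical layers in order of critical value. You take an inclusion-minimal side among the finitely many components of the sets $S^n\setminus\Sigma_i$. This works because a side of $\Sigma_i$ containing $\Sigma_j$ strictly contains the side of $\Sigma_j$ not containing $\Sigma_i$, so a minimal side is free of critical points.
\item \emph{Why it is contradictory.} The paper uses the Morse-theoretic fact that a critical-point-free region bounded by one level hypersurface would be a product $\partial D_1\times I$, and so would have two boundary components. You use only the extreme value theorem: $f$ is constant on the bounding sphere and has no interior extremum, hence is constant, a contradiction.
\end{itemize}

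Your route is more elementary, since it needs no flow and no cylindrical neighbourhoods. It is also more robust. The paper's description of $D_1,D_2$ with boundaries $f^{-1}(a\mp\varepsilon)$ tacitly assumes the two sides of the chosen critical sphere lie below and above its level. That fails, for example, for the sphere on which $f$ attains its minimum. Your minimality step also replaces the informal ``continue removing critical layers''.

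Your argument also proves more. On any closed manifold in which every closed connected hypersurface separates (e.g.\ $H^1(M;\mathbb{Z}_2)=0$), no function has as critical set a finite disjoint union of closed connected smooth hypersurfaces. The paper's product-structure argument, for its part, fits the layer-by-layer Morse-theoretic viewpoint of Chapter~II and is the same device used for Theorem~3.2.

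A few minor points:
\begin{itemize}
\item The middle of your proposal can be dropped, since the final argument never uses it. This covers the trivial normal bundle, Schoenflies, and the claim that $f$ is strictly monotone along normal fibres (which does not follow from nonvanishing of the gradient).
\item Finiteness should be justified by the critical set being a closed submanifold, so that its components are open in it. Equal dimension alone does not stop disjoint hypersurfaces from accumulating. This is the same convention the paper relies on when it says the critical submanifolds are finite by compactness.
\item Constancy of $f$ on $\Sigma$ uses that $\Sigma\cong S^{n-1}$ is connected, i.e.\ $n\ge 2$, which is where the statement is meant. For $n=1$, $\cos\theta$ on $S^1$ has critical set $\{0,\pi\}\cong S^0$.
\end{itemize}
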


\begin{theorem}[3.6]
Let $P = S^k$ be the $k$-dimensional sphere, and let
\[
   n \geq 2k + 1.
\]
Then on the $n$-dimensional sphere there exists a $P$-function whose number of singularities does not exceed $[n/2] + 1$.
\end{theorem}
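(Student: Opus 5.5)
We need to prove Theorem 3.6: for $P=S^k$ and $n\ge 2k+1$, the sphere $S^n$ carries a $P$-function with at most $[n/2]+1$ critical submanifolds.

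The plan is to build the function explicitly from a splitting of coordinates. Write $\R^{n+1}$ as a direct sum of $(k+1)$-dimensional blocks plus a remainder. Concretely, choose an integer $m\ge 2$ and write $n+1=\sum_{j=1}^{m}d_j$ with each $d_j\ge k+1$. The hypothesis $n+1\ge 2k+2$ is exactly what guarantees $m\ge 2$ blocks exist. For $x=(x_1,\dots,x_m)\in S^n$ with $x_j\in\R^{d_j}$, consider the quadratic-type function
\[
   f(x)=\sum_{j=1}^m \lambda_j |x_j|^2,\qquad \lambda_1<\dots<\lambda_m .
\]
By Lagrange multipliers its critical set on $S^n$ is the disjoint union of the spheres $S^{d_j-1}=\{x_j\ne 0,\ x_i=0 \text{ for } i\ne j\}$. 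Each is a Bott nondegenerate critical submanifold.

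To make every critical sphere homeomorphic to $S^k$, we cannot simply take $d_j=k+1$ for all $j$ unless $(k+1)\mid(n+1)$. So the second step is to handle a block of dimension $d>k+1$. Inside such a block we replace $\lambda|x_j|^2$ by a function whose critical set on the unit sphere $S^{d-1}$ is a union of $S^k$'s. Here I would exploit that $S^{d-1}$ for $d-1\ge 2k+1$ is a join $S^k * S^{d-k-2}$, and iterate. An alternative, cleaner route is to use Theorem~2.5: the decomposition above produces a filtration of $S^n$ by tubular neighbourhoods of $S^k$'s, namely joins $S^k*S^k*\cdots$. It then suffices to check condition (d), i.e.\ that each difference piece admits the three-set covering of Proposition~\ref{prop:2.4}. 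Since each piece looks like $S^k\times D^{\cdot}\times I$ glued appropriately, this check should reduce to the local model of a neighbourhood of a nondegenerate critical $S^k$.

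Finally we count. Using blocks of dimension $k+1$ wherever possible and absorbing the remainder via the join construction, the number of critical spheres is at most the number of blocks plus one for the remainder. Since $k\ge 1$ gives $k+1\ge 2$, this is at most $[(n+1)/2]+1\le [n/2]+1$ once adjusted by parity; when $n$ is odd and $k=1$ one uses the Hopf-type splitting $\R^{n+1}=\bigoplus\R^2$, giving $(n+1)/2$ circles.

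The main obstacle is the remainder block of dimension $d$ with $k+1<d<2k+2$. There $S^{d-1}$ cannot be written as a join of $S^k$ with something containing a further $S^k$. I would first try to avoid this case entirely by redistributing: choose the decomposition so that every block has dimension $k+1$ except one block of dimension $\ge 2k+2$, which is possible since $n+1\ge 2k+2$. I would then treat that big block by induction on $n$, applying the theorem on the lower-dimensional sphere and extending radially. Verifying that the count stays $\le[n/2]+1$ through this induction is where care is needed.
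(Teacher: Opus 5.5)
Your construction works exactly when $(k+1)\mid(n+1)$. Splitting $\R^{n+1}$ into blocks of dimension $k+1$ and taking $f=\sum_j\lambda_j|x_j|^2$ gives $(n+1)/(k+1)\le[n/2]+1$ Bott-nondegenerate critical spheres $S^k$. This is an explicit version of what the paper obtains for $S^3$ and $S^5$ when $k=1$.

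The genuine gap is the remainder case $(k+1)\nmid(n+1)$, and your induction does not close it. If $S^n=S^a*S^b$ and $(k+1)\nmid(n+1)$, then $(k+1)\nmid(a+1)$ or $(k+1)\nmid(b+1)$. So each step only passes the problem to a smaller non-divisible sphere, and there is no base case. Take $n=2k+1+r$ with $1\le r\le k$, the smallest non-divisible values in range. Every splitting $S^n=S^a*S^b$ with $a,b\ge k$ has $a,b\le 2k$, so both factors lie below the range $n\ge 2k+1$ of the theorem. Your redistribution into $(k+1)$-blocks plus one block of dimension $\ge 2k+2$ is therefore impossible there. Moreover $S^{k+1}$ carries no $S^k$-function at all, by Theorem~\ref{thm:3.5}.

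The simplest instance is $k=1$, $n=4$. $\R^5$ has no splitting into $2$-blocks, and the only joins are $S^1*S^2$ and $S^0*S^3$. Neither $S^2$ (Theorem~\ref{thm:3.2}) nor $S^0$ carries a round function. So your method gives no round function on $S^4$, and every even $n$ with $k=1$ reduces to this case.

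The missing ingredient is a decomposition of the remainder that is not of join type. The paper's proof, written for $P=S^1$, does the following:
\begin{itemize}
\item It realizes $S^4=D^4/S^3$ as $M_1\cup M_2$ with $M_1\cong S^1\times D^3$ and $M_2\cong S^2\times D^2$.
\item It then covers $M_2$ by two pieces with disjoint interiors, each $\cong S^1\times D^3$. This step is not a coordinate splitting; it uses the normal $D^2$ directions.
\item The resulting three tubular neighbourhoods of circles give a filtration to which Theorem~\ref{thm:2.5} applies, producing three critical circles.
\item $S^5=S^1\times D^4\cup S^3\times D^2$ and higher spheres are treated the same way.
\end{itemize}
You would need an analogous covering of the remainder by pieces $\cong S^k\times D^{n-k}$ satisfying the conditions of Proposition~\ref{prop:2.4}. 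As it stands, your argument proves the statement only for $n\equiv -1\pmod{k+1}$.

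Two smaller points. First, the inequality $[(n+1)/2]+1\le[n/2]+1$ in your count fails for odd $n$. Second, the radial extension $|u|^2g(u/|u|)$ is not smooth at $u=0$ unless $g$ is a restricted quadratic form. The join step therefore needs factors flat at the origin, e.g.\ $\phi(|u|^2)g(u/|u|)+\psi(|w|^2)h(w/|w|)$ with $\phi,\psi$ flat at $0$ and strictly increasing, and constants added so that $g>0>h$.
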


The author wishes to express her deep gratitude to her scientific supervisor Volodymyr Vasylovych Sharko, communication with whom made it possible to broaden and deepen her knowledge of contemporary problems of mathematics, and to whom in particular belongs the idea of considering this topic, and also to Yevhen Mykhailyuk for a number of interesting ideas and remarks.

\chapter{$P$-category of a topological space}

The notion of Lyusternik--Shnirelman category was introduced by its authors for estimating the number of critical values, and hence of critical points, of a function on a manifold, and was used in problems of the calculus of variations.
This notion was developed and applied by such mathematicians as K.\,Borsuk~\cite{borsuk-1936}, R.\,Fox~\cite{fox-1941}, T.\,Ganea~\cite{ganea-1967}, F.\,Takens~\cite{takens-1968}, W.\,Singhof~\cite{singhof-1979}, Chogoshvili, Frolov and Elsgolts \cite{frolov-elsgolts-1939}, and others.
Their work was directed toward further investigation of the relationship between the topology of a space and its category (the Lyusternik--Shnirelman category, the strong category, etc.).
For instance, it was proved~\cite{ganea-1967} that any $(n-1)$-connected CW-complex of dimension at most $3n-3$ and of Lyusternik--Shnirelman category at most one has the homotopy type of a suspension.
Frolov and Elsgolts \cite{frolov-elsgolts-1939} related the notion of Lyusternik--Shnirelman category of a manifold to its homological characteristic --- the length of the manifold.
In modern mathematics one uses the notion of cohomological length of a manifold, which is applicable to a wider class of objects.

This chapter is devoted to the generalisation of the well-known inequality between the length of a manifold and its Lyusternik--Shnirelman category
\[
   \cat M \;\geq\; \mathrm{long}\, M + 1
\]
to the case where the categorical subsets are contracted to homeomorphic submanifolds.
We also give examples of the computation of such a category and of the correspondingly generalised length of a manifold.

\section{Definition and properties of the $P$-category of a topological space}

\begin{definition}[1.1]\label{def:1.1}
Let $A$, $B$, $P$ be closed subsets of a Hausdorff topological space, with $P \subset B$ and $A \subset B$.
The \emph{$P$-category} $\Pcat_B A$ of the set $A$ relative to the set $B$ is the minimal number $k$ of closed subsets $A_1, \ldots, A_k$ in $B$ that satisfy the following properties:
\begin{enumerate}\setlength{\itemsep}{0pt}
\item $A$ is their union
      \[
         A = \bigcup_{i=1}^{k} A_i;
      \]
\item each subset $A_i$, $i=1,\ldots,k$, is contractible within $B$ to $P$.
That is, for each subset $A_i$ there exists a homotopy $F_i\colon A_i \times I \to B$ such that
      \begin{itemize}\setlength{\itemsep}{0pt}
\item[a)] $F_i(x,0) = x$, $x \in A_i$;
\item[b)] $F_i(x,t) \in B$, $x \in A_i$, $0 < t < 1$;
\item[c)] $F_i(x,1) = P_i \subset B$, where $P_i$ is homeomorphic to $P$.
      \end{itemize}
\end{enumerate}
If no such number $k$ exists, we set $\Pcat_B A = \infty$.
The subsets $A_1, \ldots, A_k$ are called \emph{categorical}.
The $P$-category of a set $A$ relative to itself is simply called the $P$-category of $A$ and denoted $\Pcat A$.
If $A$ coincides with the whole topological space, then $\Pcat A$ denotes the $P$-category of that space.
\end{definition}

Observe that when $P$ is a point, the definition of $P$-category coincides with the definition of the Lyusternik--Shnirelman category.

\begin{definition}[1.2]\label{def:1.2}
The $P$-category of a topological space is called the \emph{round category} if $P$ is the circle $S^1$.
\end{definition}

\subsection*{Properties of the $P$-category}

\paragraph{Property 1.}
Let $A$, $B$, $P$ be closed subsets of a closed subset $C$ of a Hausdorff topological space.
Then
\[
   \Pcat_C(A \cup B) \;\leq\; \Pcat_C A + \Pcat_C B.
\]

\textit{Proof.} Let $\Pcat_C A = k$ and $\Pcat_C B = p$, that is, there exist closed subsets $A_1, \ldots, A_k$ and $B_1, \ldots, B_p$ such that
\[
   A = \bigcup_{i=1}^{k} A_i, \quad B = \bigcup_{i=1}^{p} B_i,
\]
and each of them is contracted within $C$ to $P$.
Their union is a covering of $A \cup B$.
The required inequality then follows from the minimality of the number of such subsets in the definition of $P$-category.

\paragraph{Property 2.}
Let $A$, $B$, $P$ be closed subsets of a closed subset $C$ of a Hausdorff topological space, with $B \subset C$.
Then
\[
   \Pcat_B A \;\geq\; \Pcat_C A.
\]

\textit{Proof.}
Let $\Pcat_B A = k$, that is, $A$ is covered by sets $A_1, \ldots, A_k$, each of which is contracted to $P$ within $B$.
Since $B \subset C$, each $A_i$ may also be considered as contracted within $C$; the required inequality follows from the minimality of the number of such sets in the definition of $P$-category.

\paragraph{Property 3.}
Let $A$, $B$, $P$ be closed subsets of a closed subset $C$ of a Hausdorff topological space, and let $A$ be a deformation retract of $B$.
Then
\[
   \Pcat_C B \;\leq\; \Pcat_C A.
\]

\textit{Proof.}
Let $\Pcat_C A = k$, that is, there is a covering of $A$ by closed subsets $A_i$, $i=1,\ldots,k$, each of which is contracted within $C$ to $P$.
The fact that $A$ is a deformation retract of $B$ means that there is a homotopy $F\colon B \times I \to B$ such that
\begin{itemize}\setlength{\itemsep}{0pt}
\item[a)] $F(x,0) = x$, $x \in B$;
\item[b)] $F(x,t) = x$, $x \in A$, $0 \leq t \leq 1$;
\item[c)] $F(x,1) \in A$, $x \in B$.
\end{itemize}
Cover $B$ by the subsets
\[
   B_i = \{ x \in B : F(x,1) \in A_i \}.
\]
Each $B_i$ is contracted within $C$ to $P$: first via the homotopy $F$ to $A_i$, then to $P$ by the homotopy that contracts $A_i$ to $P$.
The required inequality follows from the minimality of the number of such subsets in the definition of $P$-category.

\paragraph{Property 4.}
Let $A$ and $P$ be closed subsets of a manifold $M$.
The $P$-category of $A$ relative to $M$ admits the following lower bound:
\[
   \Pcat_M A \;\geq\; \frac{\mathrm{long}_M A + 1}{\mathrm{long}_M P + 1},
\]
where $\mathrm{long}_M A$ and $\mathrm{long}_M P$ are the lengths \cite{frolov-elsgolts-1939} of $A$ and $P$ in $M$.

\textit{Proof.}
Let $\Pcat_M A = k$, that is, there is a covering of $A$ by closed subsets $A_1, \ldots, A_k$, each contracted within $M$ to $P$.
By Property~3 of length \cite[p.\,565]{frolov-elsgolts-1939}, the length of each categorical subset does not exceed the length of $P$ in $M$:
\[
   \mathrm{long}_M A_i \;\leq\; \mathrm{long}_M P, \quad i=1,\ldots,k.
\]
From this inequality and the properties of length~\cite{frolov-elsgolts-1939} we obtain
\[
   \mathrm{long}_M A
   = \mathrm{long}_M \bigcup_{i=1}^{k} A_i
   \;\leq\; \sum_{i=1}^{k} \mathrm{long}_M A_i + k - 1
   \;\leq\; k\cdot\mathrm{long}_M P + k - 1,
\]
from which the required inequality follows.

\begin{corollary}
If, with the notation above, $A$ coincides with $M$ and $P$ is a closed submanifold of $M$, then
\[
   \Pcat M \;\geq\; \frac{\mathrm{long}\, M + 1}{\mathrm{long}\, P + 1}.
\]
\end{corollary}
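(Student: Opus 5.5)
The corollary is Property~4 applied with $A = M$, so I would specialise rather than reprove. Take $A = M$ and the given closed submanifold $P \subset M$. Both are closed subsets of $M$, so the hypotheses of Property~4 hold. The $P$-category of $M$ relative to itself is by definition $\Pcat M$, so the left-hand side $\Pcat_M M$ of Property~4 is exactly $\Pcat M$.

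Next I would identify the two lengths. The length of $M$ in $M$ is the length of the manifold, $\mathrm{long}_M M = \mathrm{long}\, M$, directly from the Frolov--Elsgolts definition. For $P$, the corollary writes $\mathrm{long}\, P$, while Property~4 gives $\mathrm{long}_M P$. Here I would adopt the convention that $\mathrm{long}\, P$ denotes the length of $P$ as a subset of $M$, i.e.\ $\mathrm{long}_M P$, and with that convention the displayed inequality follows immediately.

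The one step needing care is if $\mathrm{long}\, P$ is read intrinsically, as the length of the manifold $P$ itself. In that case I would show $\mathrm{long}_M P \le \mathrm{long}\, P$ and then substitute, since a smaller denominator only strengthens the bound. Since I cannot see Frolov--Elsgolts' exact definition, I argue cohomologically as in Definition~1.3. Suppose classes $a_1,\dots,a_q$ of positive degree in $H^*(M)$ have a product that is nonzero on $P$. Then the restrictions $i^*a_1,\dots,i^*a_q$ have positive degree and nonzero product in $H^*(P)$, because $i^*$ is a ring homomorphism. Hence $\mathrm{long}_M P \le \mathrm{long}\, P$. Combining this with Property~4 gives
\[
\Pcat M \;\geq\; \frac{\mathrm{long}\, M + 1}{\mathrm{long}_M P + 1} \;\geq\; \frac{\mathrm{long}\, M + 1}{\mathrm{long}\, P + 1}.
\]

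The main obstacle is this comparison between relative and intrinsic length. It depends on the homological (cycle-intersection) formulation of length, and I would check it against the cohomological reformulation. Everything else is direct substitution into Property~4.
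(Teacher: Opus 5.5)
Your plan matches the paper's, which states the corollary as Property~4 with $A=M$ and gives no further argument. The problem is that Property~4 is false when read with the relative length $\mathrm{long}_M P$ of a fixed $P$. So your first convention turns the corollary into a false statement, and the first inequality in your final chain also fails.

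By Definition~\ref{def:1.1}, a categorical set $A_i$ only deforms onto some $P_i$ \emph{homeomorphic} to $P$. The deformation property of length therefore gives $\mathrm{long}_M A_i\le \mathrm{long}_M P_i$, not $\mathrm{long}_M A_i\le \mathrm{long}_M P$, and relative length depends on the embedding. Concretely, let $M=T^2$ and let $P$ be a circle bounding a disc $D\subset T^2$:
\begin{itemize}
\item Every positive-degree class of $T^2$ restricts to zero on $P$, since restriction factors through $H^*(D)$. So $\mathrm{long}_M P=0$, while $\mathrm{long}\,T^2=2$, and the inequality with $\mathrm{long}_M P$ would force $\Pcat T^2\ge 3$.
\item But $T^2=(S^1\times I_1)\cup(S^1\times I_2)$ with $I_1,I_2$ closed arcs covering $S^1$. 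Each piece deforms onto a circle homeomorphic to $P$, so $\Pcat T^2\le 2$, as in Proposition~\ref{prop:1.11} and Corollary~\ref{cor:1.15}.
\end{itemize}
Hence only the intrinsic reading of $\mathrm{long}\,P$ can be meant. Under that reading the conclusion is true, but it cannot be obtained by quoting Property~4.

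The missing step is your own comparison lemma, applied to each $P_i$ inside the proof of Property~4, not to $P$ afterwards. If $F_i$ deforms $A_i$ into $P_i$, the inclusion $A_i\hookrightarrow M$ is homotopic to $A_i\xrightarrow{F_i(\cdot,1)}P_i\hookrightarrow M$. So restriction to $A_i$ factors through $H^*(P_i)$. A product $a_1\cdots a_q$ of positive-degree classes of $M$ that is nonzero on $A_i$ is then nonzero on $P_i$, i.e.\ the positive-degree classes $a_j|_{P_i}$ have nonzero product in $H^*(P_i)$. This gives $\mathrm{long}_M A_i\le \mathrm{long}\,P_i=\mathrm{long}\,P$, the equality by homeomorphism invariance. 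Subadditivity then yields $\mathrm{long}\,M\le k\,\mathrm{long}\,P+k-1$ for any cover by $k$ categorical sets, which is the corollary.

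Fix one coefficient ring for both $M$ and $P$ throughout. With the orientation-dependent choice of Definition~\ref{def:1.3} ($\R$ on an orientable $M$, $\Z_2$ on a non-orientable $P$), the restriction map does not relate the two cohomology rings, and the comparison is not automatic.
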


In particular, when $P$ is a point, the well-known estimate of the Lyusternik--Shnirelman category of $M$
\[
   \cat M \;\geq\; \mathrm{long}\, M + 1
\]
coincides with the bound above.

\section{$P$-length of a manifold}

\begin{definition}[1.3]\label{def:1.3}
Let $m$ be the dimension of a manifold $M$ and let $p$ be a non-negative integer.
Let $H^*(M; A)$ be the cohomology ring of $M$, where $A = \Z$ or $\R$ when $M$ is orientable, and $A = \Z_2$ when $M$ is non-orientable.
Consider all integers $q$ such that in the cohomology ring $H^*(M; A)$ there exist elements $a_1, \ldots, a_q$ satisfying:
\begin{enumerate}\setlength{\itemsep}{0pt}
\item[1)] $\dim a_i > 0$, $i = 1, \ldots, q$;
\item[2)] the product $a_1 \cdots a_q$ (with respect to the multiplication in $H^*(M; A)$) is non-zero;
\item[3)] $\dim (a_1 \cdots a_q) \leq m - p$.
\end{enumerate}
The \emph{cohomological $p$-length} of $M$ is the largest such number $q$.
\end{definition}

Since for connected compact manifolds Poincaré duality gives an isomorphism
\[
   D\colon H^k(M) \to H_{n-k}(M)
\]
between cohomologies and homologies, we formulate the dual definition of $p$-length.

\begin{definition}[1.3$'$]\label{def:1.3prime}
With the notation of Definition~\ref{def:1.3}, the \emph{homological $p$-length} of $M$ is the largest number $q$ of cycles $c_1, \ldots, c_q$ of $M$ satisfying:
\begin{enumerate}\setlength{\itemsep}{0pt}
\item[$1')$] $\dim c_i < m$, $i=1,\ldots,q$;
\item[$2')$] $\bigcap_{i=1}^{q} c_i = c$ is a cycle not homologous to zero;
\item[$3')$] $\dim c \geq p$.
\end{enumerate}
\end{definition}

The (co)homological $p$-length of $M$ is called the \emph{$p$-length} of $M$ and is denoted by $\mathrm{long}^p M$.

\begin{remark}[1.4]\label{rem:1.4}
None of the cycles $c_1, \ldots, c_q$ in the definition of $p$-length may contain another, otherwise the definition would lose meaning, since one would have to consider only manifolds of infinite length.

If $M$ has a boundary, then in the definition of $p$-length only absolute cycles of $M$ are considered.
\end{remark}

\begin{theorem}[1.5]\label{thm:1.5}
If $M$ is a manifold without boundary, then its $P$-category is at least one more than the $p$-length of $M$:
\[
   \Pcat M \;\geq\; \mathrm{long}^{\,p} M + 1.
\]
If $M$ is a manifold with boundary, then its $P$-category is at least the $p$-length of $M$:
\[
   \Pcat M \;\geq\; \mathrm{long}^{\,p} M.
\]
\end{theorem}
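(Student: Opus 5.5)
The plan is to adapt the classical cup-length argument for the Lyusternik--Shnirelman category, using the extra room in degree $m-p$ that comes from the categorical sets being contracted onto a copy of $P$, of dimension $p$, rather than onto a point. Throughout, $P$ is taken to be a closed $p$-dimensional manifold. Let $\Pcat M = k$ with categorical closed sets $A_1,\dots,A_k$, and homotopies $F_i$ deforming $A_i$ into $P_i \cong P$. Suppose $a_1,\dots,a_q$ realise $\mathrm{long}^p M = q$, so that $\dim a_j>0$, $a_1\cdots a_q\neq 0$ and $\dim(a_1\cdots a_q)\le m-p$. We need to show $k\ge q+1$ when $\partial M=\emptyset$, and $k\ge q$ when $\partial M\ne\emptyset$.

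\textbf{Step 1: local vanishing.} The sets $A_i$ are closed, so we first thicken each to an open neighbourhood $U_i$ that still deforms into a small neighbourhood $V_i$ of $P_i$. On a manifold this can be done by the usual ENR/collar argument, and replacing $P_i$ by a tubular neighbourhood changes nothing cohomologically. Then the restriction $H^*(M)\to H^*(U_i)$ factors through $H^*(V_i)\cong H^*(P_i)$.

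\textbf{Step 2: the key combinatorial point, the main obstacle.} We need one of the factors to vanish on all but at most one of the $U_i$, with the leftover factor living in degree $\ge p$. The plan is to use the homological form, Definition~1.3$'$: the cycles $c_j$ Poincaré-dual to the $a_j$ have $\dim c_j<m$, and their intersection $c$ is non-trivial with $\dim c\ge p$. For $k\le q$ in the closed case, we use the lemma that a cycle of codimension $>0$ can be pushed off a set which deforms into a manifold of dimension $p$ whenever the intersection dimension exceeds that allowed. Concretely, the cohomology of $U_i$ vanishes above degree $p$, so any class of degree $>p$ restricts to zero on $U_i$. For classes of low degree we use the genuine vanishing given by the relative lift $a_j\in H^*(M,U_j)$, which exists whenever $a_j|_{P_j}=0$.

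I expect to have to group the factors. Split the product $a_1\cdots a_q$ into $k$ blocks $b_1,\dots,b_k$, with $b_i$ assigned to $U_i$, so that each block restricts to zero on its $U_i$. A block restricts to zero on $U_i$ either because it has degree greater than $p$ (by Step 1), or because one of its factors lifts relatively. Then the product $b_1\cdots b_k$ lifts to $H^*(M,\bigcup U_i)=H^*(M,M)=0$, which contradicts $a_1\cdots a_q\neq0$. The hard part is arranging the blocks. Here condition 3) is used: the total degree is $\le m-p$. In the closed case, Poincaré duality provides a complementary class $a_0$ of degree $\ge p$ with $a_0a_1\cdots a_q\ne0$ in top degree $m$. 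This gives $q+1$ factors, and when $k\le q$ one can distribute them into $k$ blocks each vanishing on its $U_i$; this yields $k\ge q+1$.

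\textbf{Step 3: boundary case.} When $\partial M\neq\emptyset$, Remark~1.4 restricts us to absolute cycles, so the dual class $a_0$ is no longer available. Top-degree classes vanish, so the extra factor is lost. Running the same argument with only $a_1,\dots,a_q$ gives $k\ge q$.

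I would first try to make the block distribution in Step 2 rigorous by a pigeonhole argument on degrees. If that fails, I would fall back on the homological formulation: make the $c_j$ transverse to the $P_i$ and count dimensions directly.
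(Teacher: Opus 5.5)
\textbf{Gap: the block distribution in Step~2 fails, and cannot be repaired, because the inequality is false as stated.}

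The copies $P_i$ are chosen by the covering, not by you. So the only general reason a class restricts to zero on $A_i$ is that its degree exceeds $p$, since the restriction factors through $H^*(P_i)$. Your alternative, a relative lift of $a_j$ ``whenever $a_j|_{P_j}=0$'', has nothing forcing its hypothesis: a class of degree $\le p$ can restrict nontrivially to a copy of $P$. Every block must therefore have degree at least $p+1$. Since $a_0a_1\cdots a_q$ has total degree $m$, you can form only about $m/(p+1)$ such blocks, and condition 3) gives no control over this.

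Here is a counterexample. Take $M=\mathbb{C}P^3$, $P=\mathbb{C}P^1$, $p=2$, $m=6$. For the generator $x\in H^2(M)$, the product $x\cdot x\neq 0$ has degree $4=m-p$, so $\mathrm{long}^{2}M=2$. Now let $A_1=\{|z_2|^2+|z_3|^2\le|z_0|^2+|z_1|^2\}$ and $A_2=\{|z_2|^2+|z_3|^2\ge|z_0|^2+|z_1|^2\}$. These closed sets cover $M$. The maps $[z_0:z_1:tz_2:tz_3]$ and $[tz_0:tz_1:z_2:z_3]$, with $t$ running from $1$ to $0$, deform them onto the lines $\{z_2=z_3=0\}$ and $\{z_0=z_1=0\}$. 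Hence $\Pcat M\le 2<3=\mathrm{long}^{2}M+1$.

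In your bookkeeping, $a_0a_1a_2=x^3$ has three factors of degree exactly $p$, and these cannot be grouped into two blocks of degree $>p$. More generally, the $\mathbb{C}P^1$-category of $\mathbb{C}P^{2r-1}$ is at most $r$, while its $2$-length is $2r-2$.

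The paper's own proof breaks at the corresponding point, in homological language. From the vanishing of $H_l(A_i)\to H_l(M)$ for $l>p$, it concludes that each cycle $c_i$ with $\dim c_i>p$ can be moved off $A_i$. But injectivity of $j_*\colon H_l(M)\to H_l(M,A_i)$ says nothing about supports. By Alexander--Lefschetz duality, $c_i$ can be moved off $A_i$ exactly when its dual class restricts to zero on $A_i$. That is guaranteed only when $\operatorname{codim} c_i>p$, not when $\dim c_i>p$: a cycle homologous to a hyperplane in $\mathbb{C}P^3$ has intersection number $1$ with the line inside $A_1$. So there is no missing trick to borrow from the paper.

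What your framework does prove is a corrected statement, in which Definition 1.3 additionally requires $\dim a_i>p$ for every $i$. This uses, in Step~1, that it is the image of $H^*(M)\to H^*(U_i)$, not $H^*(U_i)$ itself, that vanishes above degree $p$. If $\Pcat M=k\le q$, then $a_1\cdots a_k$, which is nonzero as a factor of $a_1\cdots a_q$, would lift to $H^*(M,U_1\cup\dots\cup U_k)=0$. This gives $\Pcat M\ge q+1$ with or without boundary, and needs neither the dual class $a_0$ nor any block distribution.
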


\textit{Proof.} Consider cycles $c_1, \ldots, c_q$ in $H_*(M)$ of dimension less than the dimension $m$ of $M$, such that their intersection is a cycle not homologous to zero of dimension at least $p$, and
\[
   q = \mathrm{long}^{\,p} M.
\]

Suppose
\[
   \Pcat M = s \;\leq\; q.
\]
Then there exist closed subsets $A_1, \ldots, A_s$ in $M$, contracting to $P$, whose union is $M$.
Without loss of generality we may assume that $s = q$ and
\[
   M = \bigcup_{i=1}^{q} A_i.
\]
It suffices, if $s < q$, to add $q - s$ submanifolds $P$ to the collection $\{A_i\}_{i=1}^{s}$.
If $q = 1$, that is, $\mathrm{long}^{\,p} M = 1$, then conditions $1')$ and $3')$ of Definition~\ref{def:1.3prime} imply that $p < m$.
The $P$-category of $M$ cannot equal one, since this would mean that $M$ contracts within itself to a subset of dimension $p$ less than $m$.

Hence $\Pcat M$ must be at least two, that is, in the case $q = 1$ the statement of the theorem holds.
From now on we assume $q \geq 2$.
In this case condition $3')$ of the definition of $p$-length implies that each cycle $c_1, \ldots, c_q$ has dimension greater than $p$:
\[
   \dim c_i > p.
\]

Associate to each cycle $c_i$ the set $A_i$.
Since each $A_i$ contracts to $P$, the exact homology sequence of the pair
\[
   \ldots \to H_l(A_i) \xrightarrow{i_*} H_l(M)
   \xrightarrow{j_*} H_l(M, A_i) \to \ldots
\]
gives, for $l > p$, $i_* = 0$, that is, $j_*$ is a monomorphism, which is equivalent to the equality
\[
   H_l(M) = H_l(M, A_i) \quad \text{for } l > p.
\]
This means that each cycle $c_i$ is homologous to a cycle
\[
   \gamma_i \subset H_l(M, A_i),
\]
that is, the support of $\gamma_i$ lies in $M \setminus A_i$.
In other words, each cycle $\gamma_i$ can be ``moved off'' the subset $A_i$.
Moving off all cycles $\gamma_i$ in this way and taking their intersection
\[
   0 \neq c = c_1 \cap \ldots \cap c_q \sim \gamma_1 \cap \ldots \cap \gamma_q = \gamma,
\]
we obtain that the non-zero cycle $\gamma$ would have to belong to the empty set, since
\[
   \gamma \subset (M \setminus A_1) \cap \ldots \cap (M \setminus A_q)
   = M \setminus \bigcup_{i=1}^{q} A_i = \emptyset.
\]

In the case of a manifold with boundary, suppose
\[
   \Pcat M = s \;\leq\; q - 1.
\]
Without loss of generality $s = q - 1$ and
\[
   M = \bigcup_{i=1}^{q-1} A_i.
\]
Analogously to the previous case, we move $q-1$ cycles $c_i$ off the subsets $A_i$, $i = 1, \ldots, q-1$.
The cycle
\[
   \gamma = \bigcap_{i=1}^{q-1} \gamma_i
\]
must then lie in the boundary $\partial M$.
But the absolute cycle $c_q$ and the cycle $\gamma$, placed in general position, do not intersect:
\[
   0 \neq c = c_1 \cap \ldots \cap c_{q-1} \cap c_q
   \sim \gamma_1 \cap \ldots \cap \gamma_{q-1} \cap c_q
   = \gamma \cap c_q = \emptyset,
\]
that is, a non-zero cycle would have to lie in the empty set.
This contradiction completes the proof.
\qed

Let us give examples of computing the $p$-length of manifolds.

\begin{proposition}[1.6]\label{prop:1.6}
Let $(N; \partial N)$ be obtained from the sphere $S^2$ by removing a finite number $s$ of open discs:
\[
   (N; \partial N) = \bigl(S^2 \setminus \bigcup_{i=1}^{s} D_i^2;\; \bigcup_{i=1}^{s} S_i^1\bigr),
\]
and let $(L; \partial L)$ be obtained from the projective plane $\R P^2$ by removing a finite number $k$ of open discs:
\[
   (L; \partial L) = \bigl(\R P^2 \setminus \bigcup_{i=1}^{k} D_i^2;\; \bigcup_{i=1}^{k} S_i^1\bigr).
\]
For $p = 0$ the $p$-length of a two-dimensional compact connected manifold equals:
\begin{enumerate}\setlength{\itemsep}{0pt}
\item one, if the manifold is homeomorphic to $(N; \partial N)$, to $(L; \partial L)$, or to the sphere $S^2$;
\item two, on the other manifolds.
\end{enumerate}
For $p = 1$ the $p$-length of a two-dimensional compact connected manifold equals:
\begin{enumerate}\setlength{\itemsep}{0pt}
\item zero, if the manifold is homeomorphic to the sphere $S^2$ or to the disc $D^2$;
\item one, on the other manifolds.
\end{enumerate}
\end{proposition}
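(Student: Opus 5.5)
The plan is to compute the $p$-length directly from the cohomology rings of the surfaces, using the classification of compact connected surfaces. Take coefficients as in Definition~\ref{def:1.3}: $\Z$ or $\R$ in the orientable case, $\Z_2$ in the non-orientable case. Every product of $q\geq 1$ positive-dimensional classes lies in degree at least $q$. Since cohomology vanishes above degree $2$, the $p$-length is always at most $2$. Condition 3) then reads $\dim(a_1\cdots a_q)\le 2-p$.

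For $p=0$ the task is to decide whether some product $a_1a_2$ of two degree-one classes is non-zero in $H^2$.
\begin{itemize}
\item Closed orientable surfaces of genus $g\ge1$: the standard symplectic basis gives $a_1b_1\neq0$.
\item Closed non-orientable surfaces $\#_k\R P^2$ with $\Z_2$ coefficients: the squares $w_i^2$ are non-zero (for $\R P^2$ itself this is $w^2\ne0$). So every closed surface other than $S^2$ has $p$-length $2$; this includes $\R P^2$, which, unlike $S^2$, is not listed in item~1 of the proposition.
\item $S^2$: $H^1=0$, so the only candidate is a single degree-two class, and the $p$-length is $1$.
\item Surfaces with non-empty boundary: $H^2(M)=0$ because such a surface deformation retracts onto a graph. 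Hence no product of length $2$ survives, and the $p$-length is at most $1$. It equals $1$ exactly when $H^1\neq0$, i.e.\ for every bounded surface except the disc. For the disc it is $0$.
\end{itemize}
Here the direct cohomological computation disagrees with the statement in two places. It gives $2$ for $\R P^2$, and it gives only $1$ for bounded surfaces of positive genus, and $0$ for $D^2$. A bounded surface has no non-zero product of two positive-degree classes at all.

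I expect the authors mean the homological Definition~\ref{def:1.3prime} together with Remark~\ref{rem:1.4}. There $\R P^2$ would have to be treated with its non-vanishing of $w^2$ reinterpreted as intersections of absolute cycles on $M$. With boundary, the intersection $\bigcap c_i$ should be allowed to land in $H_0$ via genuine geometric intersection numbers (Lefschetz duality with $H^*(M,\partial M)$). For a bounded surface of genus $\ge1$, two absolute $1$-cycles $a,b$ meeting transversally once have intersection a non-zero point class. By contrast, in $N$ (a planar surface) and in $L$ (a Möbius band with holes), the intersection form on $H_1$ is zero over the relevant coefficients. In the Möbius case the core meets itself once, so I must check carefully that the statement indeed puts $L$ in the length-one class; this is a real concern, since the core self-intersects non-trivially mod $2$. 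In $N$ and $L$ a single cycle (a point or a boundary circle) still gives length $1$.

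So the concrete steps are as follows. First, fix the intersection-theoretic reading via Lefschetz duality $H_1(M)\times H_1(M)\to H_0$. Second, compute this pairing for each type in the classification. It vanishes identically exactly on $S^2$, $N$, $D^2$; I expect the $L$ and $\R P^2$ cases to need the Remark~\ref{rem:1.4} restriction that cycles must not contain one another, which excludes self-pairing $c\cap c$. Third, exhibit explicit pairs of transverse circles on the torus, the Klein bottle and the higher-genus surfaces.

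For $p=1$ the final cycle must have dimension $\ge1$. Since each $c_i$ has dimension less than $2$, already a single $c_i$ has dimension at most $1$, and the intersection of two $1$-cycles has dimension $0$. Hence the $p$-length is at most $1$. It equals $1$ iff there is a non-trivial $1$-cycle, i.e.\ $H_1\neq0$ with the chosen coefficients. That fails only for $S^2$ and $D^2$, and holds for $\R P^2$ with $\Z_2$ coefficients.

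The main obstacle is the $p=0$ case for $\R P^2$ and for $L$. There the Möbius core's mod-$2$ self-intersection is non-zero, and the intended exclusion has to come from Remark~\ref{rem:1.4}. I would first try to justify that restriction, then verify the classification case by case against it.
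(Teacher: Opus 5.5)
\textbf{Verdict: genuine gap in the $L$ case, and the statement itself is false there.}

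\textbf{What matches the paper.} Your switch from Definition~\ref{def:1.3} to the homological reading is the right one, and it is the route the paper takes. Its proof works only with absolute cycles in general position (Definition~\ref{def:1.3prime} with Remark~\ref{rem:1.4}). You are also right that the absolute cohomology ring gives different answers on bounded surfaces: $1$ rather than $2$ for the bounded surfaces of item~2, and $0$ rather than $1$ for $D^2$. The reason is that the relevant products live in $H^*(M,\partial M)$, via Lefschetz duality. Your upper bound $q\le 2$ and your $p=1$ argument ($q=1$ iff $H_1\neq 0$) coincide with the paper's.

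One misreading: $\R P^2$ is not a discrepancy. $S^2$ is listed separately from $N$, so the families $N$ and $L$ have at least one hole. The paper explicitly counts $\R P^2$ (a sphere with one Möbius band) among the closed surfaces of length two, in agreement with $w^2\neq 0$.

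\textbf{The gap.} You expect Remark~\ref{rem:1.4} to put $L$ into the length-one class; it cannot. That remark only forbids one cycle from containing another. It does not stop you from pairing a cycle with a transverse push-off of itself.
\begin{itemize}
\item In the Möbius band $[0,1]\times[-1,1]/(0,t)\sim(1,-t)$, which is $L$ with $k=1$, take the core $t=0$ and the curve $t=\varepsilon\cos\pi s$. They are distinct absolute $1$-cycles, neither contains the other, and they meet transversally in exactly one point. That point is non-zero in $H_0(L;\Z_2)\cong\Z_2$.
\item For general $k$, take two projective lines in $\R P^2$ and remove the discs away from them.
\end{itemize}
So under the paper's own definitions $\mathrm{long}^{\,0}L=2$.

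No natural extra rule rescues the claim. Any restriction that forbids pairing a class with itself would also drop $\R P^2$ to length one, contradicting the statement. This is because every length-two witness in $\R P^2$ pairs the unique non-zero class of $H_1(\R P^2;\Z_2)$ with itself. For $k\ge 2$ such a restriction would not even help: the core $\gamma$ and $\gamma+b$, with $b$ a boundary circle that is non-zero in $H_1(L;\Z_2)$, are non-homologous and still meet once mod~$2$.

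The paper handles $L$ only by the assertion, read off Fig.~\ref{fig:1.1}(a), that $1$-cycles in general position on $N$ and $L$ do not meet. That is true for $N$, whose intersection form vanishes, but false for $L$. So the $L$ part of item~1 for $p=0$ cannot be proved. What your intersection-form computation actually establishes is the proposition with $L$ moved to the length-two class: for $p=0$ the length is $1$ exactly for $S^2$ and the planar surfaces $N$ (including $D^2$), and $2$ for all other surfaces. The $p=1$ part stands as stated.
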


\textit{Proof.}
Let $M$ be the manifold given in the statement.
All cycles considered on $M$ are taken to be realised as simplicial subcomplexes in general position.

If $p = 0$, then the cycles $c_1, \ldots, c_q$, where $q$ equals the $p$-length, must be chosen among the zero-dimensional or one-dimensional cycles (by Property $(1)'$ of $p$-length).
If a zero-dimensional cycle appears among them, $q$ must equal one by Remark~\ref{rem:1.4}.
Hence
\[
   \mathrm{long}^{\,p} M \;\geq\; 1.
\]
In order to possibly increase the number of cycles satisfying the conditions of $p$-length, we consider one-dimensional cycles.

\begin{figure}[h]
\centering\begin{tabular}{ccc}
\includegraphics[width=4cm]{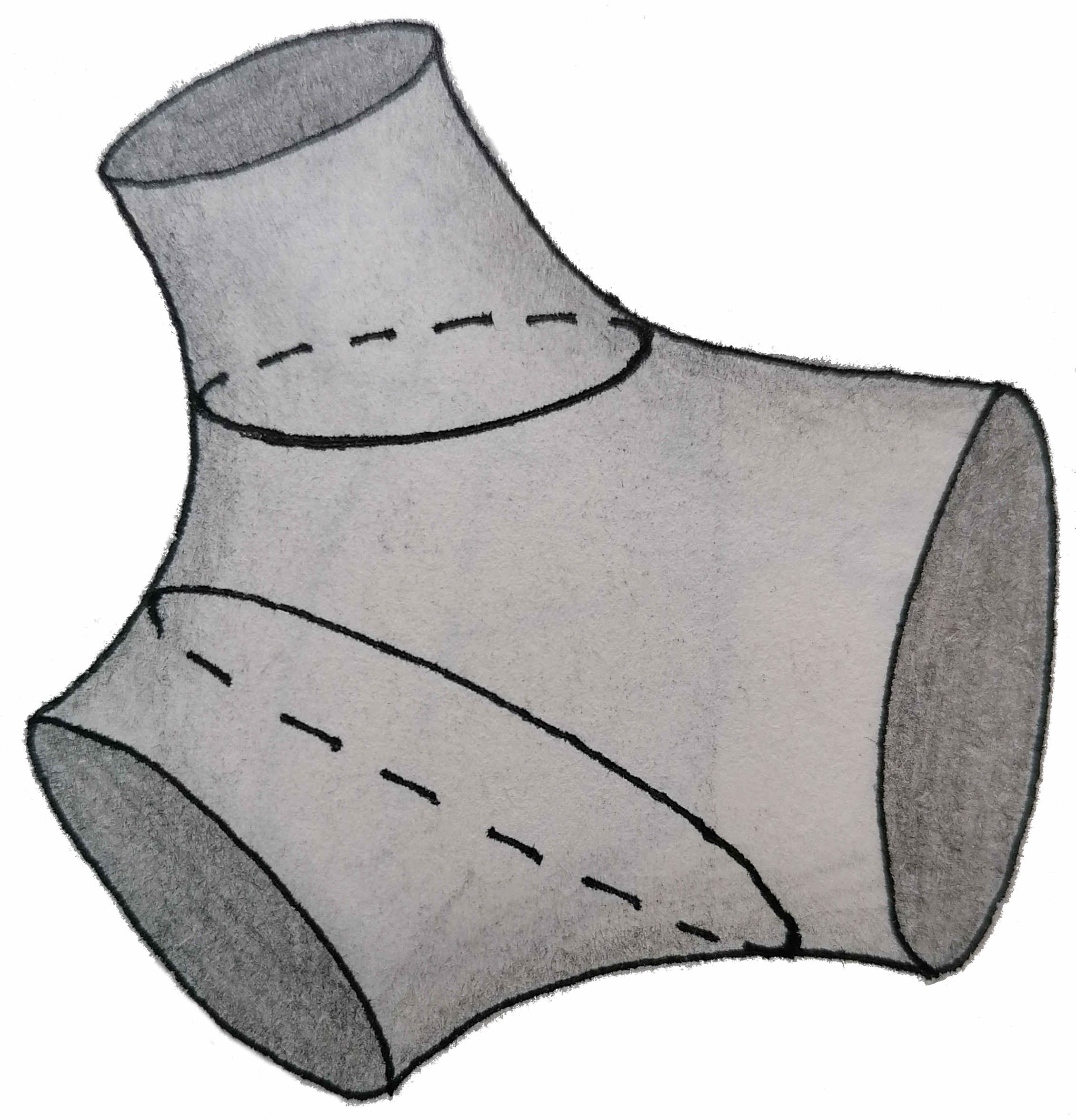} & \qquad\qquad &
\includegraphics[width=4cm]{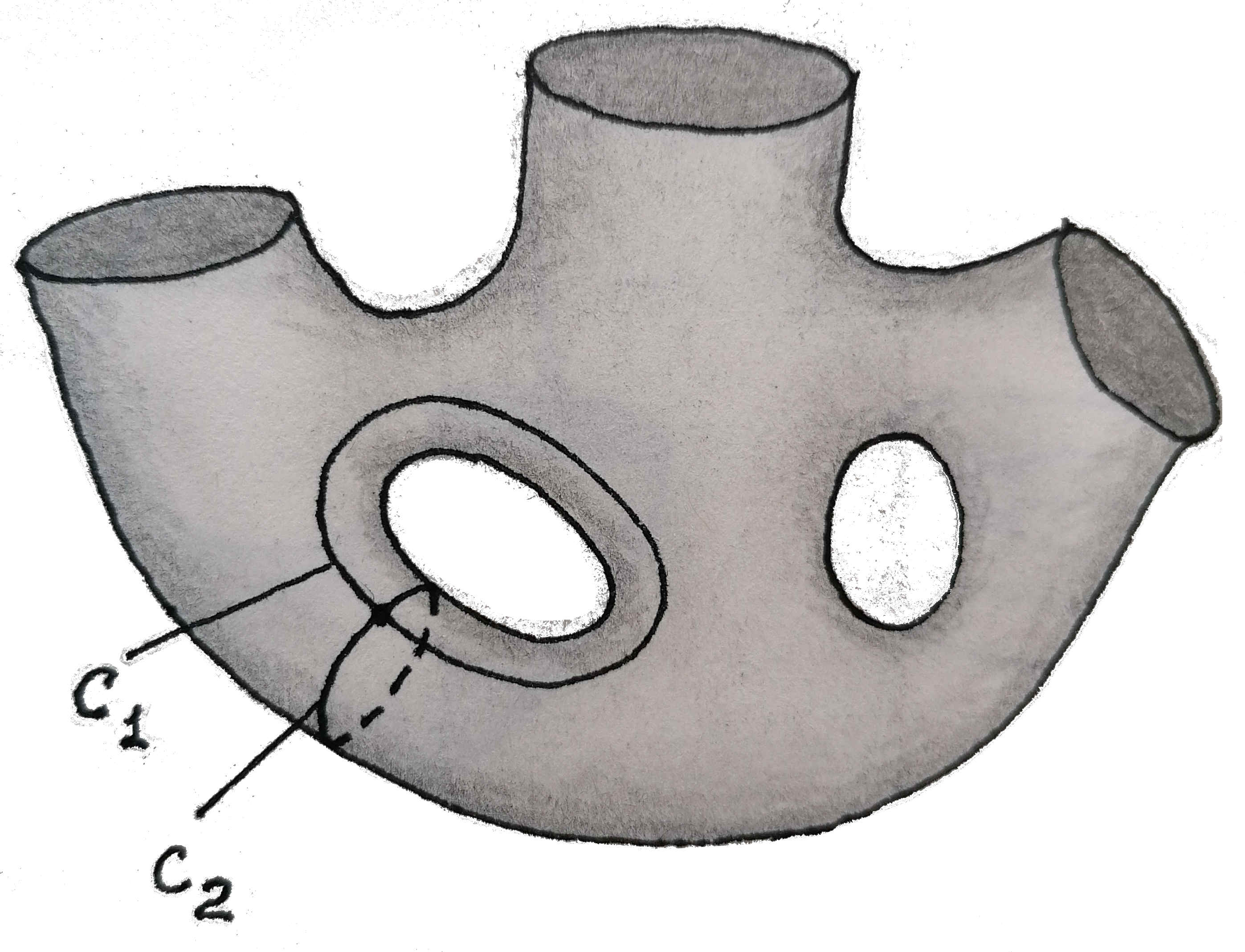} \\
(a) & & (b)
\end{tabular}
\caption{}\label{fig:1.1}
\end{figure}

In the first case, where $M$ is homeomorphic to $(N; \partial N)$ or $(L; \partial L)$, one-dimensional cycles in general position do not intersect (Fig.~1.1(a)), so $q = 1$.
On the other manifolds with boundary, the maximal number of one-dimensional cycles whose intersection is a cycle not homologous to zero equals two (Fig.~1.1(b)), that is, $q = 2$.

If $M$ is homeomorphic to the sphere $S^2$, then $H_1(M) = 0$, so the cycle $c$ in Definition~\ref{def:1.3prime} must be taken to be a zero-dimensional cycle.
This means $q = 1$.

If $M$ is a manifold without boundary which is not homeomorphic to $S^2$, then by the classification theorem of two-dimensional manifolds (see, e.g., Massey) it is homeomorphic either to a sphere $S^2$ with $k$ handles ($k > 0$) or to a sphere $S^2$ with $s$ Möbius bands ($s > 0$).
In this case the maximal number of one-dimensional cycles whose intersection is not homologous to zero equals two (recall that the cycles are in general position).

Figure~1.2 shows possible cycles in the orientable case (a) and non-orientable case (b).

\begin{figure}[h]
\centering\begin{tabular}{ccc}
\includegraphics[width=4cm]{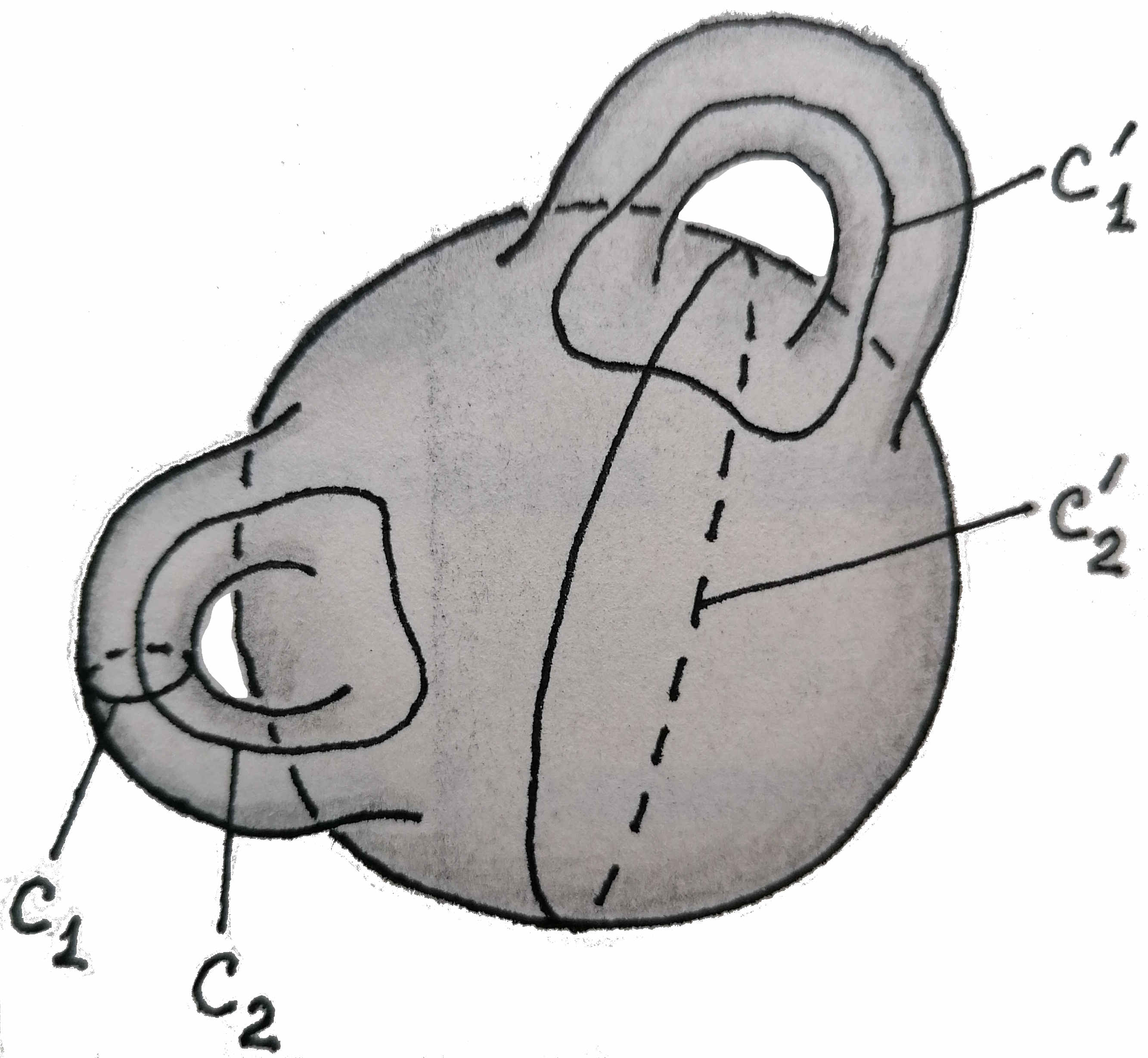} & \qquad\qquad &
\includegraphics[width=6cm]{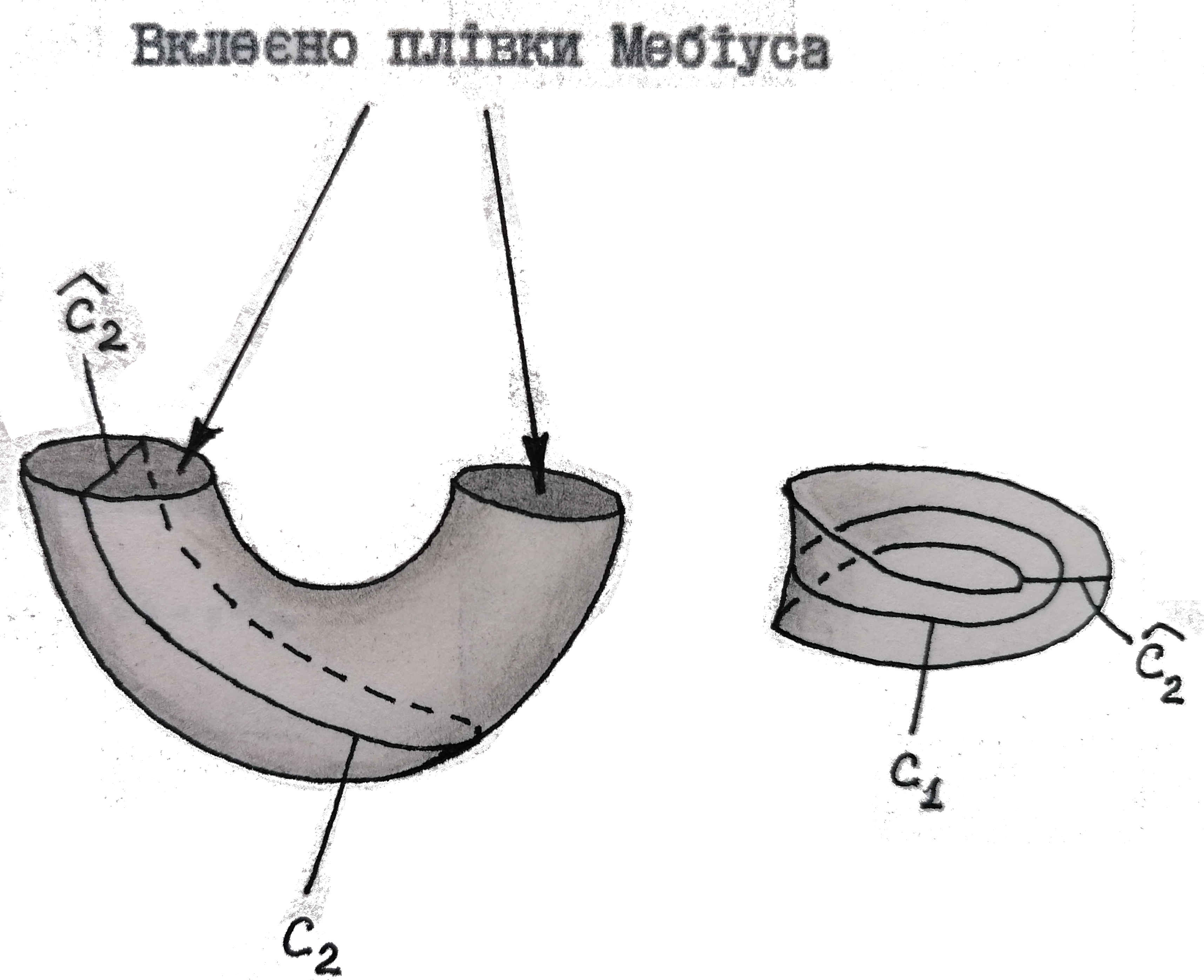} \\
(a) & & (b)
\end{tabular}
\caption{}\label{fig:1.2}
\end{figure}

If $p = 1$, then the cycles $c_1, \ldots, c_q$, where $q = \mathrm{long}^{\,p} M$, must be taken to be one-dimensional by the properties of $p$-length.
Moreover, their number must not exceed one; otherwise the intersection of the cycles could not have dimension greater than or equal to one.
Thus the $p$-length of the manifold equals one if there exists a one-dimensional cycle not homologous to zero on the manifold, and zero otherwise.
Since
\[
   H^1(S^2) = 0 \quad \text{and} \quad H_1(M) \neq 0
\]
for all other two-dimensional manifolds $M$ without boundary, the required statement follows.
\qed

\begin{theorem}[1.7]\label{thm:1.7}
Let
\[
   M = S^{k_1} \times S^{k_2} \times \ldots \times S^{k_r}
\]
be a product of $k_i$-dimensional spheres, $i = 1, \ldots, r$, and let $p$ be a non-negative integer.
Then
\[
   \mathrm{long}^{\,p} M \;\geq\; r - \min l(p),
\]
where the minimum is taken over all subsets
\[
   \bigl\{(k_{i_1}, k_{i_2}, \ldots, k_{i_l}) \subset (k_1, k_2, \ldots, k_r) :
      \textstyle\sum_{j=1}^{l} k_{i_j} \geq p\bigr\}.
\]
\end{theorem}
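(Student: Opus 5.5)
We exhibit explicit cohomology classes. By the Künneth formula (valid over $\Z$, since each $H^*(S^{k_i};\Z)$ is free), the cohomology ring of $M$ is the tensor product of the rings of the factors:
\[
   H^*(M;\Z) \cong \bigotimes_{i=1}^{r} H^*(S^{k_i};\Z)
   = \bigotimes_{i=1}^{r} \Lambda[x_i], \qquad \dim x_i = k_i,
\]
where $x_i = \pi_i^*(\sigma_i)$ is the pull-back of the fundamental class $\sigma_i$ of the $i$-th sphere under the projection $\pi_i$. The key fact is that for any set of distinct indices $J\subset\{1,\ldots,r\}$ the product $\prod_{j\in J} x_j$ is non-zero. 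It is, up to sign, the generator of the corresponding Künneth summand $\bigotimes_{j\in J} H^{k_j}(S^{k_j})\otimes\bigotimes_{j\notin J}H^0(S^{k_j})$, which is a free summand of rank one. Its dimension is $\sum_{j\in J}k_j$. We take $A=\Z$; $M$ is orientable.

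Now choose a subset $\{k_{i_1},\ldots,k_{i_l}\}$ realizing the minimum $l=\min l(p)$, so that $\sum_j k_{i_j}\ge p$. Let $J$ be the complementary set of indices, so $|J| = r-l$. Take $a_1,\ldots,a_{r-l}$ to be the classes $x_j$, $j\in J$, and check the three conditions of Definition~\ref{def:1.3}:
\begin{enumerate}
\item each $\dim a_i = k_j >0$, since we assume $k_i\ge1$ for the spheres;
\item the product is non-zero by the key fact above;
\item its dimension is
\[
   \sum_{j\in J}k_j = m - \sum_{s=1}^{l} k_{i_s} \le m-p,
\]
where $m=\sum k_i$.
\end{enumerate}
Hence $\mathrm{long}^{\,p}M\ge r-l$. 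If the minimizing subset is empty (the case $p=0$), all $r$ classes are used. If no subset satisfies $\sum k_{i_j}\ge p$, i.e.\ $p>m$, then the minimum is over the empty family and the statement is vacuous. We can also note the dual picture from Definition~\ref{def:1.3prime}: the cycles $c_j = \pi_j^{-1}(\mathrm{pt})$, $j\in J$, intersect transversally in the non-trivial cycle $\prod_{s} S^{k_{i_s}}\times\mathrm{pt}$, of dimension $\ge p$. This is consistent with Remark~\ref{rem:1.4}, since no such cycle contains another.

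The only point needing care is the non-vanishing of the product. It relies on Künneth being a ring isomorphism with torsion-free factors, and on the sign conventions of graded commutativity only affecting the result by $\pm1$. Everything else is bookkeeping, so I expect no serious obstacle for this inequality. The reverse inequality, needed for equality as stated in the Introduction, would require showing that a non-zero product of positive-degree classes is essentially a product of distinct $x_i$'s. That holds since $x_i^2=0$ and products of sums expand into such monomials, but it is not required here.
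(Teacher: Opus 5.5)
Your proof is correct. It makes the same choice as the paper: discard a minimizing set of factors and use the $r-l$ remaining ones, whose cup product is Poincar\'e dual to the subproduct $Q=\prod_s S^{k_{i_s}}$.

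The difference is in the language and in how non-vanishing is verified.
\begin{itemize}
\item The paper works with the homological Definition~\ref{def:1.3prime}. It exhibits $r-l$ cycles meeting in $Q$ and treats the non-triviality of that intersection as geometrically evident.
\item You work with Definition~\ref{def:1.3} and get $\prod_{j\in J}x_j\neq0$ from the K\"unneth ring isomorphism. This is a cleaner and fully rigorous check.
\end{itemize}

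Your dual cycles $c_j=\pi_j^{-1}(\mathrm{pt})$, i.e.\ $Q$ times all remaining factors but the $j$-th, are also the right reading of the paper's phrase ``multiplying $Q$ by each of the remaining factors''. Taken literally as $Q\times S^{k_j}$, those cycles are dual to $\prod_{i\in J\setminus\{j\}}x_i$. Each $x_i$ would then occur at least twice in the dual product, so the intersection product vanishes once $r-l\ge3$.

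The algebraic route also gives the reverse inequality, as you sketch, with no $P$-category input. This is worth having. The paper obtains equality from Theorems~\ref{thm:1.5} and~\ref{thm:1.14}, but Theorem~\ref{thm:1.5} cannot be relied on. Take $M=S^1\times S^1\times S^3$, $P=T^2$, $p=2$:
\begin{itemize}
\item $\Pcat M\le 2$, by covering $M$ with $T^2\times D^3_{\pm}$ as in Theorem~\ref{thm:1.14};
\item $x_1x_2\neq0$ has degree $2\le 5-2$, so $\mathrm{long}^{\,2}M\ge2$;
\item hence Theorem~\ref{thm:1.5} would force $\Pcat M\ge3$, a contradiction.
\end{itemize}

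Finally, your assumption $k_i\ge1$ is genuinely needed, not just convenient; the paper states it only in Theorem~\ref{thm:1.14}. For $M=S^0\times S^1$ and $p=0$ the bound would give $\mathrm{long}^{\,0}M\ge2$. But every product of two positive-degree classes vanishes there.
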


\textit{Proof.}
All cycles considered are in general position and realised as simplicial complexes.
From the set
\[
   \{k_1, \ldots, k_r\}
\]
choose $l$ elements
\[
   k_{i_1}, k_{i_2}, \ldots, k_{i_l},
\]
whose sum is at least $p$.
The corresponding submanifold
\[
   Q = S^{k_{i_1}} \times S^{k_{i_2}} \times \ldots \times S^{k_{i_l}}
\]
may be viewed as a cycle of dimension
\[
   \sum_{j=1}^{l} k_{i_j}.
\]
Multiplying $Q$ by each of the remaining factors of the product
\[
   S^{k_1} \times S^{k_2} \times \ldots \times S^{k_r},
\]
we obtain $r - l$ cycles $c_1, \ldots, c_{r-l}$ of $M$ satisfying:
\begin{enumerate}\setlength{\itemsep}{0pt}
\item[1)] $\dim c_i < n = \sum_{i=1}^{r} k_i$, if $l + 1 \leq r$;
\item[2)] $\dim \bigcap_{i=1}^{r-l} c_i \geq p$, since $\bigcap_{i=1}^{r-l} c_i = Q$;
\item[3)] $\bigcap_{i=1}^{r-l} c_i = Q$ is a cycle not homologous to zero.
\end{enumerate}
Thus the cycles $c_1, \ldots, c_{r-l}$ satisfy conditions $1)'$--$3)'$ of the definition of $p$-length.
Taking the minimal number $l$ of elements from $\{k_1, \ldots, k_r\}$ having the required properties, the inequality
\[
   \mathrm{long}^{\,p} M \;\geq\; r - \min l(p)
\]
holds.
Then Theorem~\ref{thm:1.5} together with the estimate of $\Pcat M$ from Theorem~\ref{thm:1.14}
\[
   \Pcat M \;\leq\; r - l + 1
\]
yields the statement of the theorem.
\qed

\begin{corollary}[1.8]\label{cor:1.8}
Let
\[
   M = S^{k_1} \times S^{k_2} \times \ldots \times S^{k_r}
\]
be a product of $k_i$-dimensional spheres, $i = 1, \ldots, r$.
Then the (co)homological length of $M$ equals the number $r$ of factors:
\[
   \mathrm{long}\, M = r.
\]
\end{corollary}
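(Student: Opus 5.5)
The corollary is the case $p=0$ of Theorem~\ref{thm:1.7}, because $\mathrm{long}^{0}M$ is the classical length. I will prove the two inequalities $\mathrm{long}\,M \geq r$ and $\mathrm{long}\,M \leq r$ separately.

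For the lower bound I take $p=0$ in Theorem~\ref{thm:1.7}. The empty subset of $(k_1,\ldots,k_r)$ already has sum $0\geq p$, so $\min l(0)=0$ and the theorem gives $\mathrm{long}\,M\geq r$. Concretely, in cohomological form I use the generators $a_i\in H^{k_i}(M;\Z)$ pulled back from the fundamental classes of the factors $S^{k_i}$. By the Künneth formula, $a_1\cdots a_r$ is the fundamental class of $M$, which is nonzero. Each $a_i$ has positive degree, and condition~3) holds trivially for $p=0$. Hence $q=r$ is admissible. In homological language this amounts to intersecting the $r$ cycles $c_i = S^{k_1}\times\cdots\times\{pt\}\times\cdots\times S^{k_r}$, where the point sits in the $i$-th factor; their intersection is a point, which is not homologous to zero.

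For the upper bound I argue directly in the cohomology ring. By Künneth, $H^*(M;\Z)$ is the exterior-type algebra $\bigotimes_i H^*(S^{k_i})$. It is free abelian with basis the monomials $a_I=\prod_{i\in I}a_i$, $I\subset\{1,\ldots,r\}$, and $a_i^2=0$. Every element of positive degree is an integer combination of monomials $a_I$ with $I\neq\emptyset$. If I take $q\geq r+1$ such elements, their product expands into products of $q$ nonempty monomials. Any such product repeats some $a_i$ (pigeonhole on $\sum|I_j|\geq q>r$), so it is zero by graded commutativity and $a_i^2=0$. Therefore every product of $r+1$ positive-degree classes vanishes, and $\mathrm{long}\,M\leq r$. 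The same argument works over $\R$.

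As a cross-check, Theorem~\ref{thm:1.5} with $P$ a point together with Theorem~\ref{thm:1.13} also gives the upper bound: $r+1=\cat M\geq \mathrm{long}\,M+1$. The only real care point is the Künneth/ring-structure step, namely verifying that the product of the pulled-back generators is nonzero; this is standard.
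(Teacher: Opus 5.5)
Your proof is correct. Its lower-bound half is the paper's argument, since the paper proves Corollary~1.8 only by exhibiting $r$ cycles that meet in a point. You differ from the paper in two useful ways.

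First, the paper takes the factor spheres $S^{k_j}$ themselves as the cycles. Read homologically, this works only for $r=2$. For $r=1$ the cycle $S^{k_1}=M$ violates condition $1')$. For $r\ge 3$ the factor spheres meet in the base point only because they are not in general position; their intersection product vanishes for dimension reasons, since the expected dimension is $(2-r)n<0$. Your codimension-$k_i$ cycles $S^{k_1}\times\cdots\times\{\mathrm{pt}\}\times\cdots\times S^{k_r}$, Poincaré dual to the pulled-back generators $a_i$, are the right ones. The cohomological form $a_1\cdots a_r\neq 0$ in $H^n(M;\Z)$ makes that step rigorous.

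Second, the paper's proof never addresses $\mathrm{long}\,M\le r$. Your pigeonhole argument in $\bigotimes_i H^*(S^{k_i})$ supplies this direction algebraically and self-containedly: a product of $r+1$ homogeneous positive-degree classes expands into monomials that repeat some $a_i$, and these vanish because $a_i^2=0$ and the ring is graded commutative. Your cross-check $\mathrm{long}\,M+1\le\cat M\le r+1$ is also valid and non-circular. It uses Theorem~1.5 only for $P$ a point, where it is the classical cup-length bound, and only the upper half of Theorem~1.13, i.e.\ the explicit function $x_1+y_1+\cdots+z_1$. The lower half of Theorem~1.13 is itself deduced from Corollary~1.8.

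Both you and the paper tacitly need $k_i\ge 1$. An $S^0$ factor carries no positive-degree class, and the length then drops to the number of positive-dimensional factors.
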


\textit{Proof.}
Since $p = 0$, we may take as cycles forming the length of $M$ the spheres $S^{k_j}$, $j = 1, \ldots, r$.
Their intersection is a single point --- a cycle of dimension $0$ which is not homologous to zero.

The following proposition is an immediate consequence of Theorem~\ref{thm:1.7}.

\begin{proposition}[1.9]\label{prop:1.9}
The $p$-length of the $n$-dimensional torus is $p$ less than its dimension:
\[
   \mathrm{long}^{\,p} T^n = n - p.
\]
\end{proposition}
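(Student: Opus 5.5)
Proposition~\ref{prop:1.9} will be obtained as a direct specialisation of Theorem~\ref{thm:1.7}, taking into account the equality version announced in the Introduction, $\mathrm{long}^{\,p} M = r - \min l(p)$. The upper half of that equality comes from Theorem~\ref{thm:1.5} combined with the bound $\Pcat M \leq r - l + 1$ from Theorem~\ref{thm:1.14}. I will take $M = T^n = S^1 \times \ldots \times S^1$ with $r = n$ factors, all with $k_i = 1$, and I will assume $0 \leq p \leq n$. For $p > n$ no nonzero cycle of dimension $\geq p$ exists, so the statement only makes sense in the range $p \leq n$.

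First I compute $\min l(p)$. A subcollection $(k_{i_1},\ldots,k_{i_l})$ of ones has sum exactly $l$, so the condition $\sum k_{i_j} \geq p$ reads $l \geq p$. Hence $\min l(p) = p$, attained by any choice of $p$ circle factors, i.e.\ by the subtorus $Q = T^p$.

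For the lower bound, I follow the construction in the proof of Theorem~\ref{thm:1.7}. Multiplying $Q = T^p$ by each of the remaining $n-p$ circle factors gives the cycles $c_j = T^p \times S^1_{(j)}$, which are $(p+1)$-dimensional subtori. I prefer to check nontriviality cohomologically, via Definition~\ref{def:1.3}. Take $a_j = dx_{j}$, $j = 1,\ldots,n-p$, in $H^1(T^n)$ for the coordinates complementary to $Q$. Their product $dx_1\cdots dx_{n-p}$ is nonzero, since $H^*(T^n)$ is the exterior algebra on $n$ degree-one generators. It has dimension $n-p \leq n-p$, so condition 3) holds with equality. This gives $\mathrm{long}^{\,p} T^n \geq n-p$.

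For the upper bound, I argue directly in cohomology rather than leaning on the categorical estimate. Any elements $a_1,\ldots,a_q$ of positive degree have product of degree at least $q$. Condition 3) forces that degree to be $\leq n-p$, so $q \leq n-p$. This is the step needing most care: it must hold for all admissible coefficient rings and be consistent with Theorem~\ref{thm:1.7}'s equality. The degree count is coefficient-independent, so I expect no real obstacle. As a cross-check I note the alternative route: Theorem~\ref{thm:1.5} gives $\mathrm{long}^{\,p} T^n + 1 \leq \Pcat T^n$, and Corollary~\ref{cor:1.15} gives $\Pcat T^n = n-p+1$ when $P = T^p$; together these yield the same bound. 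Combining both bounds gives $\mathrm{long}^{\,p} T^n = n-p$.
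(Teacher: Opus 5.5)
Your proof is correct, and its upper bound follows a genuinely different route from the paper's. The paper proves Proposition~\ref{prop:1.9} in one line, as an immediate consequence of Theorem~\ref{thm:1.7} with all $k_i = 1$, so that $\min l(p) = p$. The equality in Theorem~\ref{thm:1.7} is itself obtained in two halves: the lower bound comes from explicit cycles, and the upper bound from Theorem~\ref{thm:1.5} combined with the covering estimate $\Pcat M \le r-l+1$ of Theorem~\ref{thm:1.14} (for tori, Corollary~\ref{cor:1.15} with $P = T^p$). That upper-bound route is exactly what you relegate to a cross-check.

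Your main upper bound is instead the degree count. A nonzero product of $q$ classes of positive degree has degree at least $q$, so condition 3) of Definition~\ref{def:1.3} forces $q \le m-p$. This holds for every $m$-manifold and every admissible coefficient ring. It therefore gives $\mathrm{long}^{\,p} M \le m-p$ in general and shows that the torus attains this universal bound. It needs neither Theorem~\ref{thm:1.5}, which rests on a geometric argument about moving cycles off the categorical sets, nor any explicit categorical covering. What the paper's route buys in exchange is the simultaneous statement that Theorem~\ref{thm:1.5} is sharp for tori, $T^p\text{-}\cat T^n = n-p+1$, about which the degree count says nothing.

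One sentence in your lower-bound paragraph should be removed or corrected: the cycles $c_j = T^p \times S^1_{(j)}$ do not work.
\begin{itemize}
\item For $n-p=1$, the only such cycle is $T^n$ itself, which violates condition~$1')$ of Definition~\ref{def:1.3prime}.
\item For $n-p \ge 3$, the Poincar\'e duals of any two of them share a factor $dx_k$, so their intersection product is zero.
\end{itemize}
Your classes $dx_j$ are dual instead to the codimension-one subtori $\{x_j = \mathrm{const}\}$, that is, to $T^p$ times all complementary circles except the $j$-th. These subtori meet transversally in a copy of $T^p$. Since your actual verification is cohomological, via the nonvanishing of $dx_1\cdots dx_{n-p}$ in the exterior algebra, the bound $\mathrm{long}^{\,p} T^n \ge n-p$ stands.
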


\section{Examples of computation of the $P$-category of manifolds}

\begin{proposition}[1.10]\label{prop:1.10}
The Lyusternik--Shnirelman category of a two-dimensional compact connected manifold with boundary, not homeomorphic to the disc $D^2$, equals two.
Moreover, the categorical sets may be chosen so as to intersect only along their boundaries, and so that each is homeomorphic to the disc $D^2$.
\end{proposition}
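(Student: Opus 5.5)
The plan is to establish a lower bound and an upper bound separately.

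\emph{Lower bound.} A compact connected surface $M$ with boundary, not homeomorphic to $D^2$, is not contractible. Its deformation retract is a wedge of $k\geq 1$ circles, since $H_1(M)\neq 0$. Hence a single closed set contractible in $M$ cannot cover $M$, because that would make the identity of $M$ null-homotopic. So $\cat M\geq 2$.

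\emph{Upper bound.} I will use the classification of surfaces with boundary. $M$ is a sphere with $g$ handles, or with $s$ crosscaps, from which $b\geq 1$ open discs have been removed. Such a surface is a regular neighbourhood of a wedge of circles: one disc $D_0$ (a $0$-handle) with finitely many $1$-handles $h_j\cong I\times I$ attached along pairs of arcs in $\partial D_0$, orientably or with a twist. I will then define the two categorical sets as follows:
\[
A_1 = D_0, \qquad A_2 = \bigcup_j h_j .
\]
$A_1$ is a disc, so it contracts to a point in itself. $A_2$ is a disjoint union of discs, and each can be contracted to a point inside $M$. We must make sure the whole of $A_2$ goes to a single point, as Definition~\ref{def:1.1} requires for $P$ a point. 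To do this I will first contract each $h_j$ to a point of its attaching arc in $\partial D_0$, and then move all these points through $D_0$ to one common point. The sets $A_1$ and $A_2$ meet only along the attaching arcs, which lie in their boundaries. Thus $\cat M = 2$.

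\emph{Disc requirement.} The proposition also asks that each categorical set be homeomorphic to $D^2$, whereas $A_2$ as defined is a disjoint union of discs. I will fix this by cutting $D_0$ into two pieces. I choose an arc $\alpha$ properly embedded in $D_0$ that separates the attaching regions of the handles so that each handle has one foot on each side; if that is not possible, I first slide handles. Equivalently, I can work in the model of $M$ as a $4g$-gon (or $2s$-gon) with holes, or as a band diagram. Then I set $A_1' = D_0' \cup \bigcup_j h_j$, where $D_0'$ is a thin collar strip of $\partial D_0$ joining consecutive handle feet in a chain. Arranging the feet in the cyclic order $1,1',2,2',\dots$, the union $A_1'$ of the handles and alternate boundary strips is a "zigzag" band, which is a disc. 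Its complement $A_2'$ is the rest of $D_0$, which is also a disc. Both are contractible, and they intersect only along their boundary arcs.

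\emph{Main obstacle.} The delicate point is this combinatorial arrangement: choosing the linking strips so that the union with the bands is a single disc and the remainder of $D_0$ stays connected and simply connected. This has to hold for twisted bands (crosscaps) and for the interleaved feet of handle pairs in the orientable case. I will handle it by reading the boundary word of the standard polygon model. The chain of strips must follow the cyclic order of the feet, and the twist of a band does not affect whether the union is a disc, only how it embeds.
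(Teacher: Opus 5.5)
You have a genuine gap in the second assertion. Your lower bound and your proof that $\cat M=2$ via $A_1=D_0$, $A_2=\bigcup_j h_j$ are correct, and more explicit than the paper's argument from Figs.~1.3--1.4. The gap is at the step you yourself call the main obstacle and then defer. Your only concrete construction assumes the $2k$ feet can be arranged in cyclic order $1,1',2,2',\dots,k,k'$, with handle slides as the fallback. For orientable surfaces of positive genus this is impossible:
\begin{itemize}
\item with $D_0$ oriented, every band of an orientable surface is untwisted;
\item untwisted bands whose feet are pairwise unlinked always give a planar surface;
\item slides do not change the surface.
\end{itemize}
So, for example, the feet of the punctured torus interleave as $a,b,a',b'$ in every such presentation. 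Your alternative condition on $\alpha$ (one foot of each band on each side) is not automatic either. It fails for the standard word $d_1d_1'd_2d_2'd_3d_3'$, where every block of three consecutive feet contains both feet of some band. ``Slide handles until it holds'' is a restatement of the claim, not an argument. As written, the disc decomposition is established only for presentations with adjacent feet, i.e.\ planar and crosscap surfaces.

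The zigzag idea does work once you read the chain off the standard word instead of forcing adjacent pairs. Take feet in cyclic order $a_1b_1a_1'b_1'\cdots a_gb_ga_g'b_g'\,c_1c_1'\cdots c_{b-1}c_{b-1}'$. Traverse the bands in the order $h_{a_1},h_{b_1},h_{a_2},\dots,h_{b_g},h_{c_1},\dots$, entering each band at its unprimed foot and leaving at its primed foot. Each exit foot is then adjacent on $\partial D_0$ to the next entry foot: $a_i'$ to $b_i$, $b_i'$ to $a_{i+1}$ (or to $c_1$), and $c_j'$ to $c_{j+1}$. Join these pairs by $k-1$ disjoint collar strips $s_1,\dots,s_{k-1}$. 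Each strip must overlap half of each of the two feet it joins, so that consecutive pieces share an arc rather than a single point; a strip lying only over the gap between feet meets the bands just at endpoints. With this choice:
\begin{itemize}
\item $A_1'$ consists of $2k-1$ discs glued in a path pattern along boundary arcs, hence is a disc;
\item $A_2'=\overline{D_0\setminus\bigcup_i s_i}$ is a disc;
\item $A_1'\cap A_2'$ consists of $k+1=2-\chi(M)$ arcs lying in both boundaries.
\end{itemize}
The non-orientable word $d_1d_1'\cdots d_sd_s'\,c_1c_1'\cdots$ is exactly your original adjacent-pair chain. With this step supplied, your band argument becomes a rigorous version of the paper's pictorial proof.
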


\textit{Proof.}
Since any two-dimensional compact connected manifold $M$ with boundary is homeomorphic to the sphere $S^2$ with a certain number of ``holes'' forming the boundary and a certain number of handles or Möbius bands, the covering of $M$ by two discs may be exhibited as follows.
An orientable manifold with a single hole is covered by two discs as shown in Fig.~1.3(a).

If the boundary of the manifold has several components, then the first categorical set is combined with the set that ``cuts the holes'' (Fig.~1.3(b)), so that the union is homeomorphic to a disc and the remaining set is also homeomorphic to the disc $D^2$.

\begin{figure}[h]
\centering\includegraphics[width=0.85\textwidth]{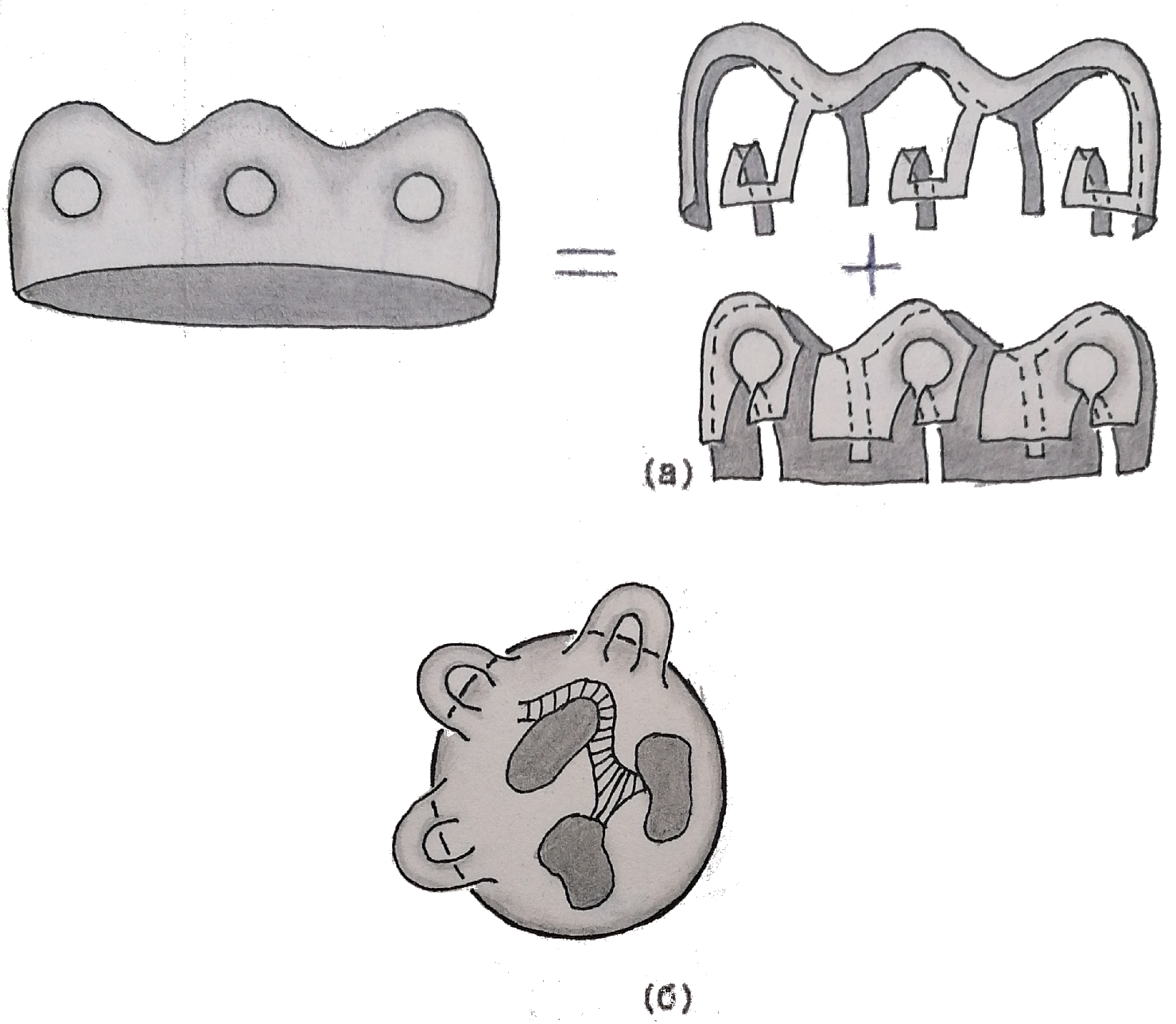}
\caption{}\label{fig:1.3}
\end{figure}

In the non-orientable case, a manifold with one boundary component is covered by the two required subsets as shown in Fig.~1.4.
Here we depict the sphere $S^2$ with three Möbius bands glued in and one ``hole''.

As the first categorical set we take the parts of the Möbius bands (Fig.~1.4(b)) glued into the ``holes'' 1, 2, 3, combined with the shaded portion of the manifold (Fig.~1.4(a)).
If the non-orientable manifold has more than one boundary component, then the first categorical set is combined with the set that ``cuts the holes'' (Fig.~1.3(b)) as is done for orientable manifolds.
This union is a subset homeomorphic to the disc $D^2$.
If this subset is removed from the manifold, the closure of the remaining subset is also homeomorphic to the disc $D^2$.

\begin{figure}[h]
\centering\begin{tabular}{ccc}
\includegraphics[width=6cm]{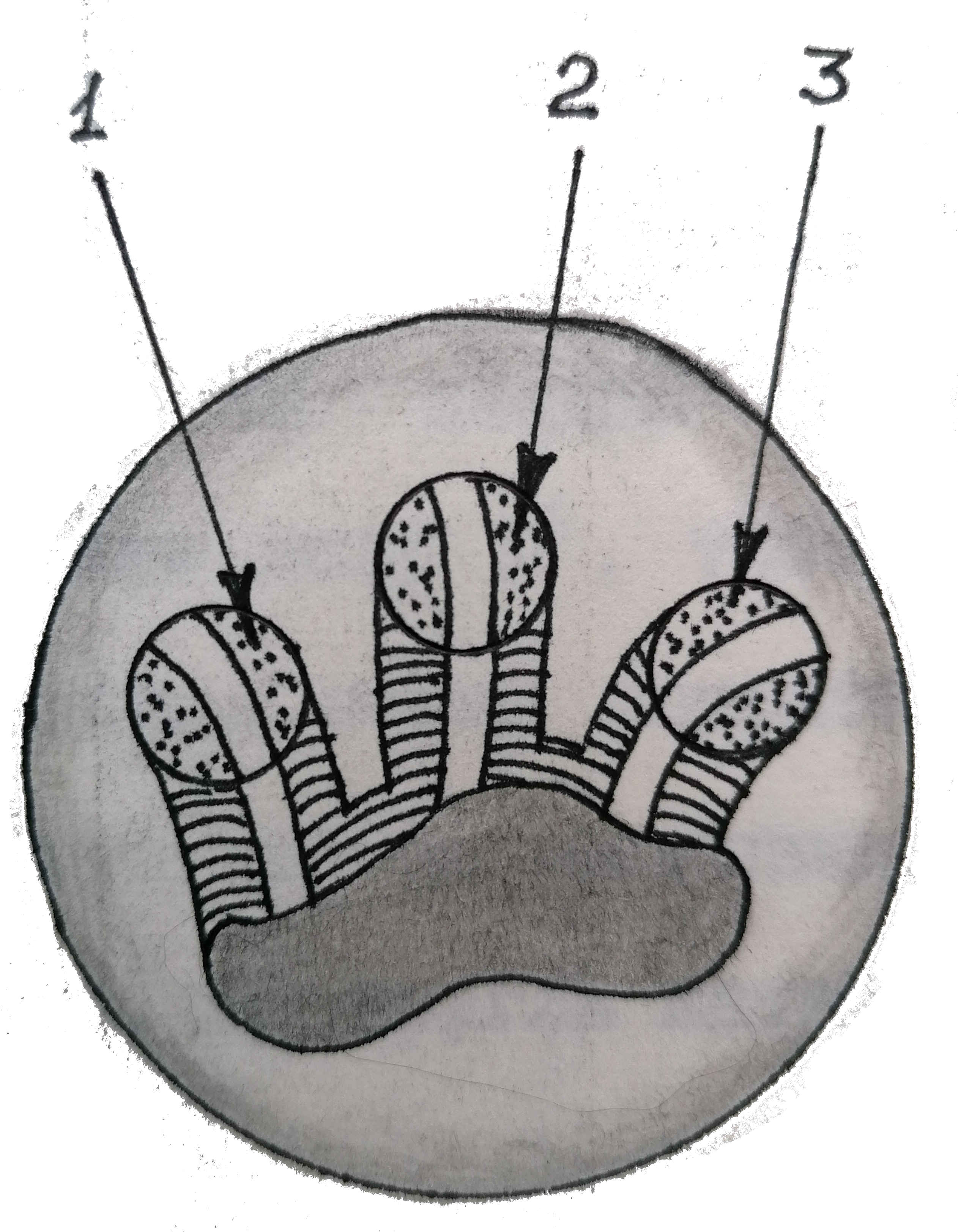} & \qquad\qquad &
\includegraphics[width=5cm]{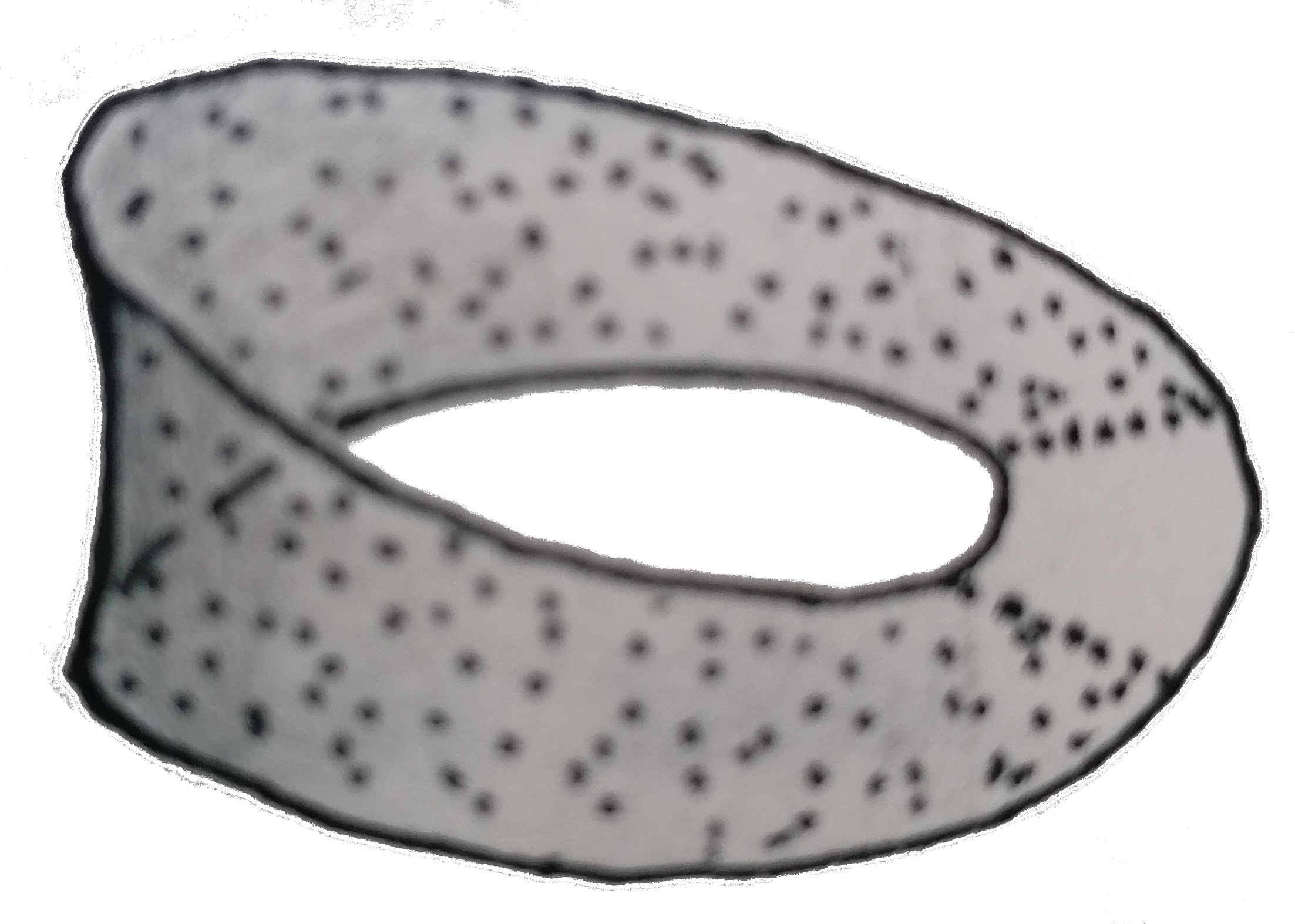} \\
(a) & & (b)
\end{tabular}
\caption{}\label{fig:1.4}
\end{figure}

\begin{proposition}[1.11]\label{prop:1.11}
The round category of a two-dimensional compact connected manifold equals:
\begin{enumerate}\setlength{\itemsep}{0pt}
\item one, if the manifold is homeomorphic to $S^1 \times I$ or to the Möbius band;
\item two, on the remaining manifolds with boundary;
\item two, if the manifold is homeomorphic to the sphere $S^2$, the projective plane $\R P^2$, the torus $T^2$ or the Klein bottle;
\item three, on the remaining manifolds without boundary.
\end{enumerate}
\end{proposition}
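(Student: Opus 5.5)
For each case the plan is to get a lower bound from Theorem~\ref{thm:1.5} with $p=1$ (or from a direct argument) and a matching upper bound by exhibiting a covering by closed sets that deform within $M$ onto circles. Proposition~\ref{prop:1.6} gives $\mathrm{long}^{1}M=1$ for every two-dimensional compact connected manifold other than $S^2$ and $D^2$. Hence Theorem~\ref{thm:1.5} gives $S^1\text{-}\cat M\ge 2$ for closed $M\ne S^2$, and only $\ge1$ when $M$ has boundary.

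\emph{Upper bounds.} For the annulus $S^1\times I$ and the Möbius band, the whole surface deformation retracts onto its core circle, so the round category is one. For the other surfaces with boundary, Proposition~\ref{prop:1.10} covers $M$ by two closed discs. I would thicken a disc inside $M$ to a closed annulus, or map it into an embedded circle by a homotopy whose image ends on that circle; this shows that each disc deforms within $M$ onto a circle, giving $\le 2$. For $T^2$ I would use two closed annuli $S^1\times[0,\tfrac12]$ and $S^1\times[\tfrac12,1]$, each retracting to a circle; the Klein bottle is two Möbius bands glued along the boundary. For $S^2$, two closed hemispheres work, since each disc can be homotoped within $S^2$ onto the equator (contract it to a point, then, if one insists on the image being homeomorphic to a circle, use the other interpretation of c)). For $\R P^2$ I would take a Möbius band together with its complementary disc. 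For the remaining closed surfaces, I would remove an open disc $D$ to get a surface with boundary of round category $2$ by the previous case, and then add the closed disc $\overline{D}$ as a third set, so Property~1 gives $\le 3$.

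\emph{Lower bounds.} For surfaces with boundary other than the annulus and the Möbius band, I must show the category is not one. If $M$ deformed within itself onto a circle $P_1$, then $M$ would be homotopy equivalent to a retract of a circle. This forces $H_1(M)$ to have rank $\le1$, which leaves only the disc, the annulus and the Möbius band; the disc is excluded because $S^1$ is not a retract of $D^2$. For $S^2$ and $\R P^2$ the lower bound $\ge2$ holds because a closed surface cannot deform onto a one-dimensional set (a top-dimensional class would be killed); this is the $q=1$ case of the proof of Theorem~\ref{thm:1.5}. The main obstacle is the lower bound $3$ for closed surfaces other than $S^2,\R P^2,T^2$ and the Klein bottle, where Theorem~\ref{thm:1.5} only gives $2$.

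For that step I would argue directly with cup products. Suppose $M=A_1\cup A_2$ with each $A_i$ deforming within $M$ onto a circle. Then the restriction $H^1(M)\to H^1(A_i)$ factors through $H^1(S^1)$, which has rank $1$ (use $\Z_2$ coefficients in the nonorientable case). So the kernel $K_i$ of this restriction has corank at most $1$ in $H^1(M)$, and $K_i$ consists of classes coming from $H^1(M,A_i)$. The product of a class in $K_1$ and a class in $K_2$ comes from $H^2(M,A_1\cup A_2)=0$, so $K_1\cdot K_2=0$. But $H^1(M)$ has dimension $\ge3$ and the cup pairing on it is nondegenerate, so subspaces of codimension $\le1$ cannot be mutually orthogonal. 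This contradicts $M=A_1\cup A_2$, so the round category is at least $3$. This genus count is also exactly what separates $T^2$ and the Klein bottle (rank $2$) from the remaining closed surfaces.
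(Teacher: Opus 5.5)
\textbf{The disc step in item (2) fails, and the claim it supports is false.} Definition~1.1 only requires a homotopy of the inclusion $A_i\hookrightarrow M$ to a map onto a circle $P_i\subset M$. Unlike the deformation retractions of Property~3, nothing forces $F_i(x,t)=x$ on $P_i$, so the fact that $S^1$ is not a retract of $D^2$ is no obstruction. Your own domination criterion does not exclude the disc either: $D^2$ is homotopy equivalent to a point, and a point is a homotopy retract of $S^1$. Explicitly, $F(z,t)=(1-t)z+\tfrac{t}{2}e^{2\pi i|z|}$ stays in the convex disc $D^2\subset\mathbb{C}$ and at $t=1$ maps $D^2$ onto the circle of radius $\tfrac12$. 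So the round category of $D^2$ is $1$, under either reading of condition~c).

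Item~(2) therefore has to be read with the disc excluded, as in Proposition~1.10 and Theorem~3.3. Your upper bound, which invokes Proposition~1.10, already tacitly does this. The paper's own justification, that one is ``prevented by the homologies of the manifold'', has the same blind spot. For every other surface with boundary your argument is correct: $H_1(M)$ is a retract of $H_1(S^1)\cong\Z$, which leaves only the disc, annulus and Möbius band.

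Apart from this, your proof is correct and differs from the paper's at the main steps.
\begin{itemize}
\item \emph{The paper's route.} It gets the upper bounds from explicit pictures (Figs.~1.5--1.7) of coverings by sets contracting within themselves to circles. It argues the lower bound $3$ informally: one of the two categorical sets would have to contain a circle not homologous to its core circle, because the rank of $H_1$ is at least four. That rank claim already fails for the non-orientable surface with three cross-caps, where $H_1=\Z^2\oplus\Z_2$.
\item \emph{Your upper bounds.} You combine the disc coverings of Proposition~1.10, the remark that any closed disc deforms within $M$ onto a circle, and Properties~1 and~2.
\item \emph{Your lower bound.} You show the kernels $K_i$ have codimension at most one and satisfy $K_1\cdot K_2=0$. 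This contradicts nondegeneracy of the cup pairing on $H^1(M;\Q)$ or $H^1(M;\Z_2)$ once its dimension is at least $3$. It is the cup-length mechanism adapted to circles: rigorous, uniform in orientability, and it identifies $\dim H^1\le 2$ as exactly what separates $T^2$ and the Klein bottle from the rest.
\end{itemize}

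Two small points remain. For arbitrary closed $A_i$, the relative cup product $H^1(M,A_1)\otimes H^1(M,A_2)\to H^2(M,A_1\cup A_2)$ should be taken in Alexander--Spanier (\v{C}ech) cohomology. Alternatively, first enlarge the $A_i$ to open neighbourhoods with the same property, which is possible because $M$ is an ANR. Also, the hedge about the hemispheres of $S^2$ is unnecessary: a hemisphere is a disc, so the same null-homotopic surjection onto a circle applies.
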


\textit{Proof.} In the first case the statement is clear.

In the second case, if the manifold with boundary is not homeomorphic to $S^1 \times I$ or to the Möbius band, then its round category cannot equal one (this is prevented by the homologies of the manifold).
Hence the round category must be at least two.
We show that it equals exactly two.
Represent the orientable manifold as a sphere with a number of ``holes'' (corresponding to the boundary components) and a number of handles.
Suppose the boundary is homeomorphic to a single circle $S^1$, so there is only one ``hole'' on the sphere.
Then the manifold can be covered by two closed subsets, each contracting (even within itself) to a circle (Fig.~1.5).
Therefore the round category of the manifold equals two.

\begin{figure}[h]
\centering\includegraphics[width=0.85\textwidth]{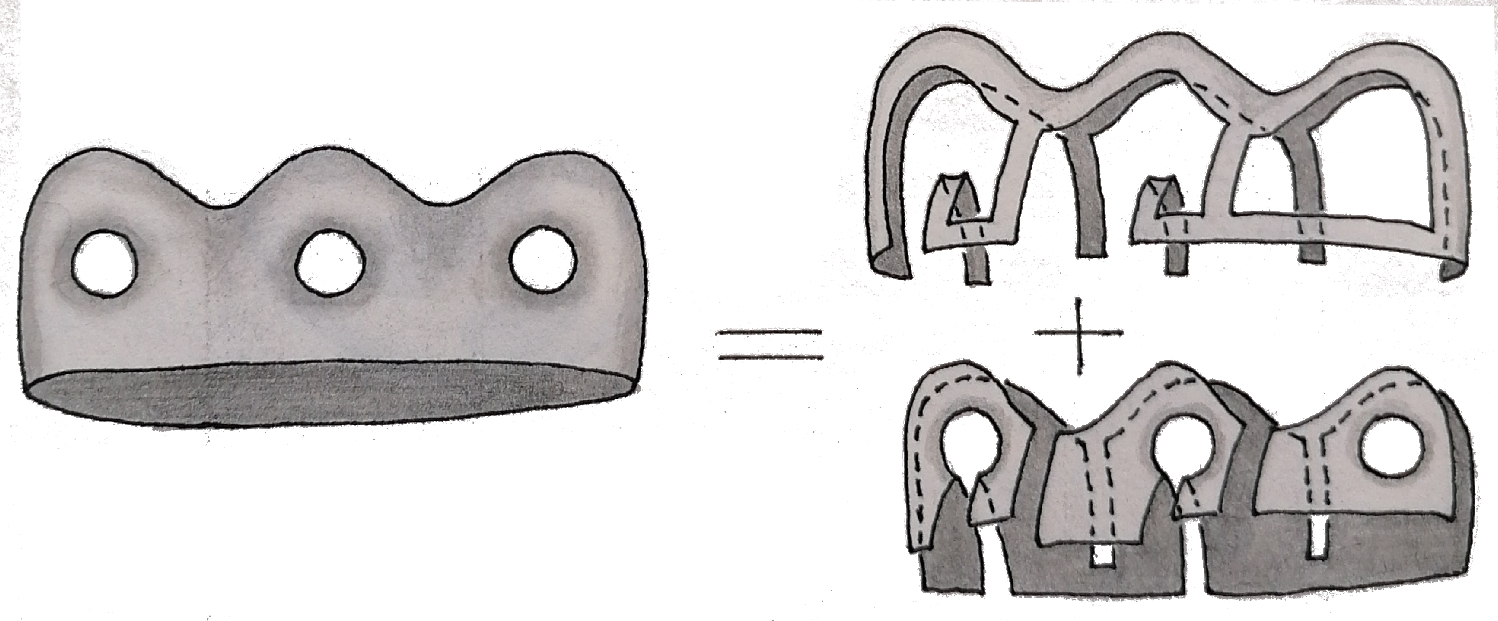}
\caption{}\label{fig:1.5}
\end{figure}

If the boundary consists of several components, then to the first categorical subset (Fig.~1.5) for the manifold with one boundary component we adjoin the subset that ``cuts the holes'' (Fig.~1.3(b)).
Thus the round category of such a manifold also equals two.

A non-orientable manifold is homeomorphic to a sphere with a number of Möbius bands ``patching the holes''.
The covering by two categorical sets is demonstrated by an example that easily generalises to any non-orientable manifold.
Figure~1.6 shows the covering of a sphere with three Möbius bands and three boundary components by two closed subsets, each contractible within itself to a circle.
The first subset consists of the entire Möbius band glued into ``hole'' 1, of parts of the Möbius bands (Fig.~1.6(b)) glued into ``holes'' 2 and 3, and of the subset that ``cuts the holes''.

\begin{figure}[h]
\centering\includegraphics[width=0.85\textwidth]{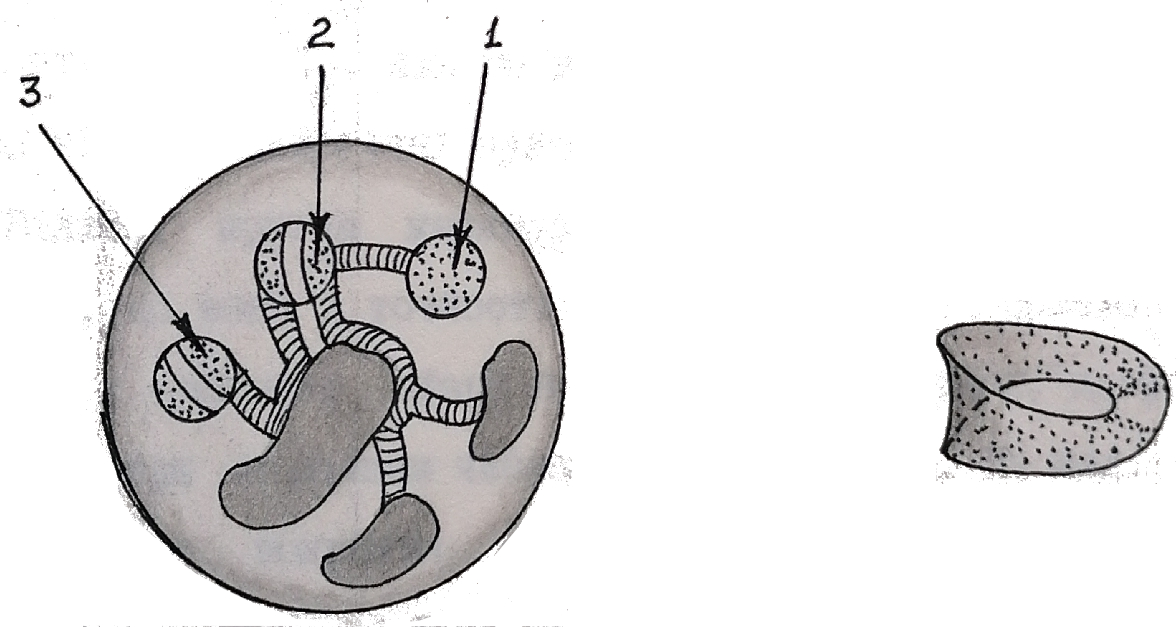}
\caption{}\label{fig:1.6}
\end{figure}

In the third case, where the manifold is homeomorphic to the sphere $S^2$, the projective plane $\R P^2$, the torus $T^2$ or the Klein bottle, the statement holds, since each such manifold can be covered by two closed subsets, each homeomorphic to the annulus $S^1 \times I$.

The round category of any other two-dimensional manifold without boundary cannot equal one (this is prevented by the manifold's homologies).
It also cannot equal two.
Suppose otherwise: there is a covering of the manifold by two closed sets, each contracting within the manifold to circles $S_1$ and $S_2$ not homologous to zero.
Without loss of generality $S_1$ and $S_2$ are not homologous to each other.
Since the rank of the one-dimensional homology of the manifold is at least four, there exist on the manifold circles $S_3$ and $S_4$ pairwise non-homologous to the circles $S_1$, $S_2$ and to each other.
Then there is a circle, non-homologous to $S_i$, $i = 1, \ldots, 4$, contained in at least one of the categorical sets $A_1$ (or $A_2$), which ``closes'' to a point; that is, through that point passes a non-zero cycle $S$ lying in $A_1$ and non-homologous to $S_1$.
But this is impossible, since $A_1$ may contain only circles non-homologous to zero that are homologous to $S_1$.

Therefore the round category of such a manifold is at least three.
We show that it equals exactly three.
Represent the orientable manifold as a sphere with a certain number of handles, and the non-orientable as a sphere with a certain number of Möbius bands.
Figure~1.7 displays a categorical covering of an orientable manifold by three closed subsets, each contractible within itself to a circle.

The covering of a non-orientable manifold by three closed sets, each contractible within itself to a circle, may be derived from Fig.~1.4 by combining the categorical sets shown there with a disc $D^2$ ``patching the hole'' and with the corresponding handle attached to the disc.

\begin{figure}[h]
\centering\includegraphics[width=0.85\textwidth]{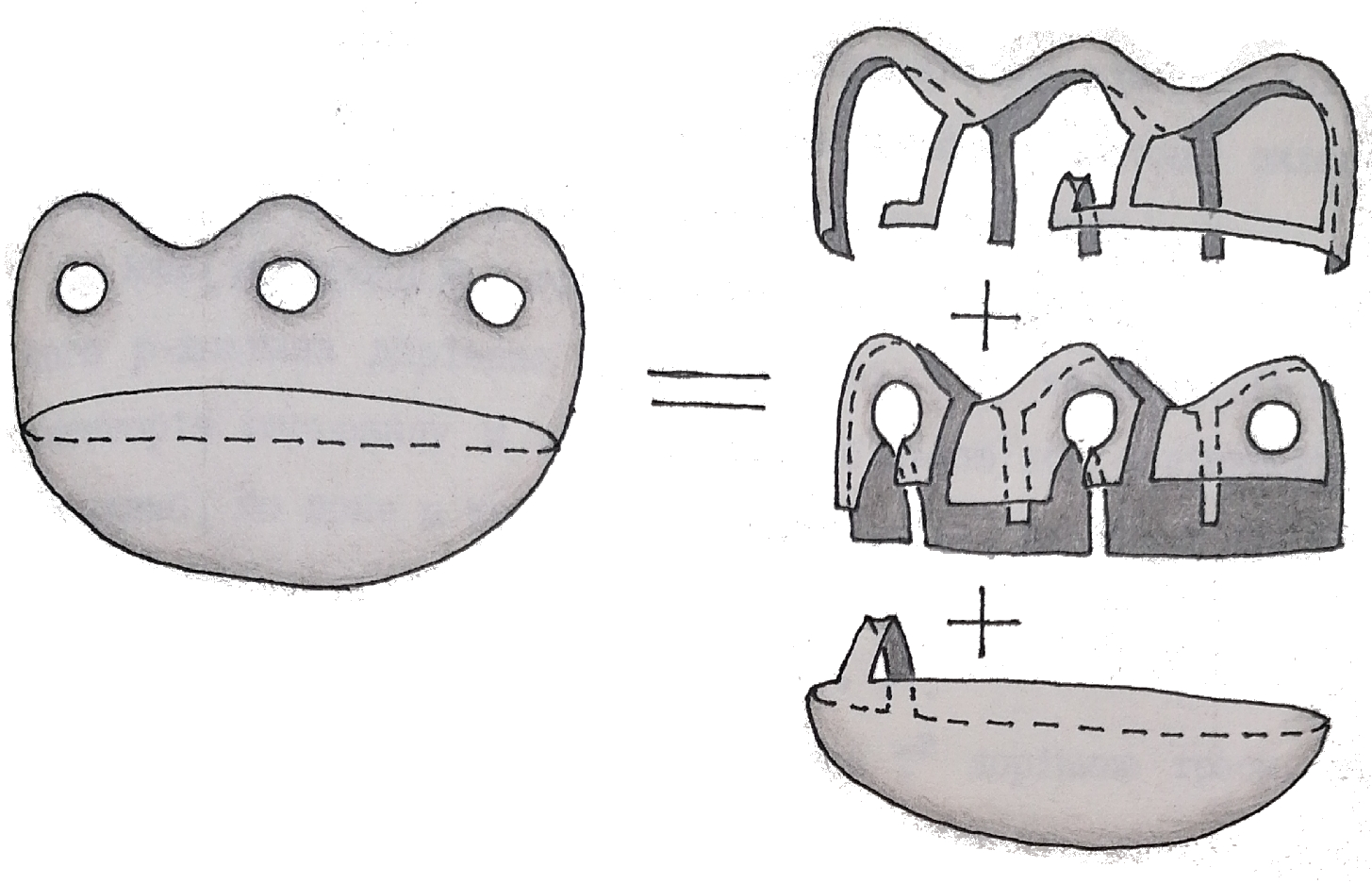}
\caption{}\label{fig:1.7}
\end{figure}

We note that the described coverings by categorical sets of round category are not unique.
The statement is proved.
\qed

\begin{proposition}[1.12]\label{prop:1.12}
The round category of a three-dimensional compact connected manifold without boundary
\begin{enumerate}\setlength{\itemsep}{0pt}
\item equals two if the manifold is homeomorphic to one of the following:
      \begin{itemize}\setlength{\itemsep}{0pt}
\item[a)] the sphere $S^3$;
\item[b)] $S^1 \times S^2$;
\item[c)] the projective space $\R P^3$;
\item[d)] a lens space obtainable as a quotient of $S^3$ by a differentiable action of the group $\Z_p$;
      \end{itemize}
\item equals three on the torus $T^3$;
\item does not exceed four on the remaining manifolds.
\end{enumerate}
\end{proposition}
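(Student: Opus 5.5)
Each bound will be proved by exhibiting explicit coverings by closed sets that contract to circles, and each lower bound will come from Theorem~\ref{thm:1.5} or from a direct homological argument. First I note that the round category of a closed $3$-manifold is never one. If $M$ contracted within itself onto a circle, the identity of $M$ would factor up to homotopy through a $1$-dimensional set. Then the fundamental class in $H_3(M;\Z_2)$ would be zero, which is impossible. So every round category here is at least two.

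For part~(1) I would use genus-one Heegaard splittings. Each of $S^3$, $S^1\times S^2$, $\R P^3$ and the lens spaces $S^3/\Z_p$ is a union of two solid tori $S^1\times D^2$ glued along their common boundary torus. This is the standard genus-one Heegaard decomposition; for lens spaces it comes from the $\Z_p$-invariant splitting of $S^3\subset\mathbb{C}^2$ into $\{|z_1|\le|z_2|\}$ and $\{|z_1|\ge|z_2|\}$. Each solid torus deformation retracts within itself onto its core circle, so condition 2 of Definition~\ref{def:1.1} holds with $P=S^1$. This gives $\Pcat M\le 2$, and hence equality.

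For part~(2), Proposition~\ref{prop:1.9} with $p=1$ gives $\mathrm{long}^{\,1}T^3=2$, so Theorem~\ref{thm:1.5} yields $\Pcat T^3\ge 3$. For the upper bound I would cover $T^3$ by three sets that each contract to a circle. The plan is to take $T^3=S^1\times T^2$ and use the covering of $T^2$ by two sets from the Lyusternik--Shnirelman category of $T^2$, which is $3$ by Theorem~\ref{thm:1.13}. Concretely, $T^2$ is covered by three closed sets $B_1,B_2,B_3$, each contractible in $T^2$ to a point. Then $S^1\times B_j$ contracts in $T^3$ onto a circle $S^1\times\{pt\}$, giving three categorical sets. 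Alternatively, Theorem~\ref{thm:1.14} with $r=3$ and $l=1$ gives $\Pcat T^3=3$ directly.

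For part~(3) I would use a Heegaard splitting of arbitrary genus $g$, $M=H_1\cup H_2$ with handlebodies $H_1,H_2$. Each handlebody retracts onto a wedge of $g$ circles, and by Property~3 it suffices to cover that graph, inside $M$, by at most two closed sets each contracting to a circle. For a wedge of circles, take one set to be a single loop together with a small neighbourhood of the wedge point; it contracts to that loop. The other set is the union of the remaining loops minus that neighbourhood, a disjoint union of arcs. These arcs can be contracted to points and then, within the connected manifold $M$, moved onto one embedded circle, the image being an arc contracting further onto the circle. This covers each handlebody by two round-categorical sets, four in total. Property~1 then gives $\Pcat M\le 4$.

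\textbf{Main obstacle.} Condition~(c) of Definition~\ref{def:1.1} asks that the final image $F_i(\cdot,1)$ be exactly a homeomorphic copy of $S^1$, not merely a subset of a circle. So for the arc pieces the homotopy must end by sweeping them onto a full circle. I expect this to be arranged by mapping a disjoint union of contractible pieces onto a circle surjectively, which is possible since each arc can be stretched to wrap around the circle. I would check this carefully, together with the closedness of the pieces.
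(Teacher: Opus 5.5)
Your proposal is correct and follows the paper's proof. It uses genus-one Heegaard splittings into two solid tori for (1); for (2) it covers $T^3=S^1\times T^2$ by the sets $S^1\times B_j$ and takes the lower bound from Proposition~1.9 and Theorem~1.5 (where ``covering of $T^2$ by two sets'' is a slip for three); and for (3) it covers each handlebody of a Heegaard splitting by two circle-categorical sets. The differences are minor: you make explicit the bound $\geq 2$ in (1) via $H_3(M;\Z_2)\neq 0$, which the paper just calls clear, and in (3) you cover the spine of each handlebody and transfer this by Property~3. The paper instead cuts the handlebody along meridian discs into two pieces that each retract within themselves onto a circle (Fig.~1.8), while your spine cover uses one loop plus the wedge neighbourhood, and the remaining arcs wrapped onto a circle inside $M$; both versions satisfy Definition~1.1.
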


\textit{Proof.}
In the first case, where the manifold is homeomorphic to one of a)--d), the statement is clear, since each of the listed manifolds has Heegaard genus one.
This means that the manifold may be presented as the gluing of two solid tori $S^1 \times D^2$ along some homeomorphism of their boundaries.

If $M$ is homeomorphic to the torus $T^3$, then by Proposition~\ref{prop:1.9} its $p$-length equals two (with $p=1$).
By Theorem~\ref{thm:1.5} the round category of $M$ must be at least three.
We show it equals exactly three.
Present the torus $T^3$ as
\[
   T^3 = S^1 \times T^2.
\]
The Lyusternik--Shnirelman category of the torus $T^2$ equals three, that is, $T^2$ can be covered by three closed subsets
\[
   T^2 = \bigcup_{i=1}^{3} A_i,
\]
each contracting on the torus to a point.
Then $T^3$ can be covered by three closed subsets
\[
   T^3 = S^1 \times T^2 = S^1 \times \bigl(\bigcup_{i=1}^{3} A_i\bigr)
   = \bigcup_{i=1}^{3} S^1 \times A_i,
\]
each contracting within $T^3$ to $S^1$.
This means that the round category of the three-dimensional torus equals three.

All other three-dimensional manifolds without boundary have Heegaard diagrams of genus greater than one.
They can be presented as gluings of two handlebodies along some homeomorphism of their boundaries.
The boundary of each handlebody is a sphere with a number of handles equal to the Heegaard genus.
Each handlebody can be presented as a union of two closed subsets, each contracting within itself to a circle (Fig.~1.8), that is, of two connected manifolds each containing one full handle $D^1 \times D^2$ and parts of the other handles $D^1 \times D^2$ cut along $x_0 \times D^2$, where $x_0 \in \Int D^1$.

\begin{figure}[h]
\centering\includegraphics[width=0.85\textwidth]{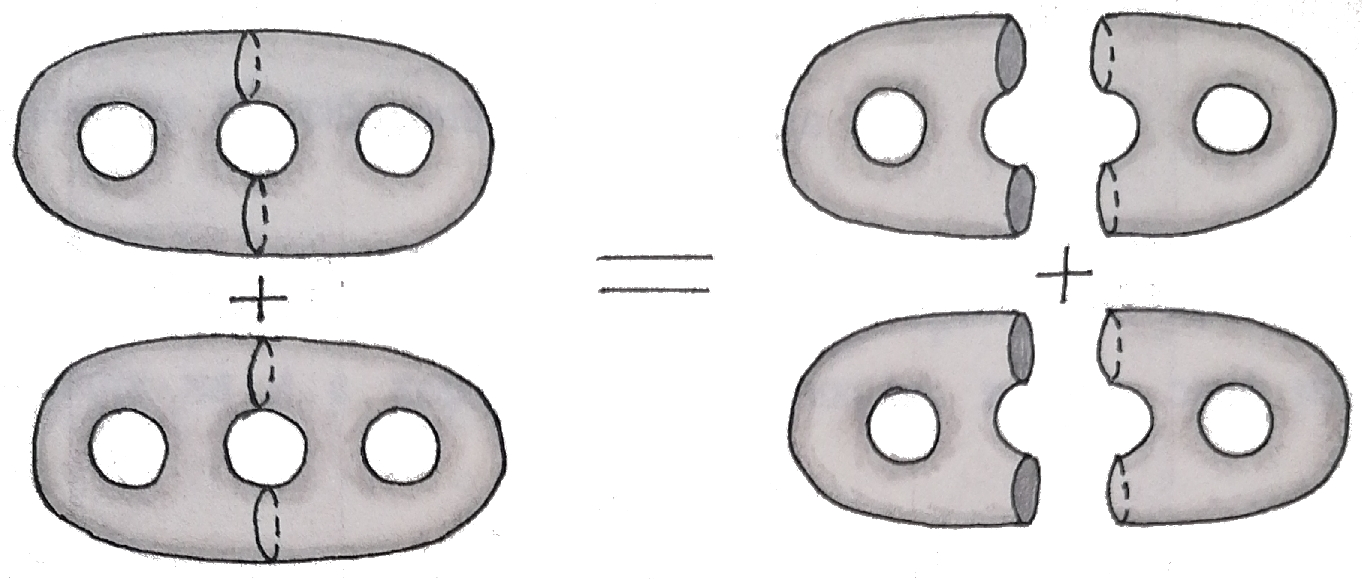}
\caption{}\label{fig:1.8}
\end{figure}

\begin{theorem}[1.13]\label{thm:1.13}
Let
\[
   M = S^{k_1} \times S^{k_2} \times \ldots \times S^{k_r}
\]
be a product of $k_i$-dimensional spheres, $i = 1, \ldots, r$.
Then the Lyusternik--Shnirelman category of $M$ is one more than the number $r$ of factors:
\[
   \cat M = r + 1.
\]
\end{theorem}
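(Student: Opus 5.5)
The plan is to prove the two inequalities $\cat M \geq r+1$ and $\cat M \leq r+1$ separately. Throughout, $k_i \geq 1$, so that $M$ is a closed connected manifold of dimension $n=\sum k_i$.

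\emph{Lower bound.} Corollary~\ref{cor:1.8} gives $\mathrm{long}\, M = r$. Take $P$ to be a point, so that $p=0$ and the $P$-category is the Lyusternik--Shnirelman category. Theorem~\ref{thm:1.5}, in the case of a manifold without boundary, then gives $\cat M \geq \mathrm{long}^{\,0} M + 1 = r+1$. Equivalently, one can use the corollary to Property~4 with $\mathrm{long}\, P = 0$. To be safe, I would make this cohomological: take $a_i = \pi_i^*[S^{k_i}]^*$, the pullback of the fundamental class of the $i$-th factor. The product $a_1\cdots a_r$ is the fundamental cohomology class of $M$, which is nonzero by the Künneth formula.

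\emph{Upper bound.} I will show $\cat M \leq r+1$ by induction on $r$, using the product inequality $\cat(X\times Y) \leq \cat X + \cat Y - 1$ for compact metric spaces. The base case is $\cat S^k = 2$: cover $S^k$ by the closed upper and lower hemispheres, each of which is a disc and so contracts to a point. Induction then gives $\cat M \leq 2r - (r-1) = r+1$.

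The main obstacle is the product inequality itself, since the naive product cover $\{A_i\times B_j\}$ has $\cat X\cdot\cat Y$ members. I would prove it in the standard way. Given categorical closed covers $\{A_i\}_{i=1}^{s}$ of $X$ and $\{B_j\}_{j=1}^{t}$ of $Y$, first thicken them slightly to open categorical covers, which compactness and normality allow. Take partitions of unity subordinate to them and define
\[
   W_m = \bigcup_{i+j=m} U_i\times V_j, \quad m=2,\ldots,s+t,
\]
where the sets $U_i$ and $V_j$ are chosen so that the sets $U_i\times V_j$ with the same $i+j$ are pairwise disjoint. This is the usual ``staircase'' refinement, built via the functions $\phi_1+\dots+\phi_i$. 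Each $W_m$ is then a disjoint union of sets contractible in $X\times Y$, and since $X\times Y$ is path connected each $W_m$ is contractible to a single point. Closing up, this yields $s+t-1$ categorical sets.

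Alternatively, for spheres I can avoid the general lemma by giving the cover explicitly. Let $h_i\colon S^{k_i}\to[0,1]$ be a height function, and for $m=0,\ldots,r$ let $A_m$ be the set of points with exactly $m$ coordinates ``high'', suitably closed up by using fractional thresholds. The components of each $A_m$ are products of hemispheres, which are discs, and they are pairwise disjoint. Each $A_m$ therefore contracts in the connected manifold $M$ to a point. This gives $r+1$ sets and completes $\cat M = r+1$.
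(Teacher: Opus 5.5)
Your lower bound is the paper's own argument: Corollary~\ref{cor:1.8} with Theorem~\ref{thm:1.5}, taking $P$ a point. Your cup-product version, with $a_i=\pi_i^*$ of the fundamental classes, is a cleaner justification than the paper's geometric ``moving cycles off $A_i$''.

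Your upper bound takes a genuinely different route. The paper never builds a cover by hand. It restricts $f=x_1+y_1+\dots+z_1$ to $M\subset\R^l$, whose $2^r$ critical points (each factor at a pole) lie on the $r+1$ levels $-r,-r+2,\dots,r$. It merges the critical points on each intermediate level (which it asserts is connected) into one degenerate point. It then concludes from the Lusternik--Schnirelmann inequality $\cat M\le\#\{\text{critical points}\}$.

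Your explicit cover is the combinatorial shadow of that same function: each piece of your $A_m$ is a product-of-caps neighbourhood of one critical point of $f$ on the level $-r+2m$. It needs neither the merging lemma nor the critical point theorem, so it is more elementary. The paper's route, in exchange, produces an actual function with $r+1$ critical points, which is what it reuses in Theorem~\ref{thm:2.15}. Your product-inequality route is the most general, since it works for any product of spaces of known category, at the cost of a nontrivial lemma.

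Two steps need repair.

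\textbf{The staircase sketch.} As written it cannot work. Since $\bigcup_{i,j}U_i\times V_j=(\bigcup_i U_i)\times(\bigcup_j V_j)$, the $U_i$ and $V_j$ must cover $X$ and $Y$. Already in the first step $S^{k_1}\times S^{k_2}$, where $s=t=2$, connectedness forces $U_1\cap U_2\neq\emptyset\neq V_1\cap V_2$, so $U_1\times V_2$ and $U_2\times V_1$ meet. The pieces of $W_m$ must instead be non-product subsets of $U_i\times V_j$. One choice is the preimages, under $(x,y)\mapsto(\phi(x),\psi(y))\in\Delta^{s-1}\times\Delta^{t-1}$, of the open stars of the vertices $(i,j)$ of the staircase triangulation; vertices with equal $i+j$ never span a simplex, so those stars are disjoint. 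Alternatively, use the standard multiplicity lemma: refine to $s+t-1$ open sets in $X$ (resp.\ $Y$), each a disjoint union of pieces of the original sets, with every point in at least $t$ (resp.\ $s$) of them, and take $W_m=U'_m\times V'_m$. Also, thickening closed categorical sets to open ones needs $X,Y$ to be ANRs, as spheres are, not merely compact metric.

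\textbf{The explicit cover.} Here the covering property is the whole content, and it hinges on the order of the thresholds. With $t_i=h_i(x_i)$, choose $0<\alpha_r<\beta_r<\alpha_{r-1}<\beta_{r-1}<\dots<\alpha_0<\beta_0<1$ and set
\[
A_m=\bigcup_{|S|=m}\{\,t_i\ge\beta_m\ (i\in S),\ \ t_i\le\alpha_m\ (i\notin S)\,\}.
\]
Pieces with distinct $S$ of equal size are separated, and each is a product of closed caps. To check coverage, put $a(m)=\#\{i:t_i>\alpha_m\}-m$ and $b(m)=\#\{i:t_i\ge\beta_m\}-m$. Then $x\in A_m$ iff $a(m)=b(m)=0$. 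Moreover $b(m)\le a(m)$, $b(m+1)\ge a(m)-1$, $b(0)\ge 0$ and $a(r)\le 0$. Hence the least $m$ with $a(m)\le 0$ satisfies $a(m)=b(m)=0$, so $x\in A_m$. With thresholds increasing in $m$ instead, the sets already fail to cover for $r=1$.
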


\textit{Proof.} By Corollary~\ref{cor:1.8}
\[
   \mathrm{long}\, M = r,
\]
so by Theorem~\ref{thm:1.5}
\[
   \cat M \;\geq\; r + 1.
\]
We show that the Lyusternik--Shnirelman category of $M$ equals exactly $r + 1$.
Realise $M$ as
\begin{align*}
M &= S^{k_1} \times S^{k_2} \times \ldots \times S^{k_r} = \\ &= \{(x_1, \ldots, x_{k_1+1}, y_1, \ldots, y_{k_2+1}, \ldots, z_1, \ldots, z_{k_r+1}) \in \R^l :\\ &\quad x_1^2 + \ldots + x_{k_1+1}^2 = 1,\\ &\quad y_1^2 + \ldots + y_{k_2+1}^2 = 1,\;\ldots,\\ &\quad z_1^2 + \ldots + z_{k_r+1}^2 = 1\},
\end{align*}
where $l = \sum_{i=1}^{r} k_i + r$.

Define on $\R^l$ a differentiable function
\[
   f\colon \R^l \to \R,\quad
   f(x_1,\ldots,x_{k_1+1}, y_1,\ldots,y_{k_2+1},\ldots, z_1,\ldots,z_{k_r+1})
   = x_1 + y_1 + \ldots + z_1.
\]
Its restriction to $M$ is a differentiable function with one maximum
\[
   f_{\max} = r,
\]
one minimum
\[
   f_{\min} = -r
\]
and critical points lying on connected level surfaces
\[
   \{f = -r + 2\},\; \{f = -r + 4\},\; \{f = -r + 6\},\;\ldots,\; \{f = r - 2\}.
\]
The number of these surfaces equals $r - 1$.
The critical points lying on a single level surface can be merged into one (degenerate) critical point \cite[p.\,206]{milnor-morse-1965}.
Thus $f$ has $r + 1$ critical points on $M$, that is, $M$ can be covered by $r + 1$ closed subsets contracting within the manifold.
This is equivalent to the statement of the theorem.
\qed

\begin{theorem}[1.14]\label{thm:1.14}
Let
\[
   M = S^{k_1} \times S^{k_2} \times \ldots \times S^{k_r}
\]
be a product of $k_i$-dimensional spheres, $k_i \geq 1$, $i = 1, \ldots, r$, and let
\[
   P = S^{k_{i_1}} \times S^{k_{i_2}} \times \ldots \times S^{k_{i_l}}
\]
be a submanifold of $M$, where $\{k_{i_1}, k_{i_2}, \ldots, k_{i_l}\} \subset \{k_1, k_2, \ldots, k_r\}$.
Then the $P$-category of $M$ equals
\[
   \Pcat M = r - l + 1.
\]
\end{theorem}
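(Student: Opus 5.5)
Two inequalities are needed: a lower bound from the $p$-length via Theorem~\ref{thm:1.5}, and an upper bound from an explicit covering. Throughout, write $P = S^{k_{i_1}} \times \ldots \times S^{k_{i_l}}$ with $p = \dim P = \sum_j k_{i_j}$. After relabelling the factors we may assume the factors of $P$ are the first $l$ ones, so $M = P \times N$ with $N = S^{k_{l+1}} \times \ldots \times S^{k_r}$.

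\emph{Lower bound.} Here I would use the construction in the proof of Theorem~\ref{thm:1.7} with the subset of exponents taken to be exactly those of $P$. Since $k_i \geq 1$, each cycle $c_j = P \times S^{k_{j}}$, $j = l+1, \ldots, r$, has dimension $p + k_j$, which is less than $m = \sum k_i$ as long as $r - l \geq 2$. Their intersection is the submanifold $P \times \{\mathrm{pt}\}$. It is not homologous to zero, because it is dual to the product of the generators of $H^*(N)$. Its dimension is $p$, so condition $3')$ holds. This gives $\mathrm{long}^{\,p} M \geq r - l$, and Theorem~\ref{thm:1.5} (no boundary) then gives $\Pcat M \geq r - l + 1$.

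The degenerate cases need separate checks. If $r - l = 1$, then $\mathrm{long}^{\,p} M \geq 1$ still holds, using a single cycle $c_1 = P \times \{\mathrm{pt}\}$ of dimension $p < m$; this gives $\Pcat M \geq 2$. If $r = l$, then $M = P$ and the bound $\Pcat M \geq 1$ is trivial.

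\emph{Upper bound.} By Theorem~\ref{thm:1.13}, $\cat N = (r-l)+1$. So $N = \bigcup_{s=1}^{r-l+1} B_s$ with closed $B_s$, each contractible in $N$ to a point $y_s$ by a homotopy $G_s$. Set $A_s = P \times B_s$ and define
\[
   F_s((x,y),t) = (x, G_s(y,t)).
\]
This is a homotopy inside $M$ starting at the identity and ending in $P \times \{y_s\}$, which is homeomorphic to $P$. Hence the $A_s$ are categorical in the sense of Definition~\ref{def:1.1}, and they cover $M$. Therefore $\Pcat M \leq r - l + 1$. This is the same device as in the $T^3 = S^1 \times T^2$ argument of Proposition~\ref{prop:1.12}.

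\emph{Main obstacle.} The delicate point is the lower bound, specifically the rigour of Theorem~\ref{thm:1.5} in this setting. A categorical set only contracts onto some $P_i$ homeomorphic to $P$, not necessarily onto the standard copy. The step $i_* = 0$ in degrees $l > p$ is still fine, since $H_l(P_i) = 0$ for $l > p$. However, the cycles $c_j$ in our choice have dimension greater than $p$ only when each $k_j \geq 1$, which is exactly why the hypothesis $k_i \geq 1$ is needed. I would verify this dimension condition explicitly and otherwise rely on Theorem~\ref{thm:1.5} as stated.
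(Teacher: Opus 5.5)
Your proposal follows the paper's proof step for step. The lower bound feeds Theorem~1.7-type cycles into Theorem~1.5, and the upper bound takes $P\times$ an LS-covering of $N$; the upper bound is fine. The gap is the lower bound. Theorem~1.5 is false as stated, so it cannot be used as a black box, and the paper's proof inherits the same error.

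A counterexample lies in the family of this very theorem. Take $M=S^1\times S^1\times S^4$, $P=S^1\times S^1$, $p=2$. The degree-one classes $a,b$ have $ab\neq0$ of degree $2\le m-p=4$. Dually, $S^1\times\mathrm{pt}\times S^4$ and $\mathrm{pt}\times S^1\times S^4$ meet in $\mathrm{pt}\times\mathrm{pt}\times S^4$. So $\mathrm{long}^{\,2}M\ge2$, and Theorem~1.5 would give $\Pcat M\ge3$. Your own covering $T^2\times D^4_\pm$ gives $\Pcat M\le2$, and the paper's Theorems~1.7 and~1.14 contradict Theorem~1.5 here as well.

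The faulty step is exactly the one you decided to trust. The fact that $i_*=0$ on $H_l(A_i)$ for $l>p$ does not let you move an $l$-cycle off $A_i$. By duality, $H_l(M\setminus A_i)\cong\check H^{m-l}(M,A_i)$. Hence a class of $H_l(M)$ is carried by $M\setminus A_i$ iff its Poincar\'e dual in $H^{m-l}(M)$ restricts to zero on $A_i$. Deformability of $A_i$ into a $p$-dimensional $P_i$ guarantees this when the \emph{codimension} $m-l$ exceeds $p$, not when $l>p$. In the example, $S^1\times\mathrm{pt}\times S^4$ meets the circle $\mathrm{pt}\times S^1\times\{y_+\}\subset T^2\times D^4_+$ with intersection number $\pm1$, so no homologous cycle avoids $T^2\times D^4_+$.

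Your cycles $c_j=P\times S^{k_j}$ have codimension $\dim N-k_j$, which can be $\le p$. The copies $P_i$ may also sit anywhere in $M$, for instance inside the $N$-factors when dimensions match, so the duals of the $c_j$ need not vanish on the $A_j$. Checking $\dim c_j>p$ and $k_i\ge1$ is therefore the wrong condition.

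What is needed is a cup-length argument whose classes are chosen according to the covering. Since $A_j$ deforms into $P_j$, every class in $K_j=\ker\bigl(H^*(M;\Q)\to H^*(P_j;\Q)\bigr)$ restricts to zero on $A_j$. If one finds $u_j\in K_j$ with $u_1\cdots u_{r-l}\neq0$, relative cup products in \v{C}ech cohomology show that $r-l$ such sets cannot cover $M$. The $K_j$ depend on how each $P_j$ is embedded, so the classes cannot be fixed in advance.

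Two special cases go through:
\begin{itemize}
\item For $r-l\le2$ this is a dimension count with Poincar\'e duality. One has $\dim K_j\ge2^r-2^l$, while the annihilator of the ideal $K_1$ is $K_1^{\perp}$, of dimension $\le2^l$.
\item When all $k_i$ equal some $k$, choose $u_1,\dots,u_{r-l-1}\in H^k(M)$ inductively. Each $K_j\cap H^k$ has dimension $\ge r-l$, and the $v\in H^k$ killing a nonzero product of $t$ classes of $H^k$ form a subspace of dimension $\le t$. Finish with a class $u_{r-l}$ of degree $(l+1)k>p$ supplied by Poincar\'e duality; it lies in every $K_j$ automatically.
\end{itemize}
For mixed sphere dimensions the argument still has to be carried out, and neither your proposal nor the paper's proof does it.
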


\textit{Proof.} By Theorem~\ref{thm:1.7}
\[
   \mathrm{long}^{\,p} M \;\geq\; r - l.
\]
By Theorem~\ref{thm:1.5}
\[
   \Pcat M \;\geq\; r - l + 1.
\]
We show that the $P$-category of $M$ does not exceed $r - l + 1$.
Express $M$ as a product:
\[
   M = P \times L,
\]
where $P$ is the manifold from the hypothesis and $L$ is the product of the remaining $(r - l)$ spheres.
By Theorem~\ref{thm:1.13} the Lyusternik--Shnirelman category of $L$ is
\[
   \cat L = r - l + 1.
\]
This means $L$ can be covered by $(r - l + 1)$ closed subsets $A_i$
\[
   L = \bigcup_{i=1}^{r-l+1} A_i,
\]
each contracting within $L$ to a point.
Then
\[
   M = P \times L = P \times \bigl(\bigcup_{i=1}^{r-l+1} A_i\bigr)
   = \bigcup_{i=1}^{r-l+1} P \times A_i,
\]
and each subset $P \times A_i$ contracts within $M$ to $P$; that is, $M$ can be covered by $r - l + 1$ closed subsets contracting within $M$ to $P$.
Hence
\[
   \Pcat M \;\leq\; r - l + 1.
\]
Combined with the opposite inequality
\[
   \Pcat M \;\geq\; r - l + 1
\]
this yields the statement of the theorem.
\qed

\begin{corollary}[1.15]\label{cor:1.15}
Let $P$ be the $k$-dimensional torus.
Then the $P$-category of the $n$-dimensional torus equals
\[
   T^k\text{-}\cat T^n = n - k + 1,\quad n \geq k.
\]
\end{corollary}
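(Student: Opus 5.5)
The corollary is the special case of Theorem~\ref{thm:1.14} in which every sphere is a circle. Write
\[
   T^n = \underbrace{S^1 \times \ldots \times S^1}_{n},
\]
so $r = n$ and $k_1 = \ldots = k_n = 1$. Take $P = T^k$ to be the product of the first $k$ circle factors. This is a subcollection $\{k_{i_1}, \ldots, k_{i_k}\}$ of $\{k_1,\ldots,k_n\}$ with $l = k$, which is possible exactly because $k \leq n$.

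Theorem~\ref{thm:1.14} then gives $\Pcat T^n = r - l + 1 = n - k + 1$ directly. The only points to check are that its hypotheses hold: all $k_i = 1 \geq 1$, and $P$ is embedded as a coordinate subtorus $T^k \times \{\ast\}$.

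For the reader it is also worth unwinding the two inequalities behind Theorem~\ref{thm:1.14}. For the lower bound, Proposition~\ref{prop:1.9} gives $\mathrm{long}^{\,k} T^n = n - k$ with $p = \dim P = k$. Concretely, the $n-k$ cycles $T^k \times \widehat{S^1_j}$ (omit one of the last $n-k$ circle factors) intersect in $T^k$, which is non-zero in homology. Theorem~\ref{thm:1.5}, for a closed manifold, then yields $\Pcat T^n \geq n-k+1$.

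For the upper bound, write $T^n = T^k \times T^{n-k}$. Theorem~\ref{thm:1.13} gives $\cat T^{n-k} = n-k+1$, so $T^{n-k} = \bigcup A_i$ with each $A_i$ contractible in $T^{n-k}$ to a point. Then each $T^k \times A_i$ deforms in $T^n$ onto $T^k \times \{\mathrm{pt}\} \cong P$.

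Two edge cases need checking. When $k = n$, $T^{n-k}$ is a point and the single set $T^n$ is already $P$, so the $P$-category is $1$. When $k = 0$, the statement reduces to $\cat T^n = n+1$.

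The only delicate point is the meaning of $p$ in the lower bound: the $p$ used in Theorem~\ref{thm:1.5} must be taken as $\dim P = k$. This is where the argument of Theorem~\ref{thm:1.5} moves cycles of dimension greater than $p$ off the categorical sets. Once that identification is fixed, the corollary is immediate.
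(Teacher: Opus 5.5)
Your reduction is the paper's own: Corollary~\ref{cor:1.15} is Theorem~\ref{thm:1.14} with $r=n$, all $k_i=1$ and $l=k$, and your upper bound via the sets $T^k\times A_i$ is fine. The gap is in the lower bound. You take it, as the paper does, from Proposition~\ref{prop:1.9} and Theorem~\ref{thm:1.5}, and the step you call the delicate point is exactly where it breaks.

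\textbf{Why the cited step fails.} That $H_l(A_i)\to H_l(M)$ vanishes for $l>p$ only makes $H_l(M)\to H_l(M,A_i)$ injective. It does not make $c_i$ homologous to a cycle in $M\setminus A_i$. By Poincar\'e--Alexander duality, that happens iff the dual class of $c_i$, of degree $m-\dim c_i$, restricts to zero on $A_i$, and this is automatic only when $m-\dim c_i>p$. Your cycles $T^k\times\widehat{S^1_j}$ have codimension $1\le k$. Their dual classes $d\theta_j$ need not vanish on a set that deforms onto a non-coordinate $k$-torus, e.g.\ a closed neighbourhood of a diagonal circle in $T^2$.

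\textbf{Theorem~\ref{thm:1.5} is false as stated.} Take $M=T^2\times S^2$ and $P=T^2$, so $p=2$. The $3$-cycles $\{a\}\times S^1\times S^2$ and $S^1\times\{b\}\times S^2$ meet in $S^2$, so $\mathrm{long}^{\,2}M\ge 2$. Yet $M=(T^2\times D_+)\cup(T^2\times D_-)$ gives $\Pcat M\le 2$; this is the upper-bound half of Theorem~\ref{thm:1.14} with $r=3$, $l=2$. Indeed $\{a\}\times S^1\times S^2$ cannot be moved off $T^2\times D_+$, because the complement is homotopy equivalent to $T^2$. The $p$-length sees only $\dim P$, so it cannot bound the $P$-category in general. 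Your lower bound is therefore unproved, even though the inequality is true for tori.

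\textbf{A correct argument for the lower bound.} Use the cohomology of $P$ itself. Suppose $T^n=A_1\cup\dots\cup A_s$ with $s\le n-k$, each $A_i$ deformable in $T^n$ onto some $P_i\cong T^k$.
\begin{itemize}
\item With rational coefficients, the restriction $H^*(T^n)\to H^*(A_i)$ factors through $H^*(P_i)\cong H^*(T^k)$. Hence every class of degree $>k$ vanishes on $A_i$, and the kernel $K_i$ of $H^1(T^n)\to H^1(A_i)$ has dimension at least $n-k$.
\item For $i\le s-1$ we have $\dim K_i\ge n-k>i-1$, so you can choose linearly independent $u_i\in K_i$, $i=1,\dots,s-1$. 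Then $w=u_1\cdots u_{s-1}\neq 0$.
\item Poincar\'e duality gives a class $v$ of degree $n-s+1\ge k+1$ with $wv\neq 0$, and $v$ vanishes on $A_s$.
\item Replace the $A_i$ by open neighbourhoods on which the chosen classes still vanish, as in the usual cup-length argument. Then each factor lifts to the relative group, and $wv$ comes from $H^n(T^n,A_1\cup\dots\cup A_s)=0$, a contradiction.
\end{itemize}
Hence $T^k\text{-}\cat T^n\ge n-k+1$. The essential step is choosing the degree-one classes inside the kernels $K_i$, since they do not vanish on $A_i$ automatically. That is exactly the information the $p$-length throws away.
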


In particular, the round category of the $n$-dimensional torus equals $n$.

\begin{proposition}[1.16]\label{prop:1.16}
Let $P$ be the $l$-dimensional sphere.
Then the $P$-category of the $n$-dimensional sphere equals:
\begin{enumerate}\setlength{\itemsep}{0pt}
\item[a)] two, if $l < n$;
\item[b)] one, if $l = n$;
\item[c)] zero, if $l > n$.
\end{enumerate}
\end{proposition}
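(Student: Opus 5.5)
The proof splits into the three cases of the statement, and in each case I give an explicit covering for the upper bound and a separate argument for the lower bound. Throughout, $P=S^l$ and $M=S^n$, and I use Definition~\ref{def:1.1} literally: each categorical set $A_i$ is deformed within $S^n$ onto a subset $P_i$ homeomorphic to $S^l$.

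\emph{Case c), $l>n$.} The sphere $S^n$ contains no subset homeomorphic to $S^l$. If it did, invariance of domain (or a dimension-theoretic argument) would give an embedding of an $l$-dimensional sphere into an $n$-manifold with $l>n$, which is impossible. Hence no nonempty closed set can be deformed onto a copy of $P$ inside $S^n$. The empty covering is then the only possibility left, and the statement records this degenerate convention as $\Pcat S^n=0$. I would state explicitly that ``zero'' is to be read in this sense, since nonempty categorical sets cannot exist here.

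\emph{Case b), $l=n$.} Take $A_1=S^n$ and let $F_1$ be the constant (identity) homotopy, so that $P_1=S^n\cong P$. This gives $\Pcat S^n\le 1$. Since $S^n$ is nonempty, at least one set is needed, so the value is exactly one.

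\emph{Case a), $l<n$.} For the upper bound, embed $S^l\subset S^n$ as an equatorial subsphere $\{x_{l+2}=\dots=x_{n+1}=0\}$. Split $S^n$ into two closed hemispheres $A_\pm=\{\pm x_{n+1}\ge 0\}$; each is a disc $D^n$. The idea is to deform each hemisphere onto a copy of $S^l$ by first contracting it to a point and then spreading that point onto $S^l$. A single homotopy cannot do this by spreading one point, so I would proceed directly. Choose a copy $S^l_\pm$ of $P$ inside the interior of $A_\pm$; this is possible because $l<n$, so $S^l$ embeds in $\R^n\cong\Int D^n$. Then deform $A_\pm$ within $S^n$ onto $S^l_\pm$ as follows. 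Since $A_\pm\cong D^n$ is contractible, the inclusion $A_\pm\hookrightarrow S^n$ is homotopic to any map $A_\pm\to S^l_\pm$. It suffices to produce a surjection $g\colon D^n\to S^l$, for example by collapsing the boundary of a small $l$-disc region and composing with a retraction of $D^n$ onto $D^l$. The linear homotopy in the contractible ambient disc then has the required endpoint $F(\cdot,1)=g$, whose image is exactly $S^l_\pm$. This yields $\Pcat S^n\le 2$.

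For the lower bound in case a), I would rule out a single set. Suppose $F\colon S^n\times I\to S^n$ with $F_0=\mathrm{id}$ and $F_1(S^n)=P_1\cong S^l$, where $l<n$. Then the identity of $S^n$ factors up to homotopy through $P_1$, a proper compact subset of $S^n$, which therefore lies in a contractible set $S^n\setminus\{pt\}$. This forces $\mathrm{id}_*=0$ on $H_n(S^n)\cong\Z$, a contradiction. Alternatively, apply Theorem~\ref{thm:1.5}: $\mathrm{long}^{\,l}S^n\ge1$, using the fundamental cycle intersected with a point class when $l=0$. Together the two bounds give $\Pcat S^n=2$.

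The main technical point is the upper bound in case a). It requires ensuring that the end of the homotopy is exactly a homeomorphic copy of $P$, not merely a map into it, which is why I insist on choosing $g$ surjective onto $S^l_\pm$.
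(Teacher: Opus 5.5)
\textbf{Case a), $l=0$.} Your upper bound in case a) takes a different route from the paper, and it fails at $l=0$. The paper does not use bare hemispheres. It takes $D_1\cup S^l_1$ and $D_2\cup S^l_2$, where $S^l_1\subset D_2$ and $S^l_2\subset D_1$ each meet the equator in a single point. Each categorical set then genuinely deformation retracts onto its sphere: slide the disc to the touching point and keep the sphere fixed.

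You instead keep the hemispheres and use the fact that Definition~\ref{def:1.1} only asks the time-one image to be a copy of $P$, via a straight-line homotopy to a surjection $g\colon D^n\to S^l_\pm$. For $l\ge 1$ this is correct. But a hemisphere is connected and $S^0$ is not, so for $l=0$ no such $g$ exists, and no homotopy of a hemisphere can end on two points. You include $l=0$ yourself, since you treat it in the lower bound. The missing idea is the paper's: enlarge each set by a piece lying in the other hemisphere (for $l=0$, an isolated point in the interior of the opposite disc). Then a genuine retraction onto a copy of $S^l$ exists for every $l<n$, and you also get the stronger deformation-retraction property.

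\textbf{Case c).} Your argument does not prove case c). The empty family has empty union, so it does not cover $S^n$. Under Definition~\ref{def:1.1}, the absence of any admissible family means $\Pcat S^n=\infty$, not $0$. What you do prove correctly is that for $l>n$ no nonempty closed subset of $S^n$ can be deformed onto a copy of $S^l$. The value ``zero'' in the statement is a convention that cannot be derived from Definition~\ref{def:1.1}, and the paper's proof does not argue c) at all. You should say this plainly rather than appeal to an empty covering.

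\textbf{The alternative lower bound.} Drop the ``alternative'' via Theorem~\ref{thm:1.5}. For $l\ge 1$, the only positive-degree class in $H^*(S^n)$ has degree $n>n-l$; homologically, the only admissible cycle is a point, of dimension $0<l$. Hence $\mathrm{long}^{\,l}S^n=0$, and Theorem~\ref{thm:1.5} gives nothing beyond $\Pcat S^n\ge 1$. Your primary argument is correct: $\mathrm{id}_{S^n}$ would be homotopic to a map into $S^n\setminus\{\mathrm{pt}\}$, which is impossible. This is what the paper's remark ``the homologies of spheres prevent this'' amounts to.
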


\textit{Proof.}
If $l \neq n$, then $S^l\text{-}\cat S^n \neq 1$ (the homologies of spheres prevent this).
If $l < n$, then from a covering of $S^n$ by two $n$-dimensional discs we construct a covering by two closed subsets contractible within $S^n$ to $S^l$.
Cover $S^n$ by two closed discs $D_1$ and $D_2$ intersecting along the equator $S^n$.
Since the dimension $n$ of the discs exceeds $l$ by at least one, into each disc one can inscribe an $S^l$-sphere meeting the equator in a single point (Fig.~1.9):
\[
   S_1^l \subset D_2^n,\quad S_2^l \subset D_1^n.
\]
Then the sets $S_1^l \cup D_1^n$ and $S_2^l \cup D_2^n$ may be taken as the categorical subsets.

\begin{figure}[h]
\centering\includegraphics[width=6cm]{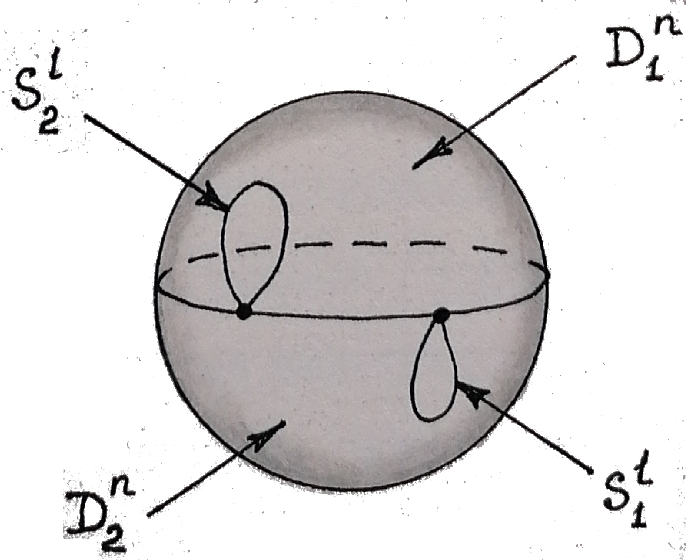}
\caption{}\label{fig:1.9}
\end{figure}

\chapter{Estimate of the number of critical submanifolds of a function on a manifold}

The idea of studying non-degenerate critical submanifolds instead of non-degenerate critical points of a differentiable function on a manifold is due to Bott.
Functions with non-degenerate critical submanifolds are called Morse--Bott functions.
A subclass of them is the so-called round Morse functions, that is, functions whose critical set is a disjoint union of non-degenerate circles.
Such functions were studied in \cite{thurston-1976,franks-1980,franks-1978,sharko-1990,matveev-fomenko-sharko-1988,fomenko-sharko-1989,miyoshi-1983}.

For estimating the number of critical submanifolds of functions with non-degenerate critical submanifolds one may use the Morse inequalities \cite{milnor-morse-1965}, generalised to this setting.

For the wider class of functions with degenerate isolated submanifolds no such estimate of the number of singularities exists in general.

The aim of the present chapter is to estimate the number of degenerate singularities of a function on a manifold when the critical set of the function is a disjoint union of homeomorphic smooth submanifolds, and to give a criterion for the existence of such functions on a manifold.
We also exhibit examples of manifolds for which the lower bound on the number of singularities is sharp.

\section{$P$-functions on a manifold. Conditions of existence}

\begin{definition}[2.1]\label{def:2.1}
A differentiable function on a manifold whose set of critical points is a disjoint union of smooth submanifolds without boundary is called a \emph{$P$-function} if each of its critical submanifolds is homeomorphic to a fixed smooth manifold $P$ without boundary.
\end{definition}

\begin{proposition}[2.2]\label{prop:2.2}
Let $P$ be a smooth submanifold of dimension $p$ in $\R^n$, and let
\[
   f\colon P \times \R^{n-p} \to \R
\]
be a function with the properties:
\begin{itemize}\setlength{\itemsep}{0pt}
\item[a)] $f$ is continuous;
\item[b)] the submanifold $P$ lies in a level surface;
\item[c)] $f\big|_{P \times \R^{n-p}\setminus C}$ is differentiable and has no critical points.
\end{itemize}
Then on $P \times \R^{n-p}$ there exists a function
\[
   F\colon P \times \R^{n-p} \to \R,
\]
having the properties:
\begin{itemize}\setlength{\itemsep}{0pt}
\item[a)] $F$ has a unique critical submanifold $P$;
\item[b)] $F\big|_{P \times \R^{n-p}\setminus C} = f\big|_{P \times \R^{n-p}\setminus C}$ for some compact set $C \subset P \times \R^{n-p}$.
\end{itemize}
\end{proposition}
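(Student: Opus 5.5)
The idea is to treat $P\times\R^{n-p}$ as a trivial tubular neighbourhood with fibre coordinate $y\in\R^{n-p}$. We then modify $f$ only inside a compact neighbourhood $C$ of $P\times\{0\}$, which we identify with $P$. The statement leaves $C$ implicit; I read hypothesis c) as saying that $f$ is smooth and regular off some compact set $C_0$ containing $P$. We may assume $f|_P\equiv 0$, and choose $R>0$ with $C_0\subset P\times\{|y|<R\}$ (using compactness of $P$ implicitly, as in all later applications). The replacement function will be a radial ``cone'' in the $y$-variable, glued to $f$ across a collar.

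The first step is a model function. Let $g(y)=|y|^2\,\varphi(|y|)$, or simply $g(y)=|y|^2$. As a function on $P\times\R^{n-p}$ it is smooth, its critical set is exactly $P\times\{0\}$, and that set lies in the level $0$. Since $P\times\{0\}$ is degenerate in the $P$-directions, degenerate critical submanifolds are allowed, so this model suffices.

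The second step is to compare level structures near the boundary sphere bundle $S_R=P\times\{|y|=R\}$. The key point is that $f$ has no critical points on the region $R\le |y|\le 2R$. By hypothesis b), and after replacing $f$ by $-f$ if necessary, the region $\{|y|\ge R\}$ lies where $f>0$. Here I expect to need the implicit assumption that $P$ is a local extremum level set, i.e.\ that the sublevel/superlevel structure of $f$ outside $C_0$ is that of a collar $S_R\times[R,\infty)$. I would obtain this by following the gradient-like flow of $f$ outward: it gives a diffeomorphism of $\{|y|\ge R\}$ with $\partial\times[0,\infty)$ in which $f$ is a coordinate.

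The third step, the gluing, is the main obstacle. Write $F=\lambda f+(1-\lambda)\,h$ on the collar, where $h$ is the model cone reparametrised so that its levels agree with those of $f$ along the flow lines, and $\lambda$ is a cutoff depending only on the collar parameter. The danger is that a convex combination of two regular functions may acquire critical points. I would avoid this by building $F$ not as a convex combination but directly in collar coordinates $(s,t)$, as $F=\psi(t)$ with $\psi$ strictly increasing, choosing $\psi$ to equal $f$ near $t$ large. Inside, I would extend $\psi(t)$ by the radial model via a diffeomorphism of the inner region $\{|y|\le R\}$ with the disc bundle. Then $dF\neq0$ off $P$ automatically, $F=f$ outside the compact set $C=P\times\{|y|\le 2R\}$, and the only critical submanifold is $P$. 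Smoothness at the junction follows from taking $\psi$ smooth and the collar coordinate smooth.
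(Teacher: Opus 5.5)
\textbf{There is a genuine gap at your Step~2, and the whole construction rests on it.} Hypothesis~b) only says that $P$ lies in a level set of $f$. It gives no reason why, after possibly replacing $f$ by $-f$, one should have $f>0$ on $\{|y|\ge R\}$, i.e.\ why $P$ should be a local extremum whose nearby levels form a sphere-bundle collar. This fails in exactly the situation the proposition is built for. In Proposition~2.4 the function $\bar f\circ\Phi^{-1}$ equals $-1$ on $M_1$, $+1$ on $M_2$ and $0$ on $P$. It therefore takes both signs in every neighbourhood of $P$, and its zero level near $P$ is strictly larger than $P$ (it contains the image of $Q_1\cap Q_2$). The simplest instance is $P=\{0\}$, $P\times\R^2=\R^2$ with coordinates $(y_1,y_2)$, $f=y_1$. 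Then there is no collar on which $f$ is a coordinate, and the radial model $|y|^2$, which has an extremum along $P$, cannot be glued to the levels of $f$. So your argument could at best treat the first and last critical submanifolds in Theorem~2.5, not the intermediate ones.

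Two further points. First, your reading of c) is too weak for the statement to be true. On $\R$ with $P=\{0\}$, take $f=e^{y}+1000$ for $y\ll 0$, $f=-e^{-y}$ for $y\gg 0$, and anything continuous in between. Any $F$ agreeing with $f$ outside a compact set increases near both ends while $F(-L)>1000>0>F(L)$, so it has both a local maximum and a local minimum. The intended hypothesis, which is how Proposition~2.4 uses it, is that $f$ is smooth and regular on the complement of $P$ itself. Second, even in the extremum case, gluing $\psi(f)$ to a radial model requires the region bounded by a level set of $f$ to be diffeomorphic to $P\times D^{n-p}$, with $P$ going to the zero section. You do not justify this.

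\textbf{The missing idea is to keep the level structure of $f$ and repair $f$ only where it fails to be smooth, namely along $P$.} The paper composes $f$ with a Takens flattening function $\lambda$: $\lambda(t)=t$ for $|t|\ge\varepsilon/2$, $\lambda'(t)>0$ for $t\neq 0$, and $\lambda=\sum a_i\lambda_i$ with the $a_i$ chosen against the derivatives of $\lambda_i\circ f$. Then $\lambda\circ f$ is $C^\infty$ and flat on $P$. Since $\lambda\circ f$ is critical along all of $f^{-1}(0)$, not just on $P$, the paper sets $F=(1-H\pi)\,\lambda(f)+H\pi\, f$ on a cylindrical neighbourhood $C_\varepsilon(P)$. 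Here $\pi$ retracts along gradient lines of $f$ onto $v=T_r(P)\cap f^{-1}(0)$, and $H$ vanishes exactly on $P$ and equals $1$ outside a small neighbourhood of $P$ in $v$. The convex combination you were wary of is harmless because the weight $H\pi$ is constant along gradient lines. The cross terms drop out and $\langle dF,\mathrm{grad}\, f\rangle=\bigl((1-H\pi)\lambda'(f)+H\pi\bigr)\,|\mathrm{grad}\, f|^2$, which vanishes only on $P$. In the extremum case, your $F=\psi(f)$ on the collar, pushed all the way down to $P$, is precisely $\lambda\circ f$; no radial model or disc-bundle identification is needed. What remains is to choose $\psi$ flat enough at $0$ that $\psi\circ f$ is smooth at $P$, where $f$ is merely continuous, and that is exactly what Takens' $\lambda$ achieves.
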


\textit{Proof.} Let $f(P) = 0$ and let $U$ be a bounded neighbourhood of $P$ in $P \times \R^{n-p}$ of the form
\[
   U = P \times U',
\]
where $U'$ is a bounded open subset of $\R^{n-p}$.
Consider in $U$ a tubular neighbourhood $T_r(P)$ of $P$,
\[
   T_r(P) = \{ y \in U : \rho(y, P) < r \},
\]
where $\rho$ is a Riemannian metric on $P \times \R^{n-p}$.
Denote
\[
   v = T_r(P) \cap f^{-1}(0).
\]
Choose $\varepsilon > 0$ small enough so that the cylindrical neighbourhood $C_\varepsilon(P)$ (see \S2.2) of $P$ is bounded in $P \times \R^{n-p}$.

Consider the retraction
\[
   \pi\colon C_\varepsilon(P) \to v,
\]
which contracts the cylindrical neighbourhood $C_\varepsilon(P)$ onto $v$ along integral trajectories of the vector field $\mathrm{grad}\, f$.

The function $F$ will be obtained by modifying $f$ only on the set $C$, which equals the cylindrical neighbourhood $C_\varepsilon(P)$.
For the construction we use two auxiliary functions
\[
   \lambda\colon \R \to \R \quad\text{and}\quad H\colon v \to [0,1],
\]
having the following properties:
\begin{enumerate}\setlength{\itemsep}{0pt}
\item[(1)] $\lambda f$ is differentiable;
\item[(2)] $\lambda(t) = t$ when $|t| \geq \varepsilon/2$;
\item[(3)] $(d\lambda/dt)_{t_0} > 0$ for $t_0 \neq 0$;
\item[($1'$)] the function $H\pi\colon C_\varepsilon(P) \to [0,1]$ is differentiable and locally flat at every point $x \in P$;
\item[($2'$)] for some open subset $N_0$ with $\overline{N_0} \subset v$, $H(v \setminus N_0) = 1$;
\item[($3'$)] $H(P) = 0$ and $H(v \setminus P) = (0,1]$.
\end{enumerate}

Define $F\colon P \times \R^{n-p} \to \R$ by
\[
   F(x) = \begin{cases}
      f(x), & x \notin C_\varepsilon(P),\\
      (1 - H\pi(x))\cdot \lambda(f(x)) + H\pi(x)\cdot f(x), & x \in C_\varepsilon(P).
   \end{cases}
\]

Properties (1) and ($1'$) ensure that $F$ is differentiable on $C_\varepsilon(P)$.
Outside $C_\varepsilon(P)$ the function $F$ coincides with $f$, since properties (2) and ($2'$) ensure this in a neighbourhood of the boundary $\partial \overline{C_\varepsilon(P)}$.

Hence $F$ is differentiable on its entire domain.
From the definition of $F$ property~(b) holds immediately, taking $C$ to be the closure of $C_\varepsilon(P)$.

Property (a) only needs to be verified for $x \in C_\varepsilon(P)$, since outside $C_\varepsilon(P)$ the function $f$, hence $F$, has no critical points.
On $C_\varepsilon(P)$ we compute the derivative in the direction of $\mathrm{grad}\, f$:
\begin{multline*}
\langle dF, \mathrm{grad}\, f\rangle = \langle d(H\pi(x)), \mathrm{grad}\, f\rangle\cdot\lambda(f(x)) +\\ {} + (1 - H\pi(x))\cdot\langle d(\lambda(f(x))), \mathrm{grad}\, f\rangle +\\ {} + \langle d(H\pi(x)), \mathrm{grad}\, f\rangle\cdot f(x) + H\pi(x)\cdot\langle \mathrm{grad}\, f, df(x)\rangle.
\end{multline*}

The first and third summands vanish for all $x \in C_\varepsilon(P)$, since along integral trajectories of $\mathrm{grad}\, f$ the map $H\pi$ is constant, and on $P$ it is locally flat.

The second summand is positive on $C_\varepsilon(P)$, except on $v \setminus N_0$ and on $v$, where it vanishes.

The last summand is positive on $C_\varepsilon(P)$ except on $P$ and on the integral trajectories entering and leaving $P$, where it vanishes.

Hence the derivative of $F$ in the direction of $\mathrm{grad}\, f$ is positive at every point $x \in C_\varepsilon(P)$ except on $P$, where all summands vanish simultaneously.
This means that $F$ has $P$ as its unique critical submanifold.

To complete the proof we show that one can construct functions $\lambda$ and $H$ with the required properties.

Take $\lambda$ exactly as in~\cite{takens-1968}, where it is constructed as follows.
Let
\[
   \{\lambda_i\colon \R \to \R\}_{i=1}^{\infty}
\]
be a family of differentiable functions with the properties:
\[
   \lambda_i(t) = \begin{cases} t, & \text{if } |t| \geq \varepsilon/2i,\\ 0, & \text{if } |t| \leq \varepsilon/(2(i+1)),\end{cases}
\quad \frac{d}{dt}\bigl[\lambda_i(t)\bigr] \geq 0,\; t \in \R.
\]
The function $\lambda$ is defined as the series
\[
   \lambda = \sum_{i=1}^{\infty} a_i\cdot \lambda_i,
\]
where $\sum_{i=1}^{\infty} a_i = 1$, $a_i = \beta_i\bigl(\sum_{k=1}^{\infty}\beta_k\bigr)^{-1}$, $\beta_i = 2^{-i}/(\gamma_i + 1)$, $\gamma_i = \max\{|\partial_{j_1,\ldots,j_r}(\lambda_i(f))_x|\}$, where the maximum is taken over the set $\{(j_1,\ldots,j_r), x : r \leq i,\; j_1,\ldots,j_r \leq n,\; x \in C_\varepsilon(P)\}$.

To construct $H$, choose a family of nested tubular neighbourhoods $T_{r_i}(P)$, $i = 0, 1, 2, \ldots$, of $P$ in $P \times \R^{n-p}$ with radii $r_i$ satisfying:
\begin{itemize}\setlength{\itemsep}{0pt}
\item[a)] $r > r_0 > r_1 > \ldots$,
\item[b)] $r_i \to 0$ as $i \to \infty$.
\end{itemize}
On each tubular neighbourhood define functions $h_i\colon T_{r_i}(P) \to [0,1]$, $i = 0, 1, 2, \ldots$,
\[
   h_i(y) = \varphi_{r_0 - r_i}(\rho(y, x) - r_i),\quad y \in T_{r_i}(P),\; x \in P,
\]
where $\rho$ is a Riemannian metric on $P \times \R^{n-p}$ and
\[
   \varphi_{r_0 - r_i}(t) = \begin{cases} 0, & \text{if } t \leq 0,\\
   \dfrac{e^{-1/t}}{e^{-1/t} + e^{-1/(r_0 - r_i - t)}}, & \text{if } t > 0.\end{cases}
\]
The functions $\varphi_{r_0 - r_i}(t)$, $i = 0, 1, 2, \ldots$, are: a)~differentiable; b)~$0 \leq \varphi_{r_0 - r_i}(t) \leq 1$; c)~$\varphi_{r_0 - r_i}(t) = 0$ iff $t \leq 0$; d)~$\varphi_{r_0 - r_i}(t) = 1$ for $t \geq r_0 - r_i$ (Fig.~2.1).

\begin{figure}[h]
\centering\includegraphics[width=7cm]{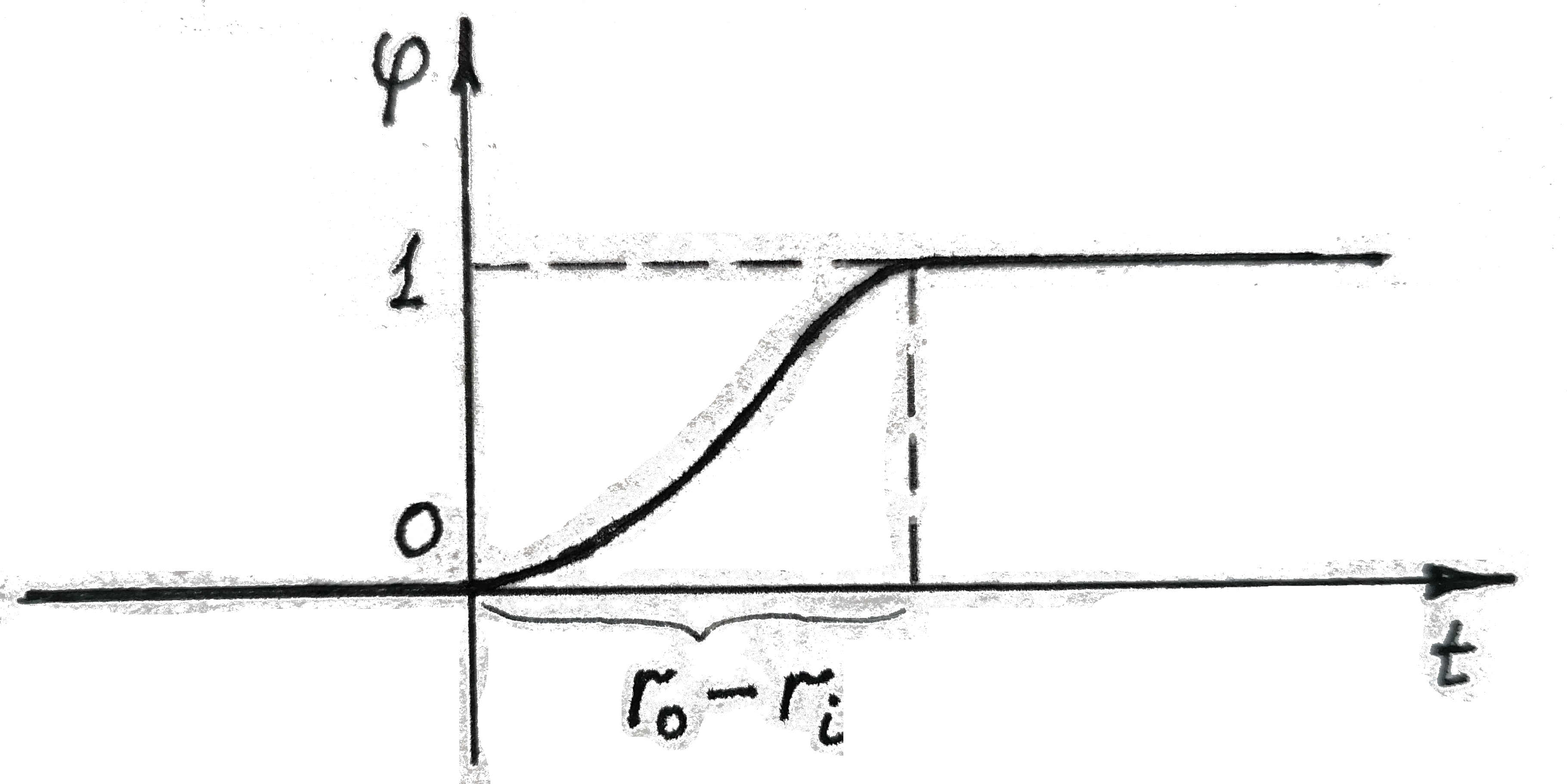}
\caption{}\label{fig:2.1}
\end{figure}

Thus each function $h_i$, $i = 0, 1, 2, \ldots$, has the properties:
\begin{enumerate}\setlength{\itemsep}{0pt}
\item[1)] $h_i$ is differentiable and locally flat at $P$, since $h_i(\overline{T_{r_i}}) = 0$;
\item[2)] $h_i(T_r \setminus T_{r_0}) = 1$;
\item[3)] all the $h_i$ vanish simultaneously only on $P$, since $\bigcap_{i=1}^{\infty} T_{r_i} = P$.
\end{enumerate}

Take the restrictions of $h_i$, $i = 0, 1, 2, \ldots$, to the neighbourhoods
\[
   N_i = T_{r_i} \cap f^{-1}(0)
\]
of $P$ in the level surface $f^{-1}(0)$ and define $H$ by
\[
   H = \sum_{i=0}^{\infty} b_i\cdot h_i\big|_{N_i},
\]
where $b_0, b_1, b_2, \ldots$ are positive numbers with $\sum_{i=0}^{\infty} b_i = 1$.

Verification of properties ($1'$)--($3'$) of $H$, which follow directly from the properties of $h_i$, completes the proof of Proposition~\ref{prop:2.2}.
\qed

\begin{definition}[2.3]\label{def:2.3}\cite{takens-1968}
Let $Q^m$ be a manifold with boundary, and let $S$ be the set of points at which $\partial Q^m$ fails to be smooth.
Assume that every point of $S$ has a neighbourhood in $Q^m$ diffeomorphic to
\[
   \{(x_1,\ldots,x_m) \in \R^m : x_1 \geq 0,\; x_2 \geq 0\}.
\]
$Q^m$ admits a unique smoothing $\widetilde{Q}^m$.
A homeomorphism $\varphi\colon Q^m \to \widetilde{Q}^m$ that is a diffeomorphism $Q^m \setminus S \to \widetilde{Q}^m \setminus \varphi(S)$, as well as its inverse, is called a \emph{special almost diffeomorphism}.
A map obtained as a composition of a diffeomorphism and a special almost diffeomorphism is called an \emph{almost diffeomorphism}.
\end{definition}

\begin{proposition}[2.4]\label{prop:2.4}
Let $M^m$ be a smooth compact manifold with boundary $\partial M^m$, which is a disjoint union of closed subsets $M_1$ and $M_2$, and let $(M^m; M_1, M_2)$ admit a covering $\{Q_i\}_{i=1}^{3}$ by three closed submanifolds satisfying:
\begin{itemize}\setlength{\itemsep}{0pt}
\item[(a)] $\Int(Q_i) \cap \Int(Q_j) = \emptyset$, $i \neq j$;
\item[(b)] $(Q_i, M_i)$ is almost diffeomorphic to $(M_i \times [0,1], M_i \times \{0\})$ for $i = 1, 2$;
\item[(c)] $Q_3$ is almost diffeomorphic to $P \times D^{m-p}$, where $P$ is a smooth submanifold of $M^m$ of dimension $p$;
\item[(d)] $Q_1 \cap Q_2$ is a smooth submanifold of $M^m \setminus \Int Q_3$.
\end{itemize}
Then on $M^m$ there exists a $P$-function $F\colon M^m \to \R$ with the following properties:
\begin{itemize}\setlength{\itemsep}{0pt}
\item $F$ takes a constant minimal value on $M_1$;
\item $F$ takes a constant maximal value on $M_2$;
\item $F$ has no critical points on $M_1$ or $M_2$;
\item $F$ has a unique critical submanifold $P$ lying in $\Int M^m$.
\end{itemize}
\end{proposition}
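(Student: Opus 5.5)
The plan follows Takens' construction of a function with one critical point from a three-piece decomposition. Here the third piece is $P \times D^{m-p}$ instead of a disc, and we finish by invoking Proposition~\ref{prop:2.2}.

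\emph{Step 1: smoothing the pieces.} Using condition (b) and Definition~\ref{def:2.3}, pass to the smoothings of $Q_1$ and $Q_2$. Then $(Q_1, M_1)$ is identified with $(M_1 \times [0,1], M_1 \times \{0\})$, and $(Q_2, M_2)$ with $(M_2 \times [0,1], M_2 \times \{0\})$. The corner sets lie on $\partial Q_3 \cap (Q_1 \cup Q_2)$, and by (d) the set $Q_1 \cap Q_2$ is a smooth hypersurface of $M^m \setminus \Int Q_3$.

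\emph{Step 2: a continuous function regular off $P$.} Define a continuous $f$ on $M^m$ as follows.
\begin{itemize}
\item On $Q_1$, set $f = -1 + t$, where $t$ is the collar coordinate.
\item On $Q_2$, set $f = 1 - t$.
\item Then $f = 0$ on $Q_1 \cap Q_2$, $f = -1$ on $M_1$, and $f = 1$ on $M_2$.
\end{itemize}
On $Q_3 \cong P \times D^{m-p}$ we need a function that agrees with these values on $\partial Q_3 = P \times S^{m-p-1}$ and has $P \times \{0\}$ inside the level $\{f = 0\}$. The boundary $\partial Q_3$ splits as $(\partial Q_3 \cap Q_1) \cup (\partial Q_3 \cap Q_2)$. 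Choose the identification so that each part is fibrewise a hemisphere of $S^{m-p-1}$, meeting the other along $P \times S^{m-p-2}$. This is the natural situation, as in the model of a handle. Then on $P \times D^{m-p}$ take $f(x,y) = \phi(y)$, where $\phi$ is a continuous function on $D^{m-p}$ that is regular on $D^{m-p}\setminus\{0\}$ and has $\phi(0) = 0$, for instance a radial cone-type function interpolating the boundary values. Standard smoothing of collars along the corners, as in~\cite{takens-1968}, makes $f$ differentiable and without critical points off $P \times \{0\}$. The resulting $f$ satisfies the hypotheses a)--c) of Proposition~\ref{prop:2.2} in a product neighbourhood of $P$, and $f$ is constant and regular on $M_1$ and $M_2$.

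\emph{Step 3: making $P$ the only critical set.} Apply Proposition~\ref{prop:2.2} in $Q_3$. This yields $F$ that agrees with $f$ outside a compact neighbourhood $C$ of $P$ contained in $\Int Q_3$, and whose unique critical submanifold in that region is $P$. Since $F = f$ near $\partial M^m$, it has the constant extremal values $-1$ on $M_1$ and $1$ on $M_2$ and no critical points there. Hence $F$ is a $P$-function with the required properties.

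The main obstacle is Step 2. We must check that the two collar functions glue across the corner locus $\partial Q_3 \cap Q_1 \cap Q_2$, and must build $\phi$ on $P \times D^{m-p}$ so that the gradient does not vanish on the interface. Conditions (a)--(d) by themselves do not force the hemisphere splitting of $\partial Q_3$. I would first try to derive it from (b), (d) and the fact that $Q_1 \cup Q_2$ is a collar-like region of $M_1 \sqcup M_2$. Failing that, I would adopt it as the intended meaning of the almost-diffeomorphism hypothesis, as in Takens' setting.
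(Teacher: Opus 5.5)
There is a genuine gap in Step~2, and it goes beyond the unproved hemisphere splitting.

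With your own collar functions, $f=-1+t$ on $Q_1$ and $f=1-t$ on $Q_2$, the function vanishes on the whole far end $M_i\times\{1\}$ of each collar. Each $Q_i\cap Q_3$ lies in that far end: interiors are disjoint, and $Q_3$ cannot reach $M_i$, of which $Q_i$ is a full collar neighbourhood. So $f\equiv 0$ on all of $\partial Q_3$, rather than negative on one part and positive on the other. No continuous function on $Q_3$ that vanishes on $\partial Q_3$ and on $P$ can be free of critical points in $\Int Q_3\setminus P$: if it is not identically zero, a nonzero extremum is attained there. So no fibrewise $\phi(y)$ of the kind you describe exists, and smoothing the corners cannot change this.

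Nor can the hemisphere splitting be adopted as ``the intended meaning''. It does not follow from (a)--(d). When it does hold, $Q_1\cup Q_3$ is again a collar of $M_1$, so $M^m$ is just a product $M_1\times[0,1]$ and nothing forces a critical set. In the cases the proposition is meant for, $\partial Q_3$ is shared differently. Already for $m=2$ and $P$ a point, $\partial Q_3$ is cut into $2k$ alternating arcs and the critical point has index $1-k$. For example, the middle piece of the three-disc filtration of $T^2$ in Chapter~III is a torus with two holes and needs $k=3$. In the degenerate cases of the Remark, used for Theorem~\ref{thm:2.17}, all of $\partial Q_3$ lies in a single $Q_i$.

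The missing idea is the collapse used in the paper, following Takens: do not try to fill $Q_3$ with a regular function at all.
\begin{enumerate}
\item Keep the collar functions, so $f=0$ on $\partial Q_3$.
\item Smooth $f$ across $Q_1\cap Q_2$ using (d). Take a tubular neighbourhood $(Q_1\cap Q_2)\times[-1,1]$ with its normal coordinate, interpolate it with $f$, and use the corner model near $Q_1\cap Q_2\cap Q_3$.
\item Extend by $0$ over $Q_3$. The result $\bar f$ is continuous, and smooth without critical points on $M^m\setminus Q_3$.
\item Use (c) to build a map $\Phi\colon M^m\to M^m$ that crushes $Q_3\cong P\times D^{m-p}$ fibrewise onto $P$ and restricts to a diffeomorphism $M^m\setminus Q_3\to M^m\setminus P$.
\item The composite $\bar f\circ\Phi^{-1}$ is continuous, vanishes on $P$, and is smooth and regular off $P$. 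This is exactly the input of Proposition~\ref{prop:2.2}, which then produces $F$.
\end{enumerate}
This route makes no assumption on how $\partial Q_3$ is divided between $Q_1$ and $Q_2$. The criticality of $P$ comes from the flattening in Proposition~\ref{prop:2.2}, not from a fibrewise model.
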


\begin{remark}
If $M_1 = \emptyset$ or $M_2 = \emptyset$, then $Q_1$ or $Q_2$, respectively, is omitted from the covering $\{Q_i\}_{i=1}^{3}$.
\end{remark}

\textit{Proof.} The fact that $(Q_i, M_i)$ is almost diffeomorphic to $(M_i \times [0,1], M_i \times \{0\})$ for $i = 1, 2$ means that there exists a homeomorphism
\[
   \varphi_i\colon Q_i \to M_i \times [0,1],
\]
which is a diffeomorphism $Q_i \setminus \bigl(\bigcap_{j=1}^{3} Q_j\bigr) \to M_i \times [0,1]\setminus \varphi_i\bigl(\bigcap_{j=1}^{3} Q_j\bigr)$.

Take the differentiable function
\[
   \phi_i\colon M_i \times [0,1] \to \R,\quad i = 1, 2,\quad
   \phi_i(x, t) = 1 - t,
\]
which equals one at $t = 0$.
The composition
\[
   f_i = \phi_i \circ \varphi_i\colon Q_i \to \R,\quad i = 1, 2,
\]
\[
\xymatrix{
   Q_i \ar[rr]^-{\phi_i} \ar[rd]_-{f_i} && 
   M_i \times [0,1] \ar[dl]^-{\psi_i} \\
    & \R
}
\]
is a diffeomorphism
\[
   Q_i \setminus \bigl(\bigcap_{j=1}^{3} Q_j\bigr) \quad\text{onto}\quad [0,1] \setminus f_i\bigl(\bigcap_{j=1}^{3} Q_j\bigr).
\]
On $Q_1 \cup Q_2$ define
\[
   f\colon Q_1 \cup Q_2 \to \R,\quad
   f(x) = \begin{cases} -f_1(x), & x \in Q_1,\\ f_2(x), & x \in Q_2.\end{cases}
\]
It is infinitely differentiable on $Q_1 \cup Q_2 \setminus (Q_1 \cap Q_2)$ and has no critical points there.

We modify $f$ near $Q_1 \cap Q_2$ so that it becomes differentiable on $Q_1 \cup Q_2 \setminus (Q_1 \cap Q_2 \cap Q_3)$ and has no critical points on this set.
For this, take a sufficiently small tubular neighbourhood $T(Q_1 \cap Q_2)$ of the smooth submanifold $Q_1 \cap Q_2$ in $M^m$, diffeomorphic to $(Q_1 \cap Q_2) \times [-1,1]$.

Let $\theta'$ denote this diffeomorphism
\[
   \theta'\colon (Q_1 \cap Q_2)\times[-1,1] \to T(Q_1 \cap Q_2).
\]
It maps the hypersurface $(Q_1 \cap Q_2)\times t_0$ to a hypersurface whose points lie at distance $|t_0|$ from $Q_1 \cap Q_2$.
On $(Q_1 \cap Q_2)\times[-1,1]$ define a further differentiable function
\[
   \theta''\colon (Q_1 \cap Q_2)\times[-1,1] \to \R,\quad
   \theta''(x, t) = t,\quad x \in Q_1 \cap Q_2.
\]
\[
\xymatrix@C=6em{
    T(Q_1 \cap Q_2) \ar[rd]^-{\theta} \\
    (Q_1 \cap Q_2) \times [-1,1] \ar[u]^-{\theta'} \ar[r]_-{\theta''} & \R
}
\]
The composition
\[
   \theta = \theta''\circ (\theta')^{-1}\colon T(Q_1 \cap Q_2) \to \R
\]
is differentiable on $T(Q_1 \cap Q_2)$.

Since $(Q_i, M_i)$ is almost diffeomorphic to $(M_i \times [0,1], M_i \times \{0\})$ for $i = 1, 2$, every point $p \in Q_1 \cap Q_2 \cap Q_3$ admits a chart $\varphi_p^{-1}\colon \R^m \to M^m$ such that $\varphi_p(0,\ldots,0) = p$, $\varphi_p^{-1}(Q_1) = \{(x_1,\ldots,x_m) : x_1 \geq 0,\; x_2 \geq 0\}$, $\varphi_p^{-1}(Q_2) = \{(x_1,\ldots,x_m) : x_1 \geq 0,\; x_2 \leq 0\}$, $\varphi_p^{-1}(Q_3) = \{(x_1,\ldots,x_m) : x_1 \geq 0\}$.

Let $T'(Q_1 \cap Q_2)$ denote the portion of the tubular neighbourhood of $Q_1 \cap Q_2$ containing no points of $\varphi_p^{-1}(Q_1)$ and $\varphi_p^{-1}(Q_2)$ satisfying
\[
   x_1 \geq 0\quad\text{and}\quad x_1 \geq \pm k x_2
\]
for some fixed $k$ (Fig.~2.2).

\begin{figure}[h]
\centering\includegraphics[width=0.85\textwidth]{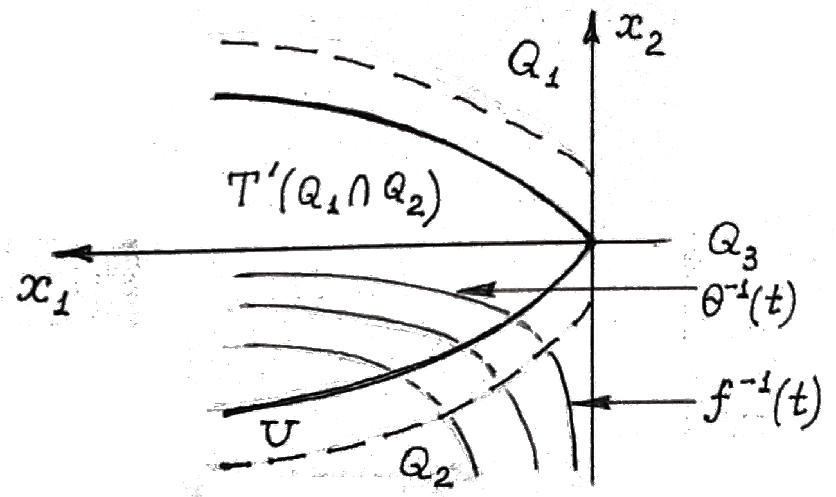}
\caption{}\label{fig:2.2}
\end{figure}

Let $U$ be a sufficiently small neighbourhood of $T'(Q_1 \cap Q_2)$ in $Q_1 \cup Q_2$.
Define
\[
   \bar f\colon Q_1 \cup Q_2 \to \R,\quad
   \bar f(x) = \begin{cases} \theta(x), & x \in T'(Q_1 \cap Q_2),\\ f(x), & x \in Q_1 \cup Q_2 \setminus U,\end{cases}
\]
which on $U \setminus T'(Q_1 \cap Q_2)$ smoothly connects the level surfaces $f^{-1}(t)$ of $f$ with the level surfaces $\theta^{-1}(t)$ of $\theta$.
Extend $\bar f$ by zero on $Q_3$.
Then $\bar f\colon (M^m;\; M_1, M_2) \to \R$ has the following properties:
\begin{enumerate}\setlength{\itemsep}{0pt}
\item[1)] $\bar f$ is continuous;
\item[2)] $\bar f\big|_{M^m \setminus Q_3}$ is differentiable and has no critical points;
\item[3)] $-1 \leq \bar f(M^m) \leq 1$, $\bar f(M_1) = -1$, $\bar f(M_2) = 1$;
\item[4)] $\bar f(Q_3) = 0$.
\end{enumerate}

Since $Q_3$ is almost diffeomorphic to $P \times D^{m-p}$, there exists a continuous map
\[
   \Phi\colon M^m \to M^m,
\]
mapping $Q_3$ to $P$ and acting as a homeomorphism $M^m \setminus Q_3 \to M^m \setminus P$.
$\Phi$ may be taken to be differentiable.
The composition
\[
   \Phi = \bar f \circ \Phi^{-1}\colon M^m \to \R
\]
is a continuous function on $M^m$, differentiable on $M^m \setminus P$, with no critical points outside $P$.
Applying Proposition~\ref{prop:2.2} if necessary to the restriction of $\Phi$ to a small open neighbourhood of $P$ in $M^m$ yields the required function.
\qed

\begin{theorem}[2.5]\label{thm:2.5}
Let $M^m$ be a smooth compact manifold (with or without boundary), and let
\[
   M_1 \subset M_2 \subset \ldots \subset M_k = M^m
\]
be a filtration of $M^m$ by compact manifolds with boundary, satisfying:
\begin{enumerate}\setlength{\itemsep}{0pt}
\item[(a)] $M_i$ are manifolds with boundary, $i = 1, \ldots, k-1$;
\item[(b)] $M_i \subset \Int(M_{i+1})$, $i = 1, \ldots, k-1$;
\item[(c)] $\partial M^m \subset M_k \setminus M_{k-1}$;
\item[(d)] for each $i = 1, \ldots, k$, the manifold
      \[
         (M_i \setminus \Int M_{i-1};\; \partial M_{i-1},\; \partial M_i)
      \]
admits a covering by three closed subsets as in Proposition~\ref{prop:2.4}.
\end{enumerate}
Then on $M^m$ there exists a $P$-function with $k$ critical submanifolds.
\end{theorem}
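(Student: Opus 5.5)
The plan is to build the function piece by piece on the \emph{layers} $W_i = M_i \setminus \Int M_{i-1}$, with the convention $M_0 = \emptyset$. On each layer Proposition~\ref{prop:2.4} gives a $P$-function with a single critical submanifold. The pieces are then rescaled in value and glued along the common boundaries $\partial M_i$.

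\textbf{Step 1 (local functions).} By hypothesis (d), each triple $(W_i;\partial M_{i-1},\partial M_i)$ admits a covering as in Proposition~\ref{prop:2.4}. For $i=1$ the lower boundary is empty, so $Q_1$ is omitted, as the Remark after Proposition~\ref{prop:2.4} allows. For $i=k$ with $\partial M^m=\emptyset$, $Q_2$ is omitted. Proposition~\ref{prop:2.4} then yields $F_i\colon W_i\to\R$ with the following properties:
\begin{itemize}
\item $F_i$ is constant and minimal on $\partial M_{i-1}$, and constant and maximal on $\partial M_i$;
\item $F_i$ has no critical points on $\partial W_i$;
\item $F_i$ has exactly one critical submanifold, homeomorphic to $P$ and lying in $\Int W_i$.
\end{itemize}
Composing with affine maps, I normalise so that $F_i(\partial M_{i-1}) = i-1$ and $F_i(\partial M_i)=i$. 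Each level value is attained on a whole boundary, so this is consistent.

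\textbf{Step 2 (gluing).} Define $F$ on $M^m$ by $F|_{W_i}=F_i$. This is well defined and continuous because the values agree on $\partial M_i = W_i\cap W_{i+1}$. The main obstacle is smoothness across $\partial M_i$.

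Near $\partial M_i$, both $F_i$ and $F_{i+1}$ are regular, and $\partial M_i$ is a regular level set of each. I would first use a collar $\partial M_i\times[-1,1]$ and choose it compatibly with the gradient flows of $F_i$ on one side and $F_{i+1}$ on the other. In this collar both functions are functions of the collar parameter $t$ alone: they equal $i+g_-(t)$ for $t\le 0$ and $i+g_+(t)$ for $t\ge 0$, with $g_\pm'>0$.

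Next I replace each $F_j$ near its boundary by $\mu\circ F_j$, where $\mu\colon\R\to\R$ is a strictly increasing diffeomorphism. The map $\mu$ is chosen so that its derivatives of all orders vanish at the integers, while $\mu'>0$ elsewhere. This makes the glued function $C^\infty$ in $t$ and leaves it unchanged away from the boundaries.

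This repair, however, creates critical points on $\partial M_i$. To avoid that, I would instead straighten $F_i$ and $F_{i+1}$ near the boundary so that they are exactly $i+t$ in the flow collar. Each $F_j$ is a submersion there, so it can be used itself as the collar coordinate: take the collar coordinate to be $F_i - i$ on one side and $F_{i+1}-i$ on the other. Then the glued function is the coordinate $t$, smooth with nonzero differential. The gluing of $W_i$ and $W_{i+1}$ along these collars recovers the smooth structure of $M^m$ up to diffeomorphism, by uniqueness of collars. Checking this compatibility with the given smooth structure carefully is the main technical point.

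\textbf{Step 3 (counting).} The resulting $F$ is smooth on $M^m$. Its critical set is the disjoint union of the $k$ submanifolds $P_i\subset\Int W_i$, each homeomorphic to $P$. So $F$ is a $P$-function with exactly $k$ critical submanifolds. If $\partial M^m\ne\emptyset$, then $F$ is additionally constant and maximal on $\partial M^m$ and has no critical points there.
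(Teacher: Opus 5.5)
Your proof is correct. It shares the paper's skeleton: the layers $M_i\setminus\Int M_{i-1}$, Proposition~2.4 on each, boundary values normalised to $i-1$ and $i$, then gluing. The real difference is how the seam along $\partial M_i$ is made smooth.

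The paper keeps the given smooth structure. Using a Riemannian metric, it replaces $F_i$ near $\partial M_i$ and $\partial M_{i-1}$ by $i-\rho(x,\partial M_i)$ and $i-1+\rho(x,\partial M_{i-1})$, so that across $\partial M_i$ the glued function is $i$ plus a signed distance. It then blends this back into $F_i$ between nested neighbourhoods $U\subset W$.

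You instead make the function itself the normal collar coordinate, with the $\partial M_i$-coordinate given by the flow of $\nabla F_i/|\nabla F_i|^2$. The glued function is then literally $i+t$. You return to the original structure via uniqueness of the smooth structure on a union glued along collars (e.g.\ Milnor's $h$-cobordism lectures, Theorem~1.4). The resulting diffeomorphism carries critical sets diffeomorphically, so the count $k$ is unaffected.

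The paper's route gives an explicit formula in the original structure, but its blending step is left unchecked and is not automatic. Take $U=\{\rho<\delta_1\}$ and $W=\{\rho<\delta_2\}$. Suppose $F_i>i-\delta_1$ on $\{\rho=\delta_2\}$; this happens when $W$ is only slightly wider than $U$ and $|\nabla F_i|<1$ along $\partial M_i$. The corrected function equals $i-\delta_1$ on $\{\rho=\delta_1\}$, but is larger on both boundary pieces of the collar $\{\rho\le\delta_2\}$. So it has an interior minimum there, i.e.\ a new critical point, whatever interpolation is used.

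Your route avoids this problem entirely, at the price of relying on the collar-uniqueness theorem. State that theorem explicitly in a write-up, and delete the discarded $\mu$-construction.
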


\textit{Proof.} On each submanifold $(M_i \setminus \Int M_{i-1};\; \partial M_{i-1},\; \partial M_i)$, $i = 1, \ldots, k$, construct a differentiable function
\[
   F_i\colon M_i \setminus \Int M_{i-1} \to \R
\]
as in Proposition~\ref{prop:2.4}, taking the values
\[
   F_i(\partial M_{i-1}) = i - 1,\quad F_i(\partial M_i) = i,
\]
with no critical points on $\partial M_{i-1}$ and $\partial M_i$, and with a single critical submanifold lying in $\Int(M_i \setminus \Int M_{i-1})$.
The function $F\colon M^m \to \R$ defined by
\[
   F(x) = F_i(x),\quad x \in M_i \setminus \Int M_{i-1},
\]
is continuous on $M^m$ and has one critical submanifold $P$ on each $M_i \setminus M_{i-1}$, $i = 1, \ldots, k$.
To make $F$ differentiable on $M^m$, equip $M^m$ with a Riemannian metric $\rho$ and ``correct'' the $F_i$ in sufficiently small neighbourhoods $U(\partial M_i)$ and $U(\partial M_{i-1})$ of the boundaries $\partial M_i$ and $\partial M_{i-1}$ in $M_i \setminus \Int M_{i-1}$ by constructing differentiable functions
\[
   f_i\colon M_i \setminus \Int M_{i-1} \to \R,
\]
\[
   f_i(x) = \begin{cases}
      i - \rho(x, \partial M_i), & x \in U(\partial M_i),\\
      i - 1 + \rho(x, \partial M_{i-1}), & x \in U(\partial M_{i-1}),\\
      F_i(x), & x \in M_i \setminus \Int M_{i-1}\setminus W(\partial M_i)\setminus W(\partial M_{i-1}),
   \end{cases}
\]
where $W(\partial M_i)$ and $W(\partial M_{i-1})$ are sufficiently small neighbourhoods of $\partial M_i$ and $\partial M_{i-1}$ containing $\overline{U(\partial M_i)}$ and $\overline{U(\partial M_{i-1})}$ respectively.
The resulting function $\bar F\colon M^m \to \R$, $\bar F(x) = f_i(x)$, $x \in M_i \setminus M_{i-1}$, is a $P$-function with $k$ critical submanifolds.
\qed

\section{Main theorem}

\begin{definition}[2.6]\label{def:2.6}
Let $P$ be a critical submanifold of a function $f$ on a manifold $M$, and $f(P) = 0$.
We say that an \emph{integral trajectory} $L$ of the vector field $\mathrm{grad}\, f$ \emph{enters} $P$ if
\begin{itemize}\setlength{\itemsep}{0pt}
\item[a)] it is defined for all sufficiently small values of $f$;
\item[b)] every sequence of points $a_1, a_2, a_3, \ldots$ of $L$ for which $f(a_1), f(a_2), f(a_3), \ldots$ decreases to $p$, converges to a point $a \in P$.
\end{itemize}
An integral trajectory $L$ \emph{leaves} a point $a \in P$ if for the function $-f$ it enters the point $a$.
\end{definition}

Let $U$ be a neighbourhood of a critical submanifold $P$ in $M$ whose closure contains no critical points other than those of $P$.

\begin{proposition}[2.7]\label{prop:2.7}
An integral trajectory of $\mathrm{grad}\, f$ passing through an arbitrary point of $U$ either enters $P$, or leaves $P$, or reaches the boundary $\partial \overline{U}$ at some point.
\end{proposition}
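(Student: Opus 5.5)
The plan is to study the gradient flow $\varphi_t$ of $\mathrm{grad}\, f$ through a point $x\in U$ and to sort the behaviour by which way the orbit can end. Let $L$ be the maximal integral curve through $x$, defined on an interval $(t_-,t_+)$. Along $L$ we have $\frac{d}{dt} f(\varphi_t(x)) = |\mathrm{grad}\, f|^2 \ge 0$. By the choice of $U$, the only critical points in $\overline U$ are those of $P$. So either $x\in P$, in which case the orbit is constant and trivially belongs to $P$, or $f$ is strictly increasing along $L$.

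\textbf{Step 1.} Assume $L$ never meets $\partial\overline U$ in forward time. Then the forward orbit stays in the compact set $\overline U$; compactness holds because $M$ is compact and $U$ is chosen bounded. Standard ODE theory then gives $t_+=\infty$, and $f$ is monotone and bounded along the orbit. Hence $\int_0^\infty |\mathrm{grad}\, f|^2\,dt<\infty$.

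\textbf{Step 2.} Take the $\omega$-limit set $\omega(x)\subset\overline U$. It is nonempty, compact and connected, and it is invariant under the flow. Moreover $f$ is constant on it, equal to $\lim f(\varphi_t x)$. An invariant set on which $f$ is constant consists of zeros of $\mathrm{grad}\, f$, because $f$ strictly increases along any nonconstant orbit. So $\omega(x)\subset P$, and the same argument applied to $-f$ treats the backward orbit. Note also that $f(P)=0$ is a single value, since $P$ is connected and $df|_P=0$.

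\textbf{Step 3.} Verify Definition~\ref{def:2.6}. Suppose a sequence of points of $L$ has $f$-values tending to $0$. Then the times along the orbit tend to $+\infty$ (or $-\infty$), so every limit point of the sequence lies in $\omega(x)\subset P$ (or in the $\alpha$-limit set). This is the requirement ``converges to a point of $P$'' in the set-wise sense. Together with Step 1, this shows that $L$ leaves $P$ in backward time, or enters $P$ in forward time, unless it hits $\partial\overline U$. This gives the trichotomy.

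\textbf{Main obstacle.} The hard step is convergence to a single point $a\in P$, rather than accumulation on a continuum in $P$, since Definition~\ref{def:2.6} b) asks for a limit point $a$. For a degenerate $P$ this can fail in general, because gradient orbits may spiral. My first attempt would be a \L ojasiewicz-type argument near $P$ to get finite arc length of $L$. Failing that, I would read the definition as convergence to the set $P$, which is all that is used later in Theorem~\ref{thm:2.12}.
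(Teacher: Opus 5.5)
Your Steps 1--3 are correct, and they are a cleaner version of what the paper does. The paper follows the descending half of $L$, picks points $a_i$ with $f(a_i)$ decreasing to the infimum $\beta$, and asserts that $(a_i)$ converges. It then rules out a regular limit point in $U$ by flowing a little further from it, which is the same invariance idea you package as $\omega(x)\subset P$. Your steps also give more than you state. A non-stationary orbit through $U$ must reach $\partial\overline U$ in at least one time direction: otherwise both its $\alpha$- and $\omega$-limit sets lie in $P$, so $f$ would tend to $f(P)=0$ at both ends while strictly increasing. So the trichotomy as literally worded always holds via its third alternative. The real content, which the paper's proof addresses and Propositions~\ref{prop:2.9} and~\ref{prop:2.11} appeal to, is the per-direction dichotomy: each half-orbit either reaches $\partial\overline U$ or tends to $P$. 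You prove this with ``tends to $P$'' read as ``the limit set lies in $P$'', i.e.\ $\rho(\varphi_t x,P)\to 0$.

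The step you leave open, convergence to a single point $a\in P$ as Definition~\ref{def:2.6}\,b) demands, is exactly where the paper's proof breaks, and it cannot be filled. The paper argues that a sequence with two limit points $x_1,x_2$ must have almost all its terms near $x_1$; this is invalid, since it uses nothing about the flow and would show that every sequence in a compact set converges. Worse, the claim is false for smooth $f$, as the Palis--de~Melo ``Mexican hat'' shows. Take $f(r,\theta)=e^{1/(r^2-1)}\bigl(1-\frac{4r^4}{4r^4+(1-r^2)^4}\sin(\theta-\frac{1}{1-r^2})\bigr)$ for $r<1$, and extend it by $0$ on $r=1$ and by the flat function $-e^{1/(1-r^2)}$ for $r>1$. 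This $f$ is $C^\infty$, and the unit circle is an isolated critical circle. Descending gradient trajectories in the spiral valley approach the circle with $f\downarrow 0$, yet accumulate on the whole circle.

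So your Lojasiewicz route is a dead end at the paper's generality. It needs real-analytic $f$, or some Lojasiewicz-type inequality near $P$, or Morse--Bott nondegeneracy of $P$, whereas the $P$-functions here are merely differentiable. The set-wise reading is therefore not a fallback but the correct form of Definition~\ref{def:2.6}\,b) and of this proposition, and your argument proves it.

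One caution about ``all that is used later''. In Theorem~\ref{thm:2.12}, contracting $\overline{C_\delta(P_i)}$ onto $\bar v_i$ along trajectories sends each point to the endpoint of its trajectory, which needs actual limit points. Under set-wise convergence, that step should instead use a tubular neighbourhood of $P_i$ and its retraction onto $P_i$, as in the standard Lyusternik--Shnirelman deformation lemma, which needs no convergence of individual trajectories.
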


\textit{Proof.}
Let $L$ be a segment of an integral trajectory of $\mathrm{grad}\, f$ traversed in the direction of decreasing $f$, passing through an arbitrary point of $U$, and contained in $U$.
Since $f$ is bounded on $U$, there is an infimum $\beta$ of its values on $L$:
\[
   f(x) > \beta,\quad x \in L.
\]
Pick on $L$ a sequence of points
\begin{equation}
a_1, a_2, a_3, \ldots, \tag{1}
\end{equation}
for which $f(a_1), f(a_2), f(a_3), \ldots$ decreases to $\beta$.

We show that sequence~(1) converges.
Assume not.
Then~(1) has at least two limit points $x_1$ and $x_2$.
This means that an arbitrarily small ball-neighbourhood of, say, $x_1$ contains infinitely many points of~(1).
But since a decreasing sequence of ball-neighbourhoods of $x_1$ with radii tending to zero has $x_1$ as its only limit, an arbitrarily small ball-neighbourhood of $x_1$ disjoint from some neighbourhood of $x_2$ must contain almost all points of~(1), so the sequence must converge to $x_1$, contradicting the assumption.

Sequence~(1) cannot converge to a point of $U$ not lying on $P$, since $U$, being open, would then contain points of the trajectory $L$ at which $f$ takes values less than $\beta$.

Hence the limit of~(1) is either a point of $P$ or a point on the boundary of $\overline{U}$.
An analogous statement holds for the segment $L$ traversed in the opposite direction.
\qed

\begin{definition}[2.8]\label{def:2.8}
Let $V$ be a neighbourhood of a critical submanifold $P$ in $M$ such that
\[
   v = V \cap f^{-1}(p) \subset U.
\]
A subset $C_\varepsilon(P)$ of $U$ is called a \emph{cylindrical neighbourhood} of $P$ in $M$ if, for sufficiently small $\varepsilon > 0$, it contains:
\begin{itemize}\setlength{\itemsep}{0pt}
\item[a)] the points $x$ of all integral trajectories of $\mathrm{grad}\, f$ entering $P$ for which $p < f(x) < p + \varepsilon$;
\item[b)] the points $x$ of all integral trajectories of $\mathrm{grad}\, f$ leaving $P$ for which $p - \varepsilon < f(x) < p$;
\item[c)] the points $x$ of all integral trajectories of $\mathrm{grad}\, f$ passing through points of $v$ distinct from $P$ for which $p - \varepsilon < f(x) < p + \varepsilon$;
\item[d)] the submanifold $P$.
\end{itemize}
\end{definition}

\begin{proposition}[2.9]\label{prop:2.9}
There exists $\varepsilon > 0$ small enough that the points described in items a)--d) of Definition~\ref{def:2.8} lie in $U$.
\end{proposition}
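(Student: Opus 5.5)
The plan is to control a single trajectory at a time and then use compactness. Fix a neighbourhood $U$ of $P$ as above, and fix a relatively compact neighbourhood $W$ of $P$ with $\overline W\subset U$. Put $f(P)=p$. Since $\overline U$ contains no critical points other than those of $P$, the function $|\mathrm{grad}\, f|$ is bounded below by some $\delta>0$ on the compact set $\overline U\setminus W$. This bound is the basic quantitative input.

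First I would handle items a) and b) of Definition~\ref{def:2.8}. Take a trajectory entering $P$, traversed in the direction of decreasing $f$, and follow it until it leaves $U$. While it runs through $\overline U\setminus W$, it has to cross the region between $\partial W$ and $\partial U$. This region has positive width $d=\rho(\partial W,\partial U)$ with respect to the metric. Along a gradient trajectory parametrised by arc length we have $df/ds=|\mathrm{grad}\, f|\ge\delta$ on that region. So crossing it changes $f$ by at least $\delta d$.

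Now suppose the entering trajectory contains a point $x\notin U$ with $p<f(x)<p+\varepsilon$. Since the trajectory eventually reaches $P$, by Proposition~\ref{prop:2.7} and connectedness it must cross the collar $\overline U\setminus W$ between $x$ and $P$. Along the way $f$ decreases from $f(x)$ to values above $p$. Hence the total change of $f$ is less than $\varepsilon$. Choosing $\varepsilon<\delta d$ gives a contradiction. The leaving case b) is symmetric: apply the same argument to $-f$.

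For item c), points of $v=V\cap f^{-1}(p)$ lie in $U$. I would shrink $V$ so that $v\subset W$. A trajectory through $y\in v$ then stays in $W$ until $f$ has changed by at least $\delta d$, by the same collar estimate. So its points with $|f(x)-p|<\varepsilon$ lie in $U$. Item d) is trivial.

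The main obstacle is the step where the trajectory is made to traverse the collar. A trajectory could in principle stay near $\partial W$ without reaching $\partial U$, but that is harmless, since it then remains in $U$. The real subtlety is that the length estimate needs the trajectory segment outside $W$ to connect $\partial W$ to $\partial U$. I would phrase the argument by contradiction to settle this: take the last point of the trajectory in $\partial U$ before it approaches $P$ (or $v$). Between that point and the first subsequent point of $\partial W$, the integral $\int |\mathrm{grad}\, f|\,ds$ is at least $\delta d$. This integral equals the variation of $f$ along the segment, which is bounded by $\varepsilon$. It also uses that $f$ is monotone along trajectories, so the values stay within $(p,p+\varepsilon)$, respectively $(p-\varepsilon,p+\varepsilon)$, on the whole relevant segment.
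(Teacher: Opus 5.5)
Your argument is correct, but it takes a different route from the paper's.

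\textbf{The paper's route.} The paper argues by compactness and contradiction. It takes $W_\eta=\{x:\rho(x,v)<\eta\}$ with $\overline{W_\eta}\subset U$. It supposes that for each $\varepsilon=1/i$ some point $q_i$ of type a)--d) lies on $\partial\overline{W_\eta}$, and extracts a limit $q$. It observes that $q$ is regular with $f(q)=p$, and takes a flow box around $q$ disjoint from $v$. The trajectories through almost all $q_i$ then meet the level $f^{-1}(p)$ inside this box, i.e.\ outside $v$ and at a regular point. This is incompatible with their being of type a)--c).

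\textbf{Your route.} You make the estimate quantitative. The minimum $\delta>0$ of $|\mathrm{grad}\,f|$ on the compact collar $\overline U\setminus W$, the collar's Riemannian width $d>0$, and the identity $df/ds=|\mathrm{grad}\,f|$ give the explicit threshold $\varepsilon<\delta d$. A single last-exit/first-entry segment then treats a), b) and c) uniformly.

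\textbf{What each approach buys.} Yours gives an explicit $\varepsilon$. It also makes precise the step the paper states loosely (``by Proposition~\ref{prop:2.7} they must lie on the boundary''). And it does not need a separate argument that entering or leaving trajectories never cross level $p$ at a regular point. The paper's version needs no metric estimate, only local rectification of a nonsingular flow. Both use compactness of $\overline U$.

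\textbf{Two minor points.}
\begin{itemize}
\item In c), the trajectory through $y$ stays in $U$, not necessarily in $W$, until $f$ has changed by $\delta d$. That is all you actually use.
\item Shrinking $V$ so that $v\subset W$ is exactly what the paper assumes implicitly by requiring $\overline{W_\eta}\subset U$, namely that $v$ stays a positive distance from $M\setminus U$. Without it, item c) can genuinely fail, so this is a necessary normalization rather than a gap.
\end{itemize}
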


\textit{Proof.} Let $W_\eta$ be a neighbourhood of $P$ in $M$ with:
\begin{enumerate}\setlength{\itemsep}{0pt}
\item[1)] $\overline{W_\eta} \subset U$;
\item[2)] $W_\eta = \{ x \in M : \rho(x, v) < \eta \}$, where $\rho$ is a Riemannian metric on $M$ and $v = V \cap f^{-1}(p)$.
\end{enumerate}
We show that one can choose $\varepsilon > 0$ small enough that the points described in items a)--d) lie in $W_\eta$, and hence in $U$.
Suppose otherwise: there exist points of type a)--d) outside $W_\eta$.
By Proposition~\ref{prop:2.7} they must lie on the boundary $\partial\overline{W_\eta}$.

For each $\varepsilon = 1/i$, choose a sequence $q_1, q_2, q_3, \ldots$ of such points, converging to some point $q$ on $\partial\overline{W_\eta}$.
Observe that $q$ is not a critical point.
Since
\[
   |f(q_i)| \leq p + 1/i,
\]
we have $\lim_{i\to\infty} f(q_i) = p$, whence $f(q) = p$.

Choose in $U$ a cylindrical neighbourhood of $q$ disjoint from $v$.
Since it contains almost all $q_i$, it also contains almost all intersection points of the integral trajectories through $q_i$ with the hypersurface $f^{-1}(p)$.
But then these latter cannot lie in $v$, contradicting the fact that $q_i$ are points of type a)--d).
\qed

\begin{corollary}[2.10]\label{cor:2.10}
The tubular neighbourhood of a critical submanifold $P$ in $M$ contains arbitrarily small cylindrical neighbourhoods of $P$.
\end{corollary}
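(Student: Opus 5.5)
Given a tubular neighbourhood $T(P)$ of $P$ and any neighbourhood $W$ of $P$, we want a cylindrical neighbourhood $C_\varepsilon(P)\subset W\cap T(P)$. The plan is to reduce this to Proposition~\ref{prop:2.9}, applied with $U$ replaced by a suitably shrunken neighbourhood of $P$.

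First, fix the ambient data. Since the critical submanifolds of a $P$-function are disjoint, closed and (on a compact $M$) finite in number, I choose a tubular neighbourhood $T_r(P)$, $r>0$, with $\overline{T_r(P)}$ containing no critical points other than those of $P$. For any given neighbourhood $W$ of $P$ I then pick $r'<r$ with $\overline{T_{r'}(P)}\subset W\cap T_r(P)$, and set $U'=T_{r'}(P)$. By construction $U'$ is an open neighbourhood of $P$ whose closure contains no other critical points, so it satisfies the standing hypothesis preceding Proposition~\ref{prop:2.7}.

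Next, I choose the level-set piece $v$. I take $V'\subset U'$ to be a neighbourhood of $P$ with $v'=V'\cap f^{-1}(p)\subset U'$; for instance $V'=T_{r''}(P)$ with $r''<r'$. Proposition~\ref{prop:2.9}, with $U'$ and $v'$ in place of $U$ and $v$, then gives $\varepsilon>0$ such that all points of types a)--d) of Definition~\ref{def:2.8} lie in $U'$. These points form $C_\varepsilon(P)$, so $C_\varepsilon(P)\subset U'\subset W\cap T(P)$. Since $W$ was arbitrary, this yields arbitrarily small cylindrical neighbourhoods inside the tubular one.

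The main point needing care is that Proposition~\ref{prop:2.9} is stated for an arbitrary admissible $U$. Its proof uses only that $\overline{W_\eta}\subset U$ contains no critical points besides $P$ and that $f$ is continuous near $P$, so it applies verbatim to $U'$; I would check this explicitly. Otherwise the corollary is just this shrinking argument, taking $r'\to 0$.
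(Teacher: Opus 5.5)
Your proposal is correct and is essentially the paper's argument. The paper notes that the points of types a)--d) obtained in the proof of Proposition~\ref{prop:2.9} lie in $W_\eta$, and that a tubular neighbourhood of $P$ fits between $\overline{W_\eta}$ and $U$; shrinking the data, as you do by replacing $U$ with $U'=T_{r'}(P)$ and $v$ with $v'$, then gives arbitrarily small cylindrical neighbourhoods (the only cosmetic fix is to choose $r'$ with $\overline{T_{r'}(P)}\subset W\cap T(P)$ for the given $T(P)$, since you wrote $W\cap T_r(P)$ but then concluded containment in $W\cap T(P)$).
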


The proof follows from the fact that in $M$ one can find a tubular neighbourhood of $P$ contained in $U$ and containing the closure of $W_\eta$.

\begin{proposition}[2.11]\label{prop:2.11}
The set $C_\varepsilon(P)$ is a neighbourhood of $P$ in $M$, that is, a subset of $M$ containing arbitrarily small tubular neighbourhoods of $P$ in $M$.
\end{proposition}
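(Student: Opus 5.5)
The plan is to show two things about $C_\varepsilon(P)$: that it is open, and that it contains a tubular neighbourhood $T_r(P)$ for some $r>0$, with $r$ as small as we like once $\varepsilon$ and $V$ are chosen small. It then sits between arbitrarily small tubular neighbourhoods, using Corollary~\ref{cor:2.10} for the upper side. I normalise $f(P)=p$ as in Definition~\ref{def:2.8}.

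\emph{Openness.} Write $C_\varepsilon(P)=P\cup C^+\cup C^-\cup C^0$ according to items a)--d). The set $C^0$ is the image of $(v\setminus P)\times(-\varepsilon,\varepsilon)$ under the gradient flow reparametrised by the value of $f$. Since $\mathrm{grad}\, f\neq 0$ off $P$, the flow-box theorem makes this map a local diffeomorphism onto an open set. The sets $C^\pm$ are unions of trajectories entering or leaving $P$, restricted to the value windows $(p,p+\varepsilon)$ and $(p-\varepsilon,p)$. The delicate point is a point $x\in C^+$: I must check that nearby points either enter $P$ too, or cross $f^{-1}(p)$ inside $v$, and so lie in $C^+\cup C^0$. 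I argue by continuity of the flow. Follow the trajectory of $x$ down until the value $p+\delta$, where it is already inside the small neighbourhood $V$ of $P$. Trajectories through points near $x$ stay close up to that level. By Proposition~\ref{prop:2.7}, applied inside $U$, each such trajectory either enters $P$ or reaches the level $p$ at a point near $P$, hence in $v$. The case $C^-$ is symmetric. Points of $P$ itself are handled in the next step.

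\emph{A tubular neighbourhood inside $C_\varepsilon(P)$.} This is the main obstacle, because $P$ is degenerate and no normal form of $f$ near $P$ is available. I would argue by contradiction. Suppose there are points $y_i\to P$ with $y_i\notin C_\varepsilon(P)$. Then $f(y_i)\to p$, so $|f(y_i)-p|<\varepsilon$ for large $i$. Consider the trajectory through $y_i$, say with $f(y_i)>p$, followed downward in $U$. Proposition~\ref{prop:2.7} gives three options. It may enter $P$; then $y_i\in C^+$. It may cross $f^{-1}(p)$; the crossing lies in $v$ (giving $y_i\in C^0$) unless the trajectory first leaves a fixed neighbourhood $W_\eta$ of $P$. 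Or it may reach $\partial\overline{W_\eta}$ while its values stay in $(p,p+\varepsilon)$, and the same options apply when $f(y_i)<p$. So the only bad case is a trajectory that starts arbitrarily close to $P$ but exits $W_\eta$ with values in $(p-\varepsilon,p+\varepsilon)$. Passing to a limit of the exit points gives a regular point $q\in\partial\overline{W_\eta}$ with $f(q)$ in $[p-\varepsilon,p+\varepsilon]$. Taking $\varepsilon\to0$ along with this, as in the proof of Proposition~\ref{prop:2.9}, gives $f(q)=p$.

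Now follow the trajectory segments from $y_i$ to the exit points. The values of $f$ along them change by at most $2\varepsilon$, while their length is at least $\mathrm{dist}(P,\partial W_\eta)>0$. Since $|\mathrm{grad}\, f|$ is bounded below on $\overline{W_\eta}\setminus W_{\eta/2}$, this forces a value change bounded below by a positive constant, which is a contradiction for small $\varepsilon$. I expect this length-versus-gradient estimate to need the most care.

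With openness and the inclusion $T_r(P)\subset C_\varepsilon(P)$ established, Corollary~\ref{cor:2.10} lets us shrink $C_\varepsilon(P)$ inside any tubular neighbourhood, which completes the plan.
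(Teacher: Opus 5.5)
Your argument is in substance the paper's. You take points $y_i\to P$ outside $C_\varepsilon(P)$, follow their trajectories via Proposition~\ref{prop:2.7} until they leave a fixed neighbourhood of $P$, and get a contradiction from the fact that the values of $f$ along these segments collapse to $p$. The only difference is the endgame. The paper takes a limit point $b$ of the exit points $b_i$ and uses a flow box at the regular point $b$ to force infinitely many $a_i$ near $b$ rather than near $P$. You instead use a uniform bound $|\mathrm{grad}\, f|\geq c>0$ on a compact shell free of critical points, together with $df/ds=|\mathrm{grad}\, f|$ along arclength. That is a cleaner, quantitative form of the same idea.

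Two points need repair.

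First, with $W_\eta=\{x:\rho(x,v)<\eta\}$, your claim that a crossing of $f^{-1}(p)$ inside $W_\eta$ lies in $v$ is false in general. The set $W_\eta\cap f^{-1}(p)$ contains points of the level set outside $V$, so your case analysis misses trajectories that cross level $p$ in $W_\eta\setminus V$. Run the argument in a neighbourhood $W\subset V$ of $P$, so that $W\cap f^{-1}(p)\subset v$. This is exactly the paper's condition~(b) on $T$.

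Second, in the remaining bad case the segment from $y_i$ to its exit point never reaches level $p$. So the change of $f$ along it is at most $|f(y_i)-p|$, which tends to $0$ because $y_i\to P$. This gives the contradiction for the given $\varepsilon$. You need neither the cruder bound $2\varepsilon$, nor the resulting smallness condition on $\varepsilon$ relative to $c\eta$. You also do not need the step ``taking $\varepsilon\to0$'', which is not legitimate here, since changing $\varepsilon$ changes the set $C_\varepsilon(P)$.

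Finally, the statement does not require openness of $C_\varepsilon(P)$. Your openness argument at points of $C^{\pm}$ also tacitly uses the conclusion of the tubular-neighbourhood step, so it would have to come after that step.
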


\textit{Proof.} Choose a small tubular neighbourhood $T$ of $P$ in $M$ such that:
\begin{itemize}\setlength{\itemsep}{0pt}
\item[a)] $|f(x)| < p + \varepsilon$, $x \in T$;
\item[b)] $T \cap f^{-1}(p) \subset C_\varepsilon(P)$.
\end{itemize}
If $C_\varepsilon(P)$ were not a neighbourhood of $P$, there would exist in $T$ a sequence of points $a_1, a_2, a_3, \ldots$ outside $C_\varepsilon(P)$ converging to some $a \in P$.
Passing to a subsequence and replacing $f$ by $-f$ if necessary, we may assume that
\[
   f(a_i) > p,\quad i = 1, 2, 3, \ldots
\]
The integral trajectory through each $a_i$ in the direction of decreasing $f$ cannot enter $P$ or pass through points of $v$, because the $a_i$ lie outside $C_\varepsilon(P)$.
By Proposition~\ref{prop:2.7}, each such trajectory reaches the boundary of $\overline{T}$ at some point $b_i$ with $f(b_i) \geq p$.
Since $\lim_{i\to\infty} a_i = a \in P$,
\[
   \lim_{i\to\infty} f(a_i) = f(a) = p,
\]
hence $\lim_{i\to\infty} f(b_i) = p$.

Let $b$ be a limit point of $b_i$, $i = 1, 2, \ldots$, lying on the boundary of $\overline{T}$.
Then $f(b) = p$.
But every cylindrical neighbourhood of $b$ contains infinitely many $b_i$ as well as infinitely many $a_i$, since
\[
   \lim_{i\to\infty} f(b_i) - \lim_{i\to\infty} f(a_i) = 0,
\]
which contradicts the fact that $a_1, a_2, a_3, \ldots$ converges to $a \in P$.
\qed

\begin{theorem}[2.12]\label{thm:2.12}
Let $M$ be a smooth compact connected manifold (with or without boundary), and let $f$ be a $P$-function on it which, in the case of a manifold with boundary, takes a constant maximal value on the boundary and has no critical points on the boundary.
Then the number of critical submanifolds of $f$ is at least the $P$-category of $M$.
\end{theorem}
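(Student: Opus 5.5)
Let $P_1,\dots,P_k$ be the critical submanifolds of $f$. We aim to cover $M$ by $k$ closed sets $A_1,\dots,A_k$, each contractible within $M$ onto $P_j$; since each $P_j$ is homeomorphic to $P$, this gives $\Pcat M \le k$. The argument is the classical Lyusternik--Shnirelman one, with points replaced by the submanifolds $P_j$. The flow of $-\mathrm{grad}\, f$ (normalised away from the critical set) is the main tool. The cylindrical neighbourhoods $C_\varepsilon(P_j)$ of Definition~\ref{def:2.8} serve as the local models near each critical submanifold.

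First I will group the $P_j$ by critical values $c_1<\dots<c_s$. Suppose several $P_j$ share one level; since they are disjoint and compact, they have disjoint cylindrical neighbourhoods, which is harmless. I will order the critical submanifolds as $P_1,\dots,P_k$ by nondecreasing $f$-value and use induction on sublevel sets $M^c=\{f\le c\}$. The claim is that for a regular value $c$, $M^c$ is covered by as many categorical sets as there are critical submanifolds below $c$. In the base case, $M^c$ for $c$ slightly above $c_1$ deformation retracts (via the gradient flow) into $\overline{C_\varepsilon(P_1)}$. By Proposition~\ref{prop:2.11} and Corollary~\ref{cor:2.10}, this may be taken inside a tubular neighbourhood of $P_1$, which retracts onto $P_1$. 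Hence $M^c$ is a single categorical set. Here it matters that the minimum set of $f$ consists of critical submanifolds, so for small $\varepsilon$ there are no other points.

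For the inductive step I pass a critical level $c_i$ with critical submanifold(s) $P_j$. Take a closed cylindrical neighbourhood $\overline{C_\varepsilon(P_j)}$ inside a tubular neighbourhood (Corollary~\ref{cor:2.10}). By Proposition~\ref{prop:2.7} and the definition of $C_\varepsilon$, every trajectory starting in $M^{c_i+\varepsilon}\setminus C_\varepsilon(P_j)$ reaches $M^{c_i-\varepsilon}$ under the downward flow in bounded time. It cannot be trapped at $P_j$, since trajectories entering $P_j$ from above lie in $C_\varepsilon$ by item a). This gives a deformation of $M^{c_i+\varepsilon}\setminus C_\varepsilon(P_j)$ into $M^{c_i-\varepsilon}$. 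Pull back the existing categorical sets along this deformation, exactly as in Property~3, and add one new set $\overline{C_\varepsilon(P_j)}$, which contracts onto $P_j$ through the tubular neighbourhood retraction. Property~1 then gives the count. Between critical levels there are no critical points, so $M^{b}$ deformation retracts onto $M^{a}$ and Property~3 applies.

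In the boundary case, $f$ attains a constant maximal value on $\partial M$ with no critical points there. So the last step from the top critical level to $M=M^{\max f}$ is again a gradient deformation retraction, and $\partial M$ contributes nothing. The main obstacle I expect is the bounded-time claim in the inductive step: the flow must push the complement of $C_\varepsilon(P_j)$ below $c_i-\varepsilon$ continuously. That requires $|\mathrm{grad}\, f|$ to be bounded below on the compact set $f^{-1}[c_i-\varepsilon,c_i+\varepsilon]\setminus C_\varepsilon(P_j)$. I would deduce this from Proposition~\ref{prop:2.9}, which shows $C_\varepsilon(P_j)$ contains a genuine open neighbourhood of $P_j$, together with compactness. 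The pulled-back sets must also be closed: preimages of closed sets under a continuous deformation are closed, and $\overline{C_\varepsilon}$ is closed by construction.
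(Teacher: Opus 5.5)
Your proposal is essentially the paper's proof. Both argue by induction over critical levels, add one new categorical set $\overline{C_\varepsilon(P_j)}$ for each critical submanifold crossed, push the rest of the sublevel set down along gradient trajectories, and absorb it via Properties~1--3. Your lower bound on $|\mathrm{grad}\,f|$ off $C_\varepsilon(P_j)$ only makes explicit the deformation the paper asserts; note that it needs Proposition~2.11, not 2.9.

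\textbf{A slip to repair.} $C_\varepsilon(P_j)$ contains only points with $f<c_i+\varepsilon$. So a point at level exactly $c_i+\varepsilon$ on a trajectory entering $P_j$ lies in $M^{c_i+\varepsilon}\setminus C_\varepsilon(P_j)$, yet its downward trajectory never reaches $M^{c_i-\varepsilon}$. Deform $M^{c_i+\varepsilon'}\setminus C_\varepsilon(P_j)$ for some $\varepsilon'<\varepsilon$ instead. Alternatively, do as the paper does and deform the closure of $M^{c+\delta}\setminus\bigcup_i\overline{C_\delta(P_i)}$.

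\textbf{An inherited weakness.} Like the paper, you rely on Propositions~2.7 and 2.11, i.e.\ on the claim that gradient trajectories accumulating on $P_j$ converge to a point of $P_j$. This is false for general smooth functions with degenerate critical sets: a trajectory can spiral onto a critical circle without converging. You can make the step unconditional by replacing $C_\varepsilon(P_j)$ with a small closed tubular neighbourhood $N$ and using the standard deformation lemma. Choose a smaller open neighbourhood $N'$ of the critical set with $\overline{N'}\subset\Int N$, and suppose $|\mathrm{grad}\,f|\ge\mu$ on $f^{-1}[c-\delta_0,c+\delta_0]\setminus N'$. Take $\delta<\delta_0$ with $2\delta<\mu\,\rho(N',M\setminus\Int N)$. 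Then the downward flow carries $M^{c+\delta}\setminus\Int N$ into $M^{c-\delta}$ without ever entering $N'$.
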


\textit{Proof.}
By compactness of $M$, $f$ has finitely many critical submanifolds, hence finitely many critical values.
We argue by induction on the number of critical values.

Consider the minimal value $a$ of $f$.
Let $P_1, \ldots, P_k$ be the corresponding critical submanifolds.
Choose $\varepsilon > 0$ so that $(a, a + \varepsilon)$ contains no critical values.
Then
\[
   M^{a+\varepsilon} = \{ x \in M : f(x) \leq a + \varepsilon \}
\]
is a disjoint union of closures of cylindrical neighbourhoods $\overline{C_\varepsilon(P_i)}$ of $P_1, \ldots, P_k$.
Since $\Pcat \overline{C_\varepsilon(P_i)} = 1$, $i = 1, \ldots, k$,
\[
   \Pcat M^{a+\varepsilon} = \Pcat\bigl(\bigsqcup_{i=1}^{k}\overline{C_\varepsilon(P_i)}\bigr) = k.
\]
Hence the statement holds for the minimal value of $f$.

Assume the statement for some critical value $b$, that is, the number $k$ of critical submanifolds of $f$ on $M^{b+\varepsilon}$ (for sufficiently small $\varepsilon > 0$) satisfies
\[
   k \;\geq\; \Pcat M^{b+\varepsilon}.
\]

Let $c$ be the next critical value, $c > b$, so that no critical values lie in $(b, c)$.
Let $P_1, \ldots, P_s$ be the critical submanifolds for value $c$.
Choose $\delta > 0$ so that $(c - \delta, c + \delta)$ contains no critical values other than $c$ ($\delta$ may be shrunk if needed).

Since the critical manifolds in $M$ are smooth submanifolds without boundary, each has a tubular neighbourhood in $M$.
Choose in $M$ tubular neighbourhoods $T(P_i)$, $i = 1, \ldots, s$, of the critical submanifolds small enough that
\begin{itemize}\setlength{\itemsep}{0pt}
\item[a)] they lie in the critical layer $f^{-1}(c - \delta, c + \delta) = \{ x \in M : c - \delta < f(x) < c + \delta \}$;
\item[b)] $\overline{T(P_i)}\cap\overline{T(P_j)} = \emptyset$, $i \neq j$;
\item[c)] the sets $v_i = T(P_i) \cap f^{-1}(c)$ are small enough that the closures of the cylindrical neighbourhoods $\overline{C_\delta(P_i)}$ are pairwise disjoint.
\end{itemize}
Present $M^{c+\delta}$ as
\[
   M^{c+\delta} = M^{b+\varepsilon} \cup f^{-1}[b+\varepsilon, c - \delta] \cup f^{-1}[c - \delta, c + \delta].
\]
After removing from the critical layer $f^{-1}[c - \delta, c + \delta]$ all the cylindrical neighbourhoods $\overline{C_\delta(P_i)}$, $i = 1, \ldots, s$, the closure of the remaining set
\[
   M^{c+\delta} \setminus \bigcup_{i=1}^{s} \overline{C_\delta(P_i)}
   = M^{b+\varepsilon} \cup f^{-1}[b+\varepsilon, c - \delta] \cup
      \bigl(f^{-1}[c - \delta, c + \delta] \setminus \bigcup_{i=1}^{s} \overline{C_\delta(P_i)}\bigr)
\]
can be continuously deformed onto $M^{b+\varepsilon}$: first along integral trajectories of $\mathrm{grad}\, f$ onto
\[
   M^{b+\varepsilon} \cup f^{-1}[b+\varepsilon, c - \delta],
\]
then again along integral trajectories onto $M^{b+\varepsilon}$.
Hence $M^{b+\varepsilon}$ is a deformation retract of $\overline{M^{c+\delta} \setminus \bigcup_{i=1}^{s} \overline{C_\delta(P_i)}}$.

For each critical submanifold $P_i$, $i = 1, \ldots, s$,
\[
   \Pcat_{M^{c+\delta}}\overline{C_\delta(P_i)} = 1,
\]
since the closure of each cylindrical neighbourhood $\overline{C_\delta(P_i)}$ can be contracted onto $\bar v_i$, which in turn contracts onto $P_i$ by the deformation $T(P_i) \to P_i$.

By the properties of $P$-category,
\begin{align*}
\Pcat M^{c+\delta} &= \Pcat_{M^{c+\delta}}\bigl[\bigl(\overline{M^{c+\delta} \setminus \bigsqcup_{i=1}^{s}\overline{C_\delta(P_i)}}\bigr)\cup\bigl(\bigsqcup_{i=1}^{s}\overline{C_\delta(P_i)}\bigr)\bigr]\\ &\leq \Pcat_{M^{c+\delta}} M^{b+\varepsilon} + \Pcat_{M^{c+\delta}}\bigsqcup_{i=1}^{s}\overline{C_\delta(P_i)}\\ &\leq \Pcat M^{b+\varepsilon} + s,
\end{align*}
whence, using the inductive hypothesis,
\[
   \Pcat M^{c+\delta} \;\leq\; k + s,
\]
which completes the proof.
\qed

\section{Exact $P$-functions on manifolds}

\begin{definition}[2.13]\label{def:2.13}
A $P$-function is called \emph{exact} on a manifold if the number of its critical submanifolds equals the minimum, taken over all $P$-functions on the manifold, of the number of critical submanifolds.
\end{definition}

Almost all functions in Chapter~III will be exact.
Here we give some examples of the existence and construction of exact $P$-functions on manifolds.

\begin{definition}[2.14]\label{def:2.14}
A $P$-function on a manifold is called \emph{round} if its critical submanifolds are homeomorphic to the circle $S^1$.
\end{definition}

\begin{theorem}[2.15]\label{thm:2.15}
Let $\{k_{i_1}, \ldots, k_{i_l}\}$ be a subset of $\{k_1, \ldots, k_n\}$ positive integers.
Let
\[
   P = S^{k_{i_1}} \times \ldots \times S^{k_{i_l}}
\]
be a product of $k_{i_j}$-dimensional spheres, $j = 1, \ldots, l$.
Then on the manifold
\[
   M = S^{k_1} \times \ldots \times S^{k_n}
\]
there exists an exact $P$-function with $n - l + 1$ singularities.
\end{theorem}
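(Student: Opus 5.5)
The plan has two halves: a lower bound on the number of critical submanifolds of any $P$-function on $M$, and an explicit $P$-function that attains it. Exactness then follows from Definition~\ref{def:2.13}.

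\emph{Lower bound.} Since $M$ is closed, Theorem~\ref{thm:2.12} applies to any $P$-function $g$ on $M$. It shows that the number of critical submanifolds of $g$ is at least $\Pcat M$. By Theorem~\ref{thm:1.14}, $\Pcat M = n-l+1$, so every $P$-function on $M$ has at least $n-l+1$ critical submanifolds.

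\emph{Construction.} After reordering the factors, write $M = P\times L$, where $L = S^{k_{j_1}}\times\cdots\times S^{k_{j_{n-l}}}$ is the product of the remaining $n-l$ spheres. Embed each sphere as a unit sphere in $\R^{k_{j}+1}$ and, as in the proof of Theorem~\ref{thm:1.13}, put
\[
   g(x^{(1)},\ldots,x^{(n-l)}) = x^{(1)}_1+\cdots+x^{(n-l)}_1 .
\]
The critical points of $g$ on $L$ are the $2^{n-l}$ points whose coordinates have the form $(\pm1,0,\ldots,0)$. They lie on the $n-l+1$ levels $-(n-l),-(n-l)+2,\ldots,n-l$. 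Following the proof of Theorem~\ref{thm:1.13} and \cite[p.\,206]{milnor-morse-1965}, we merge the critical points on each level into a single degenerate critical point. This gives a smooth function $\tilde g$ on $L$ with exactly $n-l+1$ critical points, one on each of these levels. Define $F = \tilde g\circ \mathrm{pr}_L\colon M\to\R$. Since $dF(x,y) = d\tilde g(y)\circ d(\mathrm{pr}_L)$, the critical set of $F$ is exactly $\bigcup_y P\times\{y\}$, where $y$ runs over the critical points of $\tilde g$. This is a disjoint union of $n-l+1$ smooth submanifolds, each diffeomorphic to $P$. So $F$ is a $P$-function with $n-l+1$ singularities.

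\emph{Main obstacle.} The delicate step is the merging of the critical points on a common level of $g$ on $L$ into a single point. The index-$0$ and top-index levels already carry one point each. For the intermediate levels we rely on the cited construction in \cite{milnor-morse-1965}, which moves the critical points on one level together along an arc inside a neighbourhood of that level (the level set is connected) without creating new critical points, in the same way as it is used in Theorem~\ref{thm:1.13}. If that citation were judged insufficient, the fallback is to stay at the level of coverings. Write $L=\bigcup_{i=1}^{n-l+1}A_i$ with each $A_i$ contractible in $L$, and translate this into a filtration of $M$ by the sets $P\times(A_1\cup\cdots\cup A_i)$, suitably thickened. Then apply Theorem~\ref{thm:2.5} with $Q_3 \cong P\times D^{m-p}$, where $m=\dim M$ and $p=\dim P$, to get $n-l+1$ critical submanifolds homeomorphic to $P$ directly.
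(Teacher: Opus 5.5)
Your proposal follows the paper's proof essentially verbatim: the lower bound $n-l+1$ comes from Theorem~\ref{thm:1.14}, and you correctly make explicit the passage through Theorem~\ref{thm:2.12}, which the paper leaves implicit; the function is then obtained by taking $x^{(1)}_1+\cdots+x^{(n-l)}_1$ on $L$, merging the critical points on each connected intermediate level as in \cite[p.\,206]{milnor-morse-1965}, and composing with the projection $P\times L\to L$. Your fallback via Theorem~\ref{thm:2.5} is not needed, and as sketched it would not be justified anyway, since a covering of $L$ by sets contractible in $L$ does not by itself give the collar/collar/$P\times D^{m-p}$ decompositions that Proposition~\ref{prop:2.4} requires of each slab.
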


\textit{Proof.}
By Theorem~\ref{thm:1.14}, the number of singularities of a $P$-function on $M$ is at least $n - l + 1$.
We show that on $M$ there exists a $P$-function with exactly $n - l + 1$ singularities.
Represent $M$ as
\[
   M = P \times L,
\]
where $P$ is the given submanifold and $L$ is the product of the remaining factors of $M$.
For notational convenience set
\[
   P = S^{k_1} \times \ldots \times S^{k_l},
\]
so that
\[
   L = S^{k_{l+1}} \times \ldots \times S^{k_n}.
\]
Write
\begin{multline*}
L = S^{k_{l+1}} \times \ldots \times S^{k_n} =\\ = \{(x_1,\ldots,x_{k_{l+1}+1},\ldots,y_1,\ldots,y_{k_n+1}) \in \R^m :\\ x_1^2 + \ldots + x_{k_{l+1}+1}^2 = 1,\;\ldots,\; y_1^2 + \ldots + y_{k_n+1}^2 = 1\},
\end{multline*}
where $m = \sum_{i=l+1}^{n} k_i + n - l$.

Define on $P \times \R^m$ the differentiable function
\[
   f\colon P \times \R^m \to \R,\quad
   f(P, x_1, \ldots, y_{k_n+1}) = x_1 + \ldots + y_1.
\]
Its restriction to $L$ is a differentiable function with one minimum, one maximum, and critical points lying on $(n - l - 1)$ connected level surfaces.
Critical points on a single level surface can be merged into one (degenerate) critical point \cite[p.\,206]{milnor-morse-1965}.
The resulting function has $n - l + 1$ critical points.
Its restriction to $M$ is the desired $P$-function with the required number of singularities.
\qed

\begin{corollary}[2.16]\label{cor:2.16}
Let $P$ be the $k$-dimensional torus, $k \leq n$.
Then on the $n$-dimensional torus there exists an exact $P$-function with $n - k + 1$ singularities.
In particular, an exact round function on the $n$-dimensional torus has $n$ singularities.
\end{corollary}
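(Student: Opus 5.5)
The statement is Corollary~\ref{cor:2.16}. The plan is to specialise Theorem~\ref{thm:2.15} to the case where every sphere is a circle. Write
\[
   T^n = S^1 \times \ldots \times S^1
\]
($n$ factors), so all $k_i = 1$, $i = 1, \ldots, n$. Take as $P$ the product of $k$ of these factors, $P = T^k$, which is a sub-collection of $\{k_1, \ldots, k_n\}$ with $l = k \le n$. The hypotheses of Theorem~\ref{thm:2.15} are then satisfied: the $k_i$ are positive integers and $P$ is the corresponding subproduct. Hence $T^n$ carries an exact $T^k$-function with $n - l + 1 = n - k + 1$ singularities.

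It is worth checking exactness directly, since that is the only content beyond the construction. Corollary~\ref{cor:1.15} gives $T^k\text{-}\cat T^n = n - k + 1$. By Theorem~\ref{thm:2.12} (with $M = T^n$ closed, compact and connected), every $T^k$-function on $T^n$ therefore has at least $n-k+1$ critical submanifolds. The function produced by Theorem~\ref{thm:2.15} attains this bound, so it realises the minimum in Definition~\ref{def:2.13}.

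For the particular case, set $k = 1$, so $P = S^1$. The function is then round in the sense of Definition~\ref{def:2.14}, and the count becomes $n - 1 + 1 = n$ critical circles. Exactness follows because the round category of $T^n$ equals $n$, as noted after Corollary~\ref{cor:1.15}.

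No step here is a real obstacle. The one point to watch is that in Theorem~\ref{thm:2.15} the factor $P$ must be a genuine sub-collection of the spheres, so the case $k = n$ should be covered too. There the count is $1$: the constant function on $T^n$ has the whole manifold $P = T^n$ as its single critical submanifold. This agrees with $T^n\text{-}\cat T^n = 1$.
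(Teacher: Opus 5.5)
Your proposal is correct and follows essentially the paper's route. The paper gives no separate proof: Corollary~\ref{cor:2.16} is Theorem~\ref{thm:2.15} with all $k_i = 1$ and $l = k$, and exactness rests on Corollary~\ref{cor:1.15} (via Theorem~\ref{thm:1.14}) together with Theorem~\ref{thm:2.12}, just as you argue, while your separate handling of $k = n$ by the constant function, where the construction in Theorem~\ref{thm:2.15} degenerates because $L$ is a point, is a harmless extra precaution the paper does not spell out.
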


\begin{theorem}[2.17]\label{thm:2.17}
Let $P = S^n$ be the $n$-dimensional sphere.
Then on the odd-dimensional sphere
\[
   S^{2n+1},\quad n \geq 1,
\]
there exists an exact $P$-function with two singularities.
\end{theorem}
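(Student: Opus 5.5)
To prove Theorem~\ref{thm:2.17} we need two things. First, a $P$-function on $S^{2n+1}$ with exactly two critical submanifolds, each homeomorphic to $S^n$. Second, exactness, meaning no $P$-function can have fewer singularities.

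Exactness comes from Theorem~\ref{thm:2.12} together with Proposition~\ref{prop:1.16}. Since $n < 2n+1$, Proposition~\ref{prop:1.16}(a) gives
\[
   S^n\text{-}\cat S^{2n+1} = 2,
\]
so by Theorem~\ref{thm:2.12} every $S^n$-function on $S^{2n+1}$ has at least two critical submanifolds. Once a function with two critical spheres is exhibited, it is therefore exact in the sense of Definition~\ref{def:2.13}.

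For the construction I would use the join decomposition $S^{2n+1} = S^n * S^n$. Concretely, realise the sphere as
\[
   S^{2n+1} = \{(x,y) \in \R^{n+1}\times\R^{n+1} : |x|^2 + |y|^2 = 1\}
\]
and take the explicit function $f(x,y) = |x|^2$, which is the restriction of a quadratic form. I would then check the following points by Lagrange multipliers.
\begin{itemize}
\item The gradient of $|x|^2 - \lambda(|x|^2+|y|^2)$ vanishes only when $x = 0$ or $y = 0$.
\item Hence the critical set of $f$ is $\{x=0\} \cup \{y=0\}$, a disjoint union of two round $n$-spheres.
\item The minimum value $0$ is attained on $\{x=0\}$ and the maximum value $1$ on $\{y=0\}$.
\item Both critical spheres are smooth submanifolds without boundary, and in fact non-degenerate in Bott's sense, since $f$ is a Morse--Bott function.
\end{itemize}
This is exactly a $P$-function in the sense of Definition~\ref{def:2.1}, with two singularities.

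As a cross-check, and to keep the argument in the paper's own framework, I would also verify the construction through Proposition~\ref{prop:2.4} and Theorem~\ref{thm:2.5}. Here $M_1 = \{|x|^2 \le 1/2\} \cong S^n \times D^{n+1}$ and $S^{2n+1} \setminus \Int M_1 \cong D^{n+1}\times S^n$, which is the standard genus-one splitting generalising the solid-torus decomposition of $S^3$ used in Proposition~\ref{prop:1.12}. Each piece is a tubular neighbourhood of its core sphere, so it admits the covering required by Proposition~\ref{prop:2.4}: the piece $Q_3$ is the whole piece and $Q_1$ is a collar.

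The main point requiring care is not the construction but the appeal to Proposition~\ref{prop:1.16}, whose lower bound rests on a homological argument. I would make it precise as follows. Suppose a single set contracts within $S^{2n+1}$ to an $n$-sphere. Then $H_{2n+1}(S^{2n+1}) \to H_{2n+1}$ must factor through $H_{2n+1}(S^n) = 0$. That is impossible, because the identity map on $H_{2n+1}(S^{2n+1}) = \Z$ is nonzero.

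Everything else is routine, but the hypotheses of Theorem~\ref{thm:2.12} should still be checked. The sphere $S^{2n+1}$ is compact, connected and without boundary, so no boundary condition is needed. The assumption $n \ge 1$ merely makes $P$ a positive-dimensional sphere, as the statement intends; the explicit construction itself also works for $n = 0$.
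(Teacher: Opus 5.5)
Your proof is correct, but it obtains the function by a different route from the paper. The paper models $S^{2n+1}$ as $D^{2n+1}/S^{2n}$ with $D^{2n+1}$ a cube, and splits it into two pieces, each almost diffeomorphic to $S^n\times D^{n+1}$. It then appeals to the general existence result, Theorem~\ref{thm:2.5}, and through it to the smoothing and gluing constructions of Propositions~\ref{prop:2.2} and~\ref{prop:2.4}, to get one critical sphere per piece. Exactness is left implicit there.

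You instead write down $f(x,y)=|x|^2$ on the unit sphere of $\R^{n+1}\times\R^{n+1}$ and find its critical set directly by Lagrange multipliers. The underlying geometry is the same: your sublevel set $\{|x|^2\le 1/2\}$ and its complement correspond, up to diffeomorphism, to the paper's pieces $M_1$ and $M_2$ of the join splitting $S^n * S^n$. Your version, however, is explicit and self-contained, and it shows the critical spheres are even Bott non-degenerate. It also does not depend on the gluing machinery of \S2.1 or on the paper's coordinate description of $N_1,N_2,N_3$. Read literally, with a condition imposed on every coordinate, that description gives neither a collar nor a connected complement. What the paper's route buys is a template it reuses where no closed formula is available, e.g.\ Theorem~\ref{thm:3.6}.

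Your exactness step via Proposition~\ref{prop:1.16} and Theorem~\ref{thm:2.12} is sound, including the top-homology argument. It could be made even more elementary. $f$ is constant on each connected critical submanifold, so a single critical sphere would contain both the maximum and the minimum. That would force $f$ to be constant, making all of $S^{2n+1}$ critical.

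Two slips occur only in remarks you do not rely on.
\begin{itemize}
\item In the Proposition~\ref{prop:2.4} cross-check, $Q_3$ cannot be the whole piece, because its interior must be disjoint from the collar. Take, say, $Q_3=\{|x|^2\le 1/4\}$ and the collar $\{1/4\le|x|^2\le 1/2\}$. In the paper's labelling this collar is $Q_2$, not $Q_1$, since the maximum is attained on that boundary.
\item The claim that the construction also works for $n=0$ requires allowing a critical submanifold to be disconnected. Counted by components, $f$ has four critical points on $S^1$, not two copies of $S^0$.
\end{itemize}
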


\textit{Proof.} Realize $S^{2n+1}$ as the quotient of the $(2n+1)$-dimensional disc by its boundary:
\[
   S^{2n+1} = D^{2n+1}/S^{2n}.
\]
The disc
\[
   D^{2n+1} = \{(x_1,\ldots,x_{2n+1}) \in \R^{2n+1} : x_i^2 \leq 1,\; i = 1, \ldots, 2n+1\}
\]
is the union of three subsets:
\begin{align*}
N_1 &= \{(x_1,\ldots,x_{2n+1}) \in \R^{2n+1} : 1 - \varepsilon \leq x_i^2 \leq 1,\; i = 1, \ldots, 2n+1\},\\ N_2 &= \{(x_1,\ldots,x_{2n+1}) \in \R^{2n+1} : x_i^2 \leq 1 - \varepsilon,\; i = 1, \ldots, n;\; 0 \leq x_j^2 \leq \varepsilon,\; j = n+1, \ldots, 2n+1\},\\ N_3 &= \overline{D^{2n+1} \setminus (N_1 \cup N_2)}\\ &= \{(x_1,\ldots,x_{2n+1}) \in \R^{2n+1} : x_i^2 \leq 1 - \varepsilon,\; i = 1, \ldots, n;\; \varepsilon \leq x_j^2 \leq 1 - \varepsilon,\; j = n+1, \ldots, 2n+1\}
\end{align*}
for sufficiently small $\varepsilon > 0$.

After the quotient $S^{2n+1} = D^{2n+1}/S^{2n}$, consider the covering of $S^{2n+1}$ by two subsets $M_1$ and $M_2$, the images of $N_1 \cup N_2$ and $N_3$, respectively.

It is not difficult to see that each of $M_1$ and $M_2$ is almost diffeomorphic to
\[
   S^n \times D^{n+1}.
\]
Hence the statement follows from Theorem~\ref{thm:2.5}.
\qed

\chapter{$P$-functions on manifolds}

It is known that on every two-dimensional smooth compact connected manifold without boundary there exists a differentiable function with three critical points.
In most cases such a function is given by means of its smooth level lines.
Does there exist an analytic way of presenting such a function?
Can one construct a round function on a three-dimensional manifold, that is, a function with isolated critical circles?
Does there exist on a given manifold a function with prescribed singularities?
Complete or partial answers to these and other questions are given in this chapter.
Some of the results presented here can also be obtained by other means.
For example, the existence of round functions on three-dimensional manifolds can be derived from the works of Wilson, Frank and Franks~\cite{wilson-1966,frank-1988,franks-1980}.

This chapter is devoted mainly to the construction of functions with degenerate singularities on certain manifolds.
It offers interesting material for further investigations not addressed in the present work.

\section{$P$-functions on two-dimensional manifolds}

On two-dimensional smooth compact connected manifolds we consider $P$-functions with
\begin{enumerate}\setlength{\itemsep}{0pt}
\item[1)] $P$ a point,
\item[2)] $P$ the circle $S^1$.
\end{enumerate}

In the first case, a $P$-function is a function with isolated critical points, in general degenerate.
It is known that the minimal number of isolated critical points on the sphere $S^2$ is two, and on the remaining two-dimensional manifolds without boundary it is three.
A function with three critical points on a manifold without boundary is, in most cases, constructed by means of its level lines.

Let us present another way of constructing a function with three critical points on a manifold $M$ without boundary that is not homeomorphic to $S^2$.
For this, represent $M$ as a covering by three discs $D_1$, $D_2$, $D_3$ (Fig.~3.1) with the corresponding filtration
\[
   M_1 \subset M_2 \subset M_3 = M,
\]
where $M_1 = D_1$, $M_2 = D_1 \cup D_2$.

After appropriate smoothing of corners, the manifolds $M_i$, $i = 1, 2, 3$, satisfy the hypotheses of Theorem~\ref{thm:2.5}, whence the existence on $M$ of a function with three critical points follows.

This function is exact, since the Lyusternik--Shnirelman category of the manifold equals three.

\begin{figure}[h]
\centering\includegraphics[width=0.85\textwidth]{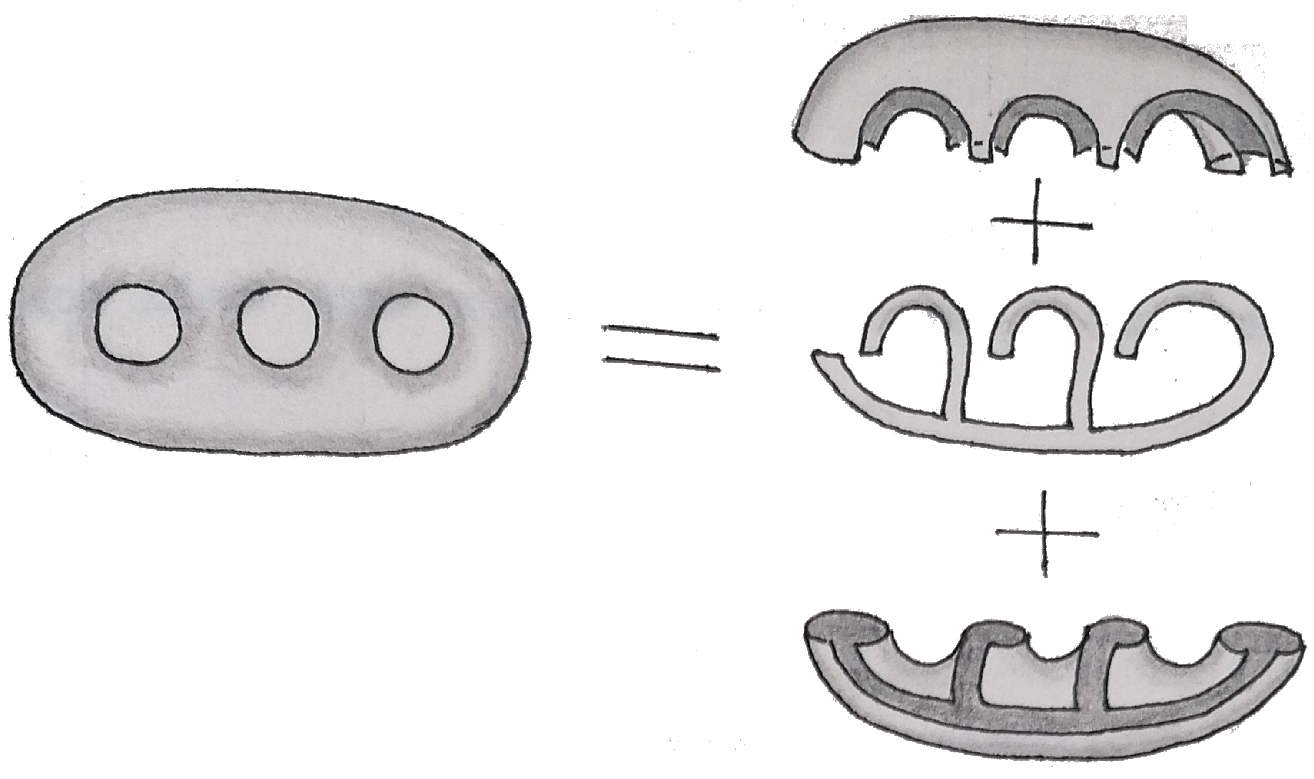}
\caption{}\label{fig:3.1}
\end{figure}

\begin{proposition}[3.1]\label{prop:3.1}
On a two-dimensional smooth compact connected manifold with boundary, not homeomorphic to the disc $D^2$, there exists an exact function with two critical points.
\end{proposition}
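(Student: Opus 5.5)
Proposition~\ref{prop:3.1} splits into two parts: constructing a function with two critical points (existence), and showing that no such function can have fewer (exactness). Here $P$ is a point, so $P$-functions are functions with isolated critical points and the $P$-category is the Lyusternik--Shnirelman category.

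\emph{Existence.} Take the covering of Proposition~\ref{prop:1.10}: $M=D_1\cup D_2$ with two closed discs meeting only along their boundaries. Set up the filtration $M_1\subset M_2=M$, where $M_1$ is $D_1$ pushed slightly into the interior of $M$ and, after smoothing corners, is a smooth disc with $M_1\subset\Int M_2$.

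Then check condition (d) of Theorem~\ref{thm:2.5} on each layer, with $P$ a point, so that $Q_3$ is almost diffeomorphic to a disc $D^2$.
\begin{itemize}
\item On $M_1$ we use $Q_2=$ a collar $\partial M_1\times[0,1]$ and $Q_3=$ the remaining inner disc. The local minimum sits there.
\item On $M_2\setminus\Int M_1$ we have the pair of boundaries $\partial M_1$ (lower) and $\partial M$ (upper). This region is essentially $D_2$ thickened by a collar of $\partial D_1$. We take $Q_1=$ a collar of $\partial M_1$ and $Q_2=$ a collar of $\partial M$. The complement $Q_3$ is a thin closed region around the ``core'' arcs; it should be chosen to be a disc by contracting the rest of $D_2$ into the two collars.
\end{itemize}
Checking that $Q_3$ can really be arranged as a single disc meeting $Q_1\cap Q_2$ correctly is the step I expect to be the main obstacle. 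My plan there is to note that $D_2$ minus small collars of its two boundary arcs is itself a disc, and to cut that disc off by a curve $Q_1\cap Q_2$ running through it. This works because $\partial D_2$ splits into arcs lying in $\partial D_1$ and arcs lying in $\partial M$. Theorem~\ref{thm:2.5} then gives a function with two critical points, and it has a constant maximal value on $\partial M$ with no critical points there.

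\emph{Exactness.} The constructed function satisfies the boundary hypothesis of Theorem~\ref{thm:2.12}, and that theorem gives a lower bound of $\cat M$, which equals two by Proposition~\ref{prop:1.10}. But exactness must also rule out functions with a single critical point. For that I would argue directly. If $f$ had a single isolated critical point and no critical points on $\partial M$, then that point would be the minimum (the maximum being attained on $\partial M$). The gradient flow would then deform $M$ minus a small disc around the minimum onto a collar of $\partial M$, so $M$ would be a disc. This contradicts the hypothesis. Alternatively, the same conclusion follows by invoking Theorem~\ref{thm:2.12} for such functions together with $\cat M=2$. Hence the minimum number of critical points is two, and the function constructed above is exact.
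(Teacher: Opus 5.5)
Your proposal is correct and is the paper's argument: the two-disc covering of Proposition~\ref{prop:1.10} (Fig.~1.3) gives a two-step filtration $M_1\subset M_2=M$ for Theorem~\ref{thm:2.5}, and exactness follows from Theorem~\ref{thm:2.12} with $\cat M=2$. Theorem~\ref{thm:2.12} applies to every admissible function, so it already rules out a function with one critical point, and your Reeb-type argument is a valid but redundant alternative. Your layer decomposition is more explicit than the paper's, which only points to the figure; note, though, that $\partial D_2$ consists of $2k$ arcs alternating between $\partial D_1$ and $\partial M$, with $k=2-\chi(M)\ge 2$, so $Q_1\cap Q_2$ is $k$ arcs running parallel to $\partial M\cap D_1$ and ending at the corners of $Q_3$, not a single curve through it.
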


\textit{Proof.} Any two-dimensional compact connected manifold with boundary admits a filtration by two manifolds with boundary
\[
   M_1 \subset M_2 = M,
\]
satisfying the hypotheses of Theorem~\ref{thm:2.5}.
This follows from the covering of the manifold by two discs (Fig.~1.3).
By Proposition~\ref{prop:1.10} and Theorem~\ref{thm:2.12}, the constructed function is exact.
\qed

We now consider the second type of $P$-functions --- round functions.

\begin{theorem}[3.2]\label{thm:3.2}
On a manifold homeomorphic to the sphere $S^2$, to the projective plane $\R P^2$, or to the disc $D^2$, there exist no round functions.
On every other two-dimensional smooth compact connected manifold round functions exist.
\end{theorem}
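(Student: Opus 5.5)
The proof splits into a non-existence part, for $S^2$, $\R P^2$ and $D^2$, and an existence part, for all remaining surfaces.

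\emph{Non-existence.} Suppose $f$ is a round function on $M$, i.e.\ its critical set is a finite disjoint union of embedded circles $C_1,\dots,C_k$. On a closed surface, $f$ attains its maximum and minimum on critical circles. I would run the deformation argument from the proof of Theorem~\ref{thm:2.12} across the critical values and compute the Euler characteristic level by level. For each critical circle $C$, the closure of a cylindrical neighbourhood $\overline{C_\varepsilon(C)}$ deformation retracts onto a neighbourhood of $C$ in its level set, which is a circle neighbourhood. Hence $\overline{C_\varepsilon(C)}$ is an annulus or a Möbius band, and either way its Euler characteristic is $0$. Passing a critical level glues such a piece to $M^{b+\varepsilon}$ along boundary circles of regular level sets, and those circles also have Euler characteristic $0$. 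By induction $\chi(M^{c+\delta})=0$ at every stage, so $\chi(M)=0$. Since $\chi(S^2)=2$ and $\chi(\R P^2)=1$, neither admits a round function.

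For $D^2$ I interpret round functions in the setting of Theorem~\ref{thm:2.12}: a constant value on the boundary with no critical points there. The same induction then applies, with the boundary collar contributing Euler characteristic $0$, giving $\chi=0\neq 1$. The step I expect to be the main obstacle is showing that the piece added at a degenerate critical circle really is a surface with Euler characteristic $0$, glued along circles. I would handle this with Proposition~\ref{prop:2.11} and the fact that $\overline{C_\delta(C)}$ deformation retracts onto $C$. That fact forces $\chi(\overline{C_\delta(C)})=\chi(S^1)=0$, and the gluing circles also have $\chi=0$, so additivity of $\chi$ suffices without knowing the exact shape of the piece.

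\emph{Existence.} Here I would apply Theorem~\ref{thm:2.5} with $P=S^1$, producing filtrations from the round-category coverings of Proposition~\ref{prop:1.11}.
\begin{itemize}
\item For the annulus and the Möbius band, take a single step $M_1=M$. Then $Q_3$ is a core annulus or core Möbius band, $Q_2$ is a boundary collar, and $M_1$ is empty. Note that the Möbius band is almost diffeomorphic to a twisted $D^1$-bundle over $S^1$ rather than to $S^1\times D^1$, so Proposition~\ref{prop:2.4} has to be used in its evident twisted-bundle form.
\item For the torus and the Klein bottle, use the covering by two annuli (respectively an annulus and two Möbius bands) with the filtration $M_1\subset M_2=M$.
\item For the remaining surfaces, with or without boundary, use the two- or three-element coverings from Figs.~1.5--1.7. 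Each successive difference $M_i\setminus\Int M_{i-1}$ is decomposed as collars together with a thickened circle $Q_3$.
\end{itemize}

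The second delicate point is checking hypothesis (d) of Theorem~\ref{thm:2.5} for these coverings. Each layer must be a collar on its incoming boundary, plus a thickened circle, plus a collar on its outgoing boundary, with transverse corners. I would verify this by smoothing corners as in Definition~\ref{def:2.3}. Where a layer is, say, a pair of pants, I would replace the circle by a figure-eight-free choice, for instance a circle parallel to one boundary component, and shift the remaining topology into an adjacent layer.
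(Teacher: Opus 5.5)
\textbf{There is a genuine gap: your non-existence argument proves too much.} As written, the proposal contradicts itself. If every critical level left $\chi$ unchanged, every surface carrying a round function would have $\chi=0$. Yet your existence half, like the theorem, produces round functions on the pair of pants and on closed surfaces of genus $\ge 2$, where $\chi<0$.

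The false step is ``passing a critical level glues such a piece to $M^{b+\varepsilon}$ along boundary circles of regular level sets.'' For a degenerate critical circle, $W=\overline{C_\delta(C)}$ meets $\overline{M^{c+\delta}\setminus W}$ (which retracts onto $M^{c-\delta}$) along arcs, not whole circles. Each arc is a piece of $f^{-1}(c-\delta)$ joined to the trajectory segments through the endpoints of $v$.

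Concretely, take $f=c+t^2\sin\theta$ near $C=\{t=0\}$ in $S^1\times(-1,1)$.
\begin{itemize}
\item Since $df=t^2\cos\theta\,d\theta+2t\sin\theta\,dt$, the critical set is exactly $C$.
\item The set $v$ is $C$ with four hairs along $\theta=0,\pi$.
\item $W$ is still an annulus, so $\chi(W)=0$, as you say.
\item But $W$ is attached along two arcs, so $\chi(M^{c+\delta})=\chi(M^{c-\delta})-2$.
\end{itemize}
Equivalently, $\mathrm{grad}\,f$ has index $-2$ along $C$: perturbing to $c+(t^2+\eta)\sin\theta$ produces two saddles. Circles of exactly this kind are what the existence half requires. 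So additivity of $\chi$ cannot be used ``without knowing the exact shape of the piece''; the shape of the attaching set is the whole issue.

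\textbf{What survives is an inequality.} Suppose you show the attaching set is a finite union of arcs and circles, for instance by taking $W$ to be an isolating block. Then each critical level changes $\chi$ by minus the number of attaching arcs. This gives $\chi(M)\le 0$, both for closed $M$ and in the boundary setting of Theorem 2.12, and that still excludes $S^2$, $\R P^2$ and $D^2$.

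\textbf{The paper's route avoids $\chi$ and argues by separation.} On $S^2$, removing a critical layer leaves pieces with a single boundary circle. A piece free of critical points would be a collar with two boundary circles, so the critical circles could never be exhausted. A clean version of this runs as follows.
\begin{itemize}
\item Take an innermost disk $D$ bounded by a critical circle. Then $f$ is constant on $\partial D$, so $f|_D$ has an interior extremum, which is a forbidden critical point.
\item In $\R P^2$, a circle in the maximum set and one in the minimum set are disjoint. They cannot both be one-sided, since two one-sided curves in $\R P^2$ always meet. The two-sided one bounds a disk, and the innermost-disk argument applies.
\end{itemize}

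\textbf{The same misconception undermines your existence plan for $\chi<0$.} A layer in which $Q_3$ meets the collars only in full circles has $\chi=0$. Hence ``shifting the remaining topology into an adjacent layer'' can never produce a pair of pants or a surface of genus $\ge 2$. Some layer must contain a degenerate circle whose thickening meets the collars along arcs, as in the example above.
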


\textit{Proof.}
Suppose a round function $f$ exists on the sphere $S^2$.
By compactness of $S^2$, the number of its critical circles is finite.
Take any critical value $c$ of $f$ and a neighbourhood $(c - \varepsilon, c + \varepsilon)$ for some $\varepsilon > 0$ such that no other critical value lies in $[c - \varepsilon, c + \varepsilon]$.
The critical value $c$ corresponds to a finite number of critical circles.
Remove from $S^2$ the critical layer
\[
   f^{-1}[c - \varepsilon, c + \varepsilon] = M_c.
\]
By Jordan's theorem the closure of the remaining set is a disjoint union of submanifolds $D_1$ and $D_2$:
\[
   \overline{S^2 \setminus f^{-1}[c - \varepsilon, c + \varepsilon]} = D_1 \cup D_2,
\]
each with a single boundary component
\[
   \partial D_1 = f^{-1}(c - \varepsilon)\quad\text{and}\quad
   \partial D_2 = f^{-1}(c + \varepsilon).
\]
Suppose that in one of these submanifolds, say $D_1$, there are no critical points of $f$.
Then $D_1$ must be homeomorphic to
\[
   \partial D_1 \times I,
\]
that is, to a manifold with two boundary components.
This contradiction shows that $D_1$ and $D_2$ must contain critical circles.

Take the next critical value $b$ of $f$ (say $b > c$) such that $(c, b)$ contains no critical values, and $\delta > 0$ small enough that $(b, b + \delta)$ also contains no critical values.
The value $b$ corresponds to finitely many critical circles.
Removing the layer $f^{-1}[c + \varepsilon, b + \delta] = M_b$ from $D_2$, we obtain
\[
   D_3 = \overline{D_2 \setminus M_b}.
\]
Since $D_3$ has only one boundary component $\partial D_3 = f^{-1}(b + \delta)$, $D_3$ must also contain critical circles.
Continuing in this way to remove critical layers we exhaust the set of critical values.
But the remaining submanifold will have a single boundary component and therefore must contain critical points.
This contradiction shows that no round function exists on a manifold homeomorphic to $S^2$.
Analogous arguments for manifolds homeomorphic to $\R P^2$ or to $D^2$ show that no round functions exist on these manifolds either.

On every two-dimensional smooth compact connected manifold without boundary, other than $S^2$ and $\R P^2$, round functions exist.
This is visually clear from the example of the height function.
Figure~3.2 displays a manifold homeomorphic to a sphere with three handles, on which the height function has six critical circles.

\begin{figure}[h]
\centering\includegraphics[width=0.85\textwidth]{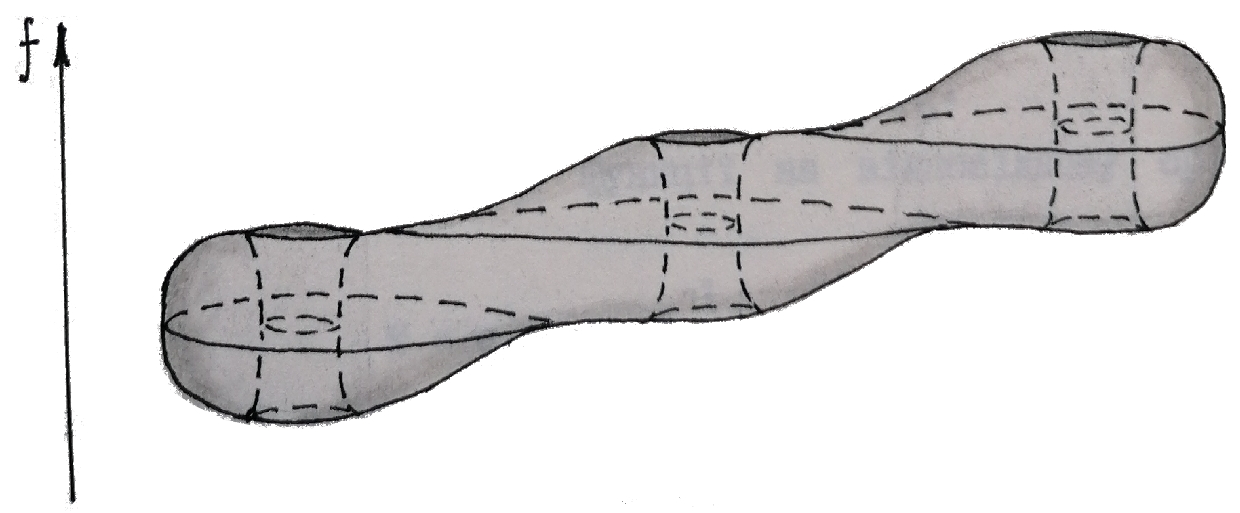}
\caption{}\label{fig:3.2}
\end{figure}

A round function on a manifold without boundary can also be defined by its level lines.
For this, represent the manifold as a fundamental polygon $W$ in the symmetric canonical form
\[
   W = a_1\ldots a_n a_1^{-1}\ldots a_{n-1}^{-1} a_n^{\,e},
\]
where $e = -1$ corresponds to the orientable case and $e = 1$ to the non-orientable one, and transfer the level lines (say, of the height function) onto the fundamental polygon.
Figure~3.3 shows level lines of a round function on the double torus.

\begin{figure}[h]
\centering\includegraphics[width=0.85\textwidth]{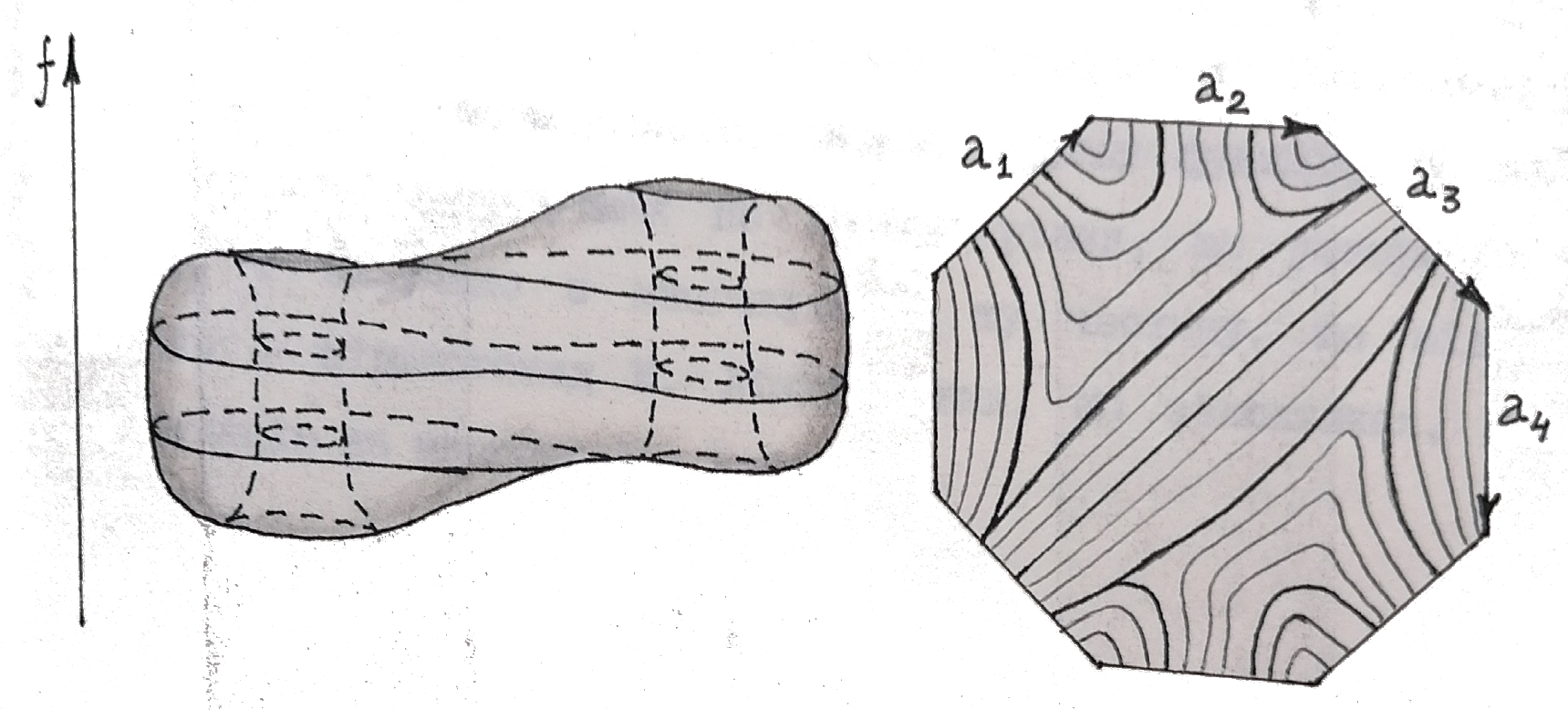}
\caption{}\label{fig:3.3}
\end{figure}

On a non-orientable manifold a round function can also be given by its level lines.
Suppose the non-orientable manifold has the symmetric canonical form
\[
   W = a_1\ldots a_n a_1^{-1}\ldots a_{n-1}^{-1} a_n.
\]
Define the level lines of a round function on the corresponding orientable manifold
\[
   W = a_1\ldots a_n a_1^{-1}\ldots a_{n-1}^{-1} a_n^{-1}
\]
so that the side $a_n$ of the fundamental polygon corresponds to one of the critical circles on the manifold.
These level lines will also be level lines of a round function on the corresponding non-orientable manifold.

On a manifold with boundary we define a round function via its level lines as follows.
Recall that on manifolds with boundary we consider only functions taking a constant maximal value on the boundary and having no critical points there.
Adjoin to $M$ (with boundary $\partial M$) a manifold $N$ with boundary homeomorphic to $\partial M$, so that the ``glued'' manifold has no boundary.
One may take $N = M$ (Fig.~3.4).

On the fundamental polygon of the ``glued'' manifold without boundary, define the round function via its level lines as above.
Cut the polygon along the level lines forming the boundary of $M$ and remove from the polygon the part corresponding to the adjoined piece $N$.
The remaining level lines give a round function on the manifold with boundary.

\begin{figure}[h]
\centering\includegraphics[width=0.85\textwidth]{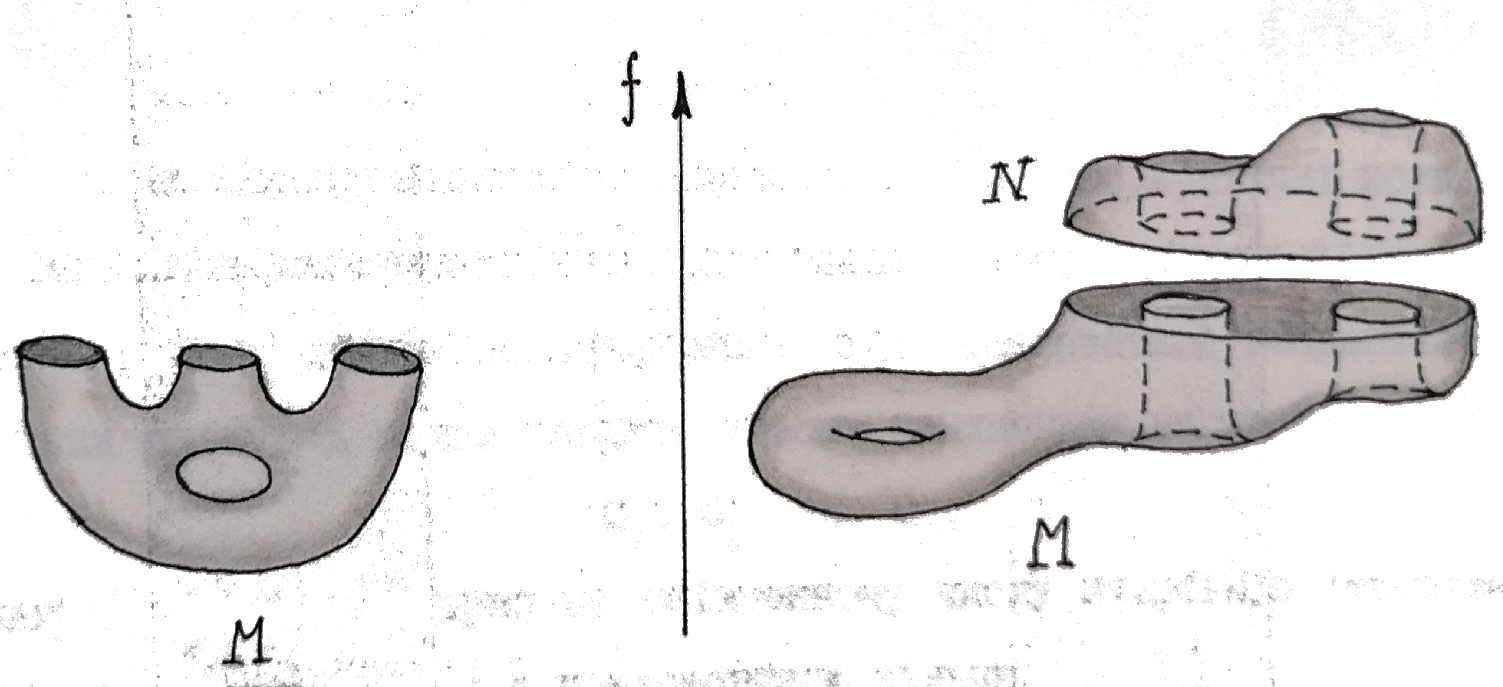}
\caption{}\label{fig:3.4}
\end{figure}

\begin{theorem}[3.3]\label{thm:3.3}
On every two-dimensional smooth compact connected manifold not homeomorphic to the disc $D^2$, the sphere $S^2$ or the projective plane $\R P^2$, there exists an exact round function whose number of singularities equals:
\begin{enumerate}\setlength{\itemsep}{0pt}
\item two on the torus and the Klein bottle;
\item three on the remaining manifolds without boundary;
\item one on the annulus $S^1 \times I$ and on the Möbius band;
\item two on the remaining manifolds with boundary.
\end{enumerate}
\end{theorem}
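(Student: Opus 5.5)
The proof will have two halves. The lower bound comes from Theorem~\ref{thm:2.12} together with the values of the round category computed in Proposition~\ref{prop:1.11}. The upper bound is an explicit construction of a round function with exactly that many critical circles, built through the filtration criterion of Theorem~\ref{thm:2.5} with $P=S^1$. By Theorem~\ref{thm:2.12}, any round function on $M$ (in the boundary case taking a constant maximum on $\partial M$ with no critical points there) has at least $S^1\text{-}\cat M$ critical circles. Proposition~\ref{prop:1.11} gives this number as two for $T^2$ and the Klein bottle, three for the other closed surfaces except $S^2,\R P^2$, one for the annulus and Möbius band, and two for the other surfaces with boundary. So it suffices to construct round functions attaining these counts; exactness in the sense of Definition~\ref{def:2.13} then follows automatically.

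For the constructions I will reuse the categorical coverings from the proof of Proposition~\ref{prop:1.11}, turning each covering $A_1,\dots,A_k$ into a filtration $M_1=A_1\subset M_2=A_1\cup A_2\subset\dots\subset M_k=M$. The pieces will be arranged so that each difference $M_i\setminus\Int M_{i-1}$ is, after smoothing corners, of the form required in Proposition~\ref{prop:2.4}: a collar of $\partial M_{i-1}$, a collar of $\partial M_i$, and a core almost diffeomorphic to $S^1\times D^1$.

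The cases are as follows.
\begin{itemize}
\item Annulus and Möbius band: a single piece $M_1=M$ works. For $S^1\times I$ one can simply take $f(\theta,t)=t(1-t)$ after reflecting so that the boundary is at the maximum. For the Möbius band one takes the squared distance from the core circle.
\item Torus and Klein bottle: these are unions of two annuli glued along their boundary circles, which gives the filtration $M_1=S^1\times I\subset M_2=M$. Each layer is an annulus with core circle, as Proposition~\ref{prop:2.4} requires. For $T^2$ an explicit alternative is $f(\theta,\varphi)=\cos\varphi$.
\item Other surfaces with boundary: take the two-set coverings of Fig.~1.5 and~1.6, each piece deformation retracting to a circle. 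I will check that the second piece, relative to the first, is a thickened circle with collars on its two boundary systems.
\item Other closed surfaces: use the three-set coverings of Fig.~1.7 (and its non-orientable analogue), again as a filtration of length three.
\end{itemize}

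The main obstacle is verifying hypothesis (d) of Theorem~\ref{thm:2.5}. Each layer $M_i\setminus\Int M_{i-1}$ must decompose as collar $\cup$ collar $\cup$ $(S^1\times D^1)$, and a categorical set that merely retracts to a circle need not have this shape. Where the figures do not obviously give this, I would first try an Euler characteristic count. A layer with a round critical level has $\chi=0$, so each layer must be a cobordism of Euler characteristic zero between its two boundary systems, that is, a union of annuli and a pair of pants glued to an annulus, or a Möbius band attached along a saddle-type circle. Failing a direct check, I would fall back on the level-line descriptions from Theorem~\ref{thm:3.2}: start from a round height-type function and merge critical circles on the same level, or cancel adjacent ones, until the count drops to the target. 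That Proposition~\ref{prop:1.11} forbids two circles for genus at least two is reassuring, since it means no further reduction should be expected there.
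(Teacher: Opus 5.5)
Your lower bound (Theorem~\ref{thm:2.12} combined with Proposition~\ref{prop:1.11}) is fine. So are your explicit functions for cases (1) and (3), and they agree with the paper: $\cos\varphi$ on $T^2$, descending to the Klein bottle, and $(t-\tfrac12)^2$ on the annulus and M\"obius band.

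\textbf{The gap is in cases (2) and (4).} These are the only nontrivial cases, and your plan for them cannot work.

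\emph{Why the categorical coverings cannot be used as layers.} Suppose a filtration $M_1\subset\cdots\subset M_k$ satisfies (b) of Theorem~\ref{thm:2.5}. Then the layers $L_i=\overline{M_i\setminus M_{i-1}}$ meet only along the closed curves $\partial M_{i-1}$, so
\[
\chi(M)=\sum_i\chi(L_i).
\]
The categorical sets of Proposition~\ref{prop:1.11} retract to circles, so $\chi(A_i)=0$. Used as layers, they would give $\chi(M)=0$. But every surface in cases (2) and (4) has $\chi(M)\le -1$. (Those sets also meet pairwise along arcs, so $A_1\not\subset\Int(A_1\cup A_2)$ in any case.)

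\emph{Your fallback heuristic is false.} The claim that ``a layer with a round critical level has $\chi=0$'' is wrong. If it were true, the same additivity would show that these surfaces admit no round function at all, which is the opposite of what you are proving. The pair of pants you list also has $\chi=-1$.

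\emph{What is actually needed.} The layer containing the minimum circle is an annulus or M\"obius band, and so is the layer containing the maximum when $M$ is closed. Hence the remaining layer $L$ must satisfy $\chi(L)=\chi(M)<0$ while containing a single critical circle $C$.
\begin{itemize}
\item Such a circle cannot be Morse--Bott, since that would make it a local extremum with an annulus or M\"obius neighbourhood. It must be a degenerate, saddle-like circle. A local model is $f=s^2g(\theta)$ with $g$ having simple zeros; its critical level is $C$ together with arcs of the regular level set crossing $C$ transversally.
\item In the language of Proposition~\ref{prop:2.4}, the collars $Q_1,Q_2$ must meet the core annulus $Q_3$ along arcs with corner points. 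Inclusion--exclusion then gives $\chi(L)=-\tfrac12\,\#(Q_1\cap Q_2\cap Q_3)$, so the negative Euler characteristic comes precisely from these corners.
\item For the genus-$2$ surface, say, you need a layer with $\chi=-2$ carrying one critical circle, for example a four-holed sphere with two bottom and two top boundary circles.
\end{itemize}

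This is what the paper's level-line construction provides. It takes three annuli meeting pairwise along arcs and foliates $Q_1,Q_2$ by parallel circles. It then pushes the intersection arcs onto the core circle $S^1\times\{1/2\}$ of $Q_3$, which becomes the single degenerate critical circle.

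Your proposal never builds such a layer, and your last resort does not supply one. Milnor's merging joins isolated critical points into a single point, not circles into a circle. Cancellation needs nondegenerate critical points of adjacent index, which are not available here.
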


\textit{Proof.} Let the torus be given by
\[
   T^2 = S^1 \times S^1 = \{(x_1, \ldots, x_4) \in \R^4 :\;
      x_1^2 + x_2^2 = 1,\; x_3^2 + x_4^2 = 1\}.
\]
The restriction of $f = x_1$ to $S^1 \times S^1$ is an exact function with two critical circles.

The Klein bottle is obtained from the torus by cutting along the circle
\[
   \{(x_1, \ldots, x_4) \in \R^4 :\; x_1 = 0,\; x_2 = 1,\; x_3^2 + x_4^2 = 1\}
\]
and identifying the boundaries via
\[
   (0, 1, x_3, x_4) \mapsto (0, 1, x_3, -x_4).
\]
The function $f = x_1$, restricted to the Klein bottle, is exact and has two critical circles.

On the remaining manifolds without boundary an exact function is defined by its smooth level lines (the function is naturally not uniquely determined).
Since such a manifold can be covered by three closed sets $Q_1$, $Q_2$, $Q_3$ intersecting pairwise along finitely many arcs (Proposition~\ref{prop:1.11}), each homeomorphic to the annulus $S^1 \times I$, take as level surfaces in $Q_1$ and $Q_2$ the circles
\[
   S^1 \times t_0,\quad\text{where } t_0 \in I.
\]
Extending in $Q_3$ neighbourhoods of the intersection arcs $Q_1 \cap Q_3$ and $Q_2 \cap Q_3$ so that those arcs lie on the circle in $Q_3$
\[
   S^1 \times t_0,\quad t_0 = 0.5,
\]
we obtain a system of level lines for a function on the manifold.

On the annulus $S^1 \times I$ given by
\[
   \{(x_1, x_2, x_3) \in \R^3 :\; x_1^2 + x_2^2 = 1,\; 0 \leq x_3 \leq 1\},
\]
the function
\[
   f = (x_3 - 0.5)^2
\]
with one critical circle is exact.

The Möbius band is obtained from $S^1 \times I$ by cutting along
\[
   \{(x_1, x_2, x_3) \in \R^3 :\; x_1 = 0,\; x_2 = 1,\; 0 \leq x_3 \leq 1\}
\]
and identifying the boundaries via
\[
   (0, 1, x_3) \mapsto (0, 1, -x_3).
\]
The function
\[
   f = (x_3 - 0.5)^2
\]
with one critical circle is exact.

On the remaining manifolds with boundary one may define a round function via its smooth level lines analogously to what was done for manifolds without boundary.
\qed

\section{$P$-functions on three-dimensional manifolds}

\begin{theorem}[3.4]\label{thm:3.4}
On every three-dimensional smooth compact connected manifold without boundary a round function exists.
If the manifold is homeomorphic to one of
\begin{enumerate}\setlength{\itemsep}{0pt}
\item[a)] the sphere $S^3$,
\item[b)] the projective space $\R P^3$,
\item[c)] the manifold $S^1 \times S^2$,
\item[d)] a lens space,
\end{enumerate}
then it admits an exact round function with two critical circles.
If the manifold is homeomorphic to the torus $T^3$, then it admits an exact round function with three critical circles.
On every other three-dimensional smooth compact connected manifold there exists a round function whose number of singularities does not exceed four.
\end{theorem}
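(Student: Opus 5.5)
The plan is to realise every such manifold through a Heegaard splitting. Each piece will then be cut into handle-type subsets, and Theorem~\ref{thm:2.5} (via Proposition~\ref{prop:2.4}) will produce a round function with as many critical circles as there are pieces. Exactness in the special cases will follow from the lower bound of Theorem~\ref{thm:2.12} together with Proposition~\ref{prop:1.12}.

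\emph{Genus one.} First I treat $S^3$, $\R P^3$, $S^1\times S^2$ and the lens spaces. Each of these is a union of two solid tori $V_1\cup V_2$ glued along their boundary tori. I take the filtration $M_1=V_1'\subset M_2=M$, where $V_1'$ is a slightly shrunken copy of $V_1$. Then $M_1\cong S^1\times D^2$ is already of the form $Q_3\cong P\times D^{2}$ with $P=S^1$, and $Q_1$ is empty. The closure $\overline{M\setminus M_1}$ is $V_2$ with a collar $\partial V_1'\times[0,1]$ attached. It is covered by the collar $Q_1\cong\partial M_1\times[0,1]$ and by $Q_3=V_2\cong S^1\times D^2$, so hypothesis (d) of Theorem~\ref{thm:2.5} holds. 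Theorem~\ref{thm:2.5} then gives a round function with two critical circles. Because these manifolds are closed and not homeomorphic to a circle, their round category is $2$ (Proposition~\ref{prop:1.12}). Theorem~\ref{thm:2.12} therefore forces at least two critical circles, so the function is exact.

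\emph{Torus.} For $T^3$ I use Corollary~\ref{cor:2.16} with $k=1$, $n=3$, which gives a round function with exactly $3$ critical circles. Exactness again comes from Corollary~\ref{cor:1.15} and Theorem~\ref{thm:2.12}.

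\emph{General case.} Take a Heegaard splitting $M=H_1\cup H_2$ of genus $g\ge 2$. As in the proof of Proposition~\ref{prop:1.12} (Fig.~1.8), split each handlebody $H_j$ along a meridian disc $x_0\times D^2$ into two closed pieces $A_j,B_j$. Each piece is a handlebody, and I choose the cut so that each is a solid torus $S^1\times D^2$. Concretely, $A_j$ consists of one full handle together with the parts of the remaining handles that reach it, and $B_j$ is the rest. This gives the filtration
\[
M_1=A_1\subset M_2=H_1\subset M_3=H_1\cup A_2\subset M_4=M,
\]
after smoothing corners and shrinking slightly so that hypothesis (b) holds. For each difference $M_i\setminus\Int M_{i-1}$, I must exhibit a covering as in Proposition~\ref{prop:2.4}: a collar of the lower boundary, a collar of the upper boundary, and one solid torus $Q_3$. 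For $i=2$, the region is $B_1$ with a collar of $\partial A_1$, and $B_1\cup{}$collar should be a neighbourhood of a core circle of $B_1$ with collars attached. Theorem~\ref{thm:2.5} then yields a round function with at most four critical circles. If some step can be merged, the count drops, which is why the statement only claims ``does not exceed four''.

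\emph{Main obstacle.} The hard step is verifying condition (d) in the general case. One must show that each region $M_i\setminus\Int M_{i-1}$, which has both an incoming and an outgoing boundary surface of genus possibly greater than one, really decomposes as collar $\cup$ collar $\cup\,(S^1\times D^2)$, meeting along smooth submanifolds as in (d) of Proposition~\ref{prop:2.4}. I would first check this on a single $1$-handle: take $D^1\times D^2$ cut along $x_0\times D^2$ and verify the decomposition using an explicit model in $\R^3$. I would then argue that the remaining handles contribute only product pieces $F\times[0,1]$, which can be absorbed into the collars. If the direct cut fails, the fallback is to use instead a round handle decomposition, in the style of Asimov and Morgan, of the $3$-manifold.
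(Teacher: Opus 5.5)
Your proposal is essentially the paper's proof. Both use genus-one Heegaard splittings into two solid tori for $S^3$, $\R P^3$, $S^1\times S^2$ and the lens spaces, and Corollary~\ref{cor:2.16} for $T^3$. For Heegaard genus $g\ge 2$, both cut each handlebody into two solid tori as in Fig.~\ref{fig:1.8} and then apply Theorem~\ref{thm:2.5}, with exactness from Theorem~\ref{thm:2.12} and Proposition~\ref{prop:1.12}.

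The only difference is organisational: the paper builds a two-circle function on one handlebody and extends it by $2a-f$ to the other, whereas you use a single four-step filtration of $M$. The paper also never checks hypothesis~(d) for the regions between a torus and a genus-$g$ surface. You can check it by taking $Q_1$ to be the collar between the shrunken $\partial A_1$ and $\partial A_1$, $Q_3=B_1$, and $Q_2$ a thin collar of $\partial H_1$ trimmed off both, so that $Q_1\cap Q_3$ is exactly the $g-1$ cutting discs.
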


\textit{Proof.}
It is known that if a three-dimensional manifold is homeomorphic to one of $S^3$, $\R P^3$, $S^1 \times S^2$, or a lens space, then it admits a Heegaard diagram of genus one.
This means that each such manifold can be represented as the gluing of two solid tori
\[
   S^1 \times D^2
\]
along some homeomorphism of their boundaries.
If on one of the solid tori one defines a differentiable function taking a constant maximal value on the boundary $S^1 \times S^1$ and having one critical circle, then the required round function is obtained by extending it to the second solid torus by the negative of the function plus an appropriate constant.
The resulting function is exact.
This follows from the main theorem and Proposition~\ref{prop:1.12}.

If the three-dimensional manifold is homeomorphic to the torus $T^3$, then by Corollary~\ref{cor:2.16} there exists on it an exact round function with three critical circles.

Every other three-dimensional smooth compact connected manifold $M$ without boundary admits a Heegaard diagram of genus $g \geq 2$.
This means that $M$ may be presented as the gluing of two handlebodies $M_1$ and $M_2$ with $g$ handles each, where each handle
\[
   D^2 \times I
\]
is attached to the ball along the set
\[
   D^2 \times S^0.
\]
Cut each of the handlebodies along $g$ handles into two parts so that each is homeomorphic to
\[
   S^1 \times D^2
\]
(Fig.~1.8).
Denote these parts by $N_i$, $i = 1, \ldots, 4$, so that
\[
   M_1 = N_1 \cup N_2,\quad M_2 = N_3 \cup N_4.
\]
Define on $M_1$ a differentiable function $f$ taking a constant maximal value $a$ on $\partial M_1$ and having two critical circles.
This can be done, for instance, as in Theorem~\ref{thm:2.5}, by taking an appropriate filtration of $M_1$.

The function $F\colon M \to \R$ defined by
\[
   F = \begin{cases} f(x), & \text{if } x \in M_1,\\
      2a - f(x), & \text{if } x \in M_2,\end{cases}
\]
is the required one.
\qed

\section{$P$-functions on $n$-dimensional manifolds}

\begin{theorem}[3.5]\label{thm:3.5}
Let $P = S^{n-1}$ be the $(n-1)$-dimensional sphere.
Then on the $n$-dimensional sphere there are no $P$-functions.
\end{theorem}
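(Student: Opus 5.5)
Suppose, for contradiction, that $f$ is a $P$-function on $S^n$ with $P=S^{n-1}$. By compactness $f$ has finitely many critical submanifolds $P_1,\dots,P_k$, each an embedded $(n-1)$-sphere in $S^n$, and $k\geq 1$ because $f$ attains its minimum and maximum. I will argue in the spirit of the proof of Theorem~\ref{thm:3.2}: look at the component of $S^n$ minus the critical set on which $f$ is extremal, and show that it cannot exist.

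\textbf{Step 1: components of the complement are balls or annuli.} Each $P_i$ is a smooth embedded hypersurface homeomorphic to $S^{n-1}$. By the Jordan--Brouwer separation theorem it separates $S^n$ into two components. Since the $P_i$ are disjoint, $S^n\setminus\bigcup P_i$ has exactly $k+1$ components $U_0,\dots,U_k$. Their adjacency graph (components as vertices, spheres as edges) is a tree. The closure $\overline{U_j}$ of each component is a compact manifold whose boundary is a union of some of the $P_i$, and $f$ has no critical points in the interior $U_j$.

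\textbf{Step 2: an extremal component is impossible.} Because the graph is a tree with $k\geq 1$ edges, it has a leaf $U_0$, whose boundary is a single sphere $P_1$. Thus $\overline{U_0}$ is a compact connected manifold with connected boundary $P_1$, and $f$ is constant on $P_1$. By compactness $f$ attains its maximum and minimum on $\overline{U_0}$. If both were attained on $\partial U_0=P_1$, then $f$ would be constant on $\overline{U_0}$, so every interior point would be critical, which is a contradiction. Hence $f$ has an interior extremum in $U_0$. An interior extremum is a critical point, but $U_0$ contains no critical points. This contradiction proves the theorem. The case $k\geq 1$ always holds, so no other cases remain.

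\textbf{Main obstacle.} The delicate point is Step 1 in the topological setting: it needs separation for embedded spheres, and the $P_i$ must be smooth, which the definition of a $P$-function guarantees. Note that Step 2 does not need the Schoenflies theorem or any claim that the pieces are balls. Only two facts are used: the separation tree has a leaf, and a leaf's boundary is a single level set of $f$. If I wanted a gradient-flow version instead, I would use Proposition~\ref{prop:2.7}. With that approach, the flow lines in $U_0$ would have to enter and leave through the same boundary sphere, and since $f$ is constant there this contradicts the strict monotonicity of $f$ along flow lines.
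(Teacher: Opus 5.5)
Your proof is correct. It is organised differently from the paper's and is more elementary. The paper works one critical value at a time. It cuts out a cylindrical neighbourhood of one critical sphere and takes the two pieces to be bounded by $f^{-1}(a-\varepsilon)$ and $f^{-1}(a+\varepsilon)$. It argues that a piece without critical points would be a product $\partial D_1\times I$ (a gradient-flow fact), and so would have two boundary components. It then keeps peeling off critical layers until, by finiteness, a critical-point-free piece with a single boundary component is left.

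You instead remove all $k$ spheres at once and get the leaf directly from the tree formed by the $k+1$ complementary components. You replace the product lemma by the extreme value theorem on $\overline{U_0}$. This approach has three advantages:
\begin{itemize}
\item it needs no cylindrical neighbourhoods and no gradient flow;
\item the tree replaces the informal ``continuing in this way'' termination with an explicit argument;
\item it makes no assumption about the sign of $f-a$ on the two sides of a critical sphere.
\end{itemize}
The last point matters: the paper's description of $\partial D_1,\partial D_2$ tacitly assumes $f<a$ on one side and $f>a$ on the other. That fails, for instance, when $f$ looks like $x_{n+1}^2$ near the equator $\{x_{n+1}=0\}$ of $S^n$. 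The paper's route has the merit of matching the critical-layer technique it uses in Theorem~2.12 and Theorem~3.2.

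Like the paper, you use that $S^{n-1}$ is connected, both for $f|_{P_1}$ to be constant and for the count of $k+1$ components. So $n\geq 2$ is implicit. For $n=1$ the statement depends on whether a pair of points may count as one critical submanifold; if it may, $\cos\theta$ on $S^1$ is a counterexample.
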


\textit{Proof.}
Suppose an $S^{n-1}$-function $f$ exists on $S^n$.
Take any of its values $a$ and the corresponding critical submanifold --- a sphere $S_1^{n-1}$.

By the Jordan--Brouwer theorem, every $(n-1)$-dimensional submanifold of $\R^n$ homeomorphic to a sphere divides the ambient space into two components and is their common boundary.
If from $S^n$ we remove a sufficiently small cylindrical neighbourhood $C_\varepsilon(S_1^{n-1})$ of the critical sphere $S_1^{n-1}$, we obtain a disjoint union of two connected components $D_1$ and $D_2$ with boundaries
\[
   \partial D_1 = f^{-1}(a - \varepsilon)\quad\text{and}\quad
   \partial D_2 = f^{-1}(a + \varepsilon)
\]
respectively (for sufficiently small $\varepsilon > 0$).

The submanifold $D_1$ must contain critical points; otherwise it would be homeomorphic to
\[
   \partial D_1 \times I,
\]
that is, to a manifold with two boundary components.
Let the critical sphere $S_2^{n-1}$ in $D_1$ correspond to a critical value $b < a$ such that $(b, a)$ contains no critical values.
Removing from $D_1$ the subset
\[
   f^{-1}(a - \varepsilon, b + \delta)\cup C_\varepsilon(S_2^{n-1})
\]
for small $\delta > 0$, we obtain a submanifold $D_1'$ with single boundary component
\[
   \partial D_1' = f^{-1}(b - \delta),
\]
which must again contain critical points.
Continuing in this way to remove critical layers we exhaust the supply of critical submanifolds, which is finite by compactness of $S^n$.
A submanifold with single boundary component remains, in which there must be no critical points.
This contradiction completes the proof.
\qed

\begin{theorem}[3.6]\label{thm:3.6}
Let $P = S^1$ be the one-dimensional sphere, and let $n \geq 3$.
Then on the $n$-dimensional sphere there exists a $P$-function whose number of singularities does not exceed
\[
   [n/2] + 1.
\]
\end{theorem}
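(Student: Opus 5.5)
We will construct the function through Theorem~\ref{thm:2.5}. This means producing a filtration $M_1\subset\dots\subset M_k=S^n$ with $k\le [n/2]+1$ whose successive layers admit coverings as in Proposition~\ref{prop:2.4}, with $P=S^1$.

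The natural source of such a filtration is the join decomposition of the sphere. Write
\[
   S^n\subset \R^{n+1}=\R^2\oplus\R^2\oplus\dots
\]
with $[ (n+1)/2 ]$ planes, plus one extra line when $n$ is even. Consider the function
\[
   g=\sum_j c_j\,(x_{2j-1}^2+x_{2j}^2),\qquad c_1<c_2<\dots .
\]
On $S^n$ it is a Morse--Bott function. Its critical sets are the unit circles $\{x_{2j-1}^2+x_{2j}^2=1\}$ in each plane, together with the two points $\pm e_{n+1}$ on the extra line when $n$ is even. For odd $n=2m+1$ this already gives $m+1=[n/2]+1$ critical circles, and we are done.

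For even $n=2m$ the extra line contributes an $S^0$, which is not allowed. We would instead group the coordinates as $m-1$ planes plus one $\R^3$ block. The $\R^3$ block, taken with coefficient $c$, gives a critical $S^2$. We then replace that block by a round function near the $S^2$ level. Concretely, the sublevel/superlevel piece around this critical $S^2$ is a neighbourhood $S^2\times D^{n-2}$. Adjacent pieces glue as in Theorem~\ref{thm:2.5}, so it suffices to exhibit each layer as in Proposition~\ref{prop:2.4}. We want layers of the form $S^1\times D^{n-1}$, using the fact that $S^2$-type layers can be re-cut.

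An alternative cleaner route for even $n$ is to put the extra coordinate into a $\R^3$ block as the first (minimum) stage. Take $M_1$ to be a neighbourhood of $S^2\subset\R^3$, i.e. $S^2\times D^{n-2}$. Using that $n\ge 4$ (the case $n=2$ is excluded since $n\ge3$, and $n$ even means $n\ge4$), cover it instead by a solid $S^1\times D^{n-1}$ plus a collar. This works because $S^2\times D^{n-2}$ with $n-2\ge2$ is a regular neighbourhood of $S^2$, and it can also be seen as $S^1\times D^{n-1}$ union a handle. Counting stages then gives $m$ circles plus at most one extra, i.e. $\le m+1=[n/2]+1$.

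The key steps, in order, are the following.
\begin{enumerate}
\item Define the filtration by sublevel sets $M_j=\{g\le t_j\}$, with $t_j$ chosen between consecutive critical values.
\item Verify, via the join structure $S^n=S^1*S^1*\dots$, that each layer $M_j\setminus\Int M_{j-1}$ is (almost diffeomorphic to) the normal disc bundle of the corresponding circle. This bundle is trivial, so the layer is $S^1\times D^{n-1}$ minus collars, and it admits the three-piece covering $Q_1,Q_2,Q_3$ of Proposition~\ref{prop:2.4}.
\item Apply Theorem~\ref{thm:2.5}.
\end{enumerate}

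The main obstacle is the even-dimensional leftover coordinate, i.e. realizing the bottom (or top) stage as a single critical circle rather than the points $\pm e_{n+1}$. We would try first to absorb the line into the last plane as a $D^3$-type stage. Failing that, we would use the count $[n/2]+1$ with one circle to spare: we would accept one extra stage whose layer is a ball $D^n$ and cover it by $S^1\times D^{n-1}$ plus collar, which is possible since $n\ge3$. Verifying condition (d) of Proposition~\ref{prop:2.4} for that ball stage is where care is needed.
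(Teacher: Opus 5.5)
\textbf{Odd $n$.} For odd $n=2m+1$ your argument is complete, and it is cleaner than the paper's. With distinct $c_j$, the Lagrange condition forces exactly one plane to carry nonzero coordinates. So $g$ is already a Morse--Bott $S^1$-function whose critical set is the $m+1$ unit circles, and Theorem~\ref{thm:2.5} is not needed. The paper instead does the following: it covers $S^5$ by $S^1\times D^4$ and $S^3\times D^2$, splits $S^3$ into two solid tori, invokes Theorem~\ref{thm:2.5} (and with it the smoothing of Propositions~\ref{prop:2.2} and~\ref{prop:2.4}), and says only ``analogously'' for larger $n$.

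\textbf{Even $n$: the three justifications fail.} The even case has a genuine gap, exactly where you suspect it, and the justifications you give there are false.
\begin{itemize}
\item In the framework of Proposition~\ref{prop:2.4}, a first stage consisting of one piece $Q_3\cong S^1\times D^{n-1}$ plus a collar is homeomorphic to $S^1\times D^{n-1}$. So $S^2\times D^{n-2}$, which has $H_2\neq 0$, cannot be ``a solid $S^1\times D^{n-1}$ plus a collar''.
\item The fallback ``ball stage covered by $S^1\times D^{n-1}$ plus collar'' fails for the same reason.
\item $S^2\times D^{n-2}$ is not $S^1\times D^{n-1}$ union one handle. A handle changes $\chi$ by $\pm1$, while $\chi(S^1\times D^{n-1})=0$ and $\chi(S^2\times D^{n-2})=2$. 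In fact $S^2\times D^{n-2}=D^n\cup h^2=(S^1\times D^{n-1})\cup h^2\cup h^2$.
\end{itemize}

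\textbf{What is actually needed.} The $m-1$ planes already use $m-1$ circles. To stay within $m+1$, you must realize both $2$-handles by a \emph{single} circle stage. That is, you need a splitting $S^2\times D^{n-2}=A\cup B$ with $A\cong B\cong S^1\times D^{n-1}$, disjoint interiors, and an interface $R=A\cap B$ meeting the corner conditions of Proposition~\ref{prop:2.4}. This is precisely what the paper asserts for $n=4$ (``$M_2\cong S^2\times D^2$ can be covered by two subsets with disjoint interiors, each homeomorphic to $S^1\times D^3$''). Your proposal neither states nor constructs it.

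\textbf{Why this is not routine.} Mayer--Vietoris for $A\cup B$ forces $R$ to be connected with $H_1(R)\cong\Z^3$ and $H_j(R)=0$ for $j\ge2$. So $R$ is not the attaching region of any round handle. More fundamentally, a Morse--Bott function whose critical set consists of circles forces $\chi=0$, whereas $\chi(S^{2m})=2$. Hence no nondegenerate re-cutting of your join function can work in even dimensions. The even case genuinely requires degenerate critical circles, produced via Proposition~\ref{prop:2.4} from an explicitly constructed non-standard decomposition (for instance two copies of $S^1\times D^{n-1}$ glued along a thickened wedge of three circles, arranged so that the result is $S^2\times D^{n-2}$). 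That construction is the missing idea.
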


\textit{Proof.} By Theorem~\ref{thm:3.4}, on the three-dimensional sphere there exists a round function with two singularities.

Consider the four-dimensional sphere $S^4$.
Realize it as the quotient
\[
   S^4 = D^4 / S^3.
\]
Let the disc $D^4$ be given by
\[
   D^4 = \{(x_1, \ldots, x_4) \in \R^4 :\; x_i^2 \leq 1,\; i = 1, \ldots, 4\}.
\]
Consider three subsets of the disc:
\begin{align*}
N_1 &= \{(x_1, \ldots, x_4) \in \R^4 :\; x_1^2 \leq 1,\; x_i^2 \leq \varepsilon,\; i = 2, 3, 4\},\\ N_2 &= \{(x_1, \ldots, x_4) \in \R^4 :\; 1 - \varepsilon \leq x_i^2 \leq 1,\; i = 1, \ldots, 4\},\\ N_3 &= \overline{D^4 \setminus (N_1 \cup N_2)}\\ &= \{(x_1, \ldots, x_4) \in \R^4 :\; x_1^2 \leq 1 - \varepsilon,\; \varepsilon \leq x_i^2 \leq 1 - \varepsilon,\; i = 2, 3, 4\}
\end{align*}
for sufficiently small $\varepsilon > 0$.
Cover $S^4$ by two subsets $M_1$ and $M_2$, the images of $N_1 \cup N_2$ and $N_3$ under the quotient.
$M_1$ is homeomorphic to
\[
   S^1 \times D^3,
\]
and $M_2$ is homeomorphic to
\[
   S^2 \times D^2.
\]
$M_2$ can be covered by two subsets with disjoint interiors, each homeomorphic to
\[
   S^1 \times D^3.
\]
Together with $M_1$, these subsets cover $S^4$.
After smoothing corners where needed and taking an appropriate filtration of the sphere, we are in the situation of Theorem~\ref{thm:2.5}, whence the existence of a round function with three critical circles on the four-dimensional sphere follows.

The five-dimensional sphere is covered analogously by two subsets
\[
   M_1 = S^1 \times D^4\quad\text{and}\quad M_2 = S^3 \times D^2.
\]
The sphere $S^3$, and hence $M_2$, admits a covering by two subsets with disjoint interiors, each homeomorphic to
\[
   S^1 \times D^4.
\]
By Theorem~\ref{thm:2.5}, on the five-dimensional sphere there exists a round function with three critical circles.

It is not difficult to see that an analogous treatment of higher-dimensional spheres yields the statement of the theorem.
\qed

\nocite{*}
\printbibliography[heading=bibintoc,title={References}]

\end{document}